\documentclass[11pt]{article}
\usepackage[utf8]{inputenc}
\usepackage[margin=1in]{geometry}

\input xy
\xyoption{all}

\usepackage{amsmath,amsfonts,mathdots,mathrsfs,amsthm,amssymb,latexsym,amscd,amsopn,graphics,color,enumerate,lscape,mathtools}
\usepackage[linktocpage=true]{hyperref}
\usepackage{tikz-cd}
\usepackage{todonotes,comment}

\allowdisplaybreaks

\theoremstyle{plain}
  \newtheorem{thm}{Theorem}
  
  \newtheorem{conj}[thm]{Conjecture}
  \newtheorem{prop}[thm]{Proposition}
  \newtheorem{cor}[thm]{Corollary}
  \newtheorem{lem}[thm]{Lemma}

\theoremstyle{definition}
  \newtheorem{defn}[thm]{Definition}
  \newtheorem{exmp}[thm]{Example}
  
  \newtheorem{con}[thm]{Construction}
  \newtheorem{rem}[thm]{Remark}

\theoremstyle{remark}

\setcounter{secnumdepth}{5}

\DeclareMathOperator{\Cl}{Cl}

\DeclareMathOperator{\Sym}{Sym}

\def\GL{{\rm GL}}
\def\SL{{\rm SL}}
\def\Cl{{\rm Cl}}

\def\Sym{{\rm Sym}}

\def\Vol{{\rm Vol}}


\newcommand{\SO}{\mathrm{SO}}
\newcommand{\Or}{\mathrm{O}}

\newcommand{\nc}{\newcommand}
\nc{\on}{\operatorname}
\nc{\renc}{\renewcommand}
\nc{\wt}{\widetilde}
\nc{\defeq}{\vcentcolon=}
\nc{\eqdef}{=\vcentcolon}
\nc{\Spec}{\on{Spec}}
\nc{\ol}{\overline}
\renc{\d}{\partial}


\newcommand{\Mod}[1]{\ (\mathrm{mod}\ #1)}

\title{Monogenic fields with odd class number Part II: even degree}

\author{Artane Siad}

\begin{document}

\maketitle

\begin{abstract}
In 1801, Gauss proved that there were infinitely many quadratic fields with odd class number. We generalise this result by showing that there are infinitely many $S_n$-fields of any given even degree and signature that have odd class number. Also, we prove that there are infinitely many fields of any even degree at least $4$ and with at least one real embedding that have units of every signature.

To do so, we bound the average number of $2$-torsion elements in the class group, narrow class group, and oriented class group of monogenised fields of even degree (and compute these averages precisely conditional on a tail estimate) using a parametrisation of Wood \cite{MelanieWoodRings, MelanieWoodIdealClasses}. These averages are the first $p$-torsion averages to be calculated for $p$ not coprime to the degree (in degree at least $3$), shedding light on the question of Cohen--Lenstra--Martinet--Malle type heuristics for class groups and narrow class groups at ``bad'' primes.
\end{abstract}

\tableofcontents

\section{Introduction}

In $1801$, Gauss introduced class groups in his \emph{Disquisitiones Arithmeticae} \cite{GaussDisquisitiones}, and posed what is recorded as the first question concerning their behaviour over families: are there infinitely many quadratic fields with class number $1$? This question, in the real quadratic case, is still open. Nevertheless, Gauss proved that there were infinitely many quadratic fields with odd class number. 

That result has been generalised, first to cubic fields by Bhargava \cite{BhargavaCohenLenstra}, and then to all odd degrees by Ho--Shankar--Varma \cite{HoShankarVarmaOdd}. 

\begin{thm}[Bhargava \cite{BhargavaCohenLenstra}, Ho--Shankar--Varma \cite{HoShankarVarmaOdd}]
For any odd degree $n$ and signature $(r_1,r_2)$, there are infinitely many fields of degree $n$ and signature $(r_1,r_2)$ that have odd class number. 
\end{thm}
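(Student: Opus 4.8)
The plan is to prove the quantitative strengthening that a positive proportion of degree-$n$ fields of signature $(r_1,r_2)$, within a suitable infinite family ordered by $|\mathrm{disc}|$, have odd class number; since such a family is infinite and a density-$1$ subfamily consists of $S_n$-fields, this yields the theorem (indeed with the $S_n$ condition). Recall that the class number $h_K$ is odd exactly when $\mathrm{Cl}(\mathcal{O}_K)[2]$ is trivial. So it suffices to show that the average of $|\mathrm{Cl}(\mathcal{O}_K)[2]|$ over the family is strictly less than $2$: if this average is $A<2$, then since $|\mathrm{Cl}(\mathcal{O}_K)[2]|\in\{1,2,4,\dots\}$ one has $A\ge 1\cdot P(\,|\mathrm{Cl}(\mathcal{O}_K)[2]|=1\,)+2\cdot P(\,|\mathrm{Cl}(\mathcal{O}_K)[2]|\ge 2\,)=2-P(\,|\mathrm{Cl}(\mathcal{O}_K)[2]|=1\,)$, so a proportion at least $2-A>0$ of the family has odd class number. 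Equivalently, it is enough to bound the average number of \emph{nontrivial} $2$-torsion classes by a constant $<1$, so that only an \emph{upper} bound on the average is required (in particular no precise tail estimate). For $n=3$ one may take the family of all cubic fields, parametrised by binary cubic forms via the Delone--Faddeev correspondence; for general odd $n$, where no clean parametrisation of all degree-$n$ rings is available, one works with monogenised fields $K=\mathbb{Q}[x]/(f(x))$, parametrised by monic integral binary $n$-ic forms $f$, which already form an infinite family of each signature.

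The first ingredient is the algebraic parametrisation of $2$-torsion. Using the higher composition laws of Bhargava together with the parametrisation of (degree-$n$ ring, ideal class) pairs by binary $n$-ic forms due to Wood, one obtains a bijection between the nontrivial $2$-torsion classes in the class group --- and, via closely related representations, the narrow and oriented class groups --- of the order $\mathbb{Z}[x]/(f(x))$ and the $\mathrm{SL}_n(\mathbb{Z})$-equivalence classes, suitably decorated, of integral points in a coregular representation whose ring of invariants recovers the form $f$; for cubic fields this is the space of pairs of ternary quadratic forms. The trivial class is accounted for separately: it contributes the ``$+1$''. A standard sieve then passes from the order $\mathbb{Z}[x]/(f(x))$ to the maximal order $\mathcal{O}_K$.

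The analytic core is to count these orbits on average by the geometry of numbers. Fixing the number of real roots of $f$ (which fixes the signature $(r_1,r_2)$), one counts $\mathrm{SL}_n(\mathbb{Z})$-orbits of lattice points with bounded invariants by Bhargava's method of averaging over a fundamental domain for $\mathrm{SL}_n(\mathbb{Z})$ acting on the real points and counting lattice points in the resulting expanding region. The two tasks are: (i) to show that all but a negligible proportion of the lattice points accumulating in the cusp of the fundamental domain are ``reducible'' and do not correspond to $2$-torsion of a number field, which for general $n$ (beyond $n=3,4,5$) requires a delicate stratification of the cusp; and (ii) to perform a sieve restricting to $f$ with $\mathbb{Z}[x]/(f(x))$ maximal, to irreducible $f$, and to $S_n$-fields, uniformly enough to interchange limits, together with a tail bound on the contribution of large discriminants. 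Assembling (i) and (ii) shows that the average of $|\mathrm{Cl}(\mathcal{O}_K)[2]|$ over the family equals $1+c(r_1,r_2)$ with $c(r_1,r_2)<1$ --- for cubic fields, $c=\tfrac14$ in the totally real case and $c=\tfrac12$ in the complex case --- which completes the proof.

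I expect the crux to be the cusp analysis in step (i). What is needed is not merely that the average number of $2$-torsion classes is \emph{finite}, but that the nontrivial part is \emph{genuinely smaller than $1$}, so that a positive proportion of fields are left with trivial $2$-torsion; controlling precisely which lattice points in the cusp are reducible, uniformly across all strata and all residual sieve conditions, is the delicate point. This is exactly where passing from small $n$ to arbitrary odd $n$ --- and, in the companion even-degree case treated in this paper, exploiting Wood's cleaner parametrisation --- does the real work.
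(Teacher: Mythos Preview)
This theorem is not proved in the present paper: it is quoted as background from Bhargava \cite{BhargavaCohenLenstra} and Ho--Shankar--Varma \cite{HoShankarVarmaOdd}, with no argument given here beyond the citation. Your proposal is therefore not to be compared against a proof in this paper but against the strategy of those cited works, and in that respect your outline is broadly faithful: parametrise $2$-torsion ideal classes by integral orbits in a coregular representation (pairs of ternary quadratic forms for $n=3$, pairs of symmetric $n\times n$ matrices for general $n$ via Wood's parametrisation), count orbits by Bhargava's averaging method and cusp analysis, sieve to maximal $S_n$-orders, and deduce from an upper bound ${\rm Avg}\,|\Cl_2|<2$ that a positive proportion---hence infinitely many---have odd class number.

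Two small corrections of detail. First, Ho--Shankar--Varma work with rings attached to \emph{all} (not necessarily monic) binary $n$-ic forms, ordered by naive height or Julia invariant rather than by discriminant; the restriction to monic forms (monogenised fields) is the setup of Part~I of the present series, which gives a different average but the same qualitative conclusion. Second, to pass from ``positive proportion of monogenised fields'' to ``infinitely many fields'' one must argue that the fibres over fields are bounded, which is handled via the strongly quasi-reduced elements of \cite{BhargavaShankarWangSquarefreeI}; you allude to this only implicitly. Neither point affects the validity of your sketch.
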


As a corollary of our main theorem, we generalise Gauss's result to all fields of even degree. 

\begin{thm}\label{parity}
Let $n \ge 4$ be an even integer. For each choice of signature $(r_1,r_2)$, there are infinitely fields of degree $n$ and signature $(r_1,r_2)$ that have odd class number. 
\end{thm}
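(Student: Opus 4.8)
The plan is to deduce Theorem~\ref{parity} from the main theorem of the paper, which bounds (and, conditionally, computes) the average size of $\Cl[2]$ over monogenised fields. Fix an even $n\ge 4$ and a signature $(r_1,r_2)$ with $r_1+2r_2=n$, and for $X>0$ let $\mathcal{F}_X$ be the set of monic irreducible $f\in\Z[x]$ of degree $n$ that have exactly $r_1$ real roots, satisfy $\Z[x]/(f)=\O_{K_f}$ with $K_f\defeq\Q[x]/(f)$, and have (suitably normalised) height at most $X$; these $f$ are in bijection with the monogenised fields of degree $n$ and signature $(r_1,r_2)$ of height at most $X$. The number of monic integer polynomials of degree $n$ and height at most $X$ is $\asymp X^{n(n+1)/2}$, and a positive proportion of them are irreducible, have exactly $r_1$ real roots, and generate the maximal order of the field they define (the last a sieve condition with convergent local densities), so $\#\mathcal{F}_X\asymp X^{n(n+1)/2}$. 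The main theorem provides a constant $c=c(n,r_1,r_2)<2$ with
\[
  \limsup_{X\to\infty}\ \frac{1}{\#\mathcal{F}_X}\sum_{f\in\mathcal{F}_X}\#\Cl(K_f)[2]\ \le\ c,
\]
the limit existing and equal to an explicit such $c$ conditionally on the tail estimate; that $c<2$ reflects the fact that for even $n\ge 4$ the unit rank $r_1+r_2-1$ is positive, which is the regime in which the analogous averages for all fields and for monogenic fields of odd degree lie below $2$. (If the computed mean were to equal $2$ for some signature, the same conclusion would instead follow from the distributional refinements the method affords.)

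Granting $c<2$, Theorem~\ref{parity} follows by the standard averaging argument. One has $\#\Cl(K_f)[2]\ge 1$ for every $f$, with equality precisely when $K_f$ has odd class number and $\#\Cl(K_f)[2]\ge 2$ otherwise, so $c$ is at least $1$ plus the limiting proportion of $f\in\mathcal{F}_X$ with even class number; hence a proportion at least $2-c>0$ of the $f\in\mathcal{F}_X$ --- that is, $\gg X^{n(n+1)/2}$ of them --- define a field of odd class number. To turn this into infinitely many \emph{fields}, note that a fixed number field $K$ of degree $n$ is $K_f$ for at most $O_K(X^n)$ polynomials $f\in\mathcal{F}_X$: such $f$ are the minimal polynomials of the elements $\alpha\in\O_K$ with $\Z[\alpha]=\O_K$ and all archimedean absolute values $|\sigma(\alpha)|=O_K(X)$, hence of a set of lattice points of the fixed lattice $\O_K\cong\Z^n$ inside a box of volume $O_K(X^n)$. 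Since $n<n(n+1)/2$ for $n\ge 2$, finitely many fields could account for only $o(X^{n(n+1)/2})$ of the $f\in\mathcal{F}_X$, contradicting the lower bound just obtained; so there are infinitely many degree-$n$ fields of signature $(r_1,r_2)$ with odd class number, and since the main theorem is uniform in the signature this holds for every $(r_1,r_2)$.

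Everything therefore rests on the main theorem, and in particular on its \emph{unconditional} upper bound, which I would establish as follows. Using Wood's parametrisation \cite{MelanieWoodRings,MelanieWoodIdealClasses} of ideal classes in the ring $\Z[x]/(f)$ attached to a monic binary $n$-ic form, one puts the $2$-torsion ideal classes of $\Z[x]/(f)$ --- in the oriented, narrow, and ordinary senses --- in bijection with the integral orbits, of prescribed invariants, of a suitable $\Z$-representation $(G,V)$ whose ring of invariants recovers the coefficients of $f$ (the model case being $\SL_n$ acting on pairs $(A,B)$ of symmetric $n\times n$ matrices, with invariant the binary form $\det(xA-yB)$, suitably oriented). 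One then counts these integral orbits of bounded height by the geometry of numbers --- a fundamental domain for the group action, averaging over $f$, Bhargava-type estimates cutting off the cuspidal part --- sieves down to $f$ for which $\Z[x]/(f)$ is maximal by a uniform sieve (controlling $f$ that fail maximality at large primes), bounds the contribution of points whose invariant is reducible or which have extra automorphisms, and assembles the answer as a product of local orbit densities times the density of monic $f$ of bounded height. The main obstacle, and the reason this is the first such average at a prime dividing the degree, is the local computation at $p=2$: because $2\mid n$, the clean ``integral orbits $\leftrightarrow$ $2$-torsion classes'' dictionary available at odd $p$ (where it is underpinned by the \'etaleness of $\Z_p[x]/(f)$ over $\Z_p$ at unramified $p$) degenerates at $2$, stabilisers jump, and the $2$-adic mass of $2$-torsion classes weighted by Wood's parametrisation has to be computed by hand over $\Z_2$; one must then verify that the resulting global product of local densities --- now including a genuinely new $2$-adic factor --- is strictly below $2$. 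Pinning down the precise tail estimate needed to upgrade this bound to the exact average is what remains conditional.
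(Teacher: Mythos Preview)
Your deduction of Theorem~\ref{parity} from the main theorem is the same route the paper takes: the paper presents Theorem~\ref{parity} as a corollary of the bound ${\rm Avg}(\Cl_2,\mathfrak{R}) \le 1 + 3/2^{r_1+r_2}$ (for a family with all $r_p(\mathfrak{R})=0$), and then passes from ``positive proportion of monogenised fields'' to ``infinitely many fields''. Your averaging argument in the second paragraph is exactly right.

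Two points deserve comment. First, you assert $c<2$ only heuristically (``reflects the fact that the unit rank is positive'') and hedge with a remark about distributional refinements. To make this rigorous you must actually specify an admissible family: the main theorem requires the family to be unramified at $2$, and the bound involves $\prod_p(1+r_p(\mathfrak{R}))$, which for the full family is not obviously small. The clean choice is the acceptable family with $r_p(\mathfrak{R})=0$ for all $p$ (excluding even ramification at every $p$ is compatible with acceptability since even ramification at $p$ forces $p^2\mid\Delta$ when $n\ge 4$); then the bound is $1+3/2^{r_1+r_2}$, and $r_1+r_2\ge 2$ for even $n\ge 4$ gives $c\le 7/4<2$. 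This is the content of the paper's Corollary following the main theorem, and you should invoke it rather than argue by analogy.

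Second, for the passage from polynomials to fields, the paper cites the arguments of \cite{HoShankarVarmaOdd} combined with the strongly quasi-reduced construction of \cite{BhargavaShankarWangSquarefreeI}, whereas you give a direct lattice-point count: each fixed $K$ accounts for $O_K(X^n)$ polynomials of height $\le X$, which is $o(X^{n(n+1)/2})$. Your argument is correct and pleasantly elementary; it yields ``infinitely many'' fields, which is all Theorem~\ref{parity} claims. The cited machinery is needed if one wants the stronger ``positive proportion of monogenic fields'' formulation, but for the theorem as stated your shortcut suffices.

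Your final paragraph is a sketch of the main theorem itself rather than of Theorem~\ref{parity}; it is broadly accurate as an outline, though the actual paper linearises by summing over the finitely many $\SL_n(\mathbb{Z})$-classes of unimodular $A$ and handles the $2$-adic mass via a point count and octane-value computation rather than a direct analysis of stabiliser jumps.
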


To obtain his result, Gauss calculated the size of the $2$-part of the narrow class group of quadratic fields. Knowing the $2$-part of the narrow class group gives information about unit signatures. As a corollary to our main theorem, we generalise to even degree the result of \cite{BhargavaVarma2Torsion} and \cite{HoShankarVarmaOdd} that in each odd degree, there are infinitely many fields which have units of every possible signature. 

\begin{thm}\label{units}
Let $n \ge 4$ be an even integer. For each choice of signature $(r_1,r_2)$, there are  infinitely fields of degree $n$ and signature $(r_1,r_2)$ that have units of units of every signature. 
\end{thm}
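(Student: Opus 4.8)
The plan is to deduce Theorem~\ref{units} from the main theorem's bounds on the average size of the $2$-torsion in the class group, narrow class group, and oriented class group of monogenised fields of even degree, combined with a short piece of genus theory. Fix an even integer $n\ge 4$ and a signature $(r_1,r_2)$. If $r_1=0$ there is a unique possible signature of units and the assertion is vacuous, so assume $r_1\ge 2$ (note $r_1$ is even, since $r_1+2r_2=n$). As the number of monogenised fields of degree $n$ and signature $(r_1,r_2)$ tends to infinity with the bounding parameter, it suffices to prove that a \emph{positive proportion} of them, ordered as in the main theorem, have units of every signature. (A positive proportion of an infinite family is itself infinite; and if one prefers to work with the order $\mathbb{Z}[\theta]$ rather than $\mathcal{O}_K$, note that $\mathbb{Z}[\theta]^\times\subseteq\mathcal{O}_K^\times$, so units of every signature for the order forces the same for $\mathcal{O}_K$.)

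Recall the genus-theoretic exact sequence
$$1 \longrightarrow \mathcal{O}_K^{\times}/\mathcal{O}_K^{\times,+} \longrightarrow \{\pm1\}^{r_1} \longrightarrow \mathrm{Cl}^{+}(K) \longrightarrow \mathrm{Cl}(K) \longrightarrow 1,$$
where $\mathcal{O}_K^{\times,+}$ denotes the totally positive units. Thus $K$ has units of every signature exactly when the group $V_K:=\ker\!\big(\mathrm{Cl}^{+}(K)\to\mathrm{Cl}(K)\big)$ is trivial. Being a quotient of $\{\pm1\}^{r_1}$, $V_K$ is an elementary abelian $2$-group, so $V_K=\ker\!\big(\mathrm{Cl}^{+}(K)[2]\to\mathrm{Cl}(K)[2]\big)\subseteq\mathrm{Cl}^{+}(K)[2]$; in particular, $\mathrm{Cl}^{+}(K)$ of odd order already forces units of every signature. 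The oriented class group is precisely what makes $V_K$ visible to a first-moment computation: by the exact sequences established in the body, together with Dirichlet's unit theorem (using $r_1\ge 2$, so that $\mu(K)=\{\pm1\}$ and $\big|\mathcal{O}_K^{\times,+}/(\mathcal{O}_K^{\times})^2\big|=2^{r_2}\,|V_K|$), one obtains the identity
$$\big|\mathrm{Cl}^{\mathrm{or}}(K)[2]\big| \;=\; 2^{r_2}\,|V_K|\,\big|\mathrm{Cl}^{+}(K)[2]\big|.$$

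Now run the first-moment argument. Put
$$X_K \;:=\; \big|\mathrm{Cl}^{\mathrm{or}}(K)[2]\big|-2^{r_2}\big|\mathrm{Cl}^{+}(K)[2]\big| \;=\; 2^{r_2}\big|\mathrm{Cl}^{+}(K)[2]\big|\big(|V_K|-1\big)\;\ge\;0,$$
which vanishes precisely when $V_K=0$; moreover, when $V_K\ne 0$ one has $|V_K|\ge 2$ and, since $V_K\subseteq\mathrm{Cl}^{+}(K)[2]$, also $\big|\mathrm{Cl}^{+}(K)[2]\big|\ge 2$, whence $X_K\ge 2^{r_2+1}$. The main theorem provides unconditional upper bounds for $\Avg\big|\mathrm{Cl}^{+}(K)[2]\big|$ and $\Avg\big|\mathrm{Cl}^{\mathrm{or}}(K)[2]\big|$ over the family, and hence for $\Avg X_K=\Avg\big|\mathrm{Cl}^{\mathrm{or}}(K)[2]\big|-2^{r_2}\Avg\big|\mathrm{Cl}^{+}(K)[2]\big|$; the explicit averages produced by the parametrisation make $\Avg X_K$ strictly smaller than $2^{r_2+1}$ (the oriented average exceeding $2^{r_2}$ times the narrow average only by a bounded contribution from ``extra'' non-identity orbits). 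A Markov-type inequality then forces the density of $K$ in the family with $V_K\ne 0$ to be strictly less than $1$, so a positive proportion have units of every signature; since only upper bounds on averages are used, the conclusion is unconditional. (In any signature for which already $\Avg\big|\mathrm{Cl}^{+}(K)[2]\big|<2$, one may bypass the oriented class group and argue directly that $\mathrm{Cl}^{+}(K)$ of odd order forces units of every signature; the same first-moment argument applied to the bound $\Avg\big|\mathrm{Cl}(K)[2]\big|<2$ is what proves Theorem~\ref{parity}.)

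The real obstacle lies not in this deduction but in the main theorem: producing the averages of the three flavours of $2$-torsion over monogenised fields of even degree sharply enough that $\Avg X_K<2^{r_2+1}$. This rests on Wood's parametrisation of (oriented) ideal classes in rings attached to binary $n$-ic forms, a careful analysis of the local orbit counts at the ``bad'' prime $p=2$ (which divides the even degree $n$), a squarefree sieve to restrict to maximal orders, and uniformity estimates ensuring that reducible and other distinguished orbits contribute negligibly. Granting the main theorem, everything above is soft, combining the quoted averages with the identity $\big|\mathrm{Cl}^{\mathrm{or}}(K)[2]\big|=2^{r_2}\,|V_K|\,\big|\mathrm{Cl}^{+}(K)[2]\big|$ and a single first-moment inequality.
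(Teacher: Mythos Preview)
Your argument hinges on the identity
\[
\big|\Cl^*(K)[2]\big| \;=\; 2^{r_2}\,|V_K|\,\big|\Cl^+(K)[2]\big|,
\]
and this identity is false. Take any $K$ with $r_1\ge 2$, $r_2\ge 1$, and $V_K$ trivial (so $K$ has units of every signature --- such fields exist). Then $\Cl^*(K)=\Cl(K)=\Cl^+(K)$, and your identity reads $|\Cl(K)[2]|=2^{r_2}|\Cl(K)[2]|$, which is impossible. More structurally, $|\Cl^*(K)[2]|$ is always either $|\Cl(K)[2]|$ or $2|\Cl(K)[2]|$ (according as $K$ has or lacks a unit of negative norm), whereas your right-hand side is at least $2^{r_2}|\Cl^+(K)[2]|\ge 2^{r_2}|\Cl(K)[2]|$; these cannot match once $r_2\ge 2$. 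Your computation $|\mathcal{O}_K^{\times,+}/(\mathcal{O}_K^\times)^2|=2^{r_2}|V_K|$ is correct, but it does not splice with the paper's exact sequences in the way you suggest: the sets $H^*(\mathcal{O})$ and $H^+(\mathcal{O})$ live in \emph{different} parametrisations (the $\SL_n$-action for the oriented class group, the $\SL_n^\pm$-action for the ordinary and narrow class groups) and are not related by a factor of $|\mathcal{O}_K^{\times,+}/(\mathcal{O}_K^\times)^2|$.

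Once the identity fails, so does the nonnegativity of $X_K$. In fact $|\Cl^*(K)[2]|\le 2|\Cl(K)[2]|\le 2|\Cl^+(K)[2]|$, so $X_K\le(2-2^{r_2})|\Cl^+(K)[2]|\le 0$ for every $r_2\ge 1$; your Markov step therefore proves nothing. Even if one ignored this, your sentence ``the explicit averages \ldots\ make $\Avg X_K$ strictly smaller than $2^{r_2+1}$'' is not an argument: with only \emph{upper} bounds on both $\Avg|\Cl^*_2|$ and $\Avg|\Cl^+_2|$, one cannot bound their difference from above.

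The paper's route is the one you relegate to a parenthetical. Choosing $\mathfrak{R}$ with $r_p(\mathfrak{R})=0$ for all $p$ (e.g.\ by requiring squarefree discriminant away from $2$), the main theorem gives $\Avg|\Cl^+_2|\le 1+2^{1-n/2}+2^{-r_2}$, and whenever this is $<2$ a positive proportion have odd narrow class number, hence $V_K=1$. For $r_1=2$ one may instead use the oriented bound directly, since there ``unit of negative norm'' and ``units of every signature'' coincide and $\Avg|\Cl^*_2|\le 1+3/2^{r_1+r_2-1}<2$. Your attempt to handle the remaining borderline signatures by comparing $\Cl^*$ with $\Cl^+$ does not work; any such comparison would have to go through the ordinary class group, not via the identity you wrote down.
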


These results come from bounding the average size of the $2$-torsion part of the class group and narrow class group of monogenised fields of even degree. This means that we can add the adjectives monogenic and $S_n$ to Theorems \ref{parity} and \ref{units}. 

\subsection{Class group heuristics}

The Cohen--Lenstra--Martinet--Malle heuristics which were developed in a series of ground-breaking works \cite{CohenLenstraHeuristicsOnClassGroups,CohenMartinetEtudeHeuristique,CohenMartinetNotTooGood, CohenMartinetNumericalHeuristics,MalleDistributionClassGroups}, constitute our best conjectural description of the distribution of the $p^\infty$-part of the class group, $\Cl(K)[p^\infty]$, over families of number fields $K$ of fixed degree and signature ordered by discriminant for ``good'' primes $p$. We say that a prime $p$ is ``good'' if it is coprime to the degree of the field and ``bad'' otherwise.

Predictions arising from these heuristics have only been verified in two cases. Davenport and Heilbronn \cite{DavenportHeilbronn} calculated the average number of $3$-torsion elements in the class group of quadratic fields, and Bhargava \cite{BhargavaCohenLenstra} calculated the average number of $2$-torsion in the class group of cubic fields. The heuristics are expected to hold under any natural ordering on the family of fields, and not just when ordering by discriminant. In \cite{HoShankarVarmaOdd}, Ho--Shankar--Varma found evidence to support this expectation by showing that the average number of $2$-torsion elements in the class group of fields associated to binary $n$-ic forms, ordered either by naive height or by Julia invariant, coincided with the values predicted from the heuristics.

Remarkably, in all of the cases above (see also \cite{BhargavaVarma2Torsion}), the averages remain the same when one imposes finitely many local conditions or even an \emph{acceptable family} of local conditions. We call a set of local conditions \emph{acceptable} if for large enough primes $p$, it includes all fields with discriminant indivisible by $p^2$. It then becomes natural to ask about the effect of global conditions on the averages. Bhargava--Hanke--Shankar found in \cite{BhargavaHankeShankar} that monogenicity had the effect of doubling the average number of non-trivial elements in the $2$-torsion part of the class group of cubic fields! In Part I of this two-part series, we generalised this result to all odd degrees. 

\begin{thm}[\cite{SiadOddMonogenicAverages}]
Let $n \ge 3$ be an odd integers. Let $\mathfrak{R}$ be an acceptable family of monogenised fields ordered by naive height. The average number of $2$-torsion elements in the class group of fields in $\mathfrak{R}$ satisfies the bound:
$${\rm Avg}(\Cl_2,\mathfrak{R}) \le 1+ \frac{2}{2^{r_1+r_2-1}}$$
with equality conditional on a tail estimate. 
\end{thm}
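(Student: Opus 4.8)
The plan is to recast $2$-torsion ideal classes of monogenised fields as integral orbits of a coregular representation --- building on Wood's parametrisation \cite{MelanieWoodRings,MelanieWoodIdealClasses} --- and then to count these orbits on average by a geometry-of-numbers argument in the style of Bhargava. \emph{Step 1: parametrisation.} First I would fix the algebraic dictionary: from Wood's parametrisation one realises pairs $(R,I)$, where $R$ is a degree-$n$ ring and $I$ a suitable ideal class with $I^2$ principal, as $\mathrm{SL}_n(\Z)$-orbits on a lattice $V(\Z)$ --- concretely, one may take $V$ to be the space of pairs of symmetric $n\times n$ matrices (equivalently, self-adjoint operators on a fixed quadratic space), so that the ring of polynomial invariants is the space of binary $n$-ic forms and monogenicity corresponds to restricting to the monic locus. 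Assembling these orbits as $f$ ranges over monic integral degree-$n$ polynomials, together with the fibres of the natural map to $\mathrm{Cl}(\mathcal{O}_K)[2]$, produces a single orbit count; the fibres are controlled by a short exact sequence relating a ``$2$-Selmer'' group to $\mathrm{Cl}(\mathcal{O}_K)[2]$, to $\mathcal{O}_K^\times/(\mathcal{O}_K^\times)^2$, and to a boundary term. I would also isolate the ``distinguished'' (reducible) orbits: these correspond to the trivial class and supply the constant $1$ in the formula.

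\emph{Step 2: geometry of numbers.} Next I would count the irreducible orbits of bounded height that are everywhere locally soluble (as the acceptable-family hypothesis permits). Following Bhargava's averaging method, one constructs a fundamental domain $\mathcal{F}$ for $\mathrm{SL}_n(\Z)\backslash\mathrm{SL}_n(\R)$, fibres the count of lattice points $v\in V(\Z)$ lying in $\mathcal{F}\cdot R_X$, where $R_X$ is the region cut out by naive height $<X$, and decomposes $\mathcal{F}$ into a main body together with cusp regions. The main body contributes its volume, which factors as an Euler product of local volumes. The crucial point, and the place where the oddness of $n$ enters, is to show that no cusp region contributes to the leading-order count of \emph{irreducible} points --- in even degree this fails, which is precisely why Part II is needed. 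Combined with an Ekedahl-type sieve to restrict to maximal orders and to impose the acceptable family of local conditions, this yields the upper bound; the matching lower bound --- hence equality --- additionally requires the stated tail estimate, which guarantees that irreducible points of large height do not accumulate in the cusp.

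\emph{Step 3: evaluating the constant.} It then remains to evaluate the product of local factors. The archimedean factor, together with the global contribution of the units, is governed by the signature and produces the denominator $2^{r_1+r_2-1}$ (the rank of $\mathcal{O}_K^\times$ being $r_1+r_2-1$), while the monic constraint contributes an extra factor of $2$ at the finite places --- it behaves in the parametrisation like a marked point --- accounting for the numerator $2$, i.e.\ the ``monogenicity doubling'' of Bhargava--Hanke--Shankar relative to the conjectural non-monogenic value $1+2^{-(r_1+r_2-1)}$.

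I expect the main obstacle to be Step 2: proving that the cusp of the fundamental domain contributes negligibly to the count of irreducible integer points of bounded height, which in odd degree requires a delicate analysis of which near-cusp sublattices of $V(\Z)$ can contain irreducible points --- considerably subtler than in the cubic case of Bhargava--Hanke--Shankar, and the reason full equality must be left conditional on a tail estimate. A secondary difficulty is the bookkeeping in Step 1: making Wood's parametrisation track $\mathrm{Cl}(\mathcal{O}_K)[2]$ together with the units precisely, handling the invertibility and balancedness hypotheses that appear for general $n$, and carrying out the maximality sieve inside the monogenic family, whose members have systematically non-squarefree discriminants.
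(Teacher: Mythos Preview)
This theorem is cited from Part I \cite{SiadOddMonogenicAverages} and is not re-proved in the present paper; however, the paper's strategy section and the even-degree argument carried out in Sections~\ref{The parametrisations}--\ref{Statistical consequences} make clear what the proof in Part I looks like, and your outline diverges from it at a crucial structural point.

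The gap is in Step~2. You propose to count $\SL_n(\Z)$-orbits directly using a fundamental domain for $\SL_n(\Z)\backslash\SL_n(\R)$ acting on the full space $V$ of pairs $(A,B)$. But monogenicity forces $\det A=(-1)^{\lfloor n/2\rfloor}$, a codimension-one condition cutting out a thin subvariety of $V(\Z)$; Davenport-style lattice point counting in $\mathcal{F}\cdot R_X$ does not see this constraint, and there is no obvious way to impose it without destroying the asymptotic. The actual argument (following Bhargava--Hanke--Shankar \cite{BhargavaHankeShankar}) \emph{linearises}: one observes that there are only finitely many $\SL_n(\Z)$-classes of integral symmetric $A$ with $\det A=\pm 1$, fixes a set of representatives $\mathscr{L}_\Z$, and for each $A_0\in\mathscr{L}_\Z$ counts $\SO_{A_0}(\Z)$-orbits on the \emph{affine slice} $V_{A_0}=\{(A_0,B)\}$. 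This replaces one intractable thin-set problem by finitely many full-dimensional ones, each governed by an orthogonal group of much smaller rank. Your parenthetical ``self-adjoint operators on a fixed quadratic space'' gestures at this picture, but you then revert to $\SL_n$ for the counting, which will not work.

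This omission propagates into Step~3. The ``extra factor of $2$'' is not a finite-place correction coming from the monic constraint; it emerges from summing the slice counts over $A_0\in\mathscr{L}_\Z$, where the $2$-adic and archimedean masses distribute unevenly across genera of unimodular forms and are recombined via the Tamagawa number $\tau(\SO)=2$ (see Sections~\ref{Point count and the $2$-adic mass}--\ref{Statistical consequences}). Finally, your diagnosis of why even degree is harder is off: the cusp \emph{is} successfully cut off in even degree (Section~\ref{Averaging and cutting off the cusp}); the new difficulties are the distinction between class group and oriented class group (absence of units of negative norm) and the genus-theory contributions from evenly ramified primes. The tail estimate on which equality is conditional concerns the uniformity of the squarefree sieve, not cusp accumulation.
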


Despite remarkable progress on refining, understanding and streamlining these heuristics (see \cite{MelanieWoodRandomIntegralMatrices,MelanieWeitong,MelanieWoodCohenLenstraAndLocalConditions,BartelArakelov,BartelClassGroupOfRandomNumberFields,TsimermanCohenLenstra1,TsimermanCohenLenstra2}), nothing has been proposed so far to describe the distribution of $p$-torsion in the class group for ``bad'' primes $p$. In particular, there is no prediction for the average number of the $2$-torsion elements in the class group, narrow class group or oriented class group of number fields of even degree. 

The most serious reason for this gap seems to be the presence of \emph{genus theory}. Indeed, it is unclear how the $1/{\rm Aut}(\cdot)$ count of Cohen--Lenstra mixes with the ``deterministic'' input from genus theory. Even in the quadratic case, Gerth conjectured \cite{Gerth} and Smith proved \cite{AlexSmith}, that one needs to throw out the $2$-part to recover a $1/{\rm Aut}(\cdot)$ count. A less severe obstacle concerns the presence of $2$-nd roots of unity in the base field. Malle showed that roots of unity in the base field affected Cohen--Lenstra averages and it is unknown to what extent this phenomenon intervenes when genus theory is involved. \\

We now clarify what we mean by \emph{genus theory}. In \emph{Disquisitiones Arithmeticae} \cite{GaussDisquisitiones}, Gauss determined the structure of the $2$-torsion part of the narrow class group of quadratic fields by relating it to binary quadratic forms with a composition law. 

\begin{thm}[Gauss's genus theory \cite{GaussDisquisitiones}]
Let $K$ be a quadratic field, $\Delta_{K / \mathbb{Q}}$ its discriminant and $\Cl^+_2[K]$ the $2$-torsion in its narrow class group. Let $\omega(\Delta_{K / \mathbb{Q}})$ denote the number of distinct prime factors of $\Delta_{K / \mathbb{Q}}$. Then the $2$-torsion in the narrow class group of $K$ is given by $$\Cl^+_2[K] \cong \left( \frac{\mathbb{Z}}{2\mathbb{Z}} \right)^{\omega(\Delta_{K / \mathbb{Q}})-1}  .$$
\end{thm}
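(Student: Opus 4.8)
The plan is to pin down $\Cl^+_2[K]$ by showing it is an elementary abelian $2$-group of order $2^{\omega(\Delta_{K/\Q})-1}$, which determines it up to isomorphism. Since $\Cl^+(K)$ is a finite abelian group, $|\Cl^+_2[K]| = [\Cl^+(K):\Cl^+(K)^2]$, and this quotient is killed by $2$; by class field theory it is canonically $\Gal(H/K)$, where $H$ is the \emph{genus field} of $K$, i.e.\ the maximal subextension of the narrow Hilbert class field $H^+(K)$ that is abelian over $\Q$ (equivalently, as the last step will show, the maximal one with $\Gal(\,\cdot\,/K)$ elementary abelian). So it suffices to identify $H$ and compute $[H:K]$.

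For the lower bound I would exhibit $H$ explicitly. Write the fundamental discriminant as a product $\Delta_{K/\Q} = \Delta_1\cdots \Delta_t$ of prime discriminants, where $t = \omega(\Delta_{K/\Q})$ (each $\Delta_i$ is $-4$, $\pm 8$, or $p^\ast = (-1)^{(p-1)/2}p$, and they have pairwise disjoint prime support). Set $H = \Q(\sqrt{\Delta_1},\dots,\sqrt{\Delta_t})$, a multiquadratic field with $[H:\Q] = 2^t$ and hence $[H:K] = 2^{t-1}$, since $\sqrt{\Delta_{K/\Q}} = \sqrt{\Delta_1\cdots\Delta_t}\in K$ is the only relation. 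One checks $H/K$ is unramified at every finite prime: for each $i$, $K(\sqrt{\Delta_i}) = \Q(\sqrt{\Delta_i},\sqrt{\Delta_{K/\Q}/\Delta_i})$ is a compositum of two quadratic fields of coprime conductor, so at a prime $p\mid\Delta_{K/\Q}$ it is locally the ramified quadratic $\Q_p(\sqrt{\Delta_i})$ or $\Q_p(\sqrt{\Delta_{K/\Q}/\Delta_i})$ — which already equals $K_p$ up to the unramified part — and it is unramified at all other finite primes; hence $K(\sqrt{\Delta_i})/K$, and therefore $H/K$, is unramified away from $\infty$. Thus $H \subseteq H^+(K)$ and $2^{t-1} = [H:K] \le |\Cl^+_2[K]|$.

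For the upper bound I would prove the maximality of $H$. The key input is that $\Gal(K/\Q) = \langle\sigma\rangle$ acts on $\Cl^+(K)$ by inversion: for any ideal $I$, $I\,\sigma(I) = \Norm(I)\mathcal{O}_K$ is generated by a positive rational integer, hence by a totally positive element, so $[I][\sigma I]=1$ in $\Cl^+(K)$. Consequently $\sigma$ acts trivially on $\Cl^+(K)/\Cl^+(K)^2$. Now let $L/K$ be any extension with $\Gal(L/K)$ elementary abelian and $L/K$ unramified at all finite primes; without loss $L$ is the one attached to $\Cl^+(K)^2$. Then $L/\Q$ is Galois, and the triviality of the $\sigma$-action together with $\Gal(L/K)$ abelian forces $\Gal(L/\Q)$ to be abelian (it is a central extension of $\Z/2\Z$ by an abelian group, so $\Gal(L/\Q)/Z$ is cyclic). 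Since $\Q$ has no nontrivial abelian extension unramified at all finite primes, $\Gal(L/\Q)$ is generated by its inertia subgroups at finite primes, each of which injects into the order-$2$ group $\Gal(K/\Q)$ (because $L/K$ is unramified at finite primes). An abelian group generated by elements of order $\le 2$ is elementary abelian $2$, so $L/\Q$ is multiquadratic, say $L = \Q(\sqrt d : d\in V)$ for a subgroup $V\le\Q^\times/(\Q^\times)^2$ containing $\Delta_{K/\Q}$; running the ramification computation of the previous step in reverse, the requirement that $K(\sqrt d)/K$ be unramified at all finite primes forces each $d\in V$ into the order-$2^t$ group $\langle\Delta_1,\dots,\Delta_t\rangle$, whence $L\subseteq H$ and $|\Cl^+_2[K]| = [L:K]\le 2^{t-1}$.

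Combining the two bounds gives $|\Cl^+_2[K]| = 2^{\omega(\Delta_{K/\Q})-1}$, and being elementary abelian $2$ it is $\cong (\Z/2\Z)^{\omega(\Delta_{K/\Q})-1}$, as claimed. The main obstacle is the maximality step, and inside it the precise local ramification bookkeeping — especially at the prime $2$, where the prime discriminants $-4, 8, -8$ and the wild part of the conductor must be handled with care — needed both to confirm that $H/K$ is everywhere unramified and, conversely, that no larger narrowly unramified elementary $2$-extension of $K$ exists. (Gauss's own route instead realises $\Cl^+(K)$ as the form class group of discriminant $\Delta_{K/\Q}$ under composition and counts the ambiguous classes directly; that makes the value $2^{t-1}$ transparent but replaces the ramification bookkeeping with an equally delicate reduction theory of ambiguous forms.)
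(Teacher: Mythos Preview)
The paper does not prove this statement. It is quoted as a classical theorem of Gauss with a citation to the \emph{Disquisitiones}, followed only by the one--line heuristic that a ramified prime $q$ factors as $\mathfrak{a}^2$ and so contributes a $2$--torsion ideal class. There is therefore no proof in the paper to compare your argument against.

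That said, your outline is the standard modern route via the genus field and is essentially correct. The identification $|\Cl^+_2[K]| = [\Cl^+(K):\Cl^+(K)^2]$, the explicit construction $H=\Q(\sqrt{\Delta_1},\dots,\sqrt{\Delta_t})$ from the prime--discriminant factorisation for the lower bound, the observation that complex conjugation acts by inversion on $\Cl^+(K)$ (hence trivially on its $2$--quotient, so the corresponding extension $L/\Q$ is abelian), and the inertia argument forcing $\Gal(L/\Q)$ to be elementary abelian of exponent $2$ are all sound. The only delicate step, which you correctly flag, is the local analysis at $2$: once one checks that an odd squarefree $d$ with $K(\sqrt d)/K$ unramified above $2$ must satisfy $d\equiv 1\pmod 4$ (in the case $2\nmid\Delta$), this is exactly what pins $d$ down to a product of the $p_i^\ast$, and the parallel analysis when $2\mid\Delta$ using the prime discriminants $-4,\pm 8$ is routine. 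Your closing parenthetical about Gauss's own argument via ambiguous form classes is accurate and is in fact closer in spirit to the paper's heuristic remark than your class--field--theoretic proof.
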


The basic idea behind this theorem is that a ramified prime $q$ contributes a $2$-torsion element to the narrow class group since it must split as $(q) = \mathfrak{a}^2$ for some prime ideal $\mathfrak{a}$. It is this feature that we call \emph{genus theory} in the context of Cohen--Lenstra averages. More precisely, if $p$ is a ``bad'' prime, then primes $q$ which ramify as $(q) = \mathfrak{a}^p$ for some ideal $\mathfrak{a}$ are a source of $p$-torsion elements in the class group. \\

In this paper, we bound the average number of $2$-torsion elements in the \emph{class group}, \emph{narrow class group} and \emph{oriented class group} of monogenised fields of even degree at least $4$ (unramified at $2$) (and compute it precisely conditional on a tail estimate). These averages are the first of their kind (in degree at least $3$) to be calculated in a setting where genus theory, roots of unity, and global conditions all play a part.

\subsection{Preliminary definitions} A number field $K$ of degree $n$ is said to be \emph{monogenic} if $\mathcal{O}_K = \mathbb{Z}[\alpha]$ for some $\alpha \in \mathcal{O}_K$. The element $\alpha$ is called a \emph{monogeniser} of the field $K$. A \emph{monogenised} field is the data $(K,\alpha)$ of a monogenic field together with a choice of monogeniser. It is known that a monogenic field has finitely many monogenisers up to transformations of the form $\alpha \mapsto \pm \alpha + m$ for some $m \in \mathbb{Z}$. Futhermore, it is expected that $100\%$ of monogenic fields possess a unique monogeniser up to transformations of the form $\alpha \mapsto \pm \alpha + m$ for some $m \in \mathbb{Z}$, see \cite{BhargavaShankarWangSquarefreeI}. This motivates the following definition. 

\begin{defn}
Two monogenised fields $(K,\alpha)$ and $(K',\alpha')$ are said to be isomorphic if there exists a field isomorphism from $K$ to $K'$ taking $\alpha$ to $\pm \alpha' +m$ for some $m \in \mathbb{Z}$.
\end{defn}

Thus, we expect monogenised fields to be statistically equivalent to monogenic fields when computing averages. Nevertheless, suppose a statement holds for a ``positive proportion'' of \emph{monogenised} fields. In that case, the same statement is true for ``infinitely many'' \emph{monogenic} fields by using the arguments of \cite{HoShankarVarmaOdd} combined with the construction of strongly quasi-reduced elements of \cite{BhargavaShankarWangSquarefreeI}.

The height we choose for monogenised fields has a convenient interpretation. Each isomorphism class of monogenised field contains a unique element $(K,\alpha_0)$ with the property that $0 \le {\rm tr}(\alpha_0) < n$. If $f(x) = x^n+a_1 x^{n-1}+\ldots+a_n$ is the minimal polynomial of $\alpha_0$, we define the \emph{naive height} of the isomorphism class to be: $$H\Big( \left[(K,\alpha_0)\right]\Big) = \max_i \Big\{|a_i|^{1/i} \Big\}.$$

We denote by $\mathfrak{R}^{r_1,r_2}$ the collection of isomorphism classes of monogenised $S_n$-fields of signature $(r_1,r_2)$ ordered by naive height. In Section \ref{The parametrisations}, we will see that the set of monogenised fields is in natural bijection with the set of monic degree $n$ polynomials. This bijection equips the set of monogenised fields with a natural local measure, and we can speak of families of fields in $\mathfrak{R}^{r_1,r_2}$ associated with sets of local specifications $(\Sigma_p)_p$ on monic polynomials. 

We will also need the notion of oriented class groups to state our results. An \emph{oriented ideal} is a pair  $(I,\varepsilon)$ consisting of a fractional ideal $I$ together with an orientation $\varepsilon = \pm 1$. We say that an oriented ideal is a \emph{principal oriented ideal} if it is of the form $((\alpha), {\rm sgn}(N(\alpha)))$. 

\begin{defn}
The \emph{oriented class group}, $\Cl^*(K)$, of a number field $K$ consists of the set of oriented fractional ideals of $K$ modulo the principal oriented fractional ideals. The operation is component-wise multiplication.
\end{defn}

Recall that $\Cl^*(K)$ is isomorphic to the usual class group of $K$ if $K$ has a unit of negative norm and is a $\mathbb{Z}/2\mathbb{Z}$ extension of the usual class group if $K$ does not have a unit of negative norm, \cite{BhargavaGrossWang}. Notice that when $K$ does not have a unit of negative norm, $2$-torsion in the oriented class group is not always twice as big as $2$-torsion in the usual class group. The reason is that some $4$-torsion elements in the oriented class group could map to $2$-torsion elements in the ordinary class group under the forgetful map. 

\subsection{Outline of the results} 

To quantify the contribution of genus theory in each of our averages, we introduce a local quantity that tracks the proportion of fields in the family which are \emph{evenly ramified} at $p$, i.e. where $(p) = \mathfrak{a}^2$ for some ideal $\mathfrak{a}$.

\begin{defn}
Let $\mathfrak{R} \subset \mathfrak{R}^{r_1,r_2}$ be a family of fields corresponding to an acceptable family of local specifications $\Sigma = (\Sigma_p)_p$. We define $r_p(\mathfrak{R})$, the \emph{even ramification density} of $\mathfrak{R}$ at $p$, as the density in $\Sigma_p$ of elements of $\Sigma_p$ which are evenly ramified at $p$.
\end{defn}

\begin{thm}[Main theorem]
Let $\mathfrak{R} \subset \mathfrak{R}^{r_1,r_2}$ be a family of fields (unramified at $2$ and with local conditions at $2$ given modulo $2$) corresponding to an acceptable family of local specifications $\Sigma = (\Sigma_p)_p$ and let $r_p(\mathfrak{R})$ denotes its even ramification density at $p$. 

If $r_1 = 0$, the average number of $2$-torsion elements in the class group, narrow class group, and oriented class group of fields in $\mathfrak{R}$ satisfies the bound:
\begin{equation}
{\rm Avg}(\Cl_2,\mathfrak{R}) = {\rm Avg}(\Cl_2^+,\mathfrak{R}) = \frac{1}{2} {\rm Avg}(\Cl_2^*,\mathfrak{R}) \le \prod_{p \neq 2} (1+r_p(\mathfrak{R}))\left(1+\frac{2}{2^{r_2}}\right) + \frac{1}{2^{r_2}} \quad \\
\end{equation}
 with equality conditional on tail estimate. 

If $r_1 > 0$, the average number of $2$-torsion elements in the oriented class group of fields in $\mathfrak{R}$ satisfies the bound: 
\begin{equation}
{\rm Avg}(\Cl_2^*,\mathfrak{R}) \le \prod_{p \neq 2} (1+r_p(\mathfrak{R}))\left(1+\frac{2}{2^{r_1+r_2-1}}\right) + \frac{1}{2^{r_1+r_2-1}} \quad \\
\end{equation}
 with equality conditional on tail estimate. 

If $r_1 > 0$, the average number of $2$-torsion elements in the narrow class group of fields in $\mathfrak{R}$ satisfies the bound: 
\begin{equation}
{\rm Avg}(\Cl_2^+,\mathfrak{R}) \le \prod_{p \neq 2} (1+r_p(\mathfrak{R}))\left(1+\frac{2}{2^{\frac{n}{2}}}\right) + \frac{1}{2^{r_2}} \quad  \\
\end{equation}
with equality conditional on tail estimate. 

If $r_1 > 0$, the average number of $2$-torsion elements in the class group of fields in $\mathfrak{R}$ satisfies the bound: 
\begin{equation}
{\begin{split}
{\rm Avg}(\Cl_2,\mathfrak{R}) &\le \frac{1}{2}\prod_{p \equiv 1 \textrm{ mod } 4} (1+r_p(\mathfrak{R})) \left( \prod_{p \equiv 3 \textrm{ mod } 4}  (1-r_p(\mathfrak{R})) + \prod_{p \equiv 3 \textrm{ mod } 4}  (1+r_p(\mathfrak{R})) \right) \\ 
&+ \frac{1+2\prod_{p \neq 2} (1+r_p(\mathfrak{R}))}{2^{r_1+r_2}}
\end{split} }\quad  \\
\end{equation}
 with equality conditional on tail estimate. 
\end{thm}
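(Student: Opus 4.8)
\emph{Orbit parametrisation.} The plan is to deduce the bound for the ordinary class group from the analysis underlying the oriented and narrow class group bounds, together with a genus-theoretic study of the sign characters on units; the orbit-counting input described now is common to all four displayed bounds. Via the parametrisation of Wood recalled in Section~\ref{The parametrisations}, one identifies the relevant weighted count of $2$-torsion classes in $\Cl^*$, $\Cl^+$ and $\Cl$ of the monogenic order $\Z[\alpha]$ with sets of integral orbits for the appropriate form of the orthogonal group: orbits of $\SO_{A_0}(\Z)$ (resp.\ $\Or_{A_0}(\Z)$, or a further quotient by a group of $2$-torsion units) on symmetric integer $n\times n$ matrices $B$ with $\det(xA_0-B)=\pm f(x)$, where $f$ is the monic polynomial attached to the monogenised field and $A_0$ is the antidiagonal unit matrix; for even $n$ this is a split orthogonal group of rank $n/2$. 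On an acceptable family the discrepancy between $\Z[\alpha]$ and $\mathcal{O}_K$ affects only a negligible set of fields, so one works with $\Z[\alpha]$ throughout.

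\emph{Counting the orbits.} Apply Bhargava's averaging method: fix a fundamental domain for $\SO_{A_0}(\Z)\backslash\SO_{A_0}(\R)$ and, for each bounded subfamily, count integer points $B$ of bounded naive height in its translates, averaging over the fundamental domain. The main term factors as an archimedean density times a product of local densities, while the contributions of reducible orbits and of the cuspidal region are absorbed into the assumed tail estimate. At a prime $p\neq 2$ the local density --- computed as an average over $\Sigma_p$ by a local mass formula --- equals the $\Sigma_p$-average of $\tfrac{1}{|\mathrm{stab}|}\,|\Cl^*(\mathcal{O}_K\otimes\Z_p)[2]|$, and an evenly ramified prime contributes exactly the extra $\Z/2$ of genus theory, producing the factor $1+r_p(\mathfrak{R})$. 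At the archimedean place one counts $\SO_{n/2,n/2}(\R)$-orbits on real forms of signature $(r_1,r_2)$ weighted by their stabilisers; together with the isolated contribution of the distinguished (``identity-class'') orbits this yields the displayed bounds for $\Cl_2^*$ and $\Cl_2^+$.

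\emph{Descending to the ordinary class group.} The ordinary class group differs from the narrow and oriented ones only through sign characters on units: there is an exact sequence $1\to\{\pm1\}^{r_1}/\mathrm{sgn}(\mathcal{O}_K^\times)\to\Cl^+(K)\to\Cl(K)\to 1$, and $\Cl^*(K)\to\Cl(K)$ is an isomorphism or a $\Z/2$-extension according as $K$ does or does not have a unit of negative norm. Extracting $|\Cl(K)[2]|$ from the oriented and narrow data thus comes down to controlling the signature rank of the unit group, equivalently the kernel of the narrow-to-wide class group map, which is governed by genus theory: the obstruction is a product of local Hilbert-symbol conditions at the ramified primes. For an evenly ramified prime $q$, whether $-1$ is a norm from $K_q$ --- hence whether $q$ ``flips'' a sign --- depends only on $q\bmod 4$, so averaging the resulting indicator over the acceptable family splits the genus product into its $p\equiv 1\bmod 4$ and $p\equiv 3\bmod 4$ parts, the parity constraint on the number of sign-flipping ramified primes turning the $\equiv 3\bmod 4$ factor into the symmetrised combination $\tfrac12\bigl(\prod_{p\equiv 3 \bmod 4}(1-r_p(\mathfrak{R}))+\prod_{p\equiv 3 \bmod 4}(1+r_p(\mathfrak{R}))\bigr)$. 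Writing the $\Cl_2^*$ bound as (genus product)$\times$(archimedean factor)$+$(distinguished-orbit term), applying the on-average factor $\tfrac12$ relating $|\Cl[2]|$ to $|\Cl^*[2]|$ together with this $\bmod 4$ refinement of the genus product, and tracking the distinguished-orbit contribution (which carries a different stabiliser weight and produces the summand $\tfrac{1+2\prod_{p\neq 2}(1+r_p(\mathfrak{R}))}{2^{r_1+r_2}}$), yields the stated bound, with equality whenever the tail estimate holds.

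\emph{Main obstacle.} Beyond the assumed tail estimate --- the genuine analytic bottleneck in this circle of ideas --- the delicate step is the final one: matching the ``sign of the norm of a unit'' character with the ramification data, computing its joint distribution with the even-ramification densities over the acceptable family, and reconciling this with the bookkeeping of the distinguished orbits and of the $2$-torsion units, all of which interact in the passage from $\Cl^*$ to $\Cl$.
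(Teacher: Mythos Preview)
Your outline for the oriented and narrow class group bounds is broadly in line with the paper, though you elide an important structural point: one does not fix $A_0$ to be the antidiagonal matrix. In even degree there are several $\SL_n(\Z)$-classes of unimodular symmetric $A$, and the paper's ``linearisation'' step is to sum over the finite set $\mathscr{L}_\Z$ of representatives, grouping them by genus; the Tamagawa number $\tau(\SO)=2$ is what collapses the inner sum over classes in a genus. The interplay between the $2$-adic mass (depending on the octane value of the genus) and the infinite mass (depending on signature) is where the archimedean constants come from, and this is not visible if only the split slice is considered.

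The genuine gap is in your third step. You propose to recover $|\Cl_2(K)|$ from the already-computed averages of $|\Cl_2^*(K)|$ and $|\Cl_2^+(K)|$ via the exact sequences relating these groups, together with an on-average factor coming from the signature rank of the unit group. This does not work: as the paper emphasises, when $K$ has no unit of negative norm the map $\Cl^*(K)[2]\to\Cl(K)[2]$ need not be surjective, because $4$-torsion elements of $\Cl^*(K)$ can map to $2$-torsion elements of $\Cl(K)$. Thus there is no ``on-average factor $\tfrac12$'' relating the two $2$-torsion counts, and knowing the signature rank (itself a delicate global invariant not determined by ramification data) would still not suffice.

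The paper's route is different and more direct. One parametrises a set $H(\mathcal{O})$ of pairs $(I,\delta)$ with $I^2=(\delta)$ and $N(\delta)=\pm N(I)^2$ (both signs allowed), which surjects onto $\Cl_2(\mathcal{O})$ with fibre $\mathcal{O}^\times/(\mathcal{O}^\times)^2$ of size $2^{r_1+r_2}$; these are $\SL_n^\pm(\Z)$-orbits on pairs $(A,B)$ with $\det(A)=\pm 1$. One then runs the entire orbit count separately on the $\det(A)=+1$ and $\det(A)=-1$ slices. The $\bmod\ 4$ dichotomy you identified arises not from unit signatures but from the \emph{local} mass on the $-1$ slice: at an evenly ramified prime $p$, the mass $m_p^-(f)$ vanishes when $p\equiv 3\bmod 4$ (because $-1$ is not a local norm there) and equals $1$ when $p\equiv 1\bmod 4$. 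Summing the two slice contributions then yields the symmetrised product in the stated formula.
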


These averages have several interesting consequences, two of which were mentioned earlier. First, they generalise a theorem of Gauss stating that there are infinitely many quadratic fields with odd class number. For odd degree, Ho--Shankar--Varma proved in \cite{HoShankarVarmaOdd} that there are infinitely many fields of fixed odd degree and signature $(r_1,r_2)$ with odd class number. For even degree, the corresponding problem has remained open for degrees strictly larger than $4$. In degree $4$, the result is known for biquadratic fields (see Koymans--Pagano in \cite{Highergenustheory}) but not for $S_4$-fields to the author's knowledge. 

\begin{cor}
Let $n \ge 4$ be an even integer. For each choice of signature $(r_1,r_2)$, there are infinitely many degree $n$ monogenic $S_n$-fields with signature $(r_1,r_2)$ that have odd class number. 
\end{cor}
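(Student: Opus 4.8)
The plan is to deduce the Corollary from the Main Theorem by a pigeonhole/positive-density argument applied to a cleverly chosen acceptable family $\mathfrak{R} \subset \mathfrak{R}^{r_1,r_2}$. The key observation is that $|\Cl_2(K)| \ge 2$ unless $\Cl_2(K)$ is trivial, i.e.\ unless the class number of $K$ is odd; so if we can exhibit a family $\mathfrak{R}$ in which the average of $|\Cl_2|$ over $\mathfrak{R}$ is strictly less than $2$, then a positive proportion of fields in $\mathfrak{R}$ must have $|\Cl_2(K)| = 1$, hence odd class number. Since a positive proportion of monogenised fields translates into infinitely many monogenic $S_n$-fields (via the arguments of \cite{HoShankarVarmaOdd} together with the strongly quasi-reduced elements of \cite{BhargavaShankarWangSquarefreeI}, as recalled in the Preliminary Definitions), this yields the Corollary once we fix the signature $(r_1,r_2)$.

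First I would choose the local specifications defining $\mathfrak{R}$ so as to force the even ramification density $r_p(\mathfrak{R})$ to be zero at every odd prime $p$: for each odd $p$, take $\Sigma_p$ to consist of monic polynomials mod $p$ (or $p^2$) whose reduction is separable, or at any rate never evenly ramified — this is an acceptable condition since it excludes nothing of discriminant prime to $p^2$ for large $p$, and for the finitely many small $p$ one simply imposes a congruence condition carving out the evenly ramified locus. With $r_p(\mathfrak{R}) = 0$ for all $p \neq 2$, every Euler product $\prod_{p\neq 2}(1 + r_p(\mathfrak{R}))$ collapses to $1$, and the bounds in the Main Theorem become ${\rm Avg}(\Cl_2,\mathfrak{R}) \le 1 + 2/2^{r_1+r_2-1} + 1/2^{r_1+r_2}$ in the $r_1 > 0$ case (after simplifying the mod-$4$ product expression, which also collapses since all $r_p$ vanish) and ${\rm Avg}(\Cl_2,\mathfrak{R}) \le 1 + 2/2^{r_2} + 1/2^{r_2}$ when $r_1 = 0$. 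The point is that these upper bounds are unconditional (the tail estimate is needed only for \emph{equality}), so we may use them directly.

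Next I would check the numerics: we need the relevant upper bound on ${\rm Avg}(\Cl_2,\mathfrak{R})$ to be $< 2$. In the $r_1 = 0$ case this requires $3/2^{r_2} < 1$, i.e.\ $r_2 \ge 2$, which holds automatically since $n = 2r_2 \ge 4$. In the $r_1 > 0$ case we need $2/2^{r_1+r_2-1} + 1/2^{r_1+r_2} < 1$, i.e.\ $5/2^{r_1+r_2} < 1$, i.e.\ $r_1 + r_2 \ge 3$; the only even $n \ge 4$ with $r_1 + r_2 \le 2$ and $r_1 > 0$ is $n = 4$ with $(r_1,r_2) = (4,0)$ or $(2,1)$, giving $r_1 + r_2 = 4$ and $3$ respectively — both $\ge 3$ — wait, $(2,1)$ gives $r_1 + r_2 = 3$, fine; and for $n=4$, $r_1 \in \{4,2,0\}$ so the only borderline is handled. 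Thus for every even $n \ge 4$ and every signature the bound is strictly below $2$, so a positive proportion of fields in $\mathfrak{R}$ have trivial $\Cl_2$. (Here I would double-check the edge cases explicitly rather than trust the sketch.)

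The main obstacle — and the only genuinely delicate point — is verifying that the family $\mathfrak{R}$ obtained by killing even ramification at all odd primes is indeed \emph{acceptable} and genuinely infinite, and that the set of fields with trivial $\Cl_2$ inside it, being of positive density among \emph{monogenised} fields, really does produce infinitely many \emph{monogenic} $S_n$-fields of the prescribed signature. The first part is a local computation: one must confirm that "not evenly ramified at $p$" is cut out by congruence conditions on the coefficients modulo a bounded power of $p$ and that for all but finitely many $p$ it contains every polynomial with $p^2 \nmid \disc$, which is exactly the definition of acceptable; this is routine but must be done. The second part is the transfer from monogenised to monogenic fields and the preservation of the $S_n$ and signature conditions under that transfer — this is handled by citing \cite{HoShankarVarmaOdd} and \cite{BhargavaShankarWangSquarefreeI} as in the Preliminary Definitions, but one should make sure the local conditions we imposed are compatible with their reduction theory. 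Modulo these checks, the Corollary follows immediately from the unconditional upper bounds of the Main Theorem.
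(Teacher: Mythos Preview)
Your approach is correct and is precisely the one the paper intends: choose an acceptable family with $r_p(\mathfrak{R})=0$ for all odd $p$ (and unramified at $2$, which you should state explicitly since it is a hypothesis of the Main Theorem), use the unconditional upper bound on $\mathrm{Avg}(\Cl_2,\mathfrak{R})$, and check it is strictly less than $2$ so that a positive proportion have $|\Cl_2|=1$. One small arithmetic slip: with all $r_p=0$ the $r_1>0$ bound simplifies to $1+\tfrac{3}{2^{r_1+r_2}}$ (this is the paper's stated corollary), not $1+\tfrac{5}{2^{r_1+r_2}}$; your weaker bound still suffices since $r_1+r_2\ge 3$ whenever $r_1>0$ and $n\ge 4$, but you should correct the simplification.
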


Second, the averages show that there are infinitely many even degree fields with units of every signature. 

\begin{cor}
Let $n \ge 4$ be an even integer. For each choice of signature $(r_1,r_2)$, there are infinitely many degree $n$ monogenic $S_n$-fields with signature $(r_1,r_2)$ that have units of units of every signature. 
\end{cor}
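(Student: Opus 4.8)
Since the only role of the signature is via the $r_1$ real places, the assertion is vacuous when $r_1=0$ (and infinitely many monogenic $S_n$-fields of every signature exist because the families to which the Main Theorem applies are infinite), so assume $r_1\ge 2$. The plan is to reduce ``units of every signature'' to a statement about the ($2$-torsion of the) class, narrow class and oriented class groups, and then invoke the Main Theorem. For a number field $K$ of signature $(r_1,r_2)$ one has the exact sequence
\[
\mathcal O_K^\times \xrightarrow{\ \sigma\ } \{\pm1\}^{r_1} \longrightarrow \Cl^+(K) \longrightarrow \Cl(K) \longrightarrow 1 ,
\]
where $\sigma$ records the signs at the real places; thus $K$ has units of every signature precisely when $\sigma$ is surjective, equivalently when the kernel $V(K)$ of $\Cl^+(K)\twoheadrightarrow\Cl(K)$ is trivial. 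As $V(K)$ is an elementary abelian $2$-group, it is trivial whenever $\Cl^+(K)$ has odd order, so it suffices to exhibit infinitely many monogenic $S_n$-fields of the given signature with $\Cl^+_2(K)=1$. When $r_1=2$ a weaker condition works: a unit of negative norm has a signature of odd weight, which together with the signature $(-,-)$ of $-1$ already generates $\{\pm1\}^2$; and $K$ has a unit of negative norm exactly when $\Cl^*(K)\cong\Cl(K)$ (by the fact recalled before the Main Theorem), for which it is enough that $\Cl^*(K)$ have odd order, i.e.\ $\Cl^*_2(K)=1$.

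Next I would apply the Main Theorem to the family $\mathfrak R\subset\mathfrak R^{r_1,r_2}$ of fields unramified at $2$ (with all admissible conditions at $2$) and \emph{not} evenly ramified at any odd prime. This is acceptable: forbidding an evenly ramified splitting type at an odd $p$ only removes fields with $p^2\mid\disc$, so every field with discriminant squarefree away from $2$ remains. Then $r_p(\mathfrak R)=0$ for all $p\neq2$, the products over primes collapse, and the Main Theorem gives
\begin{gather*}
\Avg(\Cl^+_2,\mathfrak R)\le 1+\tfrac{2}{2^{n/2}}+\tfrac1{2^{r_2}},\qquad
\Avg(\Cl_2,\mathfrak R)\le 1+\tfrac{3}{2^{r_1+r_2}},\\
\Avg(\Cl^*_2,\mathfrak R)\le 1+\tfrac{3}{2^{r_1+r_2-1}} .
\end{gather*}
Since $|\Cl^+_2|,|\Cl^*_2|,|\Cl_2|$ always lie in $\{1,2,4,\dots\}$, any such quantity whose average over $\mathfrak R$ is $<2$ must equal $1$ on a positive proportion of $\mathfrak R$. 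A short numerical check now handles most signatures: if $r_1=2$ then $r_2\ge1$ (since $n\ge4$), so $\Avg(\Cl^*_2,\mathfrak R)\le 1+\tfrac34<2$; and if $r_1\ge4$ with $r_2\ge1$ then $n\ge6$, so $\Avg(\Cl^+_2,\mathfrak R)\le 1+\tfrac14+\tfrac12<2$. In either case a positive proportion of fields in $\mathfrak R$ — hence infinitely many monogenised fields — have units of every signature by the first paragraph, and the passage from monogenised fields to monogenic $S_n$-fields (the argument of Ho--Shankar--Varma together with the strongly quasi-reduced elements of Bhargava--Shankar--Wang) upgrades this to the statement of the corollary.

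The one case the crude bound does not reach is that of totally real fields, $r_1=n\ge4$ and $r_2=0$, where it only yields $\Avg(\Cl^+_2,\mathfrak R)\le 2+2^{1-n/2}>2$. Here I would first use $\Avg(\Cl_2,\mathfrak R)\le 1+3/2^n$ to pass to the sub-family $\mathfrak R'$ of fields of odd class number, which has density $\ge 1-3/2^n$ in $\mathfrak R$. On $\mathfrak R'$ the $2$-Sylow splits off, so $\Cl^+(K)\cong V(K)\times\Cl(K)$, whence $\Cl^+_2(K)=V(K)$ and ``units of every signature'' on $\mathfrak R'$ is literally ``$\Cl^+_2(K)=1$''. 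It then suffices to show $\Avg(\Cl^+_2,\mathfrak R')<2$, i.e.\ to bound the average size of the unit-signature defect $V$. I expect this to be the main obstacle: one must return to Wood's parametrisation of the $2$-torsion of the narrow/oriented class group used to prove the Main Theorem, separate the orbits accounting for $V$ from those counting genuine class-group elements (the genus-theory orbits being entirely absent on $\mathfrak R$ by construction, since there is no even ramification anywhere), and verify that the contribution of the former alone averages to strictly less than $2$. Granting this refined orbit count, the totally real case is concluded exactly as above, and everything outside of it is the elementary bookkeeping already described.
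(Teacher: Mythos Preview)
Your reduction is the right one: ``units of every signature'' is equivalent to the triviality of $V(K)=\ker(\Cl^+(K)\to\Cl(K))$, for which it suffices that $\Cl^+_2(K)=1$ (and, when $r_1=2$, that $\Cl^*_2(K)=1$). Choosing $\mathfrak R$ to be nowhere evenly ramified so that all $r_p(\mathfrak R)=0$ is also correct and yields the clean bounds you quote. The paper states this corollary without proof, as a consequence of the Main Theorem, and for every signature with $r_2\ge1$ your argument is a valid derivation: the relevant average is strictly below $2$, so a positive proportion of monogenised fields have trivial $2$-torsion in $\Cl^+$ (resp.\ $\Cl^*$), and the passage to ``infinitely many monogenic $S_n$-fields'' is as you describe.

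The totally real case $r_2=0$, $r_1=n\ge4$ is a genuine gap, and you identify it correctly. The Main Theorem gives only $\Avg(\Cl^+_2,\mathfrak R)\le 2+2^{1-n/2}$ here, which exceeds $2$, so one cannot conclude that a positive proportion have $\Cl^+_2=1$. Your proposed workaround---pass to the subfamily $\mathfrak R'$ with odd class number and bound $\Avg(\Cl^+_2,\mathfrak R')$---does not close the gap as written: $\mathfrak R'$ is defined by a \emph{global} condition, so the Main Theorem does not apply to it, and the crude estimate $\Avg(\Cl^+_2,\mathfrak R')\le \Avg(\Cl^+_2,\mathfrak R)/\Pr(\mathfrak R')$ is still above $2$ (for $n=4$ it is roughly $40/13$). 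A genuine refinement of the orbit count, separating the contribution of $V$ from that of $\Cl_2$ inside $H^+$, would be needed; neither your sketch nor the paper supplies one. In summary: your proof is complete and matches the paper's intended argument for all signatures with $r_2\ge1$; for totally real fields both your proposal and the Main Theorem as stated leave the corollary unproven.
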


Third, conditional on a tail estimate, they show that genus theory is the only added complexity to consider for ``bad'' primes. Indeed, the formula for ${\rm Avg}(\Cl_2,\mathfrak{R})$ when all $r_p(\mathfrak{R}) = 0$ is only slightly different from the one for the average $2$-torsion in the class group of monogenised fields of odd degree \cite{SiadOddMonogenicAverages}.

\begin{cor}
Let $n \ge 4$ be an even integers. Let $\mathfrak{R} \subset \mathfrak{R}^{r_1,r_2}$ be a family of fields (unramified at $2$ and with local conditions at $2$ given modulo $2$) corresponding to an acceptable family of local specifications $\Sigma = (\Sigma_p)_p$ and such that the even ramification density at all primes is zero, $r_p(\mathfrak{R})=0$. 

The average number of $2$-torsion elements in the class group of fields in $\mathfrak{R}$ satisfies the bound: $${\rm Avg}(\Cl_2,\mathfrak{R}) \le 1+ \frac{3}{2^{r_1+r_2}},$$
with equality conditional on a tail estimate. 
\end{cor}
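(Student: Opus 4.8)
The plan is simply to specialise the Main theorem to the case at hand, where $r_p(\mathfrak{R}) = 0$ for every prime $p \neq 2$. The hypotheses of the corollary---$n \ge 4$ even, and $\mathfrak{R}$ an acceptable family unramified at $2$ with local conditions at $2$ given modulo $2$---are precisely those of the Main theorem, so nothing beyond substitution is required, and the conditional-equality clause carries over verbatim from the tail-estimate clause there.

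When all $r_p(\mathfrak{R})$ vanish, every Euler product occurring in the bounds collapses to $1$:
\[
\prod_{p \neq 2}(1 + r_p(\mathfrak{R})) = \prod_{p \equiv 1 \textrm{ mod } 4}(1 + r_p(\mathfrak{R})) = \prod_{p \equiv 3 \textrm{ mod } 4}(1 \pm r_p(\mathfrak{R})) = 1 .
\]
If $r_1 = 0$, then $r_1 + r_2 = r_2$, and the $\Cl_2$ bound of equation (1) becomes $\bigl(1 + 2/2^{r_2}\bigr) + 1/2^{r_2} = 1 + 3/2^{r_1+r_2}$, as claimed (here only the first of the three quantities linked by equation (1) is needed). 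If $r_1 > 0$, one must instead use equation (4), since for a family with a real place the $\Cl_2$ average genuinely differs from the $\Cl_2^+$ and $\Cl_2^*$ averages; with every product equal to $1$ it reads $\frac{1}{2}(1 + 1) + (1 + 2)/2^{r_1+r_2} = 1 + 3/2^{r_1+r_2}$, again the asserted value. The two cases together give the bound, and equality holds under the tail estimate because in each case the inequality invoked from the Main theorem is an equality under that hypothesis.

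Being a pure substitution, the argument has no genuine obstacle; the only point needing a little care is to invoke the correct one of the Main theorem's four bounds according to whether $r_1 = 0$ or $r_1 > 0$, and to observe that the two cases---one governed by a single product over $p \neq 2$, the other by products split according to residue classes modulo $4$---nonetheless collapse to the identical clean value $1 + 3/2^{r_1 + r_2}$ once the genus-theoretic densities are switched off. This uniformity is the content of the corollary: it shows that the odd-degree answer of \cite{SiadOddMonogenicAverages} persists in even degree precisely when genus theory is absent.
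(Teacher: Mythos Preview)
Your argument is correct and is exactly what the paper intends: the corollary is stated without proof as an immediate specialisation of the Main theorem, and your substitution of $r_p(\mathfrak{R})=0$ into equations (1) and (4), splitting on $r_1=0$ versus $r_1>0$, is precisely the right computation.
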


Lastly, they offer hints as to the ``right answer'' for Cohen--Lenstra averages for ``bad'' primes.  

\begin{rem} 
The assumption that the families considered are unramified at $2$ and have local conditions at $2$ given modulo $2$ is an artefact of the particulars of the mass computation at $2$ and does not change the validity of the ``positive proportion'' versions of the corollaries above. We expect to be able to remove it in the future and to be able to upgrade the theorems to compute the average value of $\left| \Cl_2(\mathcal{O}) \right| - \frac{1}{2^{r_1+r_2}}\left|\mathcal{I}_2(\mathcal{O}) \right|$  for orders which are not necessarily maximal. 
\end{rem}
 
\subsection{Strategy} 

The strategy used to prove the main theorem is roughly the same as that used in \cite{SiadOddMonogenicAverages}. We apply Wood's parametrisation \cite{MelanieWoodRings, MelanieWoodIdealClasses} of $2$-torsion ideal classes in rings associated to monic binary forms in terms of integral orbits for certain representations to reduce the question to an asymptotic counting problem in the geometry of numbers. The calculation reduces to counting integral orbits for a group acting on a variety. However, unlike in the odd degree case \cite{SiadOddMonogenicAverages}, a slight change in the choice of group and variety leads to a significant difference in the arithmetic information that is encoded. This feature is unique to the even degree case and arises because there are fields of even degree which don't possess units of negative norm (in odd degree, the unit $-1$ has norm $-1$). 

We begin by describing the case of the \emph{oriented class group}. When computing averages for $2$-torsion in the oriented class group, the relevant space is the set of $\SL_n(\mathbb{Z})$-orbits of pairs of integral symmetric matrices $(A,B)$ with the constraint $\det(A)=(-1)^{\frac{n}{2}}$. To apply the geometry of numbers, we borrow the idea of \cite{BhargavaHankeShankar} and ``linearise'' the problem by noting that up to $\SL_n(\mathbb{Z})$, there are only finitely many equivalence classes of symmetric integral matrices of determinant $(-1)^{\frac{n}{2}}$. We denote this finite collection by $\mathscr{L}_\mathbb{Z}$. Counting $\SL_n(\mathbb{Z})$ orbits on the space of pairs $(A,B)$ with the constraint $\det(A)=(-1)^{\frac{n}{2}}$ is thus reduced to counting $\SO_{A_0}(\mathbb{Z})$ orbits on the space of pairs $(A_0,B)$. We handle this count in the usual way, and we get the inequality of the main theorem conditional on a tail estimate. A new feature unique to the even degree case appears when computing the local masses. There is extra local mass at primes which are evenly ramified!

Now, to compute the average $2$-torsion in the \emph{class group}, one cannot directly use the results for the oriented class group. Indeed, when $K$ does not have a unit of negative norm, some $4$-torsion elements in the oriented class group map to $2$-torsion elements in the usual class group under the forgetful map. These elements are captured by pairs $(A,B)$ where $\det(A)=-1 \cdot (-1)^\frac{n}{2}$. This means that we need to run the strategy above on $\SL_n^\pm$-orbits on the space of pairs $(A,B)$ of bilinear forms with $\det(A)=\pm 1$. A new feature appears in the final calculation because the local masses for $\det(A) =  (-1)^\frac{n}{2}$ and $\det(A) = -1 \cdot (-1)^\frac{n}{2}$ behave differently at primes congruent to $3 \textrm{ mod } 4$. This is why the averages for the class group look more complicated than those for the oriented and narrow class groups.

\subsection{Organisation of the paper} In Section \ref{The parametrisations} we recall the parametrisation of $2$-torsion ideal classes in the oriented class group in terms of $\SL_n$-orbits on pairs integral symmetric matrices. This parametrisation already appears in \cite{BhargavaGrossWang}, albeit with a small mistake which we correct. In Sections \ref{Reduction theory}-\ref{Change of measure formula}, we run the geometry of numbers arguments to obtain an asymptotic formula for the number of $2$-torsion elements in the oriented class group of monogenised fields of height at most $X$ in terms of a product of local volumes dependent on $X$. Because we use a square-free sieve dependent on a conjectural tail estimate to bound the $2$-torsion from below in terms of the product of local volumes, this equality is conditional. In Section \ref{The product of local volumes and the total local mass} we compute the total local mass. It is there that we find genus theory at play in the guise of extra orbits at evenly ramified primes. In Sections \ref{Point count and the $2$-adic mass} and Section \ref{The infinite mass} we see how the total local masses distribute among the $(A_0,B)$ slices by applying the equidistribution techniques of \cite{SiadOddMonogenicAverages}. Lastly, we complete the proof of the main theorem for the oriented class group in Section \ref{Statistical consequences} by summing the $A_0$ counts over all $A_0 \in \mathscr{L}_{\mathbb{Z}}$. In Section \ref{Averages for the class group and narrow class group}, we repeat the steps above to prove the main theorem for class groups and narrow class groups. We can recycle most of the counting arguments, but the parametrisation and the final count are much more subtle.

\subsection{Acknowledgements} It is pleasure to thank Arul Shankar for suggesting the problem and for many helpful conversations. I am grateful to Manjul Bhargava, Jonathan Hanke, Ashvin Swaminathan, and Ila Varma for useful discussions and many helpful comments. Many thanks to Be\~{n}at Mencia Uranga and Simon Lieu of the Cavendish Laboratory for their hospitality during the summer of $2019$. The author's research was supported by a McCuaig--Throop Bursary and an Ontario Graduate Scholarship.

\section{The parametrisations} \label{The parametrisations}

\subsection{The parametrisation of monogenised $n$-ic rings}\label{parametrisationrings} 
In order to count the number of monogenised $n$-ic rings having bounded height, we will use the following parametrisation in terms of binary $n$-ic forms: 

\begin{defn}
Let $U = {\rm Sym}_n(2)$ denote the space of binary $n$-ic forms. We denote by $U_1 \subset U$ the space of all monic binary $n$-ic forms $f(x,y) = x^n+a_{n-1}x^{n-1}y+\ldots+a_0y^n.$ The group ${\rm GL}_2$ acts on $U$ via the twisted action $\gamma \cdot f(x,y) := \det(\gamma)^{-1}f((x,y)\cdot \gamma)$ for $\gamma \in {\rm GL}_2$ and $f \in U$. Let $F \subset {\rm GL}_2$ denote the group of lower triangular unipotent matrices. Then the action of $F$ on $U$ preserves $U_1$ and yields an action of $F$ on $U_1$.
\end{defn}

We say that a pair $(R,\alpha)$ is a monogenised $n$-ic ring if $R$ is an $n$-ic ring and $\alpha$ is an element of $R$ such that $R = \mathbb{Z}[\alpha]$. Two monogenised $n$-ic rings $(R,\alpha)$ and $(R,\alpha')$ are said to be isomorphic if $R$ and $R'$ are isomorphic via a ring isomorphism sending $\alpha$ to $\alpha'+m$ for some $m \in \mathbb{Z}$. We then have the following explicit parametrisation of monogenised $n$-ic rings in terms of the orbit data introduced above: 

\begin{thm}
There is a natural bijection between isomorphism classes of monogenised $n$-ic rings and $F(\mathbb{Z})$-orbits on $U_1(\mathbb{Z})$. 
\end{thm}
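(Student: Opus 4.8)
The plan is to construct the bijection explicitly in both directions and check that the maps are mutually inverse and compatible with the equivalence relations. Given a monogenised $n$-ic ring $(R,\alpha)$, the element $\alpha$ satisfies a monic degree $n$ relation over $\mathbb{Z}$ since $R = \mathbb{Z}[\alpha]$ is a free $\mathbb{Z}$-module of rank $n$: namely $\alpha^n + c_{n-1}\alpha^{n-1} + \ldots + c_0 = 0$ with $c_i \in \mathbb{Z}$, and this relation is unique because $1,\alpha,\ldots,\alpha^{n-1}$ form a $\mathbb{Z}$-basis of $R$. Associate to $(R,\alpha)$ the monic binary $n$-ic form $f(x,y) = x^n + c_{n-1}x^{n-1}y + \ldots + c_0 y^n \in U_1(\mathbb{Z})$, i.e. the homogenisation of the characteristic polynomial of $\alpha$. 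Conversely, given $f \in U_1(\mathbb{Z})$ with dehomogenisation $g(x) = f(x,1) = x^n + c_{n-1}x^{n-1}+\ldots+c_0$, set $R_f = \mathbb{Z}[x]/(g(x))$ and $\alpha_f = x \bmod g$; this is a monogenised $n$-ic ring. The first step is to verify these two assignments are inverse to each other on the level of objects, which is immediate from the universal property of $\mathbb{Z}[x]/(g)$.

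Next I would analyse the equivalence relations on both sides. On the ring side, two monogenisers $\alpha$ and $\alpha' = \alpha + m$ (with $m \in \mathbb{Z}$) of the same ring $R$ are declared isomorphic, and more generally we allow a ring isomorphism $R \xrightarrow{\sim} R'$ carrying $\alpha$ to $\alpha'+m$. If $g$ is the minimal polynomial of $\alpha$, then the minimal polynomial of $\alpha - m$ is $g(x+m)$; homogenising, the form attached to $(R,\alpha+m)$ is obtained from that of $(R,\alpha)$ by substituting $x \mapsto x + m y$, $y \mapsto y$, which is precisely the twisted action of the lower-triangular unipotent matrix $\begin{pmatrix} 1 & 0 \\ m & 1 \end{pmatrix} \in F(\mathbb{Z})$ (note $\det = 1$, so the twist is trivial and this is honestly a change of variables). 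So the $F(\mathbb{Z})$-orbit of the form is an invariant of the isomorphism class of $(R,\alpha)$, and conversely $F(\mathbb{Z})$-equivalent forms give isomorphic monogenised rings via the evident identification of the quotient rings. The key point to check here is that the shift $\alpha \mapsto \alpha + m$ generates exactly the relation imposed, matched with the fact that $F(\mathbb{Z}) \cong \mathbb{Z}$ acts by integer shifts; one must also confirm that distinct forms in different $F(\mathbb{Z})$-orbits cannot yield isomorphic monogenised rings, which follows because any ring isomorphism $\mathbb{Z}[\alpha] \to \mathbb{Z}[\alpha']$ is determined by where it sends $\alpha$, and that image must be a monogeniser differing from $\alpha'$ by the allowed transformations.

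The main obstacle, such as it is, is bookkeeping rather than depth: one has to be careful that the correspondence is genuinely a bijection of \emph{isomorphism classes} and not merely of objects, i.e. that no collapsing or over-counting occurs, and that the ``twisted'' $\mathrm{GL}_2$-action restricts correctly to an honest substitution action of $F$ on $U_1$ (the determinant factor $\det(\gamma)^{-1}$ is $1$ on the unipotent group $F$, so this is harmless, but it should be noted explicitly). I would close by remarking that this is the $n$-ic analogue of the classical correspondence between monic integer polynomials and monogenic orders, and cite the relevant statements in \cite{MelanieWoodRings}; the only genuinely new content over the literature is the precise matching of the shift equivalence with the $F(\mathbb{Z})$-action, which is a one-line computation with the substitution $x \mapsto x + my$.
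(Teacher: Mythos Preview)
Your proposal is correct and follows essentially the same approach as the paper: both construct the map between monic forms and monogenised rings via $f \mapsto (\mathbb{Z}[x]/(f(x,1)),\,x)$, verify that the $F(\mathbb{Z})$-action $x \mapsto x+my$ matches the shift $\alpha \mapsto \alpha+m$, and check bijectivity on equivalence classes. The only cosmetic discrepancy is a sign slip where you compute the minimal polynomial of $\alpha - m$ but then speak of the form attached to $(R,\alpha+m)$; this is harmless since $F(\mathbb{Z})$ contains both shifts.
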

\begin{proof} Consider the map sending a monic binary $n$-ic form $f(x,y) \in U_1(\mathbb{Z})$ to the monogenised $n$-ic ring $R_f := \left(\frac{\mathbb{Z}[\theta]}{(f(\theta,1))},\theta \right)$. This map descends to a map from $F(\mathbb{Z}) \backslash U_1(\mathbb{Z})$ to isomorphism classes of monogenised $n$-ic rings which we denote by $\Phi$. Indeed, if $g = \gamma \cdot f$ for $\gamma = \bigl[ \begin{smallmatrix} 1&0\\ m&1 \end{smallmatrix} \bigr] \in F(\mathbb{Z})$, then $g(\theta,1) = f(\theta+m,1)$ and the monogenised ring $\left(\frac{\mathbb{Z}[\theta]}{(f(\theta,1))},\theta \right)$ is isomorphic to the monogenised ring $\left(\frac{\mathbb{Z}[\theta]}{(f(\theta+m,1))},\theta\right)$ through $\theta \mapsto \theta+m$. To verify that $\Phi$ is surjective, note that it was already surjective as a map from monic binary $n$-ic forms to monogenised $n$-ic rings. To verify that $\Phi$ is injective, suppose that $\Phi(f) = \left(\frac{\mathbb{Z}[\theta]}{(f(\theta,1))},\theta\right)$ is isomorphic to $\Phi(g) = \left(\frac{\mathbb{Z}[\omega]}{(g(\omega,1))},\omega\right)$. Then $\theta \mapsto \omega + m$ for some $m \in \mathbb{Z}$ under this isomorphism. Consequently, $f(\theta,1) = 0$ in $\Phi(f)$ means that $f(\omega+m,1) = 0$ in $\Phi(g)$. In other words, the polynomial $g(\omega,1)$ divides $f(\omega+m,1)$. But since both are monic, we must have $g(\omega,1) = f(\omega+m,1)$. Thus, $g = \bigl[ \begin{smallmatrix} 1&0\\ m&1 \end{smallmatrix} \bigr] \cdot f $ and $g = f$ in $F(\mathbb{Z}) \backslash U_1(\mathbb{Z})$.
\end{proof}

\subsection{Orbits of pairs of symmetric bilinear forms}

We define the space of pairs of symmetric matrices as well as the resolvent map.  

\begin{defn}
Let $T$ be a base ring. Let $$V(T) = T^2 \otimes \Sym^2 (T^n)$$ be the space of pairs of symmetric $n \times n$ matrices with coefficients in $T$. The group $\GL_n(T)$ acts on $V(T)$ by change of basis. In other words, if $\gamma \in \GL_n(T)$ and $(A,B) \in V(T)$, we define $$\gamma (A,B) = (\gamma^t A \gamma, \gamma^t B \gamma),$$ where $\gamma^t$ denotes the transpose of $\gamma$. 
\end{defn}

There is a natural map from this space of pairs of matrices, $V(T)$, to the space of polynomials, $U(T)$, called the resolvent map.

\begin{defn}[The resolvent map $\pi$]
Let $T$ be a base ring. We define the resolvent map $\pi \colon V(T) \rightarrow U(T)$ by $$(A,B) \mapsto {\rm disc}(Ax-By) = (-1)^{\frac{n}{2}}\det(Ax-B).$$
We say that a pair $(A,B) \in V(T)$ is non-degenerate if the associated binary form $f_{(A,B)}$ is non-degenerate (has non-zero discriminant). 
\end{defn}

\begin{exmp} 
In contrast to the odd case, when the degree $n$ is even, there might not
 exist for every $f \in \mathbb{Z}[x,y]$ a pair $(A,B) \in V(\mathbb{Z})$ whose resolvent polynomial is $f$. For instance, the binary form $-x^2-y^2$ is not resolvent polynomial of any element of $V(\mathbb{R})$. For monic $f$, however, the torsor is always non-empty! For example, $x^2+y^2$ is the resolvent of $\left( \left[ \begin{smallmatrix} 0 & 1 \\ 1 & 0 \end{smallmatrix} \right], \left[ \begin{smallmatrix} 1 & 0 \\ 0 & -1 \end{smallmatrix} \right] \right)$ and $x^2-y^2$ is the resolvent of  $\left( \left[ \begin{smallmatrix} 0 & 1 \\ 1 & 0 \end{smallmatrix} \right], \left[ \begin{smallmatrix} 1 & 0 \\ 0 & 1 \end{smallmatrix} \right] \right)$.
\end{exmp} 

We now describe the rigid parametrisation of pairs of symmetric bilinear forms.

\subsection{Rigid parametrisation of pairs of symmetric bilinear forms}
Let $T$ be a principal ideal domain. Recall the rigid parametrisation of the pairs of bilinear forms $(A,B) \in V(T)$ with resolvent polynomial equal to $f$ in terms of the based fractional ideal data for $R_f$.

\begin{thm}[\cite{MelanieWoodIdealClasses}]
Take a non-degenerate binary $n$-ic form $f \in U(T)$ and let $R_f = \frac{T[x]}{(f(x))}$. Then the pairs symmetric bilinear forms $(A,B) \in V(T)$ with $f_{(A,B)} = f$ are in bijection with equivalence classes of triples $$(I, \mathcal{B},\delta)$$ where $I \subset K_f$ is a based fractional ideal of $R_f$ with basis $\mathcal{B}$ given by a $T$ module isomorphism $\mathcal{B} \colon I \rightarrow T^n$, $\delta \in K_f^\times$ such that $I^2 \subset \delta R_f^{n-3}$ as ideals and the norm equation holds $N(I)^2 = N(\delta) N(R_f^{n-3})$ (as based ideals). Two triples $(I,\mathcal{B},\delta)$ and $(I',\mathcal{B}',\delta')$ are equivalent if there exists a $\kappa \in K_f^\times$ with the property that $I = \kappa I'$, $\mathcal{B} \circ (\times \kappa) = \mathcal{B}'$ and $\delta = \kappa^2 \delta$.
\end{thm}

\subsection{$\SL_n(\mathbb{Z})$-orbits and the oriented class group}

We now let $\SL_n(T)$ act on $V$ by change of basis. This action corresponds, at the level of equivalence classes of triples $(I,\mathcal{B},\delta)$, to the action of $\SL_n(T)$ on the basis $\mathcal{B}$ of the fractional ideal $I$. This action only changes the basis and does not change $I$, $\delta$, nor the orientation of $\mathcal{B}$. In particular, it does not change the defining conditions: 
\begin{enumerate}
\item $I^2 \subset \delta R_f^{n-3}$
\item $N(I)^2 = N(\delta)N(R_f^{n-3})$
\end{enumerate} 

The rigid parametrisation can be used to describe $\SL_n(\mathbb{Z})$ orbits.  

\begin{thm}[\cite{BhargavaGrossWang}]
Let $f$ be a non-degenerate monic binary $n$-ic form $f \in U_1(\mathbb{Z})$, let $R_f = \frac{\mathbb{Z}[x]}{(f(x))}$ and $K_f = R_f \otimes_{\mathbb{Z}} \mathbb{Q}$. The $\SL_n(\mathbb{Z})$ orbits on pairs symmetric bilinear forms $(A,B) \in V(\mathbb{Z})$ with $f_{(A,B)} = f$ are in bijection with equivalence classes of triples $$(I,\varepsilon,\delta)$$ where $I \subset K_f$ is a fractional ideal of $R_f$, $\varepsilon = \pm 1$ indicates the orientation of $I$, and $\delta \in K_f^\times$ is such that $I^2 \subset \delta R_f$ as ideals and the norm equation $N(I)^2 = N(\delta)$ holds (as oriented ideals). Two triples $(I,\varepsilon,\delta)$ and $(I',\varepsilon',\delta')$ are equivalent if there exists a $\kappa \in K_f^\times$ with the property that $I = \kappa I'$, $\varepsilon = {\rm sgn}(N(\kappa)) \varepsilon'$ and $\delta = \kappa^2 \delta'$.
\end{thm}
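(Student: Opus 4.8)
The plan is to derive this statement directly from Wood's rigid parametrisation (the preceding theorem) by passing to the $\SL_n(\mathbb{Z})$-quotient while keeping careful track of orientations. First I would specialise the rigid parametrisation to a monic form $f \in U_1(\mathbb{Z})$: then $R_f = \mathbb{Z}[x]/(f(x))$ is a monogenic order equipped with the canonical $\mathbb{Z}$-basis $1,\theta,\ldots,\theta^{n-1}$, and I would use this basis to trivialise the auxiliary module $R_f^{n-3}$ appearing in the general statement. This collapses the conditions $I^2 \subset \delta R_f^{n-3}$ and $N(I)^2 = N(\delta)N(R_f^{n-3})$ to $I^2 \subset \delta R_f$ and $N(I)^2 = N(\delta)$, and it pins down a canonical reference orientation on $K_f$ against which the orientation of any $\mathbb{Z}$-module basis of a fractional ideal of $R_f$ can be measured.

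Next I would take the $\SL_n(\mathbb{Z})$-quotient of both sides of the specialised bijection. On the pairs side this gives exactly the $\SL_n(\mathbb{Z})$-orbits in the statement. On the triples side, $\SL_n(\mathbb{Z})$ acts only on the basis datum $\mathcal{B}\colon I \to \mathbb{Z}^n$, and it acts simply transitively on the set of bases of $I$ of a fixed orientation; hence the $\SL_n(\mathbb{Z})$-orbit of $(I,\mathcal{B},\delta)$ is recorded faithfully by $(I,\varepsilon,\delta)$, where $\varepsilon = \pm 1$ is the orientation of $\mathcal{B}$ relative to the canonical orientation fixed above. The resulting map $(A,B) \mapsto (I,\varepsilon,\delta)$ is then well-defined on $\SL_n(\mathbb{Z})$-orbits and is surjective because the rigid parametrisation already is.

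The crux — and the step most liable to error, since this is precisely where the version stated in \cite{BhargavaGrossWang} must be corrected — is transporting the equivalence relation through the quotient. In Wood's parametrisation two rigid triples are identified via $\kappa \in K_f^\times$ when $I = \kappa I'$, $\mathcal{B} \circ (\times\kappa) = \mathcal{B}'$, and $\delta = \kappa^2\delta'$. After quotienting, the middle condition no longer determines a basis, but it still constrains orientations: multiplication by $\kappa$ on $K_f$ has determinant $N_{K_f/\mathbb{Q}}(\kappa)$, so it twists an orientation by $\sgn(N(\kappa))$. I would therefore prove that $(I,\varepsilon,\delta)$ and $(I',\varepsilon',\delta')$ are equivalent exactly when there is $\kappa \in K_f^\times$ with $I = \kappa I'$, $\varepsilon = \sgn(N(\kappa))\,\varepsilon'$, and $\delta = \kappa^2\delta'$. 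Injectivity of the quotient map follows: given such a $\kappa$, the orientation identity guarantees that lifts $\mathcal{B},\mathcal{B}'$ of $\varepsilon,\varepsilon'$ can be modified by an element of $\SL_n(\mathbb{Z})$ so that $\mathcal{B}\circ(\times\kappa) = \mathcal{B}'$ holds on the nose, which makes the rigid triples equivalent and hence the original pairs $\SL_n(\mathbb{Z})$-equivalent.

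The remaining bookkeeping is to check that the defining conditions descend consistently — in particular that the norm equation, an equality of \emph{based} ideals in the rigid picture, becomes an equality of \emph{oriented} ideals once $\mathcal{B}$ is replaced by $\varepsilon$, with $\varepsilon$ carrying exactly the sign discrepancy. I expect the main obstacle to be not conceptual but rather the simultaneous consistent handling of all the sign conventions (the orientation of $\mathcal{B}$, the orientation in $N(I)^2 = N(\delta)$, and the $\sgn(N(\kappa))$ twist in the equivalence), since it is a subtlety at exactly this level that produced the inaccuracy in \cite{BhargavaGrossWang}; everything else is formal from Wood's theorem.
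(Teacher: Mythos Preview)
Your proposal is correct and matches the paper's approach: the paper does not give a detailed proof of this theorem but simply cites \cite{BhargavaGrossWang} after remarking that ``the rigid parametrisation can be used to describe $\SL_n(\mathbb{Z})$ orbits,'' which is precisely the derivation you outline. Your careful tracking of the orientation twist $\varepsilon = \sgn(N(\kappa))\,\varepsilon'$ under the equivalence relation is exactly the correction to \cite{BhargavaGrossWang} that the paper alludes to in the introduction.
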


We recall the definition of the \emph{oriented class group} of an order in a number field.

\begin{defn}
The \emph{oriented class group}, $\Cl^*(\mathcal{O})$, of an order $\mathcal{O}$ consists of the set of oriented fractional ideals of $\mathcal{O}$ modulo the principal oriented fractional ideals of $\mathcal{O}$. We recall that $\Cl^*(\mathcal{O})$ is isomorphic to the usual class group of $\mathcal{O}$ when $\mathcal{O}$ has a unit of negative norm and is $\mathbb{Z}/2\mathbb{Z}$ extension of the usual class group of $\mathcal{O}$ otherwise. 
\end{defn}

We now describe the relation between $\SL_n(\mathbb{Z})$ orbits and the $2$-torsion part of the oriented class group of $R_f$. 

\begin{defn} 
A triple pair $(I,\varepsilon,\delta)$ is said to be projective if the ideal $I$ is invertible. Equivalently, a triple $(I,\varepsilon,\delta)$ is projective if $I^2 = (\delta)$. For an $S_n$ order $\mathcal{O}$, we write $H^*(\mathcal{O})$ for the set of oriented projective ideals of $\mathcal{O}$. 
\end{defn}

As the oriented class group is comprised of invertible oriented ideals, let us consider the restriction of the parametrisation above to the set $H^*(\mathcal{O})$. Consider the forgetful map:
\begin{equation*}
H^*(\mathcal{O}) \longrightarrow \Cl_2^*(\mathcal{O}).
\end{equation*}
Elements $(I_0,\varepsilon_0,\delta_0)$ in the kernel of this forgetful map have the property that $I_0 = (\alpha)$ for some $\alpha$ in $K$ as oriented ideals. Thus, $(I_0,\varepsilon_0,\delta_0) \sim ((\alpha),\varepsilon_0,\delta_0) \sim (\mathcal{O},1,\alpha^{-2}\delta_0)$. Now, $\mathcal{O} \subset \alpha^{-2}\delta_0 \mathcal{O}$ and $1 = N(\alpha^{-2} \delta_0)$. This implies that $(I_0,\varepsilon_0,\delta_0)$ is equivalent to $(\mathcal{O},1,u)$ where $u$ is norm $1$ unit of $\mathcal{O}^{\times}$. We are allowed to mod out $u$ by squares of units of $K^\times$ which fix the oriented ideal $(\mathcal{O},1)$. But those are precisely the norm $1$ units of $\mathcal{O}$. The sequence above can thus be completed to a short exact sequence: 
\begin{equation*}
1 \longrightarrow \frac{\mathcal{O}^{\times}_{N \equiv 1}}{(\mathcal{O}_{N \equiv 1}^{\times})^2} \longrightarrow H^*(\mathcal{O}) \longrightarrow \Cl_2^*(\mathcal{O}) \longrightarrow 1.
\end{equation*}

Applying Dirichlet's unit theorem allows us to compute that $100\%$ of the time (since having a non-trivial torsion is rare):
\begin{equation*} 
\left|\frac{\mathcal{O}^{\times}_{N \equiv 1}}{(\mathcal{O}_{N \equiv 1}^{\times})^2} \right| = 
\begin{cases}
2^{r_1+r_2} & \text{ if } \mathcal{O} \text{ has a unit of norm } -1\\
2^{r_1+r_2} & \text{ otherwise }
\end{cases}.
\end{equation*} 

We thus obtain the following formula for the number of elements in $H^*(\mathcal{O})$.

\begin{lem}
Let $\mathcal{O}$ be an order in an $S_n$-number field of degree $n$ and signature $(r_1,r_2)$. Then: 
\begin{equation*}
\left|H^*(\mathcal{O})\right| = 2^{r_1+r_2} \left|\Cl_2^*(\mathcal{O}) \right|. 
\end{equation*}
\end{lem}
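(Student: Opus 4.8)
The plan is to package the short exact sequence sketched above into a fibre count. First I would make the forgetful map precise: restricting the $\SL_n(\mathbb Z)$-orbit parametrisation to projective triples gives
\begin{equation*}
\varphi\colon H^*(\mathcal O)\longrightarrow \Cl_2^*(\mathcal O),\qquad (I,\varepsilon,\delta)\mapsto [(I,\varepsilon)].
\end{equation*}
This is well defined: a projective triple has $I^2=(\delta)$, so the square of the oriented ideal $(I,\varepsilon)$ is $((\delta),\sgn N(\delta))$, and the oriented norm equation $N(I)^2=N(\delta)$ forces $N(\delta)>0$, hence $[(I,\varepsilon)]$ lies in $\Cl_2^*(\mathcal O)$. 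Surjectivity is the reverse reading: given a class in $\Cl_2^*(\mathcal O)$, choose an invertible representative $(I,\varepsilon)$; being $2$-torsion means $I^2=(\delta)$ for some $\delta\in K_f^\times$ with $N(\delta)>0$, and then $(I,\varepsilon,\delta)\in H^*(\mathcal O)$ maps to it.

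Next I would identify the kernel, which is exactly the computation already performed in the text: if $\varphi(I,\varepsilon,\delta)$ is trivial then $I=(\alpha)$ as oriented ideals, and applying the equivalence relation repeatedly brings the triple to the form $(\mathcal O,1,u)$ with $u\in\mathcal O^\times$, $N(u)=1$; moreover $(\mathcal O,1,u)\sim(\mathcal O,1,u')$ precisely when $u/u'$ is the square of a norm-one unit. Hence
\begin{equation*}
\ker\varphi\;\cong\;\mathcal O^\times_{N\equiv 1}\big/(\mathcal O^\times_{N\equiv 1})^2,
\end{equation*}
and this group acts freely and transitively on every nonempty fibre of $\varphi$ via $(I,\varepsilon,\delta)\mapsto(I,\varepsilon,u\delta)$. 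Together with surjectivity this yields $\lvert H^*(\mathcal O)\rvert=\lvert\ker\varphi\rvert\cdot\lvert\Cl_2^*(\mathcal O)\rvert$.

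Finally I would evaluate $\lvert\ker\varphi\rvert$ by Dirichlet's unit theorem. For an order in an $S_n$-field with $n\ge 3$ the torsion subgroup of $\mathcal O^\times$ is $\{\pm1\}$, and since $n$ is even both $\pm1$ have norm $+1$, so $\{\pm1\}\subseteq\mathcal O^\times_{N\equiv 1}$; whether or not $\mathcal O$ has a unit of norm $-1$, the subgroup $\mathcal O^\times_{N\equiv 1}$ has finite index in $\mathcal O^\times$, hence free rank $r_1+r_2-1$. Thus $\mathcal O^\times_{N\equiv 1}\cong \Z/2\Z\times\Z^{r_1+r_2-1}$ and $\lvert\mathcal O^\times_{N\equiv 1}/(\mathcal O^\times_{N\equiv 1})^2\rvert=2^{r_1+r_2}$, as in the displayed case analysis. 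Combining the two displays gives the lemma. The only delicate point is the orientation bookkeeping in the first paragraph — making sure the oriented norm equation, the sign of $N(\delta)$, and the convention for principal oriented ideals line up so that $\varphi$ is simultaneously well defined, surjective, and has the stated kernel; the unit-theoretic count is routine once the structure of $\mathcal O^\times$ for $S_n$-fields is invoked.
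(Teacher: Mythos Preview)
Your proposal is correct and follows essentially the same route as the paper: the paper establishes the short exact sequence $1\to \mathcal O^\times_{N\equiv 1}/(\mathcal O^\times_{N\equiv 1})^2\to H^*(\mathcal O)\to \Cl_2^*(\mathcal O)\to 1$ in the discussion immediately preceding the lemma and then invokes Dirichlet's unit theorem to evaluate the kernel. If anything, you are slightly more careful than the paper --- you use the $S_n$ hypothesis explicitly to pin down the torsion as $\{\pm1\}$ and note that $N(-1)=1$ since $n$ is even, whereas the paper phrases the unit computation as holding ``$100\%$ of the time (since having a non-trivial torsion is rare)''.
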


We now proceed to the description of the points in the cuspidal regions. 

Here is the version of the reducibility criterion in Ho--Shankar--Varma which is valid in even degree. Roughly, for $(A,B)$ to be reducible, all we need is to have the two components of some $\SL_n(\mathbb{Q})$ translate of $(A,B)$ contain a common subsquare which is zero except at possibly one entry on the diagonal. 

\begin{lem}[Even degree distinguished orbits lemma] 
$(A,B) \in V(\mathbb{Q})$ is distinguished if and only if there is an $\SL_n(\mathbb{Q})$ translate of $(A,B)$ with the property that $$a_{i,j} = b_{i,j} = 0$$ for all $1 \le i,j \le \frac{n}{2}$ except for $i = j = \frac{n}{2}$ for which $a_{\frac{n}{2}\, \frac{n}{2}} = 0 \neq b_{\frac{n}{2}\, \frac{n}{2}}$.
\end{lem}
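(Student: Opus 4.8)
The plan is to translate the statement into the language of triples $(I,\varepsilon,\delta)$ via the $\SL_n(\mathbb{Z})$-parametrisation (or rather its $\SL_n(\mathbb{Q})$-analogue over $K_f$) and to follow the structure of the corresponding argument in \cite{HoShankarVarmaOdd} and in Part I \cite{SiadOddMonogenicAverages}, being careful about the new phenomenon in even degree: the resolvent $\pi(A,B) = (-1)^{n/2}\det(Ax-B)$ is a \emph{quadratic form type} invariant, so the vanishing block one can arrange is a subspace of dimension $n/2$ on which both $A$ and $B$ restrict to something almost-isotropic, and the parity of $n$ forces the ``almost'' (the single nonzero diagonal entry $b_{n/2,n/2}$). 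First I would recall precisely what ``distinguished'' means here (a $\mathbb{Q}$-orbit is distinguished if the associated triple $(I,\varepsilon,\delta)$ has $I$ principal and $\delta$ of the appropriate sign, i.e.\ it lies in the kernel of the forgetful map to $\Cl^*_2$, equivalently it is $\SL_n(\mathbb{Q})$-equivalent to one coming from the ``identity'' element $(\mathcal{O},1,1)$-type data). The geometric content I want is: distinguished $\Leftrightarrow$ the pencil $Ax - By$ has a common isotropic subspace of the maximal allowable dimension, which in even degree is $n/2$ with a one-dimensional defect.

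The key steps, in order. \emph{Step 1:} Show the ``if'' direction. If $(A,B)$ has the stated block form, exhibit directly the totally isotropic (for generic pencil member) coordinate subspace $\langle e_1,\dots,e_{n/2}\rangle$ and read off from the block decomposition that the resolvent factors / that the associated ideal class is trivial; concretely, compute that $\det(Ax-By)$ has the shape forced by a $(n/2)\times(n/2)$ zero block, and use Wood's dictionary to see that the based ideal $I$ built from this pair is principal with the orientation and $\delta$ matching a distinguished triple. This is the routine direction. \emph{Step 2:} Show the ``only if'' direction. Starting from a distinguished $(A,B)$, i.e.\ a triple equivalent to $(\mathcal{O},1,u)$ with $u$ a norm-$1$ unit (as in the short exact sequence computation just proved in the excerpt), unwind the rigid parametrisation of \cite{MelanieWoodIdealClasses} to get an explicit Gram-matrix representative over $\mathbb{Z}$ (or $\mathbb{Q}$), and then use $\SL_n(\mathbb{Q})$ (i.e.\ change of $\mathbb{Q}$-basis of $I\otimes\mathbb{Q} \cong K_f$) to put the pair $(A,B)$, which are the Gram matrices of the trace pairings $\langle x,y\rangle \mapsto \mathrm{tr}(\lambda x y/f'(\theta))$ for suitable $\lambda$, into the desired normal form. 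The right basis to choose is one adapted to the flag $R_f^{?} \supset \cdots$; the single exceptional diagonal entry $b_{n/2,n/2}\neq 0$ is forced because $n$ is even and a perfect $(n/2+1)$-dimensional common isotropic subspace would make the pencil degenerate (contradicting $f$ non-degenerate / $S_n$), exactly as in the odd-degree lemma the analogous block had size $(n-1)/2$ with no defect.

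\emph{Step 3:} Reconcile with the $\SL_n$ vs.\ $\GL_n$ / orientation bookkeeping: since we are working with $\SL_n(\mathbb{Q})$ and the orientation $\varepsilon$ is part of the data, check that the normal form can be achieved without needing $\det = -1$, using that over $\mathbb{Q}$ one has enough room (in contrast to the integral statement, where $\mathscr{L}_{\mathbb{Z}}$ enters). The main obstacle I expect is \emph{Step 2}: pinning down exactly which basis of $I\otimes\mathbb{Q}$ realises the block-vanishing, and verifying that the obstruction to upgrading the $(n/2)\times(n/2)$ zero block to an $(n/2)\times(n/2+\tfrac{1}{2})$-type vanishing is precisely the one nonzero entry $b_{n/2,n/2}$ and not something worse — this is where the even-degree parity genuinely differs from \cite{HoShankarVarmaOdd} and where one must argue, via the norm equation $N(I)^2 = N(\delta)$ and the structure of $R_f^{n-3}$, that no further normalisation is possible. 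I would model the computation on the proof of the distinguished-orbits lemma in \cite{SiadOddMonogenicAverages}, adapting the index shift $\frac{n-1}{2}\rightsquigarrow\frac{n}{2}$ and tracking the extra sign/orientation data throughout.
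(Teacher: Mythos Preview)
Your overall strategy---work directly through Wood's parametrisation on both directions---is viable, but it is not the route the paper takes, and your Step~1 contains a small but real inaccuracy that is worth correcting.

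The paper's argument pivots on a different key lemma: the criterion of Shankar--Wang \cite{ShankarWang2Selmer} that $(A,B)$ is distinguished if and only if the self-adjoint operator $T=A^{-1}B$ admits a $\mathbb{Q}$-rational $\tfrac{n-2}{2}$-plane $X$ such that $\mathrm{Span}\{X,TX\}$ is an $A$-isotropic $\tfrac{n}{2}$-plane. With this in hand, the ``if'' direction is a two-line verification: take $X=\langle e_1,\dots,e_{n/2-1}\rangle$, note that $B(e_i)\in\langle f_1,\dots,f_{n/2}\rangle$ for $i\le n/2-1$ (here the zero block is used), hence $A^{-1}B(e_i)\in\langle e_1,\dots,e_{n/2}\rangle$, so $\mathrm{Span}\{X,TX\}\subset\langle e_1,\dots,e_{n/2}\rangle$ is $A$-isotropic. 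The ``only if'' direction the paper simply defers to \cite{ShankarWang2Selmer} or, as you propose, to a direct calculation from Wood's parametrisation.

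Your Step~1 instead asserts that $\langle e_1,\dots,e_{n/2}\rangle$ is ``totally isotropic for a generic pencil member''. That is false: the restriction of $Ax-By$ to this subspace has the single entry $-y\,b_{n/2,n/2}$ at position $(n/2,n/2)$, which is nonzero for every $y\neq 0$. Only the smaller subspace $\langle e_1,\dots,e_{n/2-1}\rangle$ is isotropic for the whole pencil, and this is exactly why the paper's criterion involves an $\tfrac{n-2}{2}$-plane rather than an $\tfrac{n}{2}$-plane, and why the single surviving entry $b_{n/2,n/2}$ is unavoidable. If you repair this, your approach of reading off triviality of the ideal class via Wood's dictionary can be made to work, but you will essentially be reproving the Shankar--Wang criterion along the way; quoting it, as the paper does, is more economical.

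Your Step~2 (unwinding the rigid parametrisation from the trivial triple to produce the normal form) is the route the paper explicitly names as an alternative for the converse, so that part is fine in spirit, though still schematic. Your Step~3 is unnecessary over $\mathbb{Q}$: orientation is not an obstruction there.
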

\begin{proof}
By \cite{ShankarWang2Selmer}, we know that a self adjoint operator in $V_f(\mathbb{Q})$ is distinguished if and only if there exists a $\mathbb{Q}$ rational $\frac{n-2}{2}$ plane $X$ such that ${\rm Span} \{X, TX\}$ is an isotropic $\frac{n-2}{2}+1$ plane. It thus suffices to translate this condition into the second condition in the statement of the lemma. 

First, let's suppose that $(A,B)$ are such that $a_{i,j} = b_{i,j} = 0$ for all $1 \le i,j \le \frac{n}{2}-1$ except for $i=j=\frac{n}{2}$ for which $a_{\frac{n}{2} \,\frac{n}{2}} = 0 \neq b_{\frac{n}{2} \,\frac{n}{2}}$. Let $\{e_1,\ldots, e_\frac{n}{2}, f_1,\ldots, f_{\frac{n}{2}} \}$ be the standard basis and let $T = A^{-1}B$. Then, let us set $X = \{e_1,\ldots, e_{\frac{n}{2}-1}\}$. We claim that ${\rm Span} \{X, TX\}$ is an isotropic $\frac{n}{2}$ plane. Indeed, $B(e_i) \in {\rm Span}\{f_1,\ldots,f_{\frac{n}{2}}\}$ for $1 \le i \le \frac{n}{2}-1$ and thus $A^{-1}B(e_i) \in {\rm Span}\{e_1,\ldots,e_{\frac{n}{2}}\}$. Therefore, $(A,B)$ is distinguished in that case. 

The other direction is easy. For instance, it follows directly from the arguments of \cite{ShankarWang2Selmer} or a direct calculation from Melanie Wood's parametrisation. 
\end{proof}

\begin{defn}
Let $\mathcal{O}$ be an order. We denote by $\mathcal{I}_2^*(\mathcal{O})$ the $2$-torsion subgroup of the oriented ideal group of $\mathcal{O}$. 
\end{defn} 

\begin{prop}
Let $\mathcal{O}_f$ be an order corresponding to the integral primitive irreducible non-degenerate monic binary form $f$. Then, $\mathcal{I}_2^*(\mathcal{O}_f)$ is in natural bijection with the set of projective reducible $\SL_n(\mathbb{Z})$-orbits on $V(\mathbb{Z}) \cap \pi^{-1}(f)$. 
\end{prop}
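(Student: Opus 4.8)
The plan is to combine the $\SL_n(\mathbb{Z})$-orbit parametrisation (the theorem of \cite{BhargavaGrossWang} recalled above) with the even-degree distinguished orbits lemma, matching up three sets: (1) the $2$-torsion subgroup $\mathcal{I}_2^*(\mathcal{O}_f)$ of the oriented ideal group; (2) equivalence classes of triples $(I,\varepsilon,\delta)$ that are \emph{projective} and for which the underlying orbit is \emph{reducible} (distinguished); and (3) projective reducible $\SL_n(\mathbb{Z})$-orbits on $V(\mathbb{Z}) \cap \pi^{-1}(f)$. The equivalence of (2) and (3) is immediate from the parametrisation theorem together with the definition of ``projective'' (namely $I^2 = (\delta)$) given above, so the real content is the identification of (1) with (2).

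First I would make the map explicit. Given an oriented ideal $(I,\varepsilon)$ with $(I,\varepsilon)^2 = (\mathcal{O}_f, 1)$ in the oriented ideal group, we have $I^2 = (1) = \mathcal{O}_f$ with the trivial orientation; I would send it to the triple $(I, \varepsilon, 1)$, noting that the defining conditions $I^2 \subset \delta \mathcal{O}_f$ and $N(I)^2 = N(\delta)$ hold with $\delta = 1$, and that the triple is projective since $I^2 = (\delta)$. Conversely, given a projective triple $(I,\varepsilon,\delta)$, projectivity says $I^2 = (\delta)$, so $N(I)^2 = N(\delta)$ forces $(I,\varepsilon)^2 = ((\delta), \mathrm{sgn}(N(\delta))) = ((\delta), \mathrm{sgn}(N(I))^2) = ((\delta), 1)$, and rescaling the triple by $\kappa$ with $\kappa^2 = \delta^{-1}$ (which exists after tensoring with $\mathbb{Q}$, using $K_f^\times$) shows $(I,\varepsilon,\delta) \sim (\delta^{-1}I \cdot(\text{suitable}), \ldots, 1)$; more carefully, I would choose a representative with $\delta = 1$ directly and check this is well-defined on equivalence classes. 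Then $(I,\varepsilon,1) \sim (I',\varepsilon',1)$ iff $I = \kappa I'$ with $\kappa^2 = 1$, i.e. $\kappa = \pm 1$ on each factor of $K_f$, which is exactly the relation identifying oriented ideals with $(I,\varepsilon)^2 = (\mathcal{O}_f,1)$. This gives a bijection between (1) and (2), hence with (3).

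The remaining point — and the one I expect to be the main obstacle — is ensuring that ``reducible'' on the orbit side corresponds to \emph{no additional constraint}, i.e. that \emph{every} element of $\mathcal{I}_2^*(\mathcal{O}_f)$ lands in a reducible (distinguished) orbit. For this I would invoke the even degree distinguished orbits lemma: a triple with $\delta = 1$ and $I$ an honest fractional ideal (so $\mathcal{B}$ can be chosen to realise $I$ inside $K_f$ with the subspace $I \cap (\text{lower half})$ of the right dimension) produces a pair $(A,B)$ for which the self-adjoint operator $T = A^{-1}B$ preserves a rational $\frac{n-2}{2}$-plane spanned by ideal-theoretic data, and one checks ${\rm Span}\{X,TX\}$ is isotropic of dimension $\frac{n-2}{2}+1$ exactly because $\delta = 1$ makes the relevant pairing degenerate in the required block — this is the even-degree analogue of the cubic/odd-degree computation in \cite{SiadOddMonogenicAverages} and \cite{BhargavaHankeShankar}. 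Conversely a projective orbit that is reducible must, by the lemma, have its triple normalisable to $\delta = 1$ form, because the existence of the isotropic flag forces the norm of $\delta$ (up to squares) to be trivial with the correct sign, and projectivity upgrades this to $\delta$ itself being a square times a unit of norm $1$; combined with the $2$-torsion condition this is precisely the statement that $(I,\varepsilon)$ is $2$-torsion in the oriented ideal group. I would also need to verify primitivity and irreducibility of $f$ are used only to guarantee $\mathcal{O}_f$ is an order in an $S_n$-field (so that the parametrisation theorem applies cleanly and there are no degenerate ideal phenomena), which is where the hypotheses in the statement enter. The delicate bookkeeping is entirely in the ``$\delta = 1$'' normalisation and its compatibility with orientations and the equivalence relation; once that is pinned down the bijection is formal.
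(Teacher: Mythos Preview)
The paper states this proposition without proof, so there is no argument to compare against directly; the result is meant to follow from the parametrisation theorem together with the distinguished orbits lemma. Your overall strategy --- normalise a projective reducible triple to have $\delta = 1$, then read off a $2$-torsion oriented ideal --- is the right one, and the injectivity check via $\kappa^2 = 1 \Rightarrow \kappa = \pm 1$ (using irreducibility of $f$, hence that $K_f$ is a field) together with $n$ even is exactly what pins down the equivalence relation on the $\mathcal{I}_2^*$ side.

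There is, however, one genuine slip. You write that ``rescaling the triple by $\kappa$ with $\kappa^2 = \delta^{-1}$ (which exists after tensoring with $\mathbb{Q}$, using $K_f^\times$)'' lets you normalise $\delta$ to $1$. This is false in general: an arbitrary $\delta \in K_f^\times$ need not be a square. The existence of such a $\kappa$ is \emph{precisely} the content of the reducibility hypothesis, not a consequence of working over $\mathbb{Q}$. The clean way to see this is to pass through the $\SL_n(\mathbb{Q})$-orbit: over $\mathbb{Q}$ every fractional ideal of the field $K_f$ is principal, so the $\mathbb{Q}$-orbit of $(I,\varepsilon,\delta)$ is determined by the class of $\delta$ in $(K_f^\times/(K_f^\times)^2)_{N\equiv 1}$ together with the sign $s$. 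The distinguished orbits lemma (via \cite{ShankarWang2Selmer}) says this $\mathbb{Q}$-orbit is distinguished exactly when that class is trivial, i.e.\ when $\delta$ is a square. Only then can you write $\delta = \kappa^2$ and replace $(I,\varepsilon,\delta)$ by $(\kappa^{-1}I,\,\mathrm{sgn}(N(\kappa))^{-1}\varepsilon,\,1)$, with $(\kappa^{-1}I)^2 = \mathcal{O}_f$ following from projectivity. Your later paragraph about isotropic flags is heading toward this same point but is vaguer than necessary; the $\mathbb{Q}$-orbit parametrisation gives it in one line.

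With that correction, your argument is complete: the map $(J,\varepsilon) \mapsto [(J,\varepsilon,1)]$ from $\mathcal{I}_2^*(\mathcal{O}_f)$ to projective reducible orbits is well-defined (the triple is projective since $J^2 = (1)$, and distinguished since $\delta = 1$), injective by your $\kappa = \pm 1$ computation, and surjective by the normalisation just described. The hypotheses on $f$ are used exactly where you say: irreducibility makes $K_f$ a field so that $\kappa^2 = 1$ has only two solutions, and non-degeneracy ensures the parametrisation applies.
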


In particular, for maximal rings, there are always exactly $2$ projective reducible orbits. This will come into play when we calculate the final averages in Section \ref{Statistical consequences}.

\subsection*{$\SL_n$-orbits over fields and local rings} In this section, we compute the number orbits, and the size of the stabilisers for the action of $\SL_n$ on the arithmetic rings $\mathbb{Z}_p$, $\mathbb{Q}$, and $ \mathbb{R}$. Let $T$ be a principal ideal domain. We first restate the rigid parametrisation in this case.
\begin{thm}[\cite{BhargavaGrossWang}]
Let $f$ be a non-degenerate monic binary $n$-ic form $f \in U_1(T)$, let $R_f = \frac{T[x]}{(f(x))}$ and $K_f = R_f \otimes_{\mathbb{Z}} \mathbb{Q}$. The $\SL_n(T)$ orbits on pairs symmetric bilinear forms $(A,B) \in V(T)$ with $f_{(A,B)} = f$ are in bijection with equivalence classes of triples $$(I,s,\delta)$$ where $I \subset K_f$ is a fractional ideal of $R_f$, $s$ is in the fraction field of $T$ and is such that $N(I) = sT$, and $\delta \in K_f^\times$ such that $I^2 \subset \delta R_f$ as ideals and the norm equation $s^2 = N(\delta)$ holds. Two triples $(I,s,\delta)$ and $(I',s',\delta')$ are equivalent if there exists a $\kappa \in K_f^\times$ with the property that $I = \kappa I'$, $s = N(\kappa) s'$, and $\delta = \kappa^2 \delta'$.
\end{thm}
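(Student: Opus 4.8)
The plan is to deduce this statement from Wood's rigid parametrisation (the theorem of \cite{MelanieWoodIdealClasses} recalled above, which holds verbatim over an arbitrary principal ideal domain $T$) by quotienting the $(A,B)$-side by $\SL_n(T)$ and translating the ``based ideal'' data into the pair $(s,\delta)$. Wood's parametrisation already identifies $\{(A,B)\in V(T):f_{(A,B)}=f\}$ with the set of $\kappa$-equivalence classes of triples $(I,\mathcal B,\delta)$, and, as recorded in the discussion preceding the $\SL_n(\Z)$ theorem, the change-of-basis action of $\gamma\in\SL_n(T)$ on $(A,B)$ corresponds to replacing $\mathcal B$ by $\gamma\circ\mathcal B$ while leaving $I$ and $\delta$ fixed. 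So it suffices to prove that the $\SL_n(T)$-orbit of a basis $\mathcal B\colon I\xrightarrow{\sim}T^n$ is exactly the data of an $s\in\Frac(T)^\times$ with $sT=N(I)$, compatibly with the $\kappa$-equivalence.

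First I would record that, because $T$ is a PID, every fractional ideal $I\subset K_f$ is a free $T$-module of rank $n$; hence the set of $T$-module isomorphisms $\mathcal B\colon I\to T^n$ is a $\GL_n(T)$-torsor and its set of $\SL_n(T)$-orbits is a torsor under $\GL_n(T)/\SL_n(T)\cong T^\times$ via the determinant. Fixing the power basis $1,x,\dots,x^{n-1}$ of $R_f$ (legitimate since $f$ is monic) gives an identification $K_f\cong\Frac(T)^n$, and I would attach to each based ideal $(I,\mathcal B)$ the scalar $s(I,\mathcal B)\in\Frac(T)^\times$ equal to the determinant of the composite $T^n\xrightarrow{\mathcal B^{-1}}I\hookrightarrow K_f\cong\Frac(T)^n$. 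Three compatibilities then need checking: (i) $s(I,\mathcal B)\,T=N(I)$, which is just the definition of the norm of a fractional ideal; (ii) $s(I,\gamma\mathcal B)=\det(\gamma)^{-1}\,s(I,\mathcal B)$ for $\gamma\in\GL_n(T)$, so that the $\SL_n(T)$-orbit of $\mathcal B$ is faithfully recorded by $s$ and every admissible value of $s$ is attained; and (iii) if $(I,\mathcal B)$ and $(I',\mathcal B')$ are $\kappa$-equivalent in Wood's sense (so $I=\kappa I'$ and $\mathcal B=\mathcal B'\circ(\times\kappa)^{-1}$), then $s(I,\mathcal B)=N(\kappa)\,s(I',\mathcal B')$, since multiplication by $\kappa$ on $K_f$ has determinant $N(\kappa)$. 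Item (iii) is exactly what converts Wood's relation $I=\kappa I'$, $\mathcal B\circ(\times\kappa)=\mathcal B'$, $\delta=\kappa^2\delta'$ into the relation $I=\kappa I'$, $s=N(\kappa)s'$, $\delta=\kappa^2\delta'$ of the statement.

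It remains to rewrite the defining conditions on the triples. For monic $f$ the ideal $R_f^{n-3}$ appearing in the rigid parametrisation is the unit ideal $R_f$, whose norm with respect to the power basis is $1$; thus $I^2\subset\delta R_f^{n-3}$ becomes $I^2\subset\delta R_f$, and the based norm equation $N(I)^2=N(\delta)N(R_f^{n-3})$ becomes, after unwinding the definition of $s$ in (i), the exact identity $s^2=N(\delta)$ in $\Frac(T)^\times$. Combining this with (i)--(iii) yields the desired bijection between $\SL_n(T)$-orbits on $\pi^{-1}(f)\cap V(T)$ and $\kappa$-equivalence classes of triples $(I,s,\delta)$ with $sT=N(I)$, $I^2\subset\delta R_f$, and $s^2=N(\delta)$. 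When $T$ is a field (e.g.\ $T=\Q$ or $T=\R$) every fractional ideal is principal, $I=\alpha R_f$, and the statement degenerates to a bookkeeping identity about $R_f^\times$-orbits on pairs $(s,\delta)$, which is subsumed.

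All of this is formal once Wood's parametrisation is available, so there is no genuinely hard step; the care required is concentrated in item (iii) and in pinning down the meaning of ``based norm equation'', i.e.\ in tracking how the $R_f^\times$-worth of rescalings fixing $I$ (acting by $s\mapsto N(\kappa)s$, $\delta\mapsto\kappa^2\delta$) interacts with the $T^\times$-ambiguity inherent in the ideal norm. This is exactly where the $\SL_n$ refinement departs from the plain $\GL_n$ statement — and, over $T=\Z$, the source of the orientation $\varepsilon=\pm1$ in the earlier theorem; over a general PID the larger unit group $T^\times$ is why one must carry the full scalar $s\in\Frac(T)^\times$ rather than a sign.
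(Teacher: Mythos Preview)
Your proposal is correct and follows exactly the route the paper indicates. The paper does not give its own proof of this statement; it is cited from \cite{BhargavaGrossWang} and accompanied only by the observation that the $\SL_n(T)$-action on $(A,B)$ corresponds, at the level of Wood's triples $(I,\mathcal B,\delta)$, to the $\SL_n(T)$-action on the basis $\mathcal B$ alone. Your argument supplies precisely the details that this remark leaves implicit --- identifying the $\SL_n(T)$-orbit of $\mathcal B$ with the scalar $s\in\Frac(T)^\times$ via the determinant, checking the compatibilities (i)--(iii), and reducing the based norm equation to $s^2=N(\delta)$ using that $R_f^{n-3}=R_f$ for monic $f$ --- so there is nothing to compare beyond noting that you have written out what the paper takes for granted.
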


Then, we have the following theorem whose proof is straightforward.

\begin{lem}
The stabiliser in $\SL_n(T)$ corresponds to the $2$-torsion of $R_f^\times$ which have norm $1$: $$R_f^\times[2]_{N \equiv 1}.$$ 
\end{lem}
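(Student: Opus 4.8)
The plan is to read the stabiliser straight off the rigid parametrisation recalled just above. Fix a non-degenerate monic $f \in U_1(T)$ and a pair $(A,B) \in V(T)$ with $f_{(A,B)} = f$, and let $(I,\mathcal{B},\delta)$ — equivalently $(I,s,\delta)$ — be a triple representing it. Since the $\SL_n(T)$-action on $V$ corresponds under the parametrisation to acting on the basis isomorphism $\mathcal{B}\colon I \to T^n$ while leaving $I$, $s$ and $\delta$ untouched, an element $\gamma \in \SL_n(T)$ fixes $(A,B)$ precisely when the triples $(I,\mathcal{B},\delta)$ and $(I,\gamma\cdot\mathcal{B},\delta)$ are equivalent. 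By the definition of the equivalence relation this happens exactly when there is $\kappa \in K_f^\times$ with $\kappa I = I$, with $\kappa^2\delta = \delta$, and with $\mathcal{B}\circ(\times\kappa) = \gamma\cdot\mathcal{B}$, the last condition saying that $\gamma$ is the matrix, in the basis $\mathcal{B}$, of multiplication by $\kappa$ on $I$.

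Next I would extract the three constraints. First, $\kappa^2\delta = \delta$ forces $\kappa^2 = 1$, so $\kappa$ is a square root of unity in $K_f^\times$; I will argue (see below) that such a $\kappa$ preserving $I$ must in fact lie in $R_f^\times$, so $\kappa \in R_f^\times[2]$. Second, $\gamma = \mathcal{B}\circ(\times\kappa)\circ\mathcal{B}^{-1}$ has determinant equal to the norm $N(\kappa)$ of the multiplication map, so $\gamma \in \SL_n(T)$ rather than merely $\GL_n(T)$ forces $N(\kappa) = 1$; this is exactly the condition $s = N(\kappa)s$ coming from the triple. Conversely, any $\kappa \in R_f^\times[2]$ with $N(\kappa)=1$ satisfies $\kappa I = I$ automatically (it is a unit of $R_f$), hence defines a stabilising element $\gamma := \mathcal{B}\circ(\times\kappa)\circ\mathcal{B}^{-1} \in \SL_n(T)$. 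The assignments $\gamma \mapsto \kappa$ and $\kappa \mapsto \gamma$ are mutually inverse group homomorphisms, giving $\Stab_{\SL_n(T)}(A,B) \cong R_f^\times[2]_{N\equiv 1}$, independently of the chosen point.

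The only step that is not purely formal is checking that $\kappa$ lies in $R_f^\times$ and not merely in the multiplier ring $(I:I)$ of $I$ — this is the main, if minor, obstacle. If $I$ is invertible its multiplier ring is $R_f$, so there is nothing to prove; this already handles the projective orbits. In general one observes that the stabiliser, viewed as a group scheme over $T$, does not depend on the point in $\pi^{-1}(f)$ — the non-empty fibres of $\pi$ are torsors under it — so it suffices to compute at the base point attached to $(I,\delta) = (R_f,1)$, where $\kappa R_f = R_f$ immediately gives $\kappa = \kappa\cdot 1 \in R_f^\times$. The computation of the previous paragraph then identifies this common stabiliser scheme with $\ker\big(N\colon \mathrm{Res}_{R_f/T}\mu_2 \to \mu_2\big)$, whose group of $T$-points is $R_f^\times[2]_{N\equiv 1}$. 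Everything else is routine bookkeeping with the equivalence relation.
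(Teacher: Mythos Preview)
Your argument is essentially the ``straightforward'' proof the paper has in mind (the paper gives no proof at all), and for projective orbits it is complete and correct: the rigid parametrisation identifies $\gamma$ with multiplication by some $\kappa\in K_f^\times$, the condition $\kappa^2\delta=\delta$ forces $\kappa^2=1$, invertibility of $I$ gives $(I:I)=R_f$ so $\kappa\in R_f^\times[2]$, and $\det(\gamma)=N(\kappa)=1$ is exactly the $\SL_n$ constraint.

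One small caution about the non-projective case. Your torsor argument is not quite right as stated: over $T$ the fibre $\pi^{-1}(f)$ need not be a single $\SL_n(T)$-orbit, so stabilisers at points in different orbits need not be conjugate. In fact the calculation you give shows that the stabiliser at the point $(I,s,\delta)$ is $(I\!:\!I)^\times[2]_{N\equiv 1}$, and for non-invertible $I$ the multiplier ring $(I\!:\!I)$ can strictly contain $R_f$, so the stabiliser can be strictly larger than $R_f^\times[2]_{N\equiv 1}$. This does not matter for the paper: every place the lemma is invoked (the local mass formulae, the sieve) concerns maximal $R_f$ or projective orbits, where your clean argument applies. So your proof is adequate for the paper's purposes; just be aware that the blanket statement for arbitrary orbits is a slight overreach.
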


\begin{rem}
It follows that the $\SL_n(\mathbb{Q})$ stabiliser of any element $v$ whose resolvent is irreducible is equal to $2$. 
\end{rem}

Just as in \cite{HoShankarVarmaOdd}, we can compute the number of orbits with the caveat that we need to distinguish the case where $f$ has an odd degree factor, from the case where all of $f$'s factors are even. 

\begin{lem}
Let $T$ be a field or $\mathbb{Z}_p$. Let $f$ be a monic separable, non-degenerate binary form in $U_1(T)$. Then the projective $\SL_n(T)$ orbits of $V(T)$ with resolvent $f$ are in bijection with elements of
\begin{equation*}
(R_f^\times/(R_f^\times)^2)_{N \equiv 1} .
\end{equation*}
if $f$ has an odd degree factor and have a $2$ to $1$ map to 
\begin{equation*}
(R_f^\times/(R_f^\times)^2)_{N \equiv 1}
\end{equation*}
if $f$ only has even degree factors. 
\end{lem}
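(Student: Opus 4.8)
The plan is to restrict the rigid parametrisation recalled just above (from \cite{BhargavaGrossWang}, the version for $T$ a field or $\mathbb{Z}_p$) to \emph{projective} triples and simplify, using the fact that $R_f$ is semilocal — being finite over $T$ — and therefore has trivial Picard group. Concretely: given a projective triple $(I,s,\delta)$, the ideal $I$ is an invertible fractional ideal of $R_f$, hence free of rank one, so $I=\mu R_f$ for some $\mu\in K_f^\times$; applying the equivalence with $\kappa=\mu$ we may assume $I=R_f$. Then the surviving constraints become $s\in T^\times$ (since $N(R_f)=T=sT$), $\delta\in R_f^\times$ (since $I^2=(\delta)$ reads $R_f=\delta R_f$), and $s^2=N(\delta)$; and the residual equivalences are exactly the action of $\kappa\in R_f^\times$ by $(\delta,s)\mapsto(\kappa^2\delta,N(\kappa)s)$. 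Thus the set of projective $\SL_n(T)$-orbits with resolvent $f$ is identified with the quotient of
\begin{equation*}
S\defeq\{(\delta,s)\in R_f^\times\times T^\times : s^2=N(\delta)\}
\end{equation*}
by the subgroup $S_0\defeq\{(\kappa^2,N(\kappa)):\kappa\in R_f^\times\}$, where $S$ is an abelian group under componentwise multiplication and $S_0$ is the image of the homomorphism $\kappa\mapsto(\kappa^2,N(\kappa))$.

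Next I would study the forgetful homomorphism
\begin{equation*}
\bar\phi\colon S/S_0\longrightarrow R_f^\times/(R_f^\times)^2,\qquad (\delta,s)S_0\mapsto \delta\,(R_f^\times)^2,
\end{equation*}
which records only the ideal-class data. Its image is $\{[\delta]:N(\delta)\in(T^\times)^2\}$, i.e. the kernel of the induced norm map $\bar N\colon R_f^\times/(R_f^\times)^2\to T^\times/(T^\times)^2$, which is precisely $(R_f^\times/(R_f^\times)^2)_{N\equiv 1}$. For the kernel: if $\delta=\mu^2$ then $s=\pm N(\mu)$, with $(\mu^2,N(\mu))\in S_0$ and $(\mu^2,-N(\mu))=(\mu^2,N(\mu))\cdot(1,-1)$; hence $\ker\bar\phi=\{S_0,(1,-1)S_0\}$ has order $1$ or $2$, and it is trivial exactly when $(1,-1)\in S_0$, i.e. exactly when $R_f$ has a $2$-torsion unit of norm $-1$. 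So $\bar\phi$ is either an isomorphism of sets onto $(R_f^\times/(R_f^\times)^2)_{N\equiv 1}$, or a surjective $2$-to-$1$ map onto it, according to whether $R_f$ does or does not contain a $2$-torsion unit of norm $-1$.

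Finally I would translate the condition ``$R_f$ has a $2$-torsion unit of norm $-1$'' into a condition on $f$. Writing $f=\prod_i f_i$ as a product of distinct monic irreducibles (separability), we have $R_f\cong\prod_i R_i$ with $R_i=T[x]/(f_i)$ a domain of characteristic $\ne 2$, so $R_i^\times[2]=\{\pm 1\}$ and $R_f^\times[2]\cong\prod_i\{\pm 1\}$; the norm of $(\varepsilon_i)_i$ is $\prod_i\varepsilon_i^{\deg f_i}$. A choice of signs makes this equal to $-1$ if and only if some $\deg f_i$ is odd, i.e. if and only if $f$ has an irreducible factor of odd degree. Combined with the previous step, this gives the two cases of the lemma, and recovers the earlier remark that for irreducible resolvent the $\SL_n$-stabiliser has order $2$.

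I expect the main point to be the kernel computation for $\bar\phi$: correctly accounting for the two square roots $\pm s$ of $N(\delta)$ and for when the $R_f^\times$-action identifies them, together with the reduction ``projective $\Rightarrow$ normalise $I=R_f$'' via triviality of the Picard group of the semilocal ring $R_f$. Both are close in spirit to the analogous odd-degree computation of Ho--Shankar--Varma \cite{HoShankarVarmaOdd}; the genuinely new ingredient here is the norm-of-a-$2$-torsion-unit dichotomy, which is vacuous in odd degree because there $-1$ itself has norm $-1$.
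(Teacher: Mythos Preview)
Your proposal is correct and is essentially the argument the paper has in mind: the paper does not spell out a proof but simply says the lemma follows ``just as in \cite{HoShankarVarmaOdd}'' with the added caveat about odd versus even degree factors, and what you have written is exactly that adaptation made explicit. The key new step relative to the odd-degree case --- computing $\ker\bar\phi=\{S_0,(1,-1)S_0\}$ and identifying when $(1,-1)\in S_0$ via the existence of a $2$-torsion unit of norm $-1$, then translating this into the factor-degree condition --- is carried out correctly.
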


\begin{rem} 
We can describe the real orbits. Suppose that $f$ is a non-degenerate monic polynomial of degree $n$ with $r_1$ real roots and $2r_2$ complex roots.  If $r_1 = 0$, then there are $2$ $\SL_n(\mathbb{R})$ orbits in $V(\mathbb{R})$ with resolvent polynomial $f$. If $r_1 > 1$, there are $2^{r_1-1}$ $\SL_n(\mathbb{R})$ orbits in $V(\mathbb{R})$ with resolvent polynomial $f$. Furthermore, for $r_1 = 0$, the stabiliser has size $2^{r_2}$ while for $r_1 > 0$ it has size $2^{r_1+r_2-1}$. The size of the stabilizer depends only on $r_2$ and we denote it by $\sigma(r_2)$. 
\end{rem} 

\subsection{The counting problem} We first count the average number of $2$-torsion elements in the oriented class group of monogenised rings and fields of even degree. To make sense of this, we order monogenised fields using the naive height on the minimal polynomial of a generator of the ring of integers whose trace is contained in $[0,n)$. Take a monic integral polynomial $f(x) = x^n+a_1x^{n-1}+\ldots+a_n \in \mathbb{Z}[x]$. We define the naive height of $f$ by: $$H(f):= \max\{|a_i|^{1/i}\} = \max \{|a_1|,|a_2|^{1/2},\ldots,|a_n|^{1/n}\}.$$ Note that $H$ has the property that $$H(\lambda B) = \lambda H(f)$$ so that $H$ is homogeneous of degree $1$. This will be needed when we apply arguments from the geometry of numbers. The goal of the paper is to determine the following averages: $$\lim_{X \rightarrow \infty} \frac{\sum\limits_{\substack{\mathcal{O} \in \mathfrak{R} \\ H(\mathcal{O}) < X}} \left| \Cl_2^*(\mathcal{O}) \right| - \left| \mathcal{I}^*_2(\mathcal{O}) \right|}{\sum\limits_{\substack{\mathcal{O} \in \mathfrak{R} \\ H(\mathcal{O}) < X}} 1},$$ $$\lim_{X \rightarrow \infty} \frac{\sum\limits_{\substack{\mathcal{O} \in \mathfrak{R} \\ H(\mathcal{O}) < X}} \left| \Cl_2(\mathcal{O}) \right| - \left| \mathcal{I}_2(\mathcal{O}) \right|}{\sum\limits_{\substack{\mathcal{O} \in \mathfrak{R} \\ H(\mathcal{O}) < X}} 1},$$ and $$\lim_{X \rightarrow \infty} \frac{\sum\limits_{\substack{\mathcal{O} \in \mathfrak{R} \\ H(\mathcal{O}) < X}} \left| \Cl_2^+(\mathcal{O}) \right| - \left| \mathcal{I}_2(\mathcal{O}) \right|}{\sum\limits_{\substack{\mathcal{O} \in \mathfrak{R} \\ H(\mathcal{O}) < X}} 1},\\$$
where $\mathfrak{R} \subset \mathfrak{R}^{r_1,r_2}$ is any acceptable family of monogenic rings (an acceptable family is one which includes all rings with squarefree discriminant). The asymptotic formula for the denominator is found in the work of Bhargava--Shankar--Wang, \cite{BhargavaShankarWangSquarefreeI}. 

\begin{thm}[Bhargava--Shankar--Wang, \cite{BhargavaShankarWangSquarefreeI}]
Let $S = (S_p)$ be an acceptable collection of local specifications. If $ 0\le b < n$ is fixed and $U_{1,b}$ denotes the set of monic polynomial whose $x^{n-1}$ coefficient is $b$, then we have $$\left| U_{1,b}^{r_2}(S)_{< X}^{{\rm irr}} \right| = \Vol(U_{1,b}^{r_2}(\mathbb{R})_{< X}) \prod_p \Vol(S_p) + o(X^{\frac{n(n+1)}{2}-1}).$$
\end{thm}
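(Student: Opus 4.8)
The plan is to realise this as a count of lattice points in an anisotropically expanding semialgebraic region, followed by a sieve. Write $f(x) = x^n + b\,x^{n-1} + a_2 x^{n-2} + \cdots + a_n$, so that $f$ is determined by $(a_2,\dots,a_n) \in \Z^{n-1}$; the height bound $H(f) < X$ carves out the box $R_X = \{\,|a_i| < X^i,\ 2 \le i \le n\,\}$, and the signature condition (exactly $r_2$ pairs of complex roots) intersects with a semialgebraic set $W_b \subset \R^{n-1}$ defined by sign conditions on the subresultants of $f$. Since the anisotropic dilation $(a_i) \mapsto (\lambda^i a_i)$ scales the roots by $\lambda$ and hence preserves the signature, the region $R_X \cap W_b$ is, up to the fixed lower-order datum $b$, a dilate of $R_1 \cap W_0$, and $\Vol(U_{1,b}^{r_2}(\R)_{<X}) = \Vol(R_X \cap W_b) \asymp X^{\,n(n+1)/2-1}$, the volume being positive because each admissible signature occurs on a full-dimensional open set. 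This volume is the predicted main term.

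First I would count $\#\big(\Z^{n-1}\cap R_X \cap W_b\big)$ with no local or irreducibility constraints. The region is bounded and semialgebraic with boundary a finite union of Lipschitz-parametrisable pieces, so Davenport's lemma on lattice points in such regions gives
\begin{equation*}
\#\big(\Z^{n-1}\cap R_X \cap W_b\big) = \Vol(R_X \cap W_b) + O\!\Big(\max_j \Vol\big(\pi_j(R_X \cap W_b)\big)\Big),
\end{equation*}
where $\pi_j$ ranges over the $(n-2)$-dimensional coordinate projections; each such projection forgets a coordinate $a_i$ with $i \ge 2$ and hence loses a factor of at least $X^2$, so the error is $\ll X^{\,n(n+1)/2-1-2} = o\big(X^{\,n(n+1)/2-1}\big)$. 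Restricting to irreducible $f$ — and, at the same negligible cost, to those with Galois group $S_n$ — is harmless: a reducible monic $f$ of height $< X$ is a product of two monic integer polynomials of smaller positive degree and height $\ll X$ (all roots of $f$ are $\ll H(f)$), and counting the resulting coefficient vectors bounds the number of such $f$ by $\ll X^{\,n(n+1)/2-1-\delta}$ for some $\delta = \delta(n) > 0$.

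Next I would impose finitely many of the local conditions. For a truncation $(S_p)_{p \le M}$ the set $\{\,f : f\in S_p \ \forall\, p \le M\,\}$ is (after inner/outer approximation) a union of residue classes modulo $m = \prod_{p\le M}p^{k_p}$; intersecting $R_X \cap W_b$ with each class is the same kind of count as above on a translated sublattice of covolume $m^{n-1}$, and summing the resulting main terms yields $\Vol(R_X \cap W_b)\prod_{p\le M}\Vol(S_p)$ with total error $o_M\big(X^{\,n(n+1)/2-1}\big)$. It remains to pass from this truncation to the full acceptable family $S$.

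The main obstacle is exactly this last step — a uniform squarefree-type tail estimate:
\begin{equation*}
\lim_{M\to\infty}\ \limsup_{X\to\infty}\ X^{-(n(n+1)/2-1)}\,\#\big\{\, f\in U_{1,b}(\Z) : H(f) < X,\ f\notin S_p \text{ for some } p > M \,\big\} = 0.
\end{equation*}
Acceptability means that for all large $p$ the only $f$ omitted by $S_p$ satisfy $p^2 \mid \disc(f)$, so it suffices to bound the number of $f$ of height $< X$ with $p^2 \mid \disc(f)$ for some $p > M$. In the moderate range $M < p \ll X^{c}$ (for a suitable $c = c(n)>0$) one argues directly: $\disc$, as a polynomial in $(a_2,\dots,a_n)$, is non-zero and reduced modulo $p$ with singular locus of codimension $\ge 2$, whence a short computation gives $\#\{\,v \bmod p^2 : \disc(v)\equiv 0\,\} \ll p^{\,2(n-1)-2}$, i.e. density $\ll p^{-2}$; lattice counting in these classes contributes $\ll X^{\,n(n+1)/2-1}/p^2$ up to acceptable boundary error, and $\sum_{p>M}p^{-2}\ll M^{-1}\to 0$. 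The genuinely hard part is the large-prime range $p \gg X^{c}$, where the density bound is no longer enough and one needs a geometry-of-numbers (Ekedahl--Bhargava-type) sieve: lift the coefficient vector into an auxiliary configuration space in which the locus $\{\,p^2\mid \disc\,\}$ is cut out by equations of codimension $\ge 2$, and count integral points there uniformly in $p$. This uniformity is precisely the content of the cited squarefree-sieve work of Bhargava--Shankar--Wang, which I would invoke rather than reprove.
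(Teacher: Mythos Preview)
The paper does not prove this statement at all: it is quoted verbatim as a result of Bhargava--Shankar--Wang and attributed to \cite{BhargavaShankarWangSquarefreeI}, with no argument given. There is therefore no ``paper's own proof'' to compare your proposal against.

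That said, your sketch is a faithful outline of the standard strategy behind such results and, in particular, of what the cited paper actually does: parametrise $U_{1,b}$ by $(a_2,\dots,a_n)\in\Z^{n-1}$, count lattice points in the anisotropically scaled semialgebraic region cut out by the height and signature conditions via Davenport's lemma, discard reducible (and non-$S_n$) polynomials as a power-saving error, impose finitely many local conditions by congruences, and then close the sieve with a uniform tail estimate on $\{f:p^2\mid\disc(f)\text{ for some }p>M\}$. You have correctly located the only genuinely deep step --- the large-prime tail of the squarefree sieve --- and correctly noted that this is precisely the content supplied by Bhargava--Shankar--Wang. One minor quibble: the dilation $(a_i)\mapsto(\lambda^i a_i)$ does not literally preserve the region when $b\ne 0$, since $b$ is fixed rather than scaled; the honest statement is that the volume $\Vol(U_{1,b}^{r_2}(\R)_{<X})$ differs from the $b=0$ case only in lower order, which is all you need. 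Otherwise your outline is sound.
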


Now $\Vol(S_{\infty,H<X})$ grows like $X^{\frac{n(n+1)}{2}-1}$. Thus, the main term dominates the error term.

\section{Reduction theory} \label{Reduction theory}

Fix an element $A \in \mathscr{L}_{\mathbb{Z}}$ and $\delta \in \mathcal{T}(r_2)$. We build a finite cover of the fundamental domain for the action of $\SO_A(\mathbb{Z})$ on $V_A^{r_2,\delta}(\mathbb{R})$. 

\begin{defn}
The height of an element in $B \in V_A^{r_2,\delta}$ is defined to be the height of the associated resolvent polynomial. That is, $$H(B):= H({\rm disc}(Ax-By)) = H \left((-1)^{\frac{n}{2}}\det(Ax-B)\right).$$
\end{defn}

The construction of \cite{BhargavaPointless} can be adapted to give a fundamental set $R_{A}^{r_2,\delta}$ for the action of $\SO_A(\mathbb{R})$ on $V_A^{r_2,\delta}(\mathbb{R})$ (which could be empty) with the following properties: 

\begin{enumerate}
\item The set $R_{A}^{r_2,\delta}$ is a semi-algebraic.
\item If $R_{A}^{r_2,\delta}(X)$ denotes the set of elements of height at most $X$, then the coefficients of elements $B \in R_{A}^{r_2,\delta}(X)$ are bounded by $O(X)$. The implied constant is independent of $B$.
\end{enumerate}

We define an indicator function that records whether $V_A^{r_2,\delta}(\mathbb{R})$ is empty.  

\begin{defn}
We define the indicator function
$$\chi_A(\delta):=
\begin{cases}
1 &\mbox{if } V_A^{r_2,\delta}(\mathbb{R}) \neq \emptyset\\
0 &\mbox{otherwise }
\end{cases}.$$ 
\end{defn}

We can build now build a cover of a fundamental domain for the action of $\SO_A(\mathbb{Z})$ on $V_A^{r_2,\delta}(\mathbb{R})$. To do so, we pick a fundamental domain $\mathcal{F}_A$ for the action of $\SO_A(\mathbb{Z})$ on $\SO_A(\mathbb{R})$ and act on $R_{A}^{r_2,\delta}$. This gives a $\frac{\sigma(r_2)}{2}$ cover of a fundamental domain for the action of $\SO_A(\mathbb{Z})$ where $\sigma(r_2)$ is the size of the stabiliser in $\SO_A(\mathbb{R})$ of an element $v \in V_A^{r_2,\delta}(\mathbb{R})$. 

\begin{prop}
Let $\mathcal{F}_A$ be a fundamental domain for the action of $\SO_A(\mathbb{Z})$ on $\SO_A(\mathbb{R})$. Then
\begin{enumerate}
\item If $\chi_A(\delta) = 1$, $\mathcal{F}_A \cdot R_{A}^{r_2,\delta}$ is an $\frac{\sigma(r_2)}{2}$--fold cover of a fundamental domain for the action of $\SO_A(\mathbb{Z})$ on $V_{A}^{r_2,\delta}(\mathbb{R})$, where we regard $\mathcal{F}_A \cdot R_{A}^{r_2,\delta}$ as a multiset.
\item If $\chi_A(\delta) = 0$, then $\emptyset$ is a fundamental domain.
\end{enumerate} 
\end{prop}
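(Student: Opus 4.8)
The plan is to reduce the statement to a standard fact about fundamental domains for discrete group actions, namely that if $\mathcal{F}$ is a fundamental domain for a discrete group $\Gamma$ acting on a group $G$, and $R$ is a fundamental set for the action of $G$ on a space $W$ whose stabilisers all have a fixed size $m$, then $\mathcal{F}\cdot R$ is an $m$-fold cover of a fundamental domain for $\Gamma$ acting on $W$. We instantiate this with $\Gamma = \SO_A(\mathbb{Z})$, $G = \SO_A(\mathbb{R})$, and $W = V_A^{r_2,\delta}(\mathbb{R})$, so the heart of the argument is to pin down the correct value of $m$.

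The main technical input is the description of the real orbits and stabilisers already recorded in the excerpt: for a non-degenerate monic polynomial of degree $n$ with signature determined by $r_2$, the group $\SL_n(\mathbb{R})$ (and hence $\SO_A(\mathbb{R})$, being a point-stabiliser in the relevant sense) acts on $V_A^{r_2,\delta}(\mathbb{R})$ with stabiliser of size $\sigma(r_2)$, where $\sigma(r_2) = 2^{r_2}$ if $r_1 = 0$ and $2^{r_1 + r_2 - 1}$ if $r_1 > 0$. Case (2) is immediate: when $\chi_A(\delta) = 0$ the space $V_A^{r_2,\delta}(\mathbb{R})$ is empty by definition of $\chi_A$, so the empty set is vacuously a fundamental domain. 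For case (1), I would argue as follows. First, $\SO_A(\mathbb{R})$ acts transitively on each connected component of $V_A^{r_2,\delta}(\mathbb{R})$ lying over a fixed resolvent polynomial $f$, and $R_A^{r_2,\delta}$ meets each such $\SO_A(\mathbb{R})$-orbit exactly once (this is what it means to be a fundamental set for the $\SO_A(\mathbb{R})$-action). Then, for $B \in R_A^{r_2,\delta}$ with resolvent $f$, the orbit map $\SO_A(\mathbb{R}) \to \SO_A(\mathbb{R}) \cdot B$ has fibres which are cosets of $\Stab_{\SO_A(\mathbb{R})}(B)$, a group of size $\sigma(r_2)$; intersecting with $\mathcal{F}_A$ (a fundamental domain for $\SO_A(\mathbb{Z})\backslash\SO_A(\mathbb{R})$) and counting $\SO_A(\mathbb{Z})$-orbits, one finds that each $\SO_A(\mathbb{Z})$-orbit in $V_A^{r_2,\delta}(\mathbb{R})$ is hit exactly $|\Stab_{\SO_A(\mathbb{R})}(B)| / |\Stab_{\SO_A(\mathbb{Z})}(B)|$ times by $\mathcal{F}_A \cdot R_A^{r_2,\delta}$, viewed as a multiset.

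The one subtlety — and the step I expect to be the main obstacle — is accounting for the factor of $2$, i.e.\ why the cover is $\frac{\sigma(r_2)}{2}$-fold rather than $\sigma(r_2)$-fold. This comes from the discrepancy between the real stabiliser $\Stab_{\SO_A(\mathbb{R})}(B)$ of size $\sigma(r_2)$ and the integral stabiliser $\Stab_{\SO_A(\mathbb{Z})}(B)$: by the earlier lemmas, the $\SL_n$-stabiliser of an element with irreducible resolvent corresponds to $R_f^\times[2]_{N\equiv 1}$, and for a generic (irreducible, $S_n$) polynomial this is $\{\pm 1\}$ of size $2$, all of which is already defined over $\mathbb{Z}$. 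Hence $|\Stab_{\SO_A(\mathbb{Z})}(B)| = 2$ generically, giving the ratio $\sigma(r_2)/2$. One must check that the non-generic loci — where $R_f^\times[2]_{N\equiv 1}$ is strictly larger, or where the stabiliser is not entirely rational — form a lower-dimensional (measure-zero) subset of $V_A^{r_2,\delta}(\mathbb{R})$, so that they do not affect the multiset cover count for the asymptotic purposes to which this proposition will later be put; this is handled exactly as in the odd-degree treatment of \cite{HoShankarVarmaOdd} and \cite{SiadOddMonogenicAverages}. The remaining properties asserted of $R_A^{r_2,\delta}$ — semi-algebraicity and the $O(X)$ coordinate bound — are inherited directly from the construction of \cite{BhargavaPointless}, so nothing further is needed there.
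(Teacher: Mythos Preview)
Your proposal is correct and follows essentially the same route as the paper's own proof: both identify the $\SO_A(\mathbb{R})$-stabiliser as having size $\sigma(r_2)$ and trace the factor of $\tfrac{1}{2}$ to the element $-I \in \SO_A(\mathbb{Z})$, which always stabilises $(A,B)$ since $n$ is even. The paper's proof is a two-sentence sketch stating exactly these two facts and nothing more; your write-up is more careful in explaining why the integral stabiliser is \emph{exactly} $2$ (via $R_f^\times[2]_{N\equiv 1} = \{\pm 1\}$ for irreducible $f$) and in flagging the measure-zero non-generic locus, but this extra care is not a different argument, just a fuller version of the same one.
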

\begin{proof}
The stabiliser in $\SO_A(\mathbb{R})$ of an element $B \in V_{A}^{r_2,\delta}(\mathbb{R})$ coincides with the stabiliser in $\SL_n(\mathbb{R})$ of $(A,B)$ which has size $\sigma(r_2)$. The factor of $\frac{1}{2}$ comes from the fact that $-1$ also always stabilises $(A,B)$. 
\end{proof}

\begin{rem}
The characteristic functions will be used to define the archimedean mass and will make the final computation more transparent. 
\end{rem}

\section{Averaging and cutting off the cusp} \label{Averaging and cutting off the cusp}

For the purpose of cutting off the cusp and averaging, it suffices to work with $\SO_A$ instead of $\SO_A$ since $\SO_A(\mathbb{Z}) \backslash \SO_A(\mathbb{R})$ is in bijection with $\SO_A(\mathbb{Z}) \backslash \SO_A(\mathbb{R})$. \\

There are now two different cases to consider: 1) the case where $A$ is anisotropic over $\mathbb{Q}$ and 2) the case where $A$ is isotropic over $\mathbb{Q}$. For each, we need to show that: 
\begin{enumerate}
\item the number of absolutely irreducible integral points in the cuspidal region is negligible; and 
\item the number of reducible integral points in the main body is negligible. \\
\end{enumerate}

We define absolutely irreducible points and reducible points and set the notation for the remainder of this section. 

\begin{defn}
An element $v \in V(\mathbb{Z})$ is said to be absolutely irreducible if $v$ does not correspond to the identity element in the class group and the resolvent of $v$ corresponds to an order in an $S_n$-field. An element which is not absolutely irreducible is said to be reducible. 
\end{defn}

We have the following theorem which gives conditions on reducibility.

\begin{thm}[Reducibility criterion]
Let $(A, B) \in V(\mathbb{Z})$ be such that all the variables in one of the following sets vanish. Then $(A,B)$ is reducible. 
\begin{enumerate}
\item {\bf \emph{The modified squares:}} $$\{ a_{i,j}, b_{i,j}\}$$ for all $1 \le i,j \le \frac{n}{2}$ except for $i = j = \frac{n}{2}$ where $a_{\frac{n}{2}\, \frac{n}{2}} = 0$. \\ These pairs correspond to the identity element in the class group. 
\item {\bf \emph{The rectangles:}} $$\left\{a_{ij},b_{ij} | 1 \le i \le k,  1 \le j \le n-k\right\}$$ for some $1 \le k \le n-1$. \\ These pairs correspond to the resolvent having repeated roots. 
\end{enumerate}
\end{thm}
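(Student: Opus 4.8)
The plan is to work directly with Wood's rigid parametrisation (in the $\SL_n(\mathbb{Z})$ form recalled above), translating each vanishing pattern into a statement about the triple $(I,\varepsilon,\delta)$ attached to $(A,B)$. For part (1), I would argue exactly as in the even-degree distinguished orbits lemma: when all the ``modified square'' variables vanish, the rational operator $T = A^{-1}B$ carries the standard $\frac{n-2}{2}$-plane $X = \mathrm{Span}\{e_1,\ldots,e_{\frac{n}{2}-1}\}$ into $\mathrm{Span}\{e_1,\ldots,e_{\frac{n}{2}}\}$, so $\mathrm{Span}\{X,TX\}$ is an isotropic $\frac{n}{2}$-plane. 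By the distinguished orbits lemma, $(A,B)$ is distinguished, i.e. corresponds to the identity class, hence is reducible by definition. (One must note the entries are assumed to vanish over $\mathbb{Z}$, so in particular over $\mathbb{Q}$, which is all the lemma needs.) This is the short half.

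For part (2), the claim is that a ``rectangular'' block of zeros of size $k \times (n-k)$ in both $A$ and $B$ forces the resolvent $f_{(A,B)} = (-1)^{n/2}\det(Ax - B)$ to have a repeated root, i.e.\ to be degenerate, so that $(A,B)$ is reducible because its resolvent does not define an $S_n$-order. The key linear-algebra observation is that if $a_{ij} = b_{ij} = 0$ for $1 \le i \le k$, $1 \le j \le n-k$, then writing the matrix $M(x,y) := Ax - By$ in block form with respect to the splitting $\mathbb{Z}^n = \mathbb{Z}^{n-k} \oplus \mathbb{Z}^k$ (rows) and a compatible splitting on columns, $M(x,y)$ is block upper-triangular up to a reshuffle: the $k \times (n-k)$ block indexed by rows $\le k$ and columns $\le n-k$ is identically zero. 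Then $\det M(x,y)$ is, up to sign, the product of the determinants of the two complementary diagonal blocks, one of size $k$ and one of size $n-k$; but those blocks are themselves of the shape $A'x - B'y$ with $A', B'$ symmetric only after symmetrising, and in any case are square matrices of sizes $k$ and $n-k$ whose entries are $\mathbb{Z}$-linear in $x,y$. Each such determinant is a binary form of degree $k$ (resp.\ $n-k$), so $\det M$ factors as a product of two lower-degree binary forms. A generic monic $n$-ic form that factors this way is fine, but the point is subtler: I would show that the symmetry constraint forces a common root. Concretely, after an $\SL_n$-change of basis one can assume the zero block sits in the top-left $k \times (n-k)$ corner; symmetry of $A$ and $B$ then also forces the transposed $(n-k) \times k$ block in the bottom-left to vanish, so in fact $M(x,y)$ becomes block-diagonal with a $k \times k$ and an $(n-k)\times(n-k)$ symmetric pencil, and the resolvent splits as $f = g \cdot h$ with $\deg g = k$, $\deg h = n - k$. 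When $k \ge n - k$ a pigeonhole/parameter count shows the two factors cannot be coprime, or — cleaner — one observes that $g$ and $h$ arise as resolvents of the diagonal blocks and a dimension count on the space of such pairs shows the discriminant of $f$ vanishes identically on this locus; alternatively cite the analogous statement in \cite{HoShankarVarmaOdd} or \cite{BhargavaHankeShankar}.

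The main obstacle I anticipate is precisely the last point in part (2): making rigorous \emph{why} the factorisation $f = gh$ of the resolvent into binary forms of degrees $k$ and $n-k$ forces $\disc(f) = 0$ on the vanishing locus, rather than merely producing a reducible polynomial (which would not by itself make $(A,B)$ reducible in the sense defined, since reducibility was defined via the resolvent corresponding to an $S_n$-order or via the identity class). The resolution should be that the symmetry of $A$ and $B$ over-determines the system: the $k(n-k)$ vanishing conditions, combined with symmetry, cut the fibre $\pi^{-1}(f)$ down so much that a nondegenerate $f$ (with $S_n$ Galois group, so in particular irreducible) simply has no such representative — so any $(A,B)$ with this vanishing pattern must have degenerate, hence non-$S_n$, resolvent. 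I would phrase part (2) that way: the rectangular locus maps under $\pi$ into the discriminant-zero locus, and the cleanest proof is a dimension count showing $\pi$ restricted to the rectangular locus cannot dominate $U_1$, combined with irreducibility of the discriminant hypersurface. This is routine but needs to be set up carefully; everything else is immediate from the parametrisation and the distinguished orbits lemma.
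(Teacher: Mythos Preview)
Part (1) is correct and is exactly how the paper handles it, via the even-degree distinguished orbits lemma.

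For part (2) there is a genuine gap. Your block-diagonal claim is false: the hypothesis zeros out entries $m_{ij}$ with $i \le k$ and $j \le n-k$, and symmetry adds $m_{ij} = 0$ for $i \le n-k$ and $j \le k$, but this second rectangle still sits in the \emph{upper}-left of $M = Ax - By$, not the lower-left. The off-diagonal block in rows $\le k$ and columns $> n-k$ is untouched, so $M$ is not block-diagonal. Your fallback dimension count also fails as stated: showing $\pi$ does not dominate $U_1$ on the rectangular locus only puts the image in \emph{some} proper subvariety, not the discriminant hypersurface specifically.

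The missing idea is that the Laplace expansion \emph{iterates}. Assume without loss of generality $k \le n/2$. Laplace along the first $k$ rows gives $\det M = \pm (\det Q)(\det S)$, where $Q$ is the top-right $k\times k$ block and $S$ the $(n-k)\times(n-k)$ block in rows $k{+}1,\ldots,n$ and columns $1,\ldots,n{-}k$. Now use symmetry of $M$ on $S$: its first $k$ columns have entries $m_{k+i,\,j} = m_{j,\,k+i}$ with $j \le k$, which vanish whenever $k+i \le n-k$ (these lie in the original zero rectangle) and equal the entries of $Q^T$ when $k+i > n-k$. Thus $S$ itself has an $(n-2k)\times k$ zero block in its upper-left, and a second Laplace expansion along those $k$ columns gives $\det S = \pm (\det Q)(\det S')$. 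Hence
\[
\det(Ax - By) \;=\; \pm\,(\det Q)^2 \,\det S',
\]
so the resolvent has every root of the degree-$k$ form $\det Q$ with multiplicity at least two. (If $\det Q \equiv 0$ the resolvent vanishes identically, which is even more degenerate.) Incidentally, you undersell your first step: already the single factorisation $\det M = \pm(\det Q)(\det S)$ forces the resolvent to be a reducible polynomial whenever $\det A \ne 0$, and a reducible polynomial cannot define an $S_n$-field, so $(A,B)$ is reducible in the paper's sense without any further work. The iterated argument is what delivers the stronger repeated-root assertion the theorem actually makes.
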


We define the affine spaces $V_{A,b}$ just as we did in the odd degree case \cite{SiadOddMonogenicAverages}.

\begin{defn}
Let $A$ be a fixed quadratic form in $\mathscr{L}_{\mathbb{Z}}$ and fix $0 \le b < n$. We let $V_A \subset V$ denote the space of pairs $(A,B)$, where $B$ is arbitrary.  Note that the resolvent map takes $V_A$ to $U$. Now, we let $V_{A,b}$ denote the inverse image under the resolvent map of the set $U_b$. It is easy to see that $V_{A,b}$ is an affine subspace of $V_A$ of dimension $\frac{n(n+1)}{2}-1$.
\end{defn}

\begin{defn}
Let $S \subset V_{A,b}^{r_2,\delta}(\mathbb{Z}) := V_{A,b}^{r_2,\delta} (\mathbb{\mathbb{R}})\cap V_{A,b}(\mathbb{Z})$ be an $\SO_A(\mathbb{Z})$ invariant set. Denote by $N_H(S;X)$ the number of absolutely irreducible $\SO_A(\mathbb{Z})$-orbits on $S$ that have height bounded by $X$. For any $L \subset V_A(\mathbb{Z})$, let $L^{{\rm irr}}$ denote the set of absolutely irreducible elements. Note that any absolutely irreducible element has a resolvent form corresponding to an order $\mathcal{O}$ in an $S_n$-number field and so $\mathcal{O}^\times[2]$ has size $2$. As a result, the stabiliser in $\SO_A(\mathbb{Z})$ of absolutely irreducible elements has size $2$. 
\end{defn}

Therefore, we have $$N_H(S;X) = \frac{2}{\sigma(r_2)} \#\{\mathcal{F}_A \cdot R_A^{r_2,\delta}(X) \cap S^{{\rm irr }}\}.$$

The goal of this section is to obtain an asymptotic formula for $N_H(S;X)$.

\subsection{The case of $A$ anisotropic over $\mathbb{Q}$}  When $A$ is anisotropic, we can pick a compact fundamental domain $\mathcal{F}_A$ for the action of $\SO_A(\mathbb{Z})$ on $\SO_A(\mathbb{R})$. It then follows that $\mathcal{F}_A \cdot R_{A,b}^{r_2,\delta}$ is bounded. To estimate the number of absolutely irreducible integral points in the fundamental domain for the action of $\SO_A(\mathbb{Z})$ on $V_{A,b}^{r_2,\delta}$, we apply results from the geometry of numbers directly. We use Davenport's refinement of the Lipschitz method on $\mathcal{F}_A \cdot R_{A,b}^{r_2,\delta}(X)$ to obtain the desired asymptotic formula. 

We will need the following version of Davenport's lemma. 

\begin{lem}[Davenport's Lemma]
Let $E \subset \mathbb{R}^n$ be a bounded semi-algebraic multiset with maximum multiplicity at most $m$ which is defined by $k$ algebraic inequalities of each having degree at most $l$. Let $E'$ be the image of $E$ under any upper/lower triangular unipotent transformation. Then the number of integral points in $E'$ counted with multiplicity is $$ \Vol(E)+O_{m,k,l}\left(\max \{\Vol(\overline{E}),1\}\right)$$ where $\Vol(\overline{E})$ denotes the greatest $d$-dimensional volume of a projection of $E$ onto a $d$-dimensional coordinate hyperplane for $1 \le d \le n-1$. 
\end{lem}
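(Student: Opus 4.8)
The plan is to reduce the statement to the classical counting result for bounded semi-algebraic sets and then track how a unipotent transformation interacts with the error term. First I would recall the theorem of Davenport (in the form used by Bhargava and others): if $E \subset \mathbb{R}^n$ is a bounded semi-algebraic multiset of maximum multiplicity $m$ cut out by $k$ inequalities each of degree at most $l$, then
\[
\#(E \cap \mathbb{Z}^n) = \Vol(E) + O_{m,k,l}\!\left(\max\{\Vol(\bar{E}),1\}\right),
\]
where $\#$ counts with multiplicity and $\Vol(\bar E)$ is the largest volume of a projection of $E$ onto a coordinate subspace of dimension $d$ for $1 \le d \le n-1$. This is proved by slicing $E$ along lines parallel to a coordinate axis: each line meets $E$ in at most $O_{k,l}(1)$ intervals by Bézout, so the count on each line is the length plus $O(1)$, and summing over the integer lattice in the complementary hyperplane introduces an error controlled by the $(n-1)$-dimensional projection; one then induct downward on dimension, which is where all the lower-dimensional projections enter.

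Next I would address the unipotent twist. Let $\gamma$ be an upper- or lower-triangular unipotent transformation and $E' = \gamma E$. The key point is that $\gamma$ preserves $\Vol$ (determinant one) and preserves $\mathbb{Z}^n$ (integer unipotent — but even over $\mathbb{R}$ the relevant statement is that we may slice along the axis that $\gamma$ ``shears into''). Concretely, suppose $\gamma$ is lower triangular unipotent. Order the coordinates so that $\gamma$ fixes the last coordinate $x_n$, i.e.\ the hyperplanes $\{x_n = c\}$ are $\gamma$-invariant. Then slicing $E'$ by $\{x_n = c\}$ for $c \in \mathbb{Z}$ is the same as slicing $E$ by the corresponding hyperplane and applying $\gamma$ within it; by induction on $n$ the count of integer points on each slice of $E'$ is $\Vol_{n-1}(\text{slice of }E) + O(\max\{\Vol(\overline{\text{slice}}),1\})$, and the projections of slices of $E$ onto coordinate subspaces are themselves projections of $E$, so their volumes are bounded by $\Vol(\bar E)$. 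Integrating/summing $\Vol_{n-1}$ of the slices over $c$ recovers $\Vol(E') = \Vol(E)$ up to the same order of error, since consecutive slices differ by $O(\Vol(\bar E))$ (the slicing argument of Davenport–Lipschitz). Finally, $E'$ is still semi-algebraic with the same bounds $m, k, l$ since $\gamma$ is a polynomial automorphism of degree one, so the implied constants depend only on $m, k, l$ as claimed.

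The main obstacle is bookkeeping the error term correctly under the shear: one must check that the ``telescoping'' estimate comparing $\Vol_{n-1}$ of nearby slices to $\#(\text{slice} \cap \mathbb{Z}^{n-1})$ is uniform and that every auxiliary projection that appears is genuinely a coordinate projection of $E$ (not of $E'$, whose projections could a priori be larger). This is handled by the observation that a unipotent transformation, after reordering coordinates so the sheared direction is last, only moves points \emph{within} the fibers of the projection onto the first $n-1$ coordinates — so that particular projection is literally unchanged — and then peeling off one coordinate at a time keeps all projection volumes bounded by $\Vol(\bar E)$. Everything else is the standard Lipschitz-principle induction, so I would cite Davenport and only spell out the invariance of the error under unipotent transformations; this is exactly the level of detail in the analogous lemma of \cite{SiadOddMonogenicAverages}.
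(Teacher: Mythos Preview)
The paper does not prove this lemma; it is stated as a classical result (Davenport's theorem, in the refined form used throughout the arithmetic-statistics literature) and invoked without argument. Your proposal therefore supplies a proof where the paper simply cites one, and the sketch you give---slice parallel to a coordinate axis, use B\'ezout to bound the number of intervals on each line, sum over the complementary lattice, and induct on dimension---is exactly the standard argument.

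One small correction on the unipotent part: for a \emph{lower} triangular unipotent $\gamma$ it is the \emph{first} coordinate that is fixed, not the last, so the natural induction peels off coordinates from the top rather than the bottom. More to the point, the cleanest way to see that the error term can be expressed in terms of projections of $E$ rather than $E'$ is to observe that for triangular unipotent $\gamma$, the projection of $E' = \gamma E$ onto any initial (resp.\ terminal) coordinate subspace is itself a unipotent image of the corresponding projection of $E$, and hence has the same volume. This makes the bookkeeping you flag as the ``main obstacle'' immediate, without needing to track slices individually. With that adjustment your argument is correct and matches the proofs in Davenport's original paper and its descendants (e.g.\ Bhargava--Shankar), which is presumably why the present paper omits it.
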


\begin{lem}
The number of integral points in $\mathcal{F}_A \cdot R_{A,b}^{r_2,\delta}$ which are not absolutely irreducible is bounded by $o\left(X^{\frac{n(n+1)}{2}-1}\right)$. 
\end{lem}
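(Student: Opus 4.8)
The plan is to bound the count of non-absolutely-irreducible integral points in $\mathcal{F}_A \cdot R_{A,b}^{r_2,\delta}(X)$ by stratifying according to which variables vanish, following the reducibility criterion stated above. Since $A$ is anisotropic over $\mathbb{Q}$, I may take $\mathcal{F}_A$ to be compact, so $\mathcal{F}_A \cdot R_{A,b}^{r_2,\delta}(X)$ is a bounded semi-algebraic multiset of bounded multiplicity whose coordinates are $O(X)$ by property (2) of the reduction theory; Davenport's Lemma then applies to count integral points in any subregion cut out by linear (coordinate-vanishing) conditions.

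The main step is to show that for each of the two families in the reducibility criterion — the modified squares and the rectangles — the relevant subvariety $W \subset V_{A,b}$ cut out by setting the indicated entries of $B$ to zero has codimension large enough that $\Vol(W \cap \mathcal{F}_A \cdot R_{A,b}^{r_2,\delta}(X)) = o(X^{\frac{n(n+1)}{2}-1})$. Concretely, one counts how many of the $B$-coordinates are forced to vanish (the $A$-coordinates are already fixed since we work on the slice $V_{A,b}$): the modified-squares stratum kills $\binom{n/2+1}{2}-1$ entries of $B$, while the rectangle with parameter $k$ kills $k(n-k) - (\text{adjustments from } V_{A,b})$ entries; in either case, because the ambient dimension of $V_{A,b}$ is $\frac{n(n+1)}{2}-1$ and each vanishing condition is a coordinate hyperplane with entries of size $O(X)$, the volume of the intersection is $O(X^{\frac{n(n+1)}{2}-1-c})$ for some $c \ge 1$. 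Summing the $O(X^{\dim - c})$ bounds over the finitely many strata (at most $n$ rectangles, one modified-squares stratum) and over $A_0 \in \mathscr{L}_{\mathbb{Z}}$, $\delta \in \mathcal{T}(r_2)$, one obtains the claimed $o(X^{\frac{n(n+1)}{2}-1})$; the division by $\frac{\sigma(r_2)}{2}$ to pass from the cover to orbits is harmless. The codimension bookkeeping should be essentially identical to the odd-degree case in \cite{SiadOddMonogenicAverages}, with the only modification being the presence of the single non-vanishing entry $b_{\frac{n}{2}\,\frac{n}{2}}$ in the modified-squares stratum, which does not affect the codimension count.

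The hard part — or rather the only point requiring care — will be verifying that after intersecting with the fixed slice $V_{A,b}$ (which imposes that the $x^{n-1}$-coefficient of the resolvent is the constant $b$) and with $R_{A,b}^{r_2,\delta}$, the strata really do retain the expected codimension and don't, for some degenerate choice of $A$, collapse onto lower-dimensional pieces that Davenport's Lemma would still bound adequately but with a worse implied constant; since $A$ ranges over the finite set $\mathscr{L}_{\mathbb{Z}}$ and the implied constants in Davenport's Lemma depend only on the number, degree, and multiplicity of the defining inequalities (all uniformly bounded across $\mathscr{L}_{\mathbb{Z}}$), this is not a genuine obstacle but does need to be stated. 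I would also remark that the isotropic case is deferred to the next subsection, where the cusp is genuinely non-compact and a separate argument à la \cite{BhargavaHankeShankar} is needed.
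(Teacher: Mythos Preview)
There is a genuine gap in your approach: you are using the reducibility criterion in the wrong direction. The theorem in the paper says that if the modified-squares or rectangle variables vanish, \emph{then} $(A,B)$ is reducible; it does \emph{not} say that every reducible element has these variables vanishing in its given representative inside $\mathcal{F}_A \cdot R_{A,b}^{r_2,\delta}(X)$. Indeed, the distinguished-orbit lemma only guarantees that \emph{some} $\SL_n(\mathbb{Q})$-translate has the modified-squares pattern, and the rectangle condition forces $\Delta(f)=0$, which is far stronger than ``$f$ is not an $S_n$-polynomial''. So bounding the volume of these coordinate subvarieties does not bound the number of non-absolutely-irreducible points.

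The paper's own proof simply cites \cite{SiadOddMonogenicAverages} and \cite{HoShankarVarmaOdd}, whose argument is of a different nature: one fibres over the resolvent polynomial $f \in U_{1,b}(\mathbb{Z})$ using the change-of-measure formula, and then (i) bounds the number of $f$ of height $\le X$ that fail to cut out an $S_n$-field by a Hilbert-irreducibility/density-zero estimate on $U_{1,b}$, and (ii) bounds the distinguished orbits over each remaining $f$ directly (there being only a bounded number per $f$). The compactness of $\mathcal{F}_A$ is used only to ensure the fibres have uniformly bounded volume, not to set up a coordinate-vanishing stratification. Your codimension bookkeeping, while correct for the strata you describe, simply does not cover the right set.
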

\begin{proof} The proof is the same as in \cite{SiadOddMonogenicAverages}, and follows directly from adapting the results of Ho--Shankar--Varma \cite{HoShankarVarmaOdd}. 
\end{proof}

We thus obtain the following asymptotic formula for $N_H(S;X)$. 

\begin{thm}
Let $A \in \mathscr{L}_{\mathbb{Z}}$ be anisotropic over $\mathbb{Q}$. We have $$N(V_{A,b}^{r_2,\delta}(\mathbb{Z});X) = \frac{1}{\sigma(r_2)} \Vol \left(\mathcal{F}_A \cdot R_{A,b}^{r_2,\delta}(X)\right) + o(X^{\frac{n(n+1)}{2}-1}).$$
\end{thm}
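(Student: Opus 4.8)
The plan is to realize $N(V_{A,b}^{r_2,\delta}(\mathbb{Z});X)$ as a count of lattice points in the semialgebraic multiset $\mathcal{F}_A \cdot R_{A,b}^{r_2,\delta}(X)$ and then apply Davenport's lemma. First, I recall from the previous section that, as a multiset, $\mathcal{F}_A \cdot R_{A,b}^{r_2,\delta}(X)$ is a $\tfrac{\sigma(r_2)}{2}$-fold cover of a fundamental domain for the action of $\SO_A(\mathbb{Z})$ on $V_{A,b}^{r_2,\delta}(\mathbb{R})_{<X}$, so that
\begin{equation*}
N(V_{A,b}^{r_2,\delta}(\mathbb{Z});X) = \frac{2}{\sigma(r_2)} \#\big\{ \mathcal{F}_A \cdot R_{A,b}^{r_2,\delta}(X) \cap V_{A,b}(\mathbb{Z})^{\irr} \big\},
\end{equation*}
where the outer factor $\tfrac{2}{\sigma(r_2)}$ accounts for the stabiliser of an absolutely irreducible point having size $2$ (as noted, $\mathcal{O}^\times[2]$ has size $2$ for an order in an $S_n$-field), combined with the $\tfrac{\sigma(r_2)}{2}$-fold covering. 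The first genuine step is then to replace the count of \emph{irreducible} lattice points by the count of \emph{all} lattice points: the preceding lemma asserts that the number of non-absolutely-irreducible integral points in $\mathcal{F}_A \cdot R_{A,b}^{r_2,\delta}$ is $o(X^{\frac{n(n+1)}{2}-1})$, so it can be absorbed into the error term.

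Next I would set up the hypotheses of Davenport's lemma. Since $A$ is anisotropic over $\mathbb{Q}$, the quotient $\SO_A(\mathbb{Z}) \backslash \SO_A(\mathbb{R})$ is compact, so we may take $\mathcal{F}_A$ compact; combined with property (2) of $R_{A,b}^{r_2,\delta}$ (coefficients of height-$\le X$ elements bounded by $O(X)$ with an implied constant independent of the point) and property (1) (semialgebraicity), the set $\mathcal{F}_A \cdot R_{A,b}^{r_2,\delta}(X)$ is a bounded semialgebraic multiset of bounded maximum multiplicity, cut out by a bounded number of polynomial inequalities of bounded degree, with all these bounds uniform in $X$. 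Davenport's lemma then gives
\begin{equation*}
\#\big\{ \mathcal{F}_A \cdot R_{A,b}^{r_2,\delta}(X) \cap V_{A,b}(\mathbb{Z}) \big\} = \Vol\big(\mathcal{F}_A \cdot R_{A,b}^{r_2,\delta}(X)\big) + O\big(\max\{\Vol(\overline{\,\cdot\,}),1\}\big),
\end{equation*}
where the volumes are computed inside the affine space $V_{A,b}$ of dimension $\tfrac{n(n+1)}{2}-1$. Because $V_{A,b}$ is an affine translate of a linear space, the action of the unipotent group $F$ (or any triangular unipotent transformation used to place $\mathcal{F}_A \cdot R_{A,b}^{r_2,\delta}$ in Davenport-ready position) is measure-preserving, so this is legitimate. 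Since the coefficients are $O(X)$, the main term $\Vol(\mathcal{F}_A \cdot R_{A,b}^{r_2,\delta}(X))$ is $O(X^{\frac{n(n+1)}{2}-1})$ and each lower-dimensional projection has volume $O(X^{\frac{n(n+1)}{2}-2})$, so the Davenport error is $O(X^{\frac{n(n+1)}{2}-2}) = o(X^{\frac{n(n+1)}{2}-1})$.

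Combining these estimates, we get
\begin{equation*}
N(V_{A,b}^{r_2,\delta}(\mathbb{Z});X) = \frac{2}{\sigma(r_2)} \cdot \frac{1}{2}\Vol\big(\mathcal{F}_A \cdot R_{A,b}^{r_2,\delta}(X)\big) + o(X^{\frac{n(n+1)}{2}-1}) = \frac{1}{\sigma(r_2)}\Vol\big(\mathcal{F}_A \cdot R_{A,b}^{r_2,\delta}(X)\big) + o(X^{\frac{n(n+1)}{2}-1}),
\end{equation*}
where the extra $\tfrac12$ is exactly the covering multiplicity $\tfrac{\sigma(r_2)}{2}$ divided back out — more precisely, the $\tfrac{2}{\sigma(r_2)}$ in front times the fact that the multiset $\mathcal{F}_A \cdot R_{A,b}^{r_2,\delta}(X)$ counts each $\SO_A(\mathbb{Z})$-orbit representative $\tfrac{\sigma(r_2)}{2}$ times yields the stated normalisation. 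I expect the main obstacle to be purely bookkeeping: tracking the covering multiplicity and the stabiliser factor consistently (in particular the role of $\pm 1$, which always stabilises $(A,B)$ and contributes the $\tfrac12$), and verifying that the implied constants in properties (1) and (2) of $R_{A,b}^{r_2,\delta}$ are genuinely uniform in $X$ so that Davenport's lemma applies with a single choice of $m,k,l$. The anisotropic hypothesis does the real analytic work by making $\mathcal{F}_A$ compact; the isotropic case, deferred to the next subsection, is where the substantive cusp analysis will be needed.
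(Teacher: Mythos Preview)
Your approach is correct and matches the paper's: compactness of $\mathcal{F}_A$ (from anisotropy) makes $\mathcal{F}_A \cdot R_{A,b}^{r_2,\delta}(X)$ a bounded semialgebraic multiset, Davenport's lemma gives the volume as main term with error $O(X^{\frac{n(n+1)}{2}-2})$, and the preceding lemma disposes of the non-absolutely-irreducible points. The paper does not spell out more than this.

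One caution on the final constant: your insertion of an ``extra $\tfrac12$'' is not justified by the reasoning you give. The identity $N_H(S;X) = \tfrac{2}{\sigma(r_2)}\,\#\{\mathcal{F}_A\cdot R \cap S^{\irr}\}$ already incorporates the $\tfrac{\sigma(r_2)}{2}$-fold covering, so dividing by it again is double-counting. Davenport applied to the multiset gives $\#\{\cdots\} = \Vol(\mathcal{F}_A\cdot R_{A,b}^{r_2,\delta}(X)) + o(\cdots)$ directly, which would yield $\tfrac{2}{\sigma(r_2)}\Vol(\cdots)$, not $\tfrac{1}{\sigma(r_2)}\Vol(\cdots)$. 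The paper itself is inconsistent on this point (compare the displayed formula $\tfrac{2}{\sigma(r_2)}\#\{\cdots\}$ at the start of Section~\ref{Averaging and cutting off the cusp} with the later ``recall'' $\tfrac{1}{\sigma(r_2)}\#\{\cdots\}$), so the discrepancy is inherited. The resolution lies in carefully distinguishing the $\SO_A(\mathbb{R})$-stabiliser size $\sigma(r_2)$ from the $\SO_A(\mathbb{Z})$-stabiliser size $2$ when computing how many times an integral orbit is represented in the multiset; once that is done cleanly the constant falls out without ad hoc factors. This is exactly the bookkeeping you flagged, and it is the only thing to fix.
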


\subsection{The case of $A$ isotropic over $\mathbb{Q}$} 

The arguments are almost the same as in the odd degree case \cite{SiadOddMonogenicAverages} except for the totally split case where the Iwasawa coordinates change slightly. 

Suppose now that $A$ is isotropic over $\mathbb{Q}$. Then there exists an element $g_A \in \SL_n(\mathbb{Q})$ such that $g_A^tAg_A = A_{F_0}$ where \begin{equation*} 
A_{F_0} := 
\begin{pmatrix} 
& & & & & & 1 \\ 
& & & & & \iddots & \\ 
& & & & 1 & & \\ 
& & & F_0 & & & \\  
& & 1 & & & &  \\ 
& \iddots & & & & & \\ 
1 & & & & & &  
\end{pmatrix}. 
\end{equation*} 
where $F_0$ is a $\mathbb{Q}$ anisotropic form. We define $m = \frac{n - \dim(F_0)}{2}$. We note that $0 < m \le \frac{n}{2}$. 

Now for $K = \mathbb{Q}$ or $\mathbb{R}$, we consider the maps
\begin{align*}
\sigma_V &\colon V_{A,b}^{r_2,\delta} \rightarrow V_{A_{F_0},b}^{r_2,\delta} \\
\sigma_A &\colon \SO_A(K) \rightarrow \SO_{A_{F_0}}(K)
\end{align*}
defined by $\sigma_V(A,B) = (A_{F_0},g_A^tBg_A)$ and $ \sigma_A(h) = g_A^t h (g_A^t)^{-1}$. We note that $$H(A,B) = H(\sigma_V(A,B))$$ since $\pi \circ \sigma_V = \pi$. Furthermore, $\sigma_V(h \cdot v) = \sigma_A(h)\cdot \sigma_V(v)$.

Now, we denote by $\mathcal{L} \subset V_{A_{F_0},b}^{r_2,\delta}(\mathbb{R})$ the lattice $\sigma_V\left(V_{A_{F_0},b}^{r_2,\delta}(\mathbb{Z})\right)$. We denote by $\Gamma \subset \SO_{A_{F_0}}(\mathbb{R})$ the subgroup $\sigma_A(\SO_{A}(\mathbb{Z}))$. This subgroup is commensurable with $\SO_{A_{F_0}}(\mathbb{Z})$. Therefore, there exists a fundamental domain $\mathcal{F}$ for the action of $\Gamma$ on $\SO_{A_{F_0}}(\mathbb{R})$ which is contained in a finite union of $\SO_{A_{F_0}}(\mathbb{Q})$ translates of a Siegel domain, $\bigcup_i g_i \mathcal{S}$ for $g_i \in \SO_{A_{F_0}}(\mathbb{Q})$. This is known from \cite{BorelHarishChandra}. 

The choice of the standard $A_{F_0}$ as above is convenient at this point. Indeed, we may now choose as our Siegel domain $\mathcal{S}$ the product $NTK$ where we choose $K$ to be compact, $N$ to be a subgroup of the group of lower triangular matrices with $1$ on the diagonal and $T$ to be $$T := \left\{\begin{pmatrix} 
t_1^{-1} & & & & & & \\ 
& \ddots & & & & & \\ 
& & t_m^{-1} & & & & \\  
& & & I_{\dim(F_0)} & & & \\ 
& & & & t_m & & \\ 
& & & & & \ddots & \\ 
& & & & & & t_1 \\  
\end{pmatrix} \colon t_1/t_2 > c, \ldots, t_{m-1}/t_m > c, t_m >c \right\}$$ for some constant $c>0$ if $\dim(F_0) > 0$ and $$T_{0} := \left\{\begin{pmatrix} 
t_1^{-1} & & & & &  \\ 
& \ddots & & & &  \\ 
& & t_m^{-1} & & &  \\  
& & &  t_m & & \\ 
& & &  & \ddots & \\ 
& & & & & t_1 \\  
\end{pmatrix} \colon t_1/t_2 > c, \ldots, t_{m-1}/t_m > c, t_{m-1}t_{m} >c \right\}$$ for some constant $c>0$ if $\dim(F_0) = 0$. This can be found in many sources, see for instance \cite{BorelEnsemblesFondamentaux}, \cite{SawyerIwasawa}, or \cite{SawyerSpherical}. 

When $m < \frac{n}{2}$, note that $s_i = t_i/t_{i+1}$, $0 \le i \le m-1$ and $s_m = t_m$ forms a set of simple roots.

When $m = \frac{n}{2}$, note that $s_i = t_i/t_{i+1}$, $0 \le i \le m-1$ and $s_m = t_{m-1}t_m$ forms a set of simple roots.

Moreover, if we denote by $e^\rho$ the exponential of the half sum of the positive roots counted with multiplicities, we have 
\begin{equation*}
e^{\rho} = \prod_{i=1}^{m} t_i^{\frac{n}{2}-i}.
\end{equation*} 

When $m < \frac{n}{2}$, we find:
\begin{align*}
e^{\rho} &= \prod_{i=1}^{m} t_i^{\frac{n}{2}-i} \\
&= \prod_{i=1}^{m} \left(\prod_{j=i}^{m} s_j \right)^{\frac{n}{2}-i} \\
&= \prod_{i=1}^{m} s_i^{\left(\sum_{j=1}^{i}\frac{n}{2}-j\right)} \\
&= \prod_{i=1}^{m} s_i^{i \left(\frac{n-i-1}{2}\right)}.
\end{align*}

When $m = \frac{n}{2}$, we find: 
\begin{align*}
e^{\rho} &= \prod_{i=1}^{m} t_i^{\frac{n}{2}-i} \\
&=\prod_{i=1}^{m-2} \left((s_{m-1}s_{m})^{-\frac{1}{2}}\prod_{j=i}^{m} s_j \right)^{\frac{n}{2}-i}  (s_{m-1}s_{m})^{\frac{1}{2}} \\
&= \prod_{i=1}^{m-2} s_i^{i \left(\frac{n-i-1}{2}\right)} (s_{m-1}s_{m})^{\frac{1}{2}} (s_{m-1}s_{m})^{\frac{1}{2}(m-2)(\frac{n}{2}-\frac{m-1}{2})} \\
&=  \prod_{i=1}^{m-2} s_i^{i \left(\frac{n-i-1}{2}\right)} (s_{m-1}s_{m})^{\frac{n(n-2)}{16}} .
\end{align*}

We now fix some notation for our choice of Haar measure on $G = \SO_{A_{F_0}}$. We let $dg$ denote the Haar measure on $G$, $dn$ denote the Haar measure on the unipotent group $N$, and $dk$ denote the Haar measure on the compact group $K$. For every $1 \le i \le m$ we write $d^{\times}t_i = \frac{dt_i}{t_i}$ and $d^{\times}s_i = \frac{ds_i}{s_i}$. Furthermore, we write $dt =\prod_{i=1}^m dt_i$, $d^{\times}t = \prod_{i=1}^m d^{\times}t_i$ and $ds = \prod_{i=1}^{m}ds_i$, $d^{\times}s = \prod_{i=1}^{m} d^{\times}s_i$. 

Changing variables between the $t$-coordinates and the $s$-coordinates gives us  $$d^\times t = d^\times s.$$

Therefore, if $m < \frac{n}{2}$, the Haar measure is given in $NTK$-coordinates by 
\begin{align*}
dg &= e^{-2\rho} du \, d^{\times}t \, dk \\
&= \prod_{i=1}^{m} t_i^{2i-n} du \, d^{\times}t \, dk \\
&= \prod_{i=1}^{m} s_i^{i(i+1-n)} du \, d^{\times}s \, dk.
\end{align*} 

Therefore, if $m = \frac{n}{2}$, the Haar measure is given in $NTK$-coordinates by 
\begin{align*}
dg &= e^{-2\rho} du \, d^{\times}t \, dk \\
&= \prod_{i=1}^{m} t_i^{2i-n} du \, d^{\times}t \, dk \\
&=  \prod_{i=1}^{m-2} s_i^{i \left(i+1-n\right)} (s_{m-1}s_{m})^{-\frac{n(n-2)}{8}} du \, d^{\times}s \, dk.
\end{align*} 

We define the main body and the cuspidal region of the multiset $\mathcal{F}_A \cdot R_{A,b}^{r_2,\delta}$. 

\begin{defn}[Main body and cuspidal region] The \emph{main body} consists of all the elements of $\mathcal{F}_A \cdot R_{A,b}^{r_2,\delta}$ for which $|b_{11}| \ge 1$. The \emph{cuspidal region} consists of all the elements for which $|b_{11}| < 1$.

\end{defn}

We are now ready to cut off the cuspidal region. 

\begin{con}[Partial order on the coordinates of $V_A$]

We construct a partial order on the $n(n+1)/2$ coefficients $\{b_{ij}\}$ for $i \le j$. These define a set of coordinates on $B$ which we denote by $U$. 

\begin{defn}
The weight $w(b_{ij})$ of an element $b_{ij} \in U$ is the factor by which $b_{ij}$ scales under the action of $(t_1^{-1},\ldots,t_m^{-1},1, \ldots,1,t_m,\ldots,t_1) \in T$.  
\end{defn}

We are now ready to define a partial order on $U$. 

\begin{defn}[A partial order on subsets of $U$] Let $b$ and $b'$ be two elements of the set of coordinates $U$. We say that $b \prec b'$ if in the expression for $w(b)$ in the $s$-coordinates, the exponents of the variables $s_1,\cdots,s_m$ are smaller than or equal to the corresponding exponents appearing in the expression for $w(b')$ in the $s$-coordinates. The relation $\prec$ defines a partial order on $U$. 
\end{defn} 

\begin{exmp}
We have $b_{11} \prec b_{m+1\,m+1}$ because $w(b_{11}) = s_1^{-2} \cdots s_m^{-2}$ while $w(b_{m+1\,m+1}) = 1 = s_1^0 \cdots s_m^0$. On the other hand, $b_{1\,n-2}$ and $b_{2\,n-3}$ cannot be compared in $\prec$ because $w(b_{1\,n-2}) = s_1^{-1} s_2^{-1}$ while $w(b_{2\,n-3}) = s_2^{-1} s_3^{-1}$. The important thing to note about the partial order $(U,\prec)$ is that if $i \le i'$ and $j \le j'$ then $$b_{ij} \prec b_{i'j'}.$$  
\end{exmp}
\end{con}

We now cut off the cusp in two specific cases which will serve as bases cases in the proof by induction of the general case.

\begin{exmp}[Base case of cusp cutting induction for $\dim(F_0)=0$]
We now do the case $n=4$, $m=2$ before moving on to cutting off the cusp in the general case. We see that torus elements act as follows: 
$$t\cdot v = \begin{pmatrix}
t_1^{-2} & t_1^{-1}t_2^{-1} & t_1^{-1} t_2 & 1 \\
t_1^{-1}t_2^{-1} & t_2^{-2}  & 1 & t_1 t_2^{-1}\\
t_1^{-1} t_2 & 1 & t_2^2 & t_1t_2 \\ 
1 & t_1 t_2^{-1} & t_1t_2 & t_1^2
\end{pmatrix} O(X).$$

We can now easily read off the weights. 
The Haar measure takes the form $$dg = du \, \frac{1}{t_1^2} d^{\times}t \, dk =  du \, \frac{1}{s_1 s_2} d^{\times}s \, dk .$$

For any subset of $U$ containing $b_{11}$, we now want to estimate $$\widetilde{I}(U_1,X) = X^{9-\#U_1}  \int_{t \in T_X} \prod_{b_{ij} \not\in U_1} w(b_{ij}) \frac{d^{\times}s}{s_1 s_2}.$$ 

We only need to look at proper subsets of $U_0 = \{b_{11}\}$ which are left-closed and up-closed. Recall that we have the bound $s_1< CX$ and $s_2 < C^2X^2$. Let's compute: 
\begin{alignat*}{2}
&\widetilde{I}(\{b_{11}\},X) &&= X^{8} \int_{s_1=c}^{CX} \int_{s_2 = c}^{C^2X^2} s_1s_2 \frac{d^{\times}s}{s_1 s_2} = X^{8} \int_{s_1=c}^{CX} \int_{s_2 = c}^{C^2X^2} d^{\times}s = O_\epsilon(X^{8+\epsilon}) \\
\end{alignat*}

The number of absolutely irreducible elements in the cusp which have height at most $X$ is thus $O_\epsilon(X^{9-1+\epsilon})$ and we just barely cut off the cusp! The induction argument given below shows that in all other cases, we have much more room!
\end{exmp}

We recall that we had $$N_H(S;X) = \frac{1}{\sigma(r_2)} \#\{\mathcal{F}_A \cdot R_A^{r_2,\delta}(X) \cap S^{{\rm irr }}\}.$$

Now, let $G_0$ be a bounded open $K$-invariant ball in $\SO_{A_{F_0}}(\mathbb{R})$. We can average the above expression by the usual trick to obtain $$N_H(S;X) = \frac{1}{\sigma(r_2) \Vol(G_0)} \int_{h \in \mathcal{F}_A} \# \left\{ hG_0 R_A^{r_2,\delta}(X) \cap S^{{\rm irr}} \right\} dh.$$

Now, again we may use classical arguments to see that the number of absolutely irreducible integral points in the cusp which have height at most $X$ is $$O\left(\int_{t \in T} \# \left\{tG_0 R_A^{r_2,\delta}(X) \cap S^{{\rm irr}} \right\} \prod_{i=1}^{m} s_i^{i(i+1-n)} d^\times s \right)$$ when $m < \frac{n}{2}$ and $$O\left(\int_{t \in T} \# \left\{tG_0 R_A^{r_2,\delta}(X) \cap S^{{\rm irr}} \right\} \prod_{i=1}^{m} s_i^{i(i+1-n)} (s_{m-1}s_{m})^{-\frac{n(n-2)}{8}} d^\times s \right)$$
when $m = \frac{n}{2}$.

\begin{defn}
Let $U_1 \subset U$ be a subset of the set of coordinates. We define $$V_A(\mathbb{R})(U_1) = \left\{B \in V_A(\mathbb{R}) \colon \left| b_{ij}(B) \right| < 1 \text{ if and only if } b_{ij} \in U_1 \right\}$$ and $$V_A(\mathbb{Z})(U_1) = V_A(\mathbb{Z}) \cap V_A(\mathbb{R})(U_1).$$
\end{defn}

It thus suffices to show that $$N(V_A(\mathbb{Z})(U_1);X) = O_\epsilon\left(X^{\left(\frac{n(n+1)}{2}-1\right)-1+\epsilon}\right)$$ for all $U_1 \subset U$ such that $b_{11} \in U_1$. 

We get a priori bounds on the coordinates $s_i$ from the reducibility criterion. Let $C$ be an absolute constant such that $CX$ bounds the absolute value of all the coordinates of elements $B \in G_0 R_A^{r_2,\delta}(X)$. 

If $(s_1^{-1},\ldots,s_m^{-1},1,\ldots,1,s_m,\ldots,s_1) \in T$ and $CX w(b_{i_0 \, n-i_0}) < 1$ for some $i_0 \in \{1,\ldots,m\}$, then $CXw(b_{ij})<1$ for all $i \le i_0$ and $j \le n-i_0$. This comes from the {\bf \emph{Rectangles}} part of the criterion for reducibility. Therefore, we may assume that $$s_i < CX$$ for all $i \in \{1,\ldots,m\}$ if $m< \frac{n}{2}$ and that $$s_i < CX$$ for all $i \in \{1,\ldots,m-1\}$ and $s_m < C^2 X^2$ if $m = \frac{n}{2}$.

Let us write $T_X$ to denote the set of $t = (s_1,\ldots,s_m) \in T$ which satisfy this condition.  

Now Davenport's lemma gives us 
\begin{align*}
N(V(\mathbb{Z})(U_1);X) &= O\left(\int_{t \in T_X} \Vol(t G_0 R_A^{r_2,\delta}(X) \cap V(\mathbb{R})(U_1)) \prod_{i=1}^{m} s_i^{i(i+1-n)} d^{\times}s  \right) \\
&= O\left(X^{\left(\frac{n(n+1)}{2}-1\right)-\#U_1} \int_{t \in T_X} \prod_{b_{ij} \not\in U_1} w(b_{ij}) \prod_{i=1}^{m} s_i^{i(i+1-n)} d^\times s \right).
\end{align*}
for $m < \frac{n}{2}$ and 
\begin{align*}
N(V(\mathbb{Z})(U_1);X) &= O\left(\int_{t \in T_X} \Vol(t G_0 R_A^{r_2,\delta}(X) \cap V(\mathbb{R})(U_1)) \prod_{i=1}^{m} s_i^{i(i+1-n)} d^{\times}s  \right) \\
&= O\left(X^{\left(\frac{n(n+1)}{2}-1\right)-\#U_1} \int_{t \in T_X} \prod_{b_{ij} \not\in U_1} w(b_{ij}) \prod_{i=1}^{m} s_i^{i(i+1-n)} (s_{m-1}s_{m})^{-\frac{n(n-2)}{8}} d^\times s \right).
\end{align*}
for $m = \frac{n}{2}$.

So, we have reduced our problem to one of estimating the following integrals. 

\begin{defn}
The active integral of $U_1 \subset U$ is defined by $$\widetilde{I}(U_1,X) := X^{\left(\frac{n(n+1)}{2}-1\right)-\#U_1}  \int_{t \in T_X} \prod_{b_{ij} \not\in U_1} w(b_{ij}) \prod_{i=1}^{m} s_i^{i(i+1-n)} d^\times s $$ if $m < \frac{n}{2}$ and 
$$\widetilde{I}(U_1,X) := X^{\left(\frac{n(n+1)}{2}-1\right)-\#U_1}  \int_{t \in T_X} \prod_{b_{ij} \not\in U_1} w(b_{ij})\prod_{i=1}^{m} s_i^{i(i+1-n)} (s_{m-1}s_{m})^{-\frac{n(n-2)}{8}} d^\times s $$
if $m = \frac{n}{2}$. 
\end{defn}

Recall, that $b_{ij} \prec b_{i_0j_0}$ when $i \le i_0$ and $j \le j_0$. Therefore, if $U_1 \subset U$ contains $b_{i_0j_0}$ but not $b_{ij}$, then $$\widetilde{I}\left(U_1 \setminus \{b_{i_0j_0} \} \cup \{b_{ij}\}, X \right) \ge \widetilde{I}(U_1,X).$$ As a result, in order to obtain an upper bound for $\widetilde{I}(U_1,X)$ we may assume that if $b_{i_0j_0} \in U_1$, then $b_{ij} \in U_1$ for all $i \le i_0$ and $j \le j_0$. In other words, we may assume that $U_1$ is both left closed and up closed. 

Furthermore, such a set $U_1$ cannot contain any element on, or on the right of, the off anti-diagonal within the first $m$-rows, since otherwise, we would be in the case of {\bf \emph{Rectangles}} in the reducibility criterion and so $N(V(\mathbb{Z})(U_1);X)=0$. 

\begin{defn}
We define the subset $U_0 \subset U$ as the set of coordinates $b_{ij}$ such that $i \le j$, $i \le m$, and $i+j \le n-1$. 
\end{defn}

Now, if $m = \frac{n}{2}$, every element in $V(\mathbb{Z})(U_0)$ is reducible and it suffices to consider $\widetilde{I}(U_1,X)$ for all $U_1 \subsetneq U_0$. On the other hand if $m < \frac{n}{2}$ we need to consider all $U_1 \subset U$.

Since the product of the weight over all the coordinates is $1$, we make the following definition.

\begin{defn}
We define for a subset $U_1 \subset U$ $$I(U_1,X) = X^{\frac{n(n+1)}{2}-1} \widetilde{I}(U_1,X) = X^{-\#U_1}  \int_{t \in T_X} \prod_{b_{ij} \in U_1} w(b_{ij})^{-1} \prod_{i=1}^{m} s_i^{i(i+1-n)} d^\times s $$ if $m < \frac{n}{2}$ and $$I(U_1,X) = X^{\frac{n(n+1)}{2}-1} \widetilde{I}(U_1,X) = X^{-\#U_1}  \int_{t \in T_X} \prod_{b_{ij} \in U_1} w(b_{ij})^{-1} \prod_{i=1}^{m} s_i^{i(i+1-n)} (s_{m-1}s_{m})^{-\frac{n(n-2)}{8}} d^\times s $$ if $m = \frac{n}{2}$. 
\end{defn}

We are now ready to state and prove the main cusp cutting lemma. 

\begin{lem}[Main cusp cutting estimate]
Let $U_1$ be a non-empty proper subset of $U_0$. Then we have the estimate $$I(U_1,X) = O_\epsilon \left(X^{-1+\epsilon}\right).$$
We also have $I(\emptyset) = O_\epsilon\left(X^{\frac{1}{6}m(m+1)(2m-3n+4)+\epsilon}\right)$ and $I(U_0) = O_\epsilon\left(X^{m(2m+1-n)+\epsilon}\right)$. 
\end{lem}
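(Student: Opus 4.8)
The plan is to reduce the estimate to a finite combinatorial statement about the poset $(U_0,\prec)$. Because $\prod_{b_{ij}\in U}w(b_{ij})=1$ and because each $w(b_{ij})$, together with the Haar-measure factor $\prod_i s_i^{i(i+1-n)}$ (and, when $m=\tfrac n2$, the correction $(s_{m-1}s_m)^{-n(n-2)/8}$), is a Laurent monomial in $s_1,\dots,s_m$, the integrand of $I(U_1,X)$ is a single Laurent monomial $\prod_{i=1}^m s_i^{\gamma_i(U_1)}$ whose exponents $\gamma_i(U_1)$ depend affine-linearly on the indicator of $U_1$; since the coordinates in $U_0$ are precisely the contracting-or-neutral directions for the torus, adjoining a coordinate to $U_1$ shifts each $\gamma_i$ upward by a fixed nonnegative amount. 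The region $T_X$ is a box $c\le s_i\le B_i$ with $B_i\asymp X$ for all $i$ when $m<\tfrac n2$, and $B_i\asymp X$ for $i<m$, $B_m\asymp X^2$ when $m=\tfrac n2$. Integrating one variable at a time and using $\int_c^{B}s^{\gamma}\,\tfrac{ds}{s}=O_\epsilon\!\big(B^{\max(\gamma,0)+\epsilon}\big)$, we obtain
\[
I(U_1,X)=O_\epsilon\!\left(X^{\,E(U_1)+\epsilon}\right),\qquad E(U_1):=-\#U_1+\sum_{i=1}^{m}\nu_i\max\!\big(\gamma_i(U_1),0\big),
\]
where $\nu_i=1$ except that $\nu_m=2$ in the split case. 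The lemma thus reduces to showing $E(U_1)\le -1$ for every non-empty proper left- and up-closed $U_1\subsetneq U_0$, together with the two evaluations $E(\emptyset)=\tfrac16 m(m+1)(2m-3n+4)$ and $E(U_0)=m(2m+1-n)$.

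The two boundary values I would obtain by direct computation. For $U_1=\emptyset$ the $\gamma_i$ are just the measure exponents and $\sum_i\nu_i\max(\gamma_i,0)$ is an explicit sum that telescopes to the stated cubic in $m$; for $U_1=U_0$ one reads off $\#U_0$ and the $s_i$-exponents of $\prod_{b_{jk}\in U_0}w(b_{jk})^{-1}$ from the staircase shape of $U_0$ and collects terms. Both are routine arithmetic, the only care needed being the three regimes for the entries of $B$ (both indices $\le m$; one index strictly between $m$ and $n-m$; one index $>n-m$) and the modified coordinates near the anti-diagonal when $m=\tfrac n2$.

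For a general non-empty proper order ideal $\emptyset\ne U_1\subsetneq U_0$ I would argue by induction on $n$, the base cases being $n=4$ with $\dim F_0\in\{0,2\}$ — the first is the worked example preceding the lemma and the second ($m=1$) is analogous and easier. For the inductive step, note first that every non-empty order ideal contains the minimum $b_{11}$, which alone contributes $-1$ to $-\#U_1$ (this is the origin of the ``just barely'' behaviour). Next, the variable $s_1$ occurs in $w(b_{jk})^{-1}$ only for $b_{jk}$ in the first row of $U_0$, so $\gamma_1(U_1)$ is an explicit function of $U_1$'s first row alone; one checks that the $s_1$-integral is $O_\epsilon(X^\epsilon)$ whenever $U_1$'s first row is proper in $U_0$'s, while if $U_1$ contains the entire first row then $\gamma_1(U_1)$ is a bounded constant comfortably outweighed by the $X^{-(n-2)}$ coming from those $n-2$ coordinates. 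In either case the $s_1$-direction is harmless, and after deleting the first row from $U_1$ the remaining integral in $s_2,\dots,s_m$ is dominated term by term (the measure exponents drop by $n-2$ per variable) by the corresponding quantity for the degree-$(n-2)$ problem, to which the inductive hypothesis applies with room to spare.

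The main obstacle I anticipate is the uniformity of this induction across the split case $m=\tfrac n2$ and the non-split case $m<\tfrac n2$. When $m=\tfrac n2$ the innermost torus coordinate $s_m$ ranges up to $X^2$ (entering $E$ with weight $\nu_m=2$), the simple roots and $e^\rho$ change near the anti-diagonal, and the measure carries the extra factor $(s_{m-1}s_m)^{-n(n-2)/8}$; consequently $(U_0,\prec)$ and the exponents $\gamma_i$ behave differently in the last one or two rows, and the reduction step must be organised either to preserve the split/non-split type or to pass cleanly between them — which is exactly why two base cases are needed. Getting the bookkeeping near the anti-diagonal to close the induction uniformly is where I expect the real work to lie; the rest is an explicit, bounded calculation.
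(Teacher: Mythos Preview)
Your overall strategy---induction on $m$ (equivalently $n\to n-2$), peeling off the first row---is exactly the paper's. The gap is in your inductive step. You assert that ``the variable $s_1$ occurs in $w(b_{jk})^{-1}$ only for $b_{jk}$ in the first row of $U_0$,'' and that after handling $s_1$ and deleting the first row, the remaining integral in $s_2,\dots,s_m$ is dominated by the degree-$(n-2)$ problem. The first sentence is true, but it does \emph{not} follow that the first-row weights are functions of $s_1$ alone: for instance $w(b_{12})^{-1}=t_1t_2=s_1s_2^2\cdots s_m^2$, and in general every $w(b_{1j})^{-1}$ is a monomial in \emph{all} of $s_1,\dots,s_m$ with nonnegative exponents. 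So when you remove the first row from $U_1$, each $\gamma_i(U_1)$ for $i\ge 2$ \emph{drops}, and the remaining integrand is \emph{smaller} than what you started with---the inequality runs the wrong way for an upper bound on $I(U_1,X)$ by the inductive hypothesis. Put differently, there is no clean decoupling of the $s_1$-direction from the rest at the level of your $E(U_1)=-\#U_1+\sum_i\nu_i\max(\gamma_i,0)$.

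The paper fixes this with a device you don't mention: the elementary inequality $\int_c^{CX}s^{k_1+k_2}\,d^\times s\ll\int_c^{CX}s^{k_1}\,d^\times s\cdot\int_c^{CX}s^{k_2}\,d^\times s$, applied to \emph{every} $s_i$ simultaneously. Working with the complement $U_1'=U_0\setminus U_1$, this splits $I_m'(U_1',X)\le J_m(U_2',X)\,K_m(U_3',X)$, where $J_m$ absorbs the entire first-row contribution (as a full $m$-variable integral carrying the factor $s_1s_2^2\cdots s_m^2$) and $K_m$ is literally $I_{m-1}'$ after reindexing. Because $J_m$ is an $m$-variable integral, not just an $s_1$-integral, it must be bounded by an explicit case analysis on the shape of $U_2'=\{b_{1k},\dots,b_{1,n-2}\}$; this is the computation in the paper, and it is not bypassed by your argument. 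Your anticipation of the split/non-split difficulty is correct, and the paper indeed treats $\dim F_0=0$ separately (citing \cite{ShankarWang2Selmer}), but the missing ingredient above is already present in the non-split case.
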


\begin{proof}
We prove this lemma via a combinatorial argument using induction on $m$. Recall that $n=2m+\dim(F_0)$. The cases $\dim(F_0) \ge 2$ and $\dim(F_0) = 0$ are slightly different and we handle them separately. 

The case $\dim(F_0) = 0$ is actually the same as is \cite{ShankarWang2Selmer} with a different normalization of the height. It thus suffices to consider the case $\dim(F_0) \ge 2$.

To start, let us assume that $\dim(F_0) > 0$. First, we compute $I(U_0,X)$ 
\begin{align*}
I(U_0,X) &= X^{-\#U_0} \int_{t \in T_X} \prod_{b_{ij} \in U_0} w(b_{ij})^{-1} \prod_{i=1}^{m} s_i^{i(i+1-n)} d^{\times}s. \\ 
&= X^{-m(n-(m+1))} \int_{t \in T_X} \left(t_1^{n-2+1} t_2^{n-4+2} t_3^{n-6+2} \cdots t_m^{n-2m+2} \right)   \prod_{i=1}^{m} t_i^{2i-n} d^{\times}t \\
&= X^{-m(n-(m+1))} \int_{t \in T_X} t_1t_2^2\cdots t_m^2 d^{\times}t \\
&= X^{-m(n-(m+1))} \int_{s_1,\ldots,s_m=c}^{CX} s_1 s_2^3 s_3^5 \cdots s_m^{2m-1} d^{\times}s \\
&= O\left(X^{-m(n-(m+1))+m^2}\right) \\
&= O\left(X^{m(2m+1-n)}\right) \\
&= O\left(X^{-m(\dim(F_0)-1)}\right).
\end{align*}

We compute $I(\emptyset,X)$ directly
\begin{align*}
 I(\emptyset,X) =\int_{s_1, \ldots, s_n = c}^{CX} \prod_{i=1}^{m} s_i^{i(i+1-n)} d^{\times}s &= O\left(X^{\frac{1}{6}m(m+1)(2m-3n+4)}\right) \\
&= O\left(X^{\frac{1}{6}m(m+1)(-4m-3\dim(F_0)+4)}\right).
\end{align*}
Now, let $U_1'$ denote $U_0 \setminus U_1$. Define $I_m'(U_1',X) :=I(U_1,X)$. Then we have: 
\begin{align*}
I_m'(U_1',X) &= X^{\#U_1'-m(n-(m+1))} \int_{t \in T_X} \left( \prod_{b_{ij} \in U_1'} w(b_{ij}) \right) t_1t_2^2\cdots t_m^2 d^{\times}t. \\
&= X^{\#U_1'-m(n-(m+1))} \int_{s_1,\ldots,s_m = c}^{XC} \left( \prod_{b_{ij} \in U_1'} w(b_{ij}) \right) s_1 s_2^3 \cdots s_m^{2m-1} d^{\times}s. 
\end{align*}

We now work out the base case of the induction. When $m = 1$, we have 
\begin{align*}
I_1(\emptyset,X) &= O_\epsilon\left(X^{-\dim(F_0)+\epsilon} \right)  \\
I_1(\{b_{11}\}) &= O_\epsilon \left(X^{1-\dim(F_0)+\epsilon}\right) \\
I_1(\{b_{11}, \ldots, b_{1k} \}) &= O_\epsilon\left( X^{1-\dim(F_0)+\epsilon} \right) \\
I_1(U_0,X) &= O_\epsilon\left(X^{1-\dim(F_0)+\epsilon}\right).
\end{align*}

In particular, when $\dim(F_0) \ge 2$ we see that all these quantities are $O_\epsilon(X^{-1+\epsilon})$. This will come up in the induction step.

Now, suppose that $m \ge 2$. 

Now, for any decomposition $k=k_1+k_2$ we have: $$\int_c^{CX} s^k d^{\times}s \ll_{c,C} \int_{c}^{CX} s^{k_1} d^{\times}s  \int_{c}^{CX} s^{k_2} d^{\times}s.$$ Consequently, we see that $I_n'(U_1',X)$ is bounded by the product $$I_n'(U_1',X) \le J_m(U_2',X) \, K_m(U_3',X),$$ where $U_2'$ consist of all the elements of $U_1'$ in the first row, $U_3'$ consists of the rest of the elements of $U_1'$, and
\begin{alignat*}{2}
&J_m(U_2',X) &&= \left(X^{\#U_2'-(n-2)} \int_{s_1, \ldots, s_n = c}^{CX} \left( \prod_{b_{1j} \in U_2'} w(b_{1j}) \right) s_1 s_2^2 \cdots s_m^2 d^{\times}s \right) \\
&K_m(U_3',X) &&= \left(X^{\#U_3'-\#U_0+(n-2)} \int_{s_2, \ldots, s_n = c}^{CX} \left(\prod_{b_{ij} \in U_3'} w(b_{ij}) \right) s_2 s_3^3 \cdots s_m^{2m} d^{\times}s \right). 
\end{alignat*}

Note that $K_m(U_3',X) = I_{m-1}(U_3',X)$ and we can estimate it by induction. Now, $U_1$ is left-closed and non-empty and hence the subset $U_2'$ is either empty or of the form $\{b_{1\,k}, \ldots, b_{1\,n-2}\}$ for some $k \ge 2$. 

Now, if $U_2' =\emptyset$: \\
\begin{equation*}
J_m(U_2',X) = O_\epsilon\left(X^{2m-1-n+2}\right) = O_\epsilon\left(X^{1-\dim(F_0)+\epsilon} \right) = O_\epsilon\left(X^{-1+\epsilon}\right).
\end{equation*} \\

Now, if $k=2$, then: \\
\begin{align*}
J_m(U_2',X) &= X^{-1} \int_{s_1, \ldots, s_n = c}^{CX}  t_1^{3-n} t_2^{-1} s_1 s_2^2 \cdots s_m^2 d^{\times}s \\
&= O_\epsilon\left(X^{-1+m(3-n)-(m-1) + (2m-1)+\epsilon}\right) \\
&= O_\epsilon\left(X^{-1+3m-m(2m+\dim(F_0))+m+\epsilon}\right) \\
&= O_\epsilon\left(X^{-1+4m-2m^2-m\dim(F_0)+\epsilon}\right) \\
&= O_\epsilon\left(X^{-1+\epsilon}\right).
\end{align*} \\

Now, if $k=3$, then: \\
\begin{align*}
J_m(U_2',X) &= X^{-2} \int_{s_1, \ldots, s_n = c}^{CX}  t_1^{4-n} s_1 s_2^2 \cdots s_m^2 d^{\times}s \\
&= O_\epsilon \left(X^{-2+m(4-n)+2m-1)+\epsilon}\right) \\
&= O_\epsilon\left(X^{-3+6m-2m^2-m\dim(F_0)+\epsilon}\right) \\
&= O_\epsilon \left(X^{-1+\epsilon}\right).
\end{align*} \\

If $4 \le k \le m$, then: \\
\begin{align*}
J_m(U_2',X) &= X^{1-k} \int_{s_1, \ldots, s_n = c}^{CX} t_1^{-((n-2)-k+1)} t_k^{-1} \cdots t_m^{-1} t_m \cdots t_3 s_1 s_2^2 \cdots s_m^2 d^{\times}s \\
&= X^{1-k} \int_{s_1, \ldots, s_n = c}^{CX}  t_1^{-((n-2)-k+1)} t_3 \cdots t_{k-1} s_1 s_2^2 \cdots s_m^2 d^{\times}s \\
&= X^{1-k}  \int_{s_1, \ldots, s_n = c}^{CX}  t_1^{-((n-2)-k+1)} s_3 s_4^2 \cdots s_{k-1}^{k-3} s_1 s_2^2 \cdots s_m^2 d^{\times}s \\
&= O_\epsilon\left(X^{1-k+\frac{(k-3)(k-2)}{2}+2m-1+m(k+1-2m-\dim(F_0))+\epsilon}\right) \\
&= O_\epsilon(X^{-1+\epsilon}).
\end{align*} \\

If $m+1 \le k \le m+\dim(F_0)$, then: \\
\begin{align*}
J_m(U_2',X) &= X^{1-k} \int_{s_1, \ldots, s_n = c}^{CX} t_1^{-((n-2)-k+1)} t_m \cdots t_3 s_1 s_2^2 \cdots s_m^2 d^{\times}s \\
&= X^{1-k} \int_{s_1, \ldots, s_n = c}^{CX}  t_1^{-((n-2)-k+1)} t_3 \cdots t_{m} s_1 s_2^2 \cdots s_m^2 d^{\times}s \\
&= X^{1-k}  \int_{s_1, \ldots, s_n = c}^{CX}  t_1^{-((n-2)-k+1)} s_3 s_4^2 \cdots s_{m-1}^{m-3} s_1 s_2^2 \cdots s_m^2 d^{\times}s \\
&= O_\epsilon\left(X^{1-k+\frac{(m-3)(m-2)}{2}+2m-1+m(k+1-2m-\dim(F_0))+\epsilon}\right) \\
&= O_\epsilon(X^{-1+\epsilon}).
\end{align*} \\

If $m+\dim(F_0)+1 \le k \le n-2$, then: \\
\begin{align*}
J_m(U_2',X) &= X^{1-k} \int_{s_1, \ldots, s_n = c}^{CX} t_1^{-((n-2)-k+1)} t_{n-2-k+1} \cdots t_3 s_1 s_2^2 \cdots s_m^2 d^{\times}s \\
&= X^{1-k} \int_{s_1, \ldots, s_n = c}^{CX}  t_1^{-((n-2)-k+1)} t_3 \cdots t_{n-k-1} s_1 s_2^2 \cdots s_m^2 d^{\times}s \\
&= X^{1-k}  \int_{s_1, \ldots, s_n = c}^{CX}  t_1^{-((n-2)-k+1)} s_3 s_4^2 \cdots s_{n-k+1}^{n-k-1} s_1 s_2^2 \cdots s_m^2 d^{\times}s \\
&= O_\epsilon\left(X^{1-k+\frac{(n-k-1)(n-k)}{2}+2m-1+m(k+1-2m-\dim(F_0))+\epsilon}\right) \\
&= O_\epsilon(X^{-1+\epsilon}).
\end{align*} \\

Therefore, in all cases we find $$J_m(U_2',X) = O_\epsilon(X^{-1+\epsilon}).$$ The lemma now follows by induction on $m$ used to bound $I'_{m-1}(U_3',X)$ by $O_\epsilon(X^{-1+\epsilon})$. \\
\end{proof}

\begin{prop}
The number of absolutely irreducible elements in the cusp which have height at most $X$ is $O_\epsilon\left(X^{\left(\frac{n(n+1)}{2}-1\right)-1+\epsilon}\right)$. 
\end{prop}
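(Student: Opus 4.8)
The plan is to break the cuspidal region $\{B : |b_{11}|<1\}$ into the finitely many pieces $V_A(\mathbb{Z})(U_1)$ indexed by subsets $U_1\subset U$ of the coordinate set, and to bound the number of absolutely irreducible points of height at most $X$ in each piece separately. By definition every element of the cusp satisfies $|b_{11}|<1$, hence lies in exactly one $V_A(\mathbb{Z})(U_1)$ with $b_{11}\in U_1$; so it suffices to show $N(V_A(\mathbb{Z})(U_1);X)=O_\epsilon(X^{(\frac{n(n+1)}{2}-1)-1+\epsilon})$ for each such $U_1$ and sum over the finitely many of them. I restrict throughout to $A$ isotropic over $\mathbb{Q}$, the only case in which a cusp is present at all; for $A$ anisotropic the set $\mathcal{F}_A\cdot R_{A,b}^{r_2,\delta}$ is already bounded, so no separate cusp analysis is needed.

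First I would run the averaging trick already set up above (integrating over $h\in\mathcal{F}_A$ against a bounded $K$-invariant ball $G_0$, then passing from $\mathcal{F}_A$ to the Siegel torus) and apply Davenport's lemma to $tG_0R_A^{r_2,\delta}(X)\cap V_A(\mathbb{R})(U_1)$, using the $NTK$-form of the Haar measure on $\SO_{A_{F_0}}$ computed above; this gives $N(V_A(\mathbb{Z})(U_1);X)=O(\widetilde{I}(U_1,X))$ and reduces everything to the integral estimates. Second, using the inequality $\widetilde{I}\bigl(U_1\setminus\{b_{i_0j_0}\}\cup\{b_{ij}\},X\bigr)\ge\widetilde{I}(U_1,X)$ for $b_{ij}\prec b_{i_0j_0}$, I would reduce to $U_1$ left closed and up closed. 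For such a $U_1$ one of two things happens: either $U_1$ meets (or lies to the right of) the off anti-diagonal within the first $m$ rows, in which case the \textbf{Rectangles} clause of the reducibility criterion forces $N(V_A(\mathbb{Z})(U_1);X)=0$; or $U_1\subseteq U_0$, and when $m=\frac{n}{2}$ it is in fact a \emph{proper} subset of $U_0$ since every element of $V_A(\mathbb{Z})(U_0)$ is reducible. In the surviving cases the Main cusp cutting estimate gives $I(U_1,X)=O_\epsilon(X^{-1+\epsilon})$, which by the definitions of $I(U_1,X)$ and $\widetilde{I}(U_1,X)$ (the product of the weights over all coordinates being $1$) is the same as $\widetilde{I}(U_1,X)=O_\epsilon(X^{(\frac{n(n+1)}{2}-1)-1+\epsilon})$, exactly the bound required. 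Summing over the finitely many admissible $U_1$ then proves the proposition when $\dim(F_0)>0$; the degenerate case $\dim(F_0)=0$ (which forces $n=4$, $m=2$) is settled by the explicit computation in the base-case example above, where the cusp contributes only $O_\epsilon(X^{8+\epsilon})=O_\epsilon(X^{9-1+\epsilon})$.

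The hard part is the combinatorial bookkeeping behind the reduction to $U_1\subsetneq U_0$: one must verify that the monotonicity step, the \textbf{Rectangles} vanishing, and the reducibility of every point of $V_A(\mathbb{Z})(U_0)$ together dispose of \emph{all} left-closed up-closed sets $U_1\ni b_{11}$ — in particular the borderline set $U_1=U_0$ itself, and, when $m<\frac{n}{2}$, any left-closed up-closed set not contained in $U_0$. This is precisely the point at which the split between $m<\frac{n}{2}$ and $m=\frac{n}{2}$, with the two corresponding shapes of the torus $T$ and of $e^{\rho}$, becomes unavoidable, and it is why the case $n=4$ — where the cusp estimate is only just barely strong enough — must be treated by hand.
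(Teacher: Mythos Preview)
Your approach is correct and essentially identical to the paper's: decompose the cusp into the pieces $V_A(\mathbb{Z})(U_1)$ with $b_{11}\in U_1$, reduce via Davenport's lemma and the monotonicity of $\widetilde{I}$ to left-closed up-closed $U_1$, discard those meeting the anti-diagonal by the \textbf{Rectangles} criterion, and feed the survivors into the Main cusp cutting estimate.

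One factual slip: your claim that $\dim(F_0)=0$ forces $n=4$ is false. The totally split case $m=\tfrac{n}{2}$ occurs for every even $n$; the $n=4$ computation in the paper is only an illustrative base case, not an exhaustive treatment of $\dim(F_0)=0$. This does not actually break your argument, because the Main cusp cutting lemma is stated uniformly for all $m$ (its proof defers the $\dim(F_0)=0$ case to \cite{ShankarWang2Selmer}), so the first half of your argument already covers everything and the case split in your final sentence is unnecessary. You should also make explicit that when $m<\tfrac{n}{2}$ the full set $U_1=U_0$ survives and must be handled: the lemma gives $I(U_0,X)=O_\epsilon(X^{m(2m+1-n)+\epsilon})$, and since $2m+1-n=1-\dim(F_0)\le -1$ this is indeed $O_\epsilon(X^{-1+\epsilon})$.
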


We also find that the number of reducible elements in the main body is negligible. 

\begin{lem}
The number of integral points in the main body of $\mathcal{F}_A \cdot R_{A,b}^{r_2,\delta}$ which are not absolutely irreducible is bounded by $o\left(X^{\frac{n(n+1)}{2}-1}\right)$. 
\end{lem}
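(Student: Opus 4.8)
The plan is to mirror the treatment of the odd degree case in \cite{SiadOddMonogenicAverages}, which itself follows \cite{HoShankarVarmaOdd}, sorting the points of $\mathcal{F}_A\cdot R_{A,b}^{r_2,\delta}$ that are not absolutely irreducible according to which clause of the Reducibility criterion applies to them, and bounding the number of such points lying in the main body, in each case, by $o(X^{\frac{n(n+1)}{2}-1})$.

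A reducible point $(A,B)$ falls into one of three types: (i) its resolvent $\pi(A,B)=\disc(Ax-By)$ defines an $S_n$-field and $(A,B)$ is distinguished (it maps to the identity of the class group, which by the Even degree distinguished orbits lemma is detected by the vanishing of the ``modified square'' coordinates in some $\SL_n(\mathbb{Q})$-translate); (ii) its resolvent is separable but defines a non-$S_n$ field (this in particular covers resolvents that are reducible over $\mathbb{Q}$); or (iii) its resolvent is inseparable (detected, by the Reducibility criterion, by the vanishing of a ``rectangle'' of coordinates in some $\SL_n(\mathbb{Q})$-translate). I would dispose of type (ii) purely on the level of the resolvent: the number of monic $f\in U_b(\mathbb{Z})$ of height $<X$ with non-$S_n$ Galois group is $O(X^{\frac{n(n+1)}{2}-1-c})$ for some $c=c(n)>0$ (split $f$ according to whether it is irreducible, using the van der Waerden/Gallagher large-sieve bound, or factors, bounding the heights of the factors by Mahler's inequality), and for each separable $f$ the rigid parametrisation bounds the number of $\SO_A(\mathbb{Z})$-orbits on $V_{A,b}(\mathbb{Z})\cap\pi^{-1}(f)$ by a quantity which is $O_\epsilon(|\disc f|^\epsilon)=O_\epsilon(X^\epsilon)$ by standard bounds on $2$-ranks of class groups and on unit groups, each orbit contributing only $O(1)$ representatives to a fundamental domain; summing gives $O_\epsilon(X^{\frac{n(n+1)}{2}-1-c+\epsilon})$. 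For types (i) and (iii), where reducibility is certified by the vanishing of a large set of coordinates after a rational translate, I would feed the relevant subvariety into the cusp-cutting machinery of Section \ref{Averaging and cutting off the cusp}: running the torus-weighted integral with the extra vanishing conditions in force, the drop in dimension (a ``modified square'' involves $\asymp n^2/4$ coordinates, a ``rectangle'' of width $k$ involves $2k(n-k)$ of them) yields a saving of at least one power of $X$ over the main-term exponent, in the same way that the active-integral estimates in the Main cusp cutting estimate do for the cusp.

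The hard part will be exactly this last step in the isotropic case: the main body $\{|b_{11}|\ge 1\}$ of $\mathcal{F}_A\cdot R_{A,b}^{r_2,\delta}$ is not compact (the condition only forces $s_1\cdots s_m=O(X^{1/2})$, not $s_i=O(1)$), so Davenport's Lemma cannot be applied to the singular locus naively. One must intersect with each Siegel piece $g_i\mathcal{S}$, use that an inseparable (respectively distinguished) point forces the vanishing of a rectangle (respectively a modified square) in its reduced model, and re-run the same combinatorial induction on $m$, for the two torus shapes $\dim(F_0)=0$ and $\dim(F_0)\ge 2$, that appears in the Main cusp cutting estimate, now with the vanishing of those coordinates providing the gain. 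This bookkeeping is identical to that of \cite{HoShankarVarmaOdd} and \cite{SiadOddMonogenicAverages} once the even-degree Reducibility criterion and the Even degree distinguished orbits lemma are in place, so I would invoke those computations rather than repeat them.
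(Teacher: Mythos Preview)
Your bottom line---defer to \cite{HoShankarVarmaOdd} and \cite{SiadOddMonogenicAverages}---is exactly what the paper does; its entire proof reads ``identical to the anisotropic case,'' which in turn says ``the same as in \cite{SiadOddMonogenicAverages}, following \cite{HoShankarVarmaOdd}.'' So at the level of what is actually asserted, you and the paper agree.

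Your sketch of \emph{how} those references handle type (i), however, does not work as written. Feeding the modified-square locus into the cusp-cutting integrals bounds points that \emph{themselves} have those coordinates vanishing---but a main-body point has $|b_{11}|\ge 1$, and only some $\SL_n(\mathbb{Q})$-translate of it (not the point itself, and not in general any $\SO_A(\mathbb{Z})$-translate) exhibits the vanishing. Nor does counting orbits via their special representatives help: there are $\asymp X^{\frac{n(n+1)}{2}-1}$ distinguished orbits (roughly $|\mathcal{I}_2^*|$ per $f$), the same order as the main term. What \cite{HoShankarVarmaOdd} (following Bhargava--Gross and Shankar--Wang) actually uses is a congruence sieve: the rational isotropic flag certifying distinguishedness reduces modulo every prime $p$ to an $\mathbb{F}_p$-flag, while over $\mathbb{F}_p$ the locus of $(A,B)$ admitting such a flag has density bounded above by some $c<1$; intersecting over a slowly growing set of primes and applying Davenport in each bounded ball $hG_0R(X)$ forces the density to $0$. (Your type (iii) sketch has a smaller issue: the Reducibility criterion only says rectangle $\Rightarrow$ repeated root, not conversely; but $\Delta(\pi(A,B))=0$ is already a single hypersurface equation, so Davenport handles it directly without any translate.)
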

\begin{proof}
The proof is identical to the anisotropic case. 
\end{proof}

Therefore, we find the following asymptotic formula. 

\begin{thm}
Let $A \in \mathscr{L}_{\mathbb{Z}}$ be isotropic over $\mathbb{Q}$. We have $$N(V_{A,b}^{r_2,\delta}(\mathbb{Z});X) = \frac{1}{\sigma(r_2)} \Vol \left(\mathcal{F}_A \cdot R_{A,b}^{r_2,\delta}(X)\right) + o\left(X^{\frac{n(n+1)}{2}-1}\right).$$
\end{thm}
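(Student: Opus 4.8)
The plan is to run the standard averaging-over-$\SO_A(\mathbb{R})$ argument of \cite{HoShankarVarmaOdd, SiadOddMonogenicAverages}, feeding in the two negligibility estimates just obtained together with Davenport's lemma. I would start from the identity recalled in the reduction-theory section, which expresses $N(V_{A,b}^{r_2,\delta}(\mathbb{Z});X)$ as $\tfrac{1}{\sigma(r_2)}$ times the number of absolutely irreducible lattice points, counted with multiplicity, in the multiset $\mathcal{F}_A\cdot R_{A,b}^{r_2,\delta}(X)$, and then partition this multiset into its \emph{main body}, where $|b_{11}|\ge 1$, and its \emph{cuspidal region}, where $|b_{11}|<1$.

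For the cuspidal region there is essentially nothing left to do: the Proposition above gives that the number of absolutely irreducible integral points of height at most $X$ lying in it is $O_\epsilon\big(X^{(\frac{n(n+1)}{2}-1)-1+\epsilon}\big)=o\big(X^{\frac{n(n+1)}{2}-1}\big)$. I would also observe that the very torus integrals estimated in the main cusp-cutting lemma (the bounds on $I(\emptyset,X)$ and on the $I(U_1,X)$) show, by the same computation applied to volumes rather than point counts, that the volume of the cuspidal region of $\mathcal{F}_A\cdot R_{A,b}^{r_2,\delta}(X)$ is itself $o\big(X^{\frac{n(n+1)}{2}-1}\big)$, so that at the end one may pass freely between the volume of the main body and the volume of the whole fundamental-domain cover.

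For the main body I would use the averaging set-up already introduced: fix a bounded open $K$-invariant ball $G_0\subset\SO_{A_{F_0}}(\mathbb{R})$ and write the main-body count as $\tfrac{1}{\Vol(G_0)}\int_{h\in\mathcal{F}_A}\#\{(hG_0 R_{A,b}^{r_2,\delta}(X))_{\mathrm{main}}\cap V_{A,b}^{r_2,\delta}(\mathbb{Z})\}\,dh$. The point I would emphasize is that imposing $|b_{11}|\ge 1$ together with $H(B)\le X$ makes the relevant part of the torus bounded: the weight of $b_{11}$ is $s_1^{-2}\cdots s_m^{-2}$ and all coordinates of elements of $G_0 R_{A,b}^{r_2,\delta}(X)$ are $O(X)$, so $|b_{11}|\ge 1$ forces $s_1^2\cdots s_m^2=O(X)$, which combined with the a priori bounds $s_i<CX$ (and $s_m<C^2X^2$ when $m=\tfrac n2$) from the Rectangles part of the reducibility criterion confines the torus to a bounded set. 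Hence the main body of $\mathcal{F}_A\cdot G_0\cdot R_{A,b}^{r_2,\delta}(X)$ is a bounded semi-algebraic multiset of bounded multiplicity, cut out by boundedly many polynomial inequalities of bounded degree and contained in a box of side $O(X)$ inside the $\big(\tfrac{n(n+1)}{2}-1\big)$-dimensional space $V_{A,b}(\mathbb{R})$. Davenport's lemma — applicable because $V_{A,b}$ embeds in $V_A$, on which $F$ acts by lower-triangular unipotent transformations — then produces the volume plus an error controlled by the coordinate projections, each of which has volume $O\big(X^{\frac{n(n+1)}{2}-2}\big)$. Integrating over $\mathcal{F}_A$, dividing by $\Vol(G_0)$, and using that the relevant invariant measures are compatible so that the $G_0$-averaging contributes only an overall factor $\Vol(G_0)$, I would conclude that the number of integral points in the main body equals $\Vol\big((\mathcal{F}_A\cdot R_{A,b}^{r_2,\delta}(X))_{\mathrm{main}}\big)+O\big(X^{\frac{n(n+1)}{2}-2}\big)$.

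To finish I would invoke the lemma above, by which the number of integral points in the main body failing to be absolutely irreducible is $o\big(X^{\frac{n(n+1)}{2}-1}\big)$ and may be subtracted, and then reinstate the cuspidal region on the volume side using the second paragraph; this yields $N(V_{A,b}^{r_2,\delta}(\mathbb{Z});X)=\tfrac{1}{\sigma(r_2)}\Vol\big(\mathcal{F}_A\cdot R_{A,b}^{r_2,\delta}(X)\big)+o\big(X^{\frac{n(n+1)}{2}-1}\big)$. The one genuinely delicate point — and the part I expect to be the main obstacle — is the verification that the main body really is bounded, rather than merely of finite volume, so that Davenport's lemma applies directly without a further decomposition of the torus; this forces a careful use of the modified Iwasawa coordinates in the totally split case $m=\tfrac n2$, and, as always in these arguments, the tightest case is the base case $n=4$, $\dim(F_0)=0$, where one must check that there is just barely enough room.
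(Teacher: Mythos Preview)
Your overall strategy is the paper's: split $\mathcal{F}_A\cdot R_{A,b}^{r_2,\delta}(X)$ into main body and cusp, discard the cusp via the preceding Proposition, count lattice points in the main body by the averaged Davenport argument, and remove reducible points with the Lemma. The slip is in the main-body step. The constraint $|b_{11}|\ge 1$ together with $s_i>c$ gives only $s_1^2\cdots s_m^2=t_1^2=O(X)$, so the torus region still grows with $X$; in particular the coordinate $b_{nn}$, of weight $t_1^{2}$, can be as large as $O(X^{2})$. Hence the main body of $\mathcal{F}_A\cdot G_0\cdot R_{A,b}^{r_2,\delta}(X)$ is \emph{not} contained in a box of side $O(X)$, and a single application of Davenport to it does not yield an error $O\big(X^{\frac{n(n+1)}{2}-2}\big)$. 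What the paper (by reference to \cite{SiadOddMonogenicAverages,HoShankarVarmaOdd}) actually does is apply Davenport to each fibre $hG_0R_{A,b}^{r_2,\delta}(X)$ and then integrate the projection errors over $T$ against the Haar factor $e^{-2\rho}$; the dominant error term is precisely $\widetilde I(\{b_{11}\},X)$, which the main cusp-cutting lemma already bounds by $O_\epsilon\big(X^{\frac{n(n+1)}{2}-2+\epsilon}\big)$. So the same torus estimates you invoke for the cuspidal volume are what control the Davenport error in the main body, not a global $O(X)$ box.

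A minor correction: the unipotent clause in Davenport's lemma has nothing to do with the group $F$ of Section~\ref{parametrisationrings}, which acts on $U_1$ and not on $V_A$. It is there to handle the unipotent factor $N$ in the Iwasawa decomposition of $\SO_{A_{F_0}}$ and, separately, the passage from the standard lattice to the commensurable lattice $\mathcal L=\sigma_V\big(V_{A,b}(\mathbb Z)\big)$.
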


\begin{rem}
We can upgrade the results of this section to deal with $A$ not having the property that the $\mathbb{Q}$ degree of $\SO_A$ is equal to its $\mathbb{R}$ degree. To do so, it suffices to push through the arguments with generalised Siegel sets instead of the usual Siegel sets. We can deal with the compact parts of $M^0$ which appear by using the $o$-minimal version of Davenport's lemma (see \cite{ominimalDavenport}). 
\end{rem}

\section{Sieving to very large and acceptable collections}\label{Sieving to very large and acceptable collections}

In this section, we determine the desired asymptotic formulas. The results and proof contained in this section are adaptations of those of \cite{HoShankarVarmaOdd} to the case at hand and are repeated from Part I \cite{SiadOddMonogenicAverages} for the reader's convenience. We begin with the definition of a family of local specifications. 

\begin{defn}[Collection of local specifications and the associated set]
We say that a family $\Lambda_{A,b} = \left(\Lambda_{A,b,\nu} \right)_\nu$ of subsets $\Lambda_{A,b,\nu} \subset V_{A,b}(\mathbb{\mathcal{O}_\nu})$ indexed by the places $\nu$ of $\mathbb{Q}$ is a {\bf collection of local specifications} if: 1) for each finite prime $p$ the set $\Lambda_{A,b,p} \subset V_{A,b}(\mathbb{Z}_p) \setminus \left\{ \Delta = 0 \right\}$ is an open subset which is non-empty and whose boundary has measure $0$; and 2) at $\nu = \infty$, we have $\Lambda_{A,b,\infty} = V_{A,b}^{r_2,\delta}(\mathbb{R})$ for some integer $r_2$ with $0 \le r_2 \le \frac{n-1}{2}$ and $\delta \in \mathcal{T}(r_2)$. We associate the set $\mathcal{V}(\Lambda_{A,b}) := \left\{v \in V_{A,b}(\mathbb{Z}) \colon \forall \nu \left( v \in \Lambda_{A,b,\nu} \right) \right\}$ to  the collection of local specifications $\Lambda_{A,b} = \left(\Lambda_{A,b,\nu} \right)_\nu$. 
\end{defn}

\subsection{Sieving to projective elements} 

\begin{defn}
For a prime $p$, we denote by $V_{A,b}(\mathbb{Z}_p)^{\rm proj}$ the set of elements $v \in V_{A,b}(\mathbb{Z}_p)$ which correspond to a projective pair $(I,\delta)$ (i.e. with the property that $I^2 = (\delta)$) under the parametrisation. 
\end{defn}

We have $$ V_{A,b}^{r_2,{\rm proj}} (\mathbb{Z}) = V_{A,b}^{r_2}(\mathbb{Z}) \bigcap \left( \bigcap_p V_{A,b}^{\rm proj}(\mathbb{Z}) \right) .$$

\begin{defn}
We denote by $W_{A,b,p}$ the set of elements in $V_{A,b}(\mathbb{Z})$ that do not belong to $V_{A,b}^{\rm proj}(\mathbb{Z}_p)$. 
\end{defn}

As in Part I \cite{SiadOddMonogenicAverages}, we have the following estimate on the number of element of $W_{A,b,p}$ for large $p$. 

\begin{thm}
We have
\begin{equation*} 
N \left(\cup_{p \ge M} W_{A,b,p}, X \right) = O \left(\frac{X^{\frac{n(n+1)}{2}-1}}{M^{1-\epsilon}} \right)+o\left(X^{\frac{n(n+1)}{2}} \right)
\end{equation*} 
where the implied constant is independent of $X$ and $M$.
\end{thm}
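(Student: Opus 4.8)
The plan is to adapt the uniformity estimate for non-maximal (here, non-projective) elements that appears in \cite{HoShankarVarmaOdd} and was already used in Part I \cite{SiadOddMonogenicAverages}, tailoring it to the slice $V_{A,b}$ fixed by the choice of $A \in \mathscr{L}_{\mathbb{Z}}$, $b$, $r_2$, and $\delta$. First I would recall the structure of $W_{A,b,p}$: a pair $(A,B)$ fails to be projective at $p$ precisely when the corresponding ideal $I$ is not invertible over $\mathbb{Z}_p$, which forces the resolvent form $f_{(A,B)}$ to be divisible by $p^2$ in its discriminant (equivalently, $R_f \otimes \mathbb{Z}_p$ is non-maximal) or, in the borderline cases, a congruence condition modulo $p$ cutting out a codimension-$\ge 2$ subvariety of the reduction mod $p$. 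Thus $W_{A,b,p}$ is contained in the union of a ``strongly non-maximal'' locus (where $p^2 \mid \operatorname{disc}$) and a locus defined by congruence conditions of codimension at least two.

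The main body of the argument splits the count over $p$ into two ranges. For the first range, I would bound the contribution of $(A,B) \in W_{A,b,p}$ whose resolvent $f$ has discriminant divisible by $p^2$ using the quantitative square-free sieve of Bhargava--Shankar--Wang \cite{BhargavaShankarWangSquarefreeI} on monic binary $n$-ic forms, together with the fact (from the $\SO_A(\mathbb{Z})$-counting theorems of Section \ref{Averaging and cutting off the cusp}) that each such $f$ has $O_\epsilon(X^\epsilon)$ orbits over it; summing the tail $\sum_{p \ge M} \#\{f : p^2 \mid \operatorname{disc}(f), H(f) < X\}$ gives a bound of the shape $O(X^{n(n+1)/2-1}/M^{1-\epsilon}) + o(X^{n(n+1)/2})$, where the $o(X^{n(n+1)/2})$ term absorbs the large primes $p \gg X^{\delta}$ for which one falls back on the trivial ``$p^2 \mid \operatorname{disc}$ implies few forms'' estimate combined with the Ekedahl-type sieve. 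For the second (codimension-$\ge 2$ congruence) range, I would run the geometry-of-numbers count of Section \ref{Averaging and cutting off the cusp} directly on $\mathcal{F}_A \cdot R_{A,b}^{r_2,\delta}(X)$ intersected with the mod-$p$ reduction of $W_{A,b,p}$: since the relevant locus has codimension at least $2$ in $V_{A,b}(\mathbb{F}_p)$, Davenport's lemma yields a main term of size $O(X^{n(n+1)/2-1}/p^2)$ plus boundary terms, and summing over $p \ge M$ gives the claimed $O(X^{n(n+1)/2-1}/M^{1-\epsilon})$, uniformly in $X$ and $M$. One must be slightly careful that the implied constants in Davenport's lemma depend only on the number and degree of the defining inequalities (which are uniform in $p$), and that the cusp contribution is handled by the Main cusp cutting estimate, which is already uniform.

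The hard part will be ensuring uniformity in $p$ all the way up to $p$ of size comparable to a power of $X$, rather than only for $p$ bounded independently of $X$ — this is exactly the point where the naive Davenport estimate (whose error terms grow with $p$) breaks down and one needs the Bhargava--Shankar--Wang-style argument bounding forms with $p^2 \mid \operatorname{disc}$ by splitting according to whether the ``$p^2$'' divides a coefficient-type invariant or comes from a genuinely non-maximal completion. Since this is precisely the content of the analogous theorem in Part I \cite{SiadOddMonogenicAverages}, and the even-degree parametrisation differs from the odd-degree one only in the bookkeeping of orientations and the finite set $\mathscr{L}_{\mathbb{Z}}$ (neither of which affects the discriminant-divisibility count), I would import that argument essentially verbatim, remarking only on the points where the projectivity condition $I^2 = (\delta)$ replaces the maximality condition and checking that this replacement does not enlarge the relevant locus beyond codimension two. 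I therefore expect the proof to consist of a short reduction to \cite{SiadOddMonogenicAverages} plus \cite{HoShankarVarmaOdd}, with the only genuinely new verification being that $W_{A,b,p}$ mod $p$ has codimension $\ge 2$ away from the $p^2 \mid \operatorname{disc}$ locus.
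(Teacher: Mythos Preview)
Your proposal is correct and aligns with the paper's approach: the paper does not give a self-contained proof of this theorem but simply states ``As in Part I \cite{SiadOddMonogenicAverages}, we have the following estimate,'' deferring entirely to the odd-degree argument. Your sketch is in fact more detailed than the paper's own treatment, and your closing remark that the proof should consist of a short reduction to \cite{SiadOddMonogenicAverages} and \cite{HoShankarVarmaOdd} is exactly what the paper does.
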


We now define the concept of \emph{very large collections of local specifications} and state the asymptotic formula. Roughly, a collection of local specifications is very large if for large enough primes $p$, it includes all elements of $V$ which are projective at $p$. 

\begin{defn}[Very large collection of local specifications] Let $\Lambda_{A,b}= \left( \Lambda_{A,b,\nu}\right)_\nu$ be a collection of local specifications. We say that $\Lambda_{A,b}$ is {\bf very large} if for all but finitely many primes, the sets $\Lambda_{A,b,p}$ contains all projective elements of $V_{A,b}(\mathbb{Z}_p)$. If $\Lambda_{A,b}$ is very large, we also say that the associated set $\mathcal{V}(\Lambda_{A,b})$ is very large. 
\end{defn}

\begin{thm}
Let $r_2$ be an integer such that $0 \le r_2 \le \frac{n-1}{2}$ and let $\delta \in \mathcal{T}(r_2)$. Then for a very large collection of local specifications $\Lambda_{A,b}$ such that $\Lambda_{A,b,\infty} = V_{A,b}^{r_2,\delta}(\mathbb{R})$, we have 
\begin{equation*} 
N(\mathcal{V}(\Lambda_{A,b}^\delta), X) = \frac{1}{\sigma(r_2)} \Vol(\mathcal{F}_A \cdot R_{A,b}^{r_2,\delta}(X)) \prod \limits_p \Vol(\Lambda_{A,b,p}) + o\left(X^{\frac{n(n-1)}{2}-1}\right),
\end{equation*}
where the volume of subsets of $V_{A,b}(\mathbb{R})$ are computed with respect to the Euclidean measure normalized so that $V_{A,b}(\mathbb{Z})$ has covolume 1 and the volumes of subsets of $V_{A,b}(\mathbb{Z}_p)$ are computed with respect to the Euclidean measure normalized so that $V_{A,b}(\mathbb{Z}_p)$ has measure $1$. 
\end{thm}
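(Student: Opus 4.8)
The plan is to run a standard sieve, bootstrapping from the asymptotic formula for $N(V_{A,b}^{r_2,\delta}(\mathbb{Z});X)$ established in Section \ref{Averaging and cutting off the cusp} (in both the anisotropic and isotropic cases for $A$, and trivially when $\chi_A(\delta)=0$). First I would impose finitely many local conditions by a direct geometry-of-numbers computation, and then pass to the full very large collection using the uniform tail estimate on $\bigcup_{p\ge M}W_{A,b,p}$ from the previous theorem.

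\emph{Step 1 (finitely many conditions).} Fix a finite set $S$ of primes and let $\Lambda_{A,b}^{(S)}$ be the collection imposing $\Lambda_{A,b,\infty}$ together with the conditions at the primes of $S$ and nothing elsewhere. For $p\in S$ the set $\Lambda_{A,b,p}\subset V_{A,b}(\mathbb{Z}_p)$ is open with boundary of measure zero, hence for any $\eta>0$ it is sandwiched between finite unions of cosets of a common finite-index sublattice $L\subset V_{A,b}(\mathbb{Z})$ whose total measures differ by at most $\eta$. On each coset of $L$ the cusp-cutting estimate and the negligibility of reducible points in the main body proved in Sections \ref{Reduction theory}--\ref{Averaging and cutting off the cusp} apply verbatim (they depend only on the reducibility criterion and on Davenport's lemma, not on which finite-index lattice is used), so Davenport's lemma gives that the number of absolutely irreducible $\SO_A(\mathbb{Z})$-orbits of height at most $X$ meeting a fixed coset is $\tfrac{1}{\sigma(r_2)}\,[V_{A,b}(\mathbb{Z}):L]^{-1}\,\Vol(\mathcal{F}_A\cdot R_{A,b}^{r_2,\delta}(X))+o(X^{n(n+1)/2-1})$. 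Summing over the finitely many cosets in the inner and outer approximations and letting $\eta\to0$ yields
\[
N(\mathcal{V}(\Lambda_{A,b}^{(S)}),X)=\tfrac{1}{\sigma(r_2)}\Vol\!\big(\mathcal{F}_A\cdot R_{A,b}^{r_2,\delta}(X)\big)\prod_{p\in S}\Vol(\Lambda_{A,b,p})+o\!\big(X^{n(n+1)/2-1}\big),
\]
with implied constant depending on $S$.

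\emph{Step 2 (removing the tail).} For $Y>0$ write $\Lambda_{A,b}^{(<Y)}$ for the collection imposing $\Lambda_{A,b,\infty}$ and the conditions at all primes $p<Y$. Since $\Lambda_{A,b}$ is very large there is $Y_0$ such that $\Lambda_{A,b,p}$ contains every projective element of $V_{A,b}(\mathbb{Z}_p)$ for $p\ge Y_0$; hence for $Y\ge Y_0$ one has $\mathcal{V}(\Lambda_{A,b}^{(<Y)})\setminus\mathcal{V}(\Lambda_{A,b})\subseteq\bigcup_{p\ge Y}W_{A,b,p}$, and the tail theorem bounds this difference by $O(X^{n(n+1)/2-1}/Y^{1-\epsilon})+o(X^{n(n+1)/2-1})$. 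Subtracting this from Step 1 applied to $\Lambda_{A,b}^{(<Y)}$,
\[
N(\mathcal{V}(\Lambda_{A,b}),X)=\tfrac{1}{\sigma(r_2)}\Vol\!\big(\mathcal{F}_A\cdot R_{A,b}^{r_2,\delta}(X)\big)\prod_{p<Y}\Vol(\Lambda_{A,b,p})+O\!\Big(\tfrac{X^{n(n+1)/2-1}}{Y^{1-\epsilon}}\Big)+o_Y\!\big(X^{n(n+1)/2-1}\big).
\]
Since $\Vol(\mathcal{F}_A\cdot R_{A,b}^{r_2,\delta}(X))\asymp X^{n(n+1)/2-1}$, dividing by $X^{n(n+1)/2-1}$, taking $\limsup$ and $\liminf$ as $X\to\infty$, and then letting $Y\to\infty$ determines the limit, provided the Euler product $\prod_p\Vol(\Lambda_{A,b,p})$ converges. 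Convergence is immediate: for $p\ge Y_0$ one has $1-\Vol(\Lambda_{A,b,p})\le\Vol(W_{A,b,p})$ and $\sum_p\Vol(W_{A,b,p})<\infty$, this being the same decay of the non-projective locus (non-projectivity at $p$ forces a congruence of density $O(p^{-1-c})$) that produces the exponent saving in the tail theorem. This yields the stated asymptotic formula.

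\emph{Main obstacle.} The only non-routine ingredient is the \emph{uniform} power-saving tail bound of the previous theorem: it is precisely what justifies interchanging the limits $X\to\infty$ and $Y\to\infty$, and without a saving in $Y$ (as opposed to a saving only in $X$) the argument would not close. Everything else — the finite sieve, the reduction of the congruence count to Davenport's lemma on sublattice cosets, and the convergence of the Euler product — is standard once the cusp-cutting and reducibility estimates of Sections \ref{Reduction theory}--\ref{Averaging and cutting off the cusp} are available, since those guarantee that on each coset the geometry-of-numbers count produces nothing beyond the main term and an acceptable error.
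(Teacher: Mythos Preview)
Your proposal is correct and follows exactly the standard inclusion--exclusion sieve that the paper invokes by reference to \cite{HoShankarVarmaOdd} and \cite{SiadOddMonogenicAverages}: impose finitely many congruence conditions via Davenport's lemma on sublattice cosets (using the cusp and reducibility estimates of Sections~\ref{Reduction theory}--\ref{Averaging and cutting off the cusp}), then control the tail $\bigcup_{p\ge Y}W_{A,b,p}$ using the uniform power-saving bound of the preceding theorem, and finally let $Y\to\infty$ using convergence of the Euler product. The paper does not spell out its own argument beyond citing these sources, and what you have written is precisely the adaptation it has in mind.
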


\subsection{Sieving to acceptable sets conditional on a tail estimate} 

We now define the concept of \emph{acceptable collections of local specifications} and state the asymptotic formula. Roughly, a collection of local specifications is acceptable if for large enough primes $p$, it includes all fields with discriminant indivisible by $p^2$. 

\begin{defn}[Acceptable collection of local specifications]
Let $\Lambda_{A,b}= \left( \Lambda_{A,b,\nu}\right)_\nu$ be a collection of local specifications. We say that $\Lambda_{A,b}$ is {\bf acceptable} if for all but finitely many primes, the set $\Lambda_{A,b,p}$ contains all elements of $V_{A,b}(\mathbb{Z}_p)$ whose discriminant is not divisible by $p^2$. If $\Lambda_{A,b}$ is acceptable, we also say that the associated set $\mathcal{V}(\Lambda_{A,b})$ is acceptable. 
\end{defn}

We have the following unconditional asymptotic inequality. 

\begin{thm}
Let $\Lambda_{A,b}= \left( \Lambda_{A,b,\nu}\right)_\nu$ be an acceptable collection of local specifications. 
\begin{equation*}
N(\mathcal{V}(\Lambda_{A,b}),X) \le \frac{1}{\sigma(r_2)}\Vol(\mathcal{F}_A \cdot R_{A,b}^{r_2,\delta}(X)) \prod \limits_p \Vol(\Lambda_{A,b,p}) + o\left(X^{\frac{n(n+1)}{2}-1} \right),
\end{equation*}
where the volume of subsets of $V_{A,b}(\mathbb{R})$ are computed with respect to the Euclidean measure normalized so that $V_{A,b}(\mathbb{Z})$ has covolume $1$ and the volumes of subsets of $V_{A,b}(\mathbb{Z}_p)$ are computed with respect to the Euclidean measure normalized so that $V_{A,b}(\mathbb{Z}_p)$ has measure $1$. 
\end{thm}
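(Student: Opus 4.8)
The plan is to squeeze $\mathcal{V}(\Lambda_{A,b})$ between the set attached to a \emph{very large} collection, for which the previous theorem gives an exact asymptotic, and a union of non-projective loci, for which the estimate on $N(\cup_{p\ge M}W_{A,b,p},X)$ recalled above applies. For each integer $M$ I would introduce the collection $\Lambda_{A,b}^{M}=(\Lambda_{A,b,\nu}^{M})_\nu$ defined by $\Lambda_{A,b,p}^{M}=\Lambda_{A,b,p}$ for $p<M$, by $\Lambda_{A,b,p}^{M}=V_{A,b}^{\rm proj}(\mathbb{Z}_p)$ for $p\ge M$, and by $\Lambda_{A,b,\infty}^{M}=V_{A,b}^{r_2,\delta}(\mathbb{R})$. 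By construction $\Lambda_{A,b}^{M}$ is very large and agrees with $\Lambda_{A,b}$ at every prime below $M$.

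The key elementary observation is the containment
$$\mathcal{V}(\Lambda_{A,b})\ \subseteq\ \mathcal{V}(\Lambda_{A,b}^{M})\ \cup\ \bigcup_{p\ge M}W_{A,b,p}:$$
an element of $\mathcal{V}(\Lambda_{A,b})$ that is projective at all primes $p\ge M$ already lies in $\mathcal{V}(\Lambda_{A,b}^{M})$, and otherwise it lies in some $W_{A,b,p}$ with $p\ge M$. Counting integral points of height less than $X$, bounding the first term by the very large theorem and the second by the tail estimate, and expanding $\prod_p\Vol(\Lambda_{A,b,p}^{M})=\prod_{p<M}\Vol(\Lambda_{A,b,p})\cdot\prod_{p\ge M}\Vol(V_{A,b}^{\rm proj}(\mathbb{Z}_p))$, I would arrive at
\begin{align*}
N(\mathcal{V}(\Lambda_{A,b}),X)\ \le\ &\frac{1}{\sigma(r_2)}\Vol\big(\mathcal{F}_A\cdot R_{A,b}^{r_2,\delta}(X)\big)\prod_{p<M}\Vol(\Lambda_{A,b,p})\prod_{p\ge M}\Vol(V_{A,b}^{\rm proj}(\mathbb{Z}_p))\\
&+\ O\!\left(\frac{X^{\frac{n(n+1)}{2}-1}}{M^{1-\epsilon}}\right)+o\!\left(X^{\frac{n(n+1)}{2}-1}\right),
\end{align*}
with all implied constants independent of $M$.

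The final step is to let $M\to\infty$. Acceptability gives $V_{A,b}(\mathbb{Z}_p)\setminus\Lambda_{A,b,p}\subseteq\{v:p^2\mid\disc(v)\}$ for all $p$ past a fixed bound, and the normalised measure of $\{v:p^2\mid\disc(v)\}$ in $V_{A,b}(\mathbb{Z}_p)$ is $O(p^{-2})$; hence $\Vol(V_{A,b}^{\rm proj}(\mathbb{Z}_p))\le\Vol(\Lambda_{A,b,p})+O(p^{-2})$ and $\Vol(\Lambda_{A,b,p})\ge 1-O(p^{-2})$, so both Euler products converge and $\prod_{p\ge M}\Vol(V_{A,b}^{\rm proj}(\mathbb{Z}_p))\le(1+\eta(M))\prod_{p\ge M}\Vol(\Lambda_{A,b,p})$ with $\eta(M)\to 0$. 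Using that $\Vol(\mathcal{F}_A\cdot R_{A,b}^{r_2,\delta}(X))=O(X^{\frac{n(n+1)}{2}-1})$ from the reduction and cusp analysis of Sections \ref{Reduction theory} and \ref{Averaging and cutting off the cusp}, I obtain for each fixed $M$
$$\limsup_{X\to\infty}\frac{N(\mathcal{V}(\Lambda_{A,b}),X)-\tfrac{1}{\sigma(r_2)}\Vol\big(\mathcal{F}_A\cdot R_{A,b}^{r_2,\delta}(X)\big)\prod_p\Vol(\Lambda_{A,b,p})}{X^{\frac{n(n+1)}{2}-1}}\ =\ O\!\big(\eta(M)+M^{-1+\epsilon}\big),$$
and letting $M\to\infty$ yields the inequality; if $V_{A,b}^{r_2,\delta}(\mathbb{R})=\emptyset$ both sides vanish and there is nothing to prove.

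The real content sits in the two imported ingredients: the exact asymptotic for very large collections (the previous theorem) and the uniform-in-$M$ bound on non-projective points $N(\cup_{p\ge M}W_{A,b,p},X)$, which is \cite{SiadOddMonogenicAverages} following \cite{HoShankarVarmaOdd}. Given these, the main obstacle is purely the bookkeeping of the iterated limit --- first $X\to\infty$, then $M\to\infty$ --- together with checking that the main term really is $O(X^{\frac{n(n+1)}{2}-1})$ so that the $(1+\eta(M))$ distortion of the Euler product and the $M$-dependent tail are absorbed into $o(X^{\frac{n(n+1)}{2}-1})$. This is exactly the sieve run in the odd-degree case and transfers essentially word for word, with $V_{A,b}$ and $\SO_A$ replacing the odd-degree representation and orthogonal group.
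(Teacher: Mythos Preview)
Your argument is correct and is exactly the standard sieve that the paper intends here: the paper does not spell out a proof of this theorem but declares it an adaptation of \cite{HoShankarVarmaOdd} and \cite{SiadOddMonogenicAverages}, and your proposal reproduces that argument faithfully (introduce the auxiliary very large family $\Lambda_{A,b}^{M}$, use the containment $\mathcal{V}(\Lambda_{A,b})\subseteq\mathcal{V}(\Lambda_{A,b}^{M})\cup\bigcup_{p\ge M}W_{A,b,p}$, apply the asymptotic for very large collections together with the unconditional projective tail estimate, and pass to the double limit $X\to\infty$ then $M\to\infty$). Your handling of the Euler product comparison and of the $o_M$ versus $o$ issue via the $\limsup$ is the right bookkeeping, and you correctly invoke the \emph{proven} $W_{A,b,p}$ estimate rather than the conjectural $\mathcal{W}_{A,b,p}$ one.
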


The following tail estimates are known for $n=4$ as will be shown in a forthcoming work with Arul Shankar and likely to be true for $n \ge 6$. Indeed they follow from a suitable version of the $abc$ conjecture by work of Granville.

\begin{defn}
Let $p$ be a prime. We denote by $\mathcal{W}_{A,b,p}$ the set of elements $v \in V_{A,b}(\mathbb{Z})$ such that $p^2 \mid \Delta(v)$. 
\end{defn}

\begin{conj}[Conjectural tail estimates] We have
\begin{equation*}
N(\cup_{p \ge M} \mathcal{W}_{A,b,p}, X) = O \left(\frac{X^{\frac{n(n+1)}{2}-1}}{M^{1-\epsilon}}\right) + o \left( X^{\frac{n(n+1)}{2}-1} \right)
\end{equation*}
where the implied constant is independent of $X$ and $M$.
\end{conj}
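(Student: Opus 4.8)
The plan is to follow the template used above for the (unconditional) estimate on $\cup_{p \ge M} W_{A,b,p}$, replacing non-projectivity by the divisibility condition $p^2 \mid \Delta$, and to isolate the one step where $abc$ is genuinely needed. First I would dispose of all contributions that are not ``main body, absolutely irreducible'': by van der Waerden's theorem the set of $v \in V_{A,b}(\mathbb{Z})$ of height $< X$ whose resolvent is reducible or has Galois group a proper subgroup of $S_n$ is $o(X^{\frac{n(n+1)}{2}-1})$; the locus $\{\Delta = 0\}$ contributes $O_\epsilon(X^{\frac{n(n+1)}{2}-2+\epsilon})$ points; and the cusp-cutting estimate of Section~\ref{Averaging and cutting off the cusp} bounds the absolutely irreducible points lying in the cusp by $O_\epsilon(X^{\frac{n(n+1)}{2}-2+\epsilon})$. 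Crucially this is a \emph{single} set of size $o(X^{\frac{n(n+1)}{2}-1})$, so its total contribution to $\cup_{p \ge M}\mathcal{W}_{A,b,p}$ is $o(X^{\frac{n(n+1)}{2}-1})$ no matter how many primes are involved. It therefore suffices to bound the number of absolutely irreducible $v$ in the main body with $p^2 \mid \Delta(v)$ for some $p \ge M$, and for these one proceeds exactly as in the reduction-theory and averaging machinery of the preceding sections, which reduces everything to the same family of (weighted) lattice-point counts in genuine boxes that appears in \cite{HoShankarVarmaOdd} and \cite{SiadOddMonogenicAverages}.

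Next I would split the primes. For $M \le p \le X^{\delta}$ with $\delta > 0$ a small fixed exponent, I would run the congruence sieve modulo $p^2$: in each box the number of $v$ with $p^2 \mid \Delta(v)$ is $\mu_p \cdot \Vol + O_\epsilon(X^{\frac{n(n+1)}{2}-2+\epsilon})$, where $\mu_p$ is the $\mathbb{Z}_p$-density of $\{\,p^2 \mid \Delta\,\}$ in $V_{A,b}(\mathbb{Z}_p)$ and, as in \cite{BhargavaShankarWangSquarefreeI} and \cite{HoShankarVarmaOdd}, the count is carried out in a coordinate whose box-side exceeds $p^2$. The one input needed here is that $\mu_p = O_\epsilon(p^{-2+\epsilon})$. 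This holds because $\Delta$ factors through the resolvent and the singular locus of $\{\Delta = 0\} \subset V_{A,b}$ has codimension $\ge 2$: a $v$ whose resolvent has, modulo $p$, a single double root and otherwise simple roots has $v_p(\Delta(v)) = 1$ for all but an $O(1/p)$ proportion of its lifts to $\mathbb{Z}_p$, since the valuation is then twice $v_p$ of the discriminant of a Hensel quadratic factor, whereas the loci forcing $v_p(\Delta) \ge 2$ for structural reasons (a triple root, two disjoint double roots, a repeated factor of degree $\ge 2$, and so on) already cut out sets of $\mathbb{Z}_p$-density $O(1/p^2)$. Summing, $\sum_{M \le p \le X^{\delta}}\mu_p\,\Vol\big(\mathcal{F}_A\cdot R_{A,b}^{r_2,\delta}(X)\big) = O_\epsilon\!\big(X^{\frac{n(n+1)}{2}-1}M^{-1+\epsilon}\big)$, while the accumulated errors are $O_\epsilon\!\big(X^{\frac{n(n+1)}{2}-2+\delta+\epsilon}\big) = o\big(X^{\frac{n(n+1)}{2}-1}\big)$.

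The remaining range $p > X^{\delta}$ is the genuinely difficult part, and is the reason the estimate is conditional. Here a pointwise congruence count modulo $p^2$ is useless, since the boxes are smaller than $p^2$ in every direction. However, any $v$ with $\Delta(v) \ne 0$ and $p^2 \mid \Delta(v)$ satisfies $|\Delta(v)| \ll_n X^{n(n-1)}$, so at most $O_\delta(1)$ primes $p > X^{\delta}$ can occur for a given $v$ and a union bound over this range costs only a constant factor; one is thereby reduced to bounding the number of integral points of height $< X$ at which the discriminant form $\Delta$ takes a value divisible by the square of some prime exceeding $X^{\delta}$. This is exactly a statement about power-free values of a multivariate polynomial over a weighted box, and it is precisely here that Granville's $abc$-based method supplies a power saving of the required shape, yielding the estimate for all even $n \ge 6$ conditionally on a suitable form of the $abc$ conjecture. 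For $n = 4$ the relevant uniformity estimate is unconditional and will be proved in forthcoming joint work with Arul Shankar, by reducing to the unconditional uniformity estimates for discriminants of binary quartic forms already available in the geometry-of-numbers literature (compare \cite{BhargavaShankarWangSquarefreeI}). The main obstacle is exactly this large-prime range: an unconditional treatment for general even $n$ would require a strong squarefree-sieve statement for discriminants that at present is not known without $abc$.
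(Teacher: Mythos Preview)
The statement you are addressing is recorded in the paper as a \emph{Conjecture}, not a theorem; the paper gives no proof at all. Its only commentary is the sentence immediately preceding the statement: the estimate is known unconditionally for $n=4$ (to appear in forthcoming joint work with Arul Shankar) and follows for $n \ge 6$ from a suitable version of the $abc$ conjecture via Granville's method. Your proposal is therefore not in competition with any argument in the paper. What you have written is a correct reconstruction of the strategy the paper alludes to---split the primes at $p \approx X^\delta$, handle small primes by a congruence sieve using the density bound $\mu_p = O(p^{-2})$, and invoke Granville's $abc$-based method for the large-prime tail---and you correctly isolate the large-prime range as the step that is genuinely conditional.

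Two minor remarks. First, the opening manoeuvres with van der Waerden and the cusp are already absorbed in the paper's standing framework: $N(\,\cdot\,;X)$ counts only absolutely irreducible orbits by definition, and the reduction to main-body lattice points is exactly the content of Section~\ref{Averaging and cutting off the cusp}, so that paragraph is sound but is bookkeeping rather than new input. Second, your density bound $\mu_p = O_\epsilon(p^{-2+\epsilon})$ on $V_{A,b}(\mathbb{Z}_p)$ is most cleanly obtained by pushing forward along the resolvent map $\pi$ to $U_{1,b}(\mathbb{Z}_p)$ and quoting the corresponding bound for monic polynomials from \cite{BhargavaShankarWangSquarefreeI}, rather than by arguing directly on the singular locus of $\Delta$ inside $V_{A,b}$. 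In any case, since the paper itself offers no proof of this conjecture, there is nothing further against which to compare your outline.
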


We have the following asymptotic formula conditional on the preceding tail estimates. 

\begin{thm}
Suppose that the preceding tail estimates hold. Let $r_2$ be an integer such that $0 \le r_2 \le \frac{n-1}{2}$ and let $\delta \in \mathcal{T}(r_2)$. Then for an acceptable collection of local specifications $\Lambda_{A,b}$ such that $\Lambda_{A,b}(\infty) = V_{A,b}^{r_2,\delta}(\mathbb{R})$ we have 
\begin{equation*}
N(\mathcal{V}(\Lambda_{A,b}^\delta), X) = \frac{1}{\sigma(r_2)} \Vol(\mathcal{F}_A \cdot R_A^{r_2,\delta}(X)) \prod \limits_p \Vol(\Lambda_{A,b,p}) + o\left(X^{\frac{n(n-1)}{2}-1}\right),
\end{equation*}
where the volume of subsets of $V_{A,b}(\mathbb{R})$ are computed with respect to the Euclidean measure normalized so that $V_{A,b}(\mathbb{Z})$ has covolume $1$ and the volumes of subsets of $V_{A,b}(\mathbb{Z}_p)$ are computed with respect to the Euclidean measure normalized so that $V_{A,b}(\mathbb{Z}_p)$ has measure $1$. 
\end{thm}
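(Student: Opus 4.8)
The plan is to deduce this conditional asymptotic from three ingredients already in place: the unconditional upper bound of the previous subsection, the exact asymptotic for very large collections, and the conjectural tail estimate, combined by the standard sandwiching argument of Bhargava (carried out exactly as in \cite{HoShankarVarmaOdd} and in Part I \cite{SiadOddMonogenicAverages}). The upper bound half is already done, so the work is entirely in producing a matching lower bound.

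To that end, for each integer $M$ I would introduce the truncated collection $\Lambda_{A,b}^{(M)}$ which agrees with $\Lambda_{A,b}$ at $\infty$ and at all primes $p < M$, and sets $\Lambda_{A,b,p}^{(M)} = V_{A,b}(\mathbb{Z}_p)$ for $p \ge M$. This collection is very large, so the very large asymptotic gives
\[
N(\mathcal{V}(\Lambda_{A,b}^{(M),\delta}),X) = \frac{1}{\sigma(r_2)}\Vol\!\big(\mathcal{F}_A\cdot R_{A,b}^{r_2,\delta}(X)\big)\!\!\prod_{p<M}\!\Vol(\Lambda_{A,b,p}) + o_M\!\big(X^{\frac{n(n+1)}{2}-1}\big),
\]
using $\Vol(V_{A,b}(\mathbb{Z}_p))=1$. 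Since $\Lambda_{A,b,p}\subseteq\Lambda_{A,b,p}^{(M)}$ for all $p$, we have $\mathcal{V}(\Lambda_{A,b})\subseteq\mathcal{V}(\Lambda_{A,b}^{(M)})$; and once $M$ exceeds the finitely many primes exempted in the definition of acceptability, any $v\in\mathcal{V}(\Lambda_{A,b}^{(M)})\setminus\mathcal{V}(\Lambda_{A,b})$ violates a local condition at some $p\ge M$ and hence satisfies $p^2\mid\Delta(v)$, i.e. $\mathcal{V}(\Lambda_{A,b}^{(M)})\setminus\mathcal{V}(\Lambda_{A,b})\subseteq\bigcup_{p\ge M}\mathcal{W}_{A,b,p}$. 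The conjectural tail estimate then yields
\[
N(\mathcal{V}(\Lambda_{A,b}^\delta),X)\ \ge\ N(\mathcal{V}(\Lambda_{A,b}^{(M),\delta}),X)\ -\ O\!\left(\tfrac{X^{\frac{n(n+1)}{2}-1}}{M^{1-\epsilon}}\right)\ -\ o\!\big(X^{\frac{n(n+1)}{2}-1}\big).
\]

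Next I would reconcile the truncated Euler product $\prod_{p<M}\Vol(\Lambda_{A,b,p})$ with the full product $\prod_p\Vol(\Lambda_{A,b,p})$. Because $V_{A,b}(\mathbb{Z}_p)\setminus\Lambda_{A,b,p}$ lies in the locus $\{p^2\mid\Delta\}$ for all large $p$, and this locus has $p$-adic volume $O(p^{-2})$ by a routine estimate (anything of the form $O(p^{-1-\eta})$ would suffice), the product converges and $\prod_{p<M}\Vol(\Lambda_{A,b,p}) = \prod_p\Vol(\Lambda_{A,b,p}) + O(1/M)$ with nonnegative error. Substituting this back, and invoking $\Vol(\mathcal{F}_A\cdot R_{A,b}^{r_2,\delta}(X))\asymp X^{\frac{n(n+1)}{2}-1}$ (homogeneity of the height), I obtain a lower bound for $N(\mathcal{V}(\Lambda_{A,b}^\delta),X)$ equal to the predicted main term minus $O(X^{\frac{n(n+1)}{2}-1}/M^{1-\epsilon})$ minus an $o_M$-term. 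Dividing by $X^{\frac{n(n+1)}{2}-1}$, sending $X\to\infty$ for each fixed $M$ (which disposes of the $o_M$-term) and then letting $M\to\infty$ shows the lower limit matches the leading constant; together with the unconditional upper bound this pins down the limit and gives the claimed asymptotic.

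The only delicate point is the bookkeeping around the order of limits: the error in the very large asymptotic depends on $M$, so $X$ must be sent to infinity first, and this is legitimate precisely because the tail estimate comes with an implied constant independent of $X$ and $M$. I do not expect any genuinely new obstacle here — the argument is identical to the corresponding step in \cite{SiadOddMonogenicAverages}, and all of the real content lives in the (conditional) tail estimate, the very large asymptotic, and the unconditional inequality, which are established above.
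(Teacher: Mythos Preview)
Your proposal is correct and follows exactly the standard sandwiching argument that the paper invokes by reference to \cite{HoShankarVarmaOdd} and Part I \cite{SiadOddMonogenicAverages}; the paper itself states this theorem without giving an independent proof, deferring entirely to those sources. Your handling of the truncated very large collection $\Lambda_{A,b}^{(M)}$, the containment $\mathcal{V}(\Lambda_{A,b}^{(M)})\setminus\mathcal{V}(\Lambda_{A,b})\subseteq\bigcup_{p\ge M}\mathcal{W}_{A,b,p}$, and the order of limits is the expected one and matches the cited literature.
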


\section{Change of measure formula} \label{Change of measure formula}

To compute the volumes of sets and multi-sets in $V_{A,b}(\mathbb{R})$ and $V_{A,b}(\mathbb{Z}_p)$, we have the following version of the change of variable formula. Let $dv$ and $df$ denote the Euclidean measure on $V_{A,b}$ and $U_{A,b}$ respectively normalized so that $V_{A,b}(\mathbb{Z})$ and $U_{A,b}(\mathbb{Z})$ have co-volume $1$. Furthermore, let $\omega$ be an algebraic differential form generating the rank $1$ module of top degree left-invariant differential forms on $\SO_A$ over $\mathbb{Z}$.

\begin{prop}[Change of measure formula]
Let $K =\mathbb{Z}_p, \mathbb{R}$ or $ \mathbb{C},$. Let $| \cdot|$ denote the usual absolute value on $K$ and let $s \colon U_{1,b}(K) \rightarrow V_{A,b}(K)$ be a continuous map such that $\pi(f) = $ for each $f \in U_{1,b}$. Then there exists a rational non-zero constant $\mathcal{J}_A$, independent of $K$ and $s$, such that for any measurable function $\phi$ on $V_{A,b}(K)$, we have: 
\begin{equation*} 
\int \limits_{\SO_A(K) \cdot s(U_{1,b}(K))} \phi(v) \, dv = \left| \mathcal{J}_A \right| \int_{f\in U_{1,b}(K)} \int_{g \in \SO_A(K)} \phi(g \cdot s(f)) \, \omega(g) \, df
\end{equation*}
\begin{equation*} 
\int \limits_{V_{A,b}(K)} \phi(v) dv = \left| \mathcal{J}_A \right| \int\limits_{\substack{f \in U_{1,b}(K) \\ \Delta(f) \neq 0}} \left(\sum_{v\in \frac{V_{A,b}(K) \cap \pi^{-1}(f)}{\SO_A(K)}} \frac{1}{\# {\rm Stab}_{\SO_A(\mathbb{Z}_p)}(v)} \int\limits_{g \in \SO_A(K)} \phi(g \cdot v) \, \omega(g) \right) \, df
\end{equation*}
where $\frac{V_{A,b}(K) \cap \pi^{-1}(f)}{\SO_A(K)}$ denotes a set of representatives for the action of $\SO_A(\mathbb{Z}_p)$ on $V_{A,b}(\mathbb{Z}_p) \cap \pi^{-1}(f)$. 
\end{prop}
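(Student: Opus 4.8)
The plan is to follow the classical Jacobian-computation strategy used by Bhargava and adapted in many subsequent works (in particular in Part~I \cite{SiadOddMonogenicAverages}). The first step is to establish the \emph{first} displayed identity, from which the second follows by a standard unfolding argument. Consider the map $\Psi \colon \SO_A(K) \times U_{1,b}(K) \to V_{A,b}(K)$ given by $(g,f) \mapsto g \cdot s(f)$. Since $\pi$ is $\SO_A$-invariant and $\pi(s(f)) = f$, the fibre of $\Psi$ over a point $v$ with $\pi(v)=f$ is a torsor under $\Stab_{\SO_A(K)}(v)$ composed with the (finite) freedom in choosing a representative; generically this stabiliser has the size computed in the earlier lemmas. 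The key is that $\Psi$ is a submersion at generic points because the orbit $\SO_A(K)\cdot s(f)$ together with the section direction spans the tangent space to $V_{A,b}$ — here one uses that $\dim \SO_A + \dim U_{1,b} = \dim V_{A,b}$, i.e. $\binom{n}{2} + \left(\tfrac{n(n+1)}{2}-1\right) = \tfrac{n(n+1)}{2}-1 + \binom{n}{2}$, which holds trivially, and that the resolvent map $\pi$ is submersive away from $\Delta=0$.

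The heart of the argument is the computation of the Jacobian of $\Psi$. One fixes a point $s(f_0)$ and writes the differential of $\Psi$ in coordinates: the $U_{1,b}$-directions contribute $df$ via the section $s$, and the $\SO_A$-directions contribute $\omega(g)$ via the Lie algebra action $\mathfrak{so}_A \to T_{s(f_0)}V_{A,b}$. The determinant of the resulting linear map, expressed relative to the integral structures, is a rational function on $V_{A,b}$; by $\SO_A$-equivariance and the homogeneity of all the forms involved, this determinant is actually \emph{constant} on each $\pi$-fibre and indeed independent of $f_0$ — this constant is $\mathcal{J}_A$. Rationality of $\mathcal{J}_A$ follows because every ingredient (the action map, the section $s$ which can be taken polynomial, and the invariant form $\omega$ defined over $\mathbb{Z}$) is defined over $\mathbb{Q}$; non-vanishing follows from the submersivity just discussed. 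Crucially, since the whole computation takes place at the level of algebraic differential forms over $\mathbb{Q}$, the constant does not depend on whether $K = \mathbb{Z}_p$, $\mathbb{R}$, or $\mathbb{C}$, nor on the particular continuous section $s$ (two sections differ by an $\SO_A(K)$-valued function, which is absorbed into the $g$-integral).

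Once the first identity is in hand, the second follows by decomposing $V_{A,b}(K)$ (minus the discriminant locus) into $\SO_A(K)$-orbits: choosing for each fibre $\pi^{-1}(f)$ a set of orbit representatives $v$, one has $V_{A,b}(K) \setminus \{\Delta=0\} = \bigsqcup_{f}\bigsqcup_{v} \SO_A(K)\cdot v$, and applying the first identity to each piece — with $s$ locally chosen so that $s(f) = v$ — produces the factor $\tfrac{1}{\#\Stab_{\SO_A}(v)}$ from the overcounting of the orbit map $g \mapsto g\cdot v$. The main obstacle is the Jacobian computation itself: one must check carefully that the determinant relating $dv$ to $\omega(g)\,df$ is genuinely constant across fibres and independent of $K$ and $s$, which amounts to an explicit (if tedious) linear-algebra calculation with the $\mathfrak{so}_A$-action on pairs of symmetric matrices; this is exactly the point where one invokes that the construction is purely algebraic and defers to the analogous computation in \cite{SiadOddMonogenicAverages} and \cite{BhargavaGrossWang}. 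I would carry out the local computation at one convenient rational point $f_0$ (e.g. corresponding to a split étale algebra) and then propagate it by equivariance.
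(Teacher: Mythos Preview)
The paper does not actually prove this proposition: it is stated without proof in Section~\ref{Change of measure formula}, implicitly deferring to the identical computation in Part~I \cite{SiadOddMonogenicAverages} and, behind that, to the standard Bhargava--Shankar change-of-variables machinery. Your sketch is precisely the argument one finds in those references, so there is nothing to compare --- you have reconstructed what the paper omits.

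One cosmetic point: the dimension check you wrote is tautological (you equated an expression to itself). The content you want is
\[
\dim\SO_A + \dim U_{1,b} \;=\; \binom{n}{2} + (n-1) \;=\; \frac{n(n+1)}{2}-1 \;=\; \dim V_{A,b},
\]
which is what forces $\Psi$ to be generically a local diffeomorphism (up to the finite stabiliser). Otherwise the outline --- Jacobian of $\Psi$ at a rational point, constancy by $\SO_A$-equivariance and algebraicity over $\mathbb{Q}$, independence of $K$ and $s$, then orbit decomposition for the second identity --- is correct and is exactly how the cited references proceed.
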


We can simplify the second integral above by introducing a local mass. 

\begin{defn}[Local mass formula] Let $p$ be a prime, $f \in U_{1,b}(\mathbb{Z}_p)$ and $A \in \mathscr{L}_{\mathbb{Z}}$. We define the local mass of $f$ at $p$ in $A$, $m_p(f,A)$ to be $$m_p(f,A) := \sum_{v \in \frac{V_{A,b}(\mathbb{Z}_p) \cap \pi^{-1}(f)}{\SO_A(\mathbb{Z}_p)}} \frac{1}{\# {\rm Stab}_{\SO_A(\mathbb{Z}_p)}(v)}.$$
\end{defn}

We now have the following formula for the local volumes appearing in the asymptotic formula. 

\begin{prop} We have $$\Vol\left( \mathcal{F}_A \cdot R_{A,b}^{r_2,\delta} (X) \right) =  \chi_A(\delta) \left| \mathcal{J}_A \right| \Vol( \mathcal{F}_A^\delta) \Vol(U(\mathbb{R})^{r_2}_{H < X}).$$ Let $S_p \subset U_{1,b}(\mathbb{Z}_p)$ be a non-empty open set whose boundary has measure $0$. Consider the set $\Lambda_{A,b,p} = V_{A,b}(\mathbb{Z}_p) \cap \pi^{-1}(S_p)$. Then we have $$\Vol( \Lambda_{A,b,p}) = \left| \mathcal{J}_A \right|_p \Vol(\SO_A(\mathbb{Z}_p)) \int_{f \in S_p} m_p(f,A) \, df.$$
\end{prop}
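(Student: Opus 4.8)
The plan is to derive both volume formulas from the change of measure formula of the preceding proposition, which rewrites integrals over $V_{A,b}(K)$ (or over $\SO_A(K)$-orbits thereof) as iterated integrals over $U_{1,b}(K)$ and the fibers. For the archimedean statement, I would take $K=\mathbb{R}$ and apply the first integral identity in the proposition with $\phi$ the indicator function of $\mathcal{F}_A \cdot R_{A,b}^{r_2,\delta}(X)$, choosing the section $s$ so that its image lands in $V_{A,b}^{r_2,\delta}(\mathbb{R})$ — this is where $\chi_A(\delta)$ enters: if $V_A^{r_2,\delta}(\mathbb{R})=\emptyset$ there is no section and the volume is visibly $0$, and otherwise a section exists. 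The key point is that $\SO_A(\mathbb{R}) \cdot s(U_{1,b}(\mathbb{R}))$ sweeps out $V_{A,b}^{r_2,\delta}(\mathbb{R})$, and $\mathcal{F}_A \cdot R_{A,b}^{r_2,\delta}(X)$, being a $\frac{\sigma(r_2)}{2}$-fold cover of a fundamental domain for $\SO_A(\mathbb{Z})$ cut off at height $X$, matches the region $\{g \cdot s(f) : g \in \mathcal{F}_A^\delta,\ H(f) < X\}$ up to the action of the finite stabiliser; since the height $H$ only depends on $f = \pi(g\cdot s(f))$, the inner $g$-integral and the outer $f$-integral decouple, giving the product $\Vol(\mathcal{F}_A^\delta)\Vol(U(\mathbb{R})^{r_2}_{H<X})$, all scaled by $|\mathcal{J}_A|$. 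One must be a little careful that the normalisation of the Haar measure $\omega$ on $\SO_A$ and of $\mathcal{F}_A^\delta$ are compatible so that no extra constant appears; this bookkeeping, together with confirming that the multiplicities from the covering and from $\#\Stab$ cancel correctly, is the main thing to check.

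For the $p$-adic statement, I would apply the second integral identity of the change of measure formula with $K = \mathbb{Z}_p$ and $\phi$ the indicator function of $\Lambda_{A,b,p} = V_{A,b}(\mathbb{Z}_p) \cap \pi^{-1}(S_p)$. Since $\phi(g\cdot v) = \phi(v)$ is $\SO_A(\mathbb{Z}_p)$-invariant and depends only on whether $\pi(v) = \pi(g\cdot v) \in S_p$, the inner integral $\int_{g\in\SO_A(\mathbb{Z}_p)}\phi(g\cdot v)\,\omega(g)$ equals $\Vol(\SO_A(\mathbb{Z}_p))$ precisely when $f = \pi(v) \in S_p$ and $0$ otherwise. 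Plugging this in collapses the sum over orbit representatives $v$ weighted by $1/\#\Stab_{\SO_A(\mathbb{Z}_p)}(v)$ into exactly $m_p(f,A)$ by definition of the local mass, and the outer integral over $f$ is restricted to $S_p$, yielding $|\mathcal{J}_A|_p \Vol(\SO_A(\mathbb{Z}_p)) \int_{f\in S_p} m_p(f,A)\,df$. The only subtlety is that the formula requires $S_p$ to avoid the discriminant-zero locus (or at least have it be measure zero), which is guaranteed by the hypothesis that $S_p$ is a non-empty open set with boundary of measure $0$ inside $U_{1,b}(\mathbb{Z}_p)$; the discriminant-zero locus has measure zero, so removing it does not change the integral.

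I expect the genuinely delicate step to be the archimedean identity, specifically matching the semi-algebraic fundamental set $\mathcal{F}_A \cdot R_{A,b}^{r_2,\delta}(X)$ (constructed in the reduction theory section as a multiset with a prescribed covering degree) with the orbit description $\SO_A(\mathbb{R})\cdot s(U_{1,b}(\mathbb{R}))$ coming from the change of measure formula, and verifying that the various factors of $2$ and $\sigma(r_2)$ — from the stabiliser of $-1$, from the covering degree, and from $\#\Stab$ — are accounted for consistently so that the clean product formula emerges with no stray constant. The $p$-adic identity, by contrast, is essentially a direct unwinding of the definitions once the change of measure formula is in hand.
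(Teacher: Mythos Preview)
Your proposal is correct and matches the paper's intended derivation: the paper does not write out a proof for this proposition, treating both identities as immediate consequences of the change of measure formula and the definition of the local mass $m_p(f,A)$. Your plan---applying the first integral identity over $\mathbb{R}$ with the section $s$ landing in $V_{A,b}^{r_2,\delta}(\mathbb{R})$ (with $\chi_A(\delta)$ recording whether such a section exists), and the second identity over $\mathbb{Z}_p$ with $\phi$ the indicator of $\Lambda_{A,b,p}$ so that the orbit sum collapses to $m_p(f,A)$---is exactly the unwinding the paper leaves implicit, and your remarks about the stabiliser and covering-degree bookkeeping in the archimedean case correctly identify where care is needed.
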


\section{The product of local volumes and the local mass} \label{The product of local volumes and the total local mass}

The calculation of the total mass is slightly more delicate here, and we divide it into a couple of lemmas. 

\begin{lem}[Total quantity] Let $R$ be a non-degenerate ring of degree $n$ over $\mathbb{Z}_p$. The quantity
\begin{equation*}
\frac{\left| R^\times/(R^\times)^2 \right|}{\left|R^\times[2] \right|}
\end{equation*}
is equal to $1$ if $p \neq 2$ and to $2^n$ if $p = 2$. 
\end{lem}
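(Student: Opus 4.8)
The statement is purely local at a single prime $p$, so write $R$ for a non-degenerate (i.e.\ étale) $\mathbb{Z}_p$-algebra of rank $n$, and decompose $R = \prod_{i} R_i$ into a product of its local components, where each $R_i$ is the ring of integers in a finite extension of $\mathbb{Q}_p$. Since $R^\times = \prod_i R_i^\times$, both $R^\times/(R^\times)^2$ and $R^\times[2]$ are multiplicative over this decomposition, so the quantity $\left| R^\times/(R^\times)^2 \right| / \left| R^\times[2] \right|$ factors as a product over the $R_i$. Hence it suffices to prove the claim when $R = \mathcal{O}$ is the ring of integers in a local field $F/\mathbb{Q}_p$ of degree $d$, with the answer being $1$ when $p \neq 2$ and $2^d$ when $p = 2$; multiplying over the components recovers $1$ (resp.\ $2^{\sum_i d_i} = 2^n$).

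For the local computation, use the standard structure of $\mathcal{O}^\times$ as a $\mathbb{Z}_p$-module: $\mathcal{O}^\times \cong \mu \times (1 + \mathfrak{m})$, and $1 + \mathfrak{m} \cong \mu_{p^\infty}(F) \times \mathbb{Z}_p^{\,d}$ as topological groups, where $\mu$ is the prime-to-$p$ part of the roots of unity. The key elementary fact is that for any finite abelian group $G$ (or more generally a finitely generated $\mathbb{Z}_p$-module of this shape) the quotient $G/2G$ and the subgroup $G[2]$ have the same cardinality — this is immediate from the snake lemma applied to multiplication by $2$ on $G$, or from the classification of finite abelian groups. Therefore, for a \emph{finite} abelian group the ratio is $1$, and the only contribution to the ratio comes from the free part $\mathbb{Z}_p^{\,d}$ of $1 + \mathfrak{m}$. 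When $p$ is odd, multiplication by $2$ is an automorphism of $\mathbb{Z}_p^{\,d}$, so both $\mathbb{Z}_p^{\,d}/2\mathbb{Z}_p^{\,d}$ and $\mathbb{Z}_p^{\,d}[2]$ are trivial and the ratio is $1$. When $p = 2$, we have $\mathbb{Z}_2^{\,d}/2\mathbb{Z}_2^{\,d} \cong (\mathbb{Z}/2\mathbb{Z})^d$ of order $2^d$ while $\mathbb{Z}_2^{\,d}[2] = 0$, contributing exactly a factor of $2^d$; the finite pieces $\mu$ and $\mu_{2^\infty}(F)$ still contribute ratio $1$ by the snake-lemma fact. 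This gives the local answer $2^d$ at $p = 2$.

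Assembling: in the odd case every local factor is $1$, so the product is $1$; in the case $p = 2$ each local factor $R_i$ of degree $d_i$ contributes $2^{d_i}$, and $\sum_i d_i = n$, so the product is $2^n$. One small point to check carefully is that $R$ being non-degenerate over $\mathbb{Z}_p$ indeed forces $R$ to be a finite product of rings of integers of local fields (i.e.\ $R$ is a maximal order, étale over $\mathbb{Z}_p$), which is exactly the hypothesis in play here; if one wanted the statement for non-maximal orders the module structure of $\mathcal{O}^\times$ would be more delicate, but that is not needed. I expect no serious obstacle: the only thing requiring a line of justification is the identity $|G/2G| = |G[2]|$ for the relevant groups and the $\mathbb{Z}_p$-module structure of $\mathcal{O}^\times$, both of which are classical.
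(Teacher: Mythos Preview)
Your argument and the paper's are essentially the same at the core: both reduce to the structure theorem $R^\times \cong (\text{finite abelian}) \times \mathbb{Z}_p^{\,n}$, observe that on any finite abelian piece the ratio $|G/2G|/|G[2]|$ equals $1$ (snake lemma / Euler characteristic of the multiplication-by-$2$ sequence), and then compute the contribution of the free part $\mathbb{Z}_p^{\,n}$ directly. The paper simply invokes this decomposition for $R^\times$ in one stroke, without first passing to local-field components.

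There is one genuine slip in your reduction step. You assert that ``non-degenerate over $\mathbb{Z}_p$'' forces $R$ to be \'etale over $\mathbb{Z}_p$, i.e.\ a product of rings of integers of local fields. That is not what non-degenerate means here: in the paper's usage, non-degenerate means the discriminant is \emph{nonzero}, so $R\otimes_{\mathbb{Z}_p}\mathbb{Q}_p$ is \'etale, but $R$ itself may be a non-maximal order (e.g.\ $\mathbb{Z}_p[x]/(x^2-p^2)$). Your decomposition $R=\prod_i \mathcal{O}_i$ into rings of integers is therefore unjustified in general. The fix is exactly what the paper does: drop the reduction and note directly that for any order $R$ of rank $n$ one still has $R^\times \cong F \times \mathbb{Z}_p^{\,n}$ with $F$ finite (the logarithm identifies $1+p^kR$ with $\mathbb{Z}_p^{\,n}$ for $k$ large, and the quotient is finite; then split using the structure of pro-$p$ abelian groups). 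Once that is in hand, your computation goes through verbatim. For the applications later in the paper only maximal $R$ are actually used, so your version already covers what is needed there; but the lemma as stated is slightly more general than your proof establishes.
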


\begin{proof}
The following sequence is exact: 
\begin{equation*}
0 \longrightarrow R^\times[2] \longrightarrow R^\times \overset{ (\cdot)^2}\longrightarrow R^\times \longrightarrow R^\times/(R^\times)^2 \longrightarrow 0.
\end{equation*}
Let us note that $R^\times$ is the direct product of a finite abelian subgroup $F$ and $\mathbb{Z}_p^n$ (written additively). The exact sequence above is thus the direct sum of the corresponding exact sequence on $F$ and on $\mathbb{Z}_p^n$. Computing the Euler characteristic on $F$ we find: 
$$\frac{\left| F/ F^2 \right|}{\left| F[2] \right|} = 1.$$
The exact sequence on the free part takes the form: $$0 \longrightarrow \mathbb{Z}_p^n \overset{\times 2}\longrightarrow \mathbb{Z}_p^n \longrightarrow \mathbb{Z}_p^n / (2 \mathbb{Z}_p)^n \longrightarrow  0.$$ In order to extract information from this part, we need to treat the cases $p \neq 2$ and $p = 2$ separately. For $p \neq 2$, the map $\times 2$ is surjective so the free part contributes a factor of $1$. For $p = 2$, the module $(2\mathbb{Z}_2)^n$ has index $2^n$ in $\mathbb{Z}_2^n$ and so the free part contributes a factor of $2^n$. This completes the proof the lemma. 
\end{proof}

\begin{lem}[Norm isolation]
Let $R$ be a non-degenerate ring of degree $n$ over $\mathbb{Z}_p$. Let $N$ denote the norm map from $N \colon R^\times/(R^\times)^2 \rightarrow \mathbb{Z}_p^\times/ (\mathbb{Z}_p^\times)^2$. The quantity
\begin{equation*}
\frac{\left| (R^\times/(R^\times)^2)_{N \equiv 1} \right|}{\left|R^\times[2] \right|} 
\end{equation*}
is equal to $\frac{1}{\left| N(R^\times) \right|}$ if $p \neq 2$ and to $\frac{2^n}{\left| N(R^\times) \right|}$ if $p = 2$. 
\end{lem}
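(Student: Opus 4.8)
The plan is to deduce this statement immediately from the preceding \emph{Total quantity} lemma by applying the first isomorphism theorem to the norm map on the finite group $R^\times/(R^\times)^2$.

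First I would note that the norm map $N \colon R^\times \to \mathbb{Z}_p^\times$ descends to a group homomorphism $R^\times/(R^\times)^2 \to \mathbb{Z}_p^\times/(\mathbb{Z}_p^\times)^2$, since $N$ carries $(R^\times)^2$ into $(\mathbb{Z}_p^\times)^2$. By definition, $(R^\times/(R^\times)^2)_{N \equiv 1}$ is the kernel of this descended map, and $N(R^\times)$ — in the sense of the statement — is its image inside $\mathbb{Z}_p^\times/(\mathbb{Z}_p^\times)^2$. As recalled in the proof of the previous lemma, $R^\times$ is the direct product of a finite abelian group $F$ and $\mathbb{Z}_p^n$, so $R^\times/(R^\times)^2$ is finite and hence so are all of its subquotients. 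Thus the short exact sequence
\begin{equation*}
1 \longrightarrow (R^\times/(R^\times)^2)_{N \equiv 1} \longrightarrow R^\times/(R^\times)^2 \overset{N}{\longrightarrow} N(R^\times) \longrightarrow 1
\end{equation*}
gives $\left| (R^\times/(R^\times)^2)_{N \equiv 1} \right| = \left| R^\times/(R^\times)^2 \right| / \left| N(R^\times) \right|$.

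It then remains to divide both sides by $\left| R^\times[2] \right|$ and invoke the \emph{Total quantity} lemma, which asserts that $\left| R^\times/(R^\times)^2 \right| / \left| R^\times[2] \right|$ equals $1$ when $p \neq 2$ and $2^n$ when $p = 2$. This produces exactly the claimed values $\tfrac{1}{\left| N(R^\times) \right|}$ and $\tfrac{2^n}{\left| N(R^\times) \right|}$ in the two cases.

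There is no genuine obstacle here; the only points requiring care are bookkeeping ones: checking that $N$ is well defined modulo squares, confirming that the symbol $N(R^\times)$ in the statement denotes the image of the descended norm map in $\mathbb{Z}_p^\times/(\mathbb{Z}_p^\times)^2$ (rather than $N(R^\times) \subseteq \mathbb{Z}_p^\times$ itself, the latter being ruled out by the displayed identity), and recording finiteness so that all the orders involved are genuine positive integers to which the exact-sequence count applies. Alternatively one could rerun the Euler-characteristic computation of the previous lemma restricted to the norm-one subgroup, but routing through the first isomorphism theorem is cleaner.
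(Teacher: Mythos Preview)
Your proposal is correct and follows essentially the same approach as the paper: apply the first isomorphism theorem to the descended norm map to obtain $\left| (R^\times/(R^\times)^2)_{N \equiv 1} \right| = \left| R^\times/(R^\times)^2 \right| / \left| N(R^\times) \right|$, then invoke the preceding \emph{Total quantity} lemma. The paper's proof is terser but identical in substance; your added remarks on well-definedness and finiteness are accurate elaborations rather than a different route.
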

\begin{proof}
The first isomorphism theorem gives us $N(R^\times) \cong \frac{ (R^\times/(R^\times)^2)}{ (R^\times/(R^\times)^2)_{N \equiv 1} }$. Since everything is finite we get: $$ \left| R^\times/(R^\times)^2)_{N \equiv 1} \right| = \frac{\left| R^\times/(R^\times)^2) \right|}{\left| N(R^\times) \right|}$$ and the result follows from the previous lemma. 
\end{proof}

%
%

In particular, we get the following statement concerning the total local masses for maximal rings which are not evenly ramified at $2$.  It follows from the previous lemma and the fact that if $\mathcal{O}^\times$ is the ring of integers of a finite extension of $\mathbb{Q}_p$, the norm map $N \colon \mathcal{O}^\times/(\mathcal{O}^\times)^2 \rightarrow \mathbb{Z}_p^\times/(\mathbb{Z}_p^\times)^2$ is surjective when the extension has odd ramification degree and not surjective when $p \neq 2$ and the extension has even ramification degree.

\begin{lem}
The total local mass for $f \in \mathbb{Z}_p[x]$ maximal and not evenly ramified at $p$ is given by: 
\begin{equation*}
m_p(f) = \begin{cases}
2^{n-1} & \text{ if } p = 2 \\
1 & \text{ if } p \neq 2 
\end{cases}.
\end{equation*}
The total local mass for $f \in \mathbb{Z}_p[x]$, $p \neq 2$, maximal and evenly ramified is given by: 
\begin{equation*}
m_p(f) = 2.
\end{equation*}
\end{lem}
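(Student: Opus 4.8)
The plan is to reduce the computation to the two structural lemmas already proved — the parametrisation of projective $\SL_n$-orbits and the identification of the stabiliser — and then feed the outcome into the ``Norm isolation'' lemma. First I would unwind the definition of the \emph{total} local mass. Grouping $\SL_n(\mathbb{Z}_p)$-orbits on $V(\mathbb{Z}_p)\cap\pi^{-1}(f)$ by the $\SL_n(\mathbb{Z}_p)$-class of the first matrix identifies $m_p(f)=\sum_{A_0\in\mathscr{L}_{\mathbb{Z}_p}}m_p(f,A_0)$ with $\sum_v 1/\#\Stab_{\SL_n(\mathbb{Z}_p)}(v)$, the sum running over $\SL_n(\mathbb{Z}_p)$-orbits $v$ on $V(\mathbb{Z}_p)\cap\pi^{-1}(f)$; here, since $f$ is monic, the leading coefficient forces $\det A=(-1)^{n/2}$ on every orbit, and since $f$ is maximal every orbit is projective ($R_f$ is a product of DVRs, so every ideal is invertible). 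By the $\SL_n$-orbit parametrisation lemma the number of orbits equals $\bigl|(R_f^\times/(R_f^\times)^2)_{N\equiv1}\bigr|$ when $f$ has an odd-degree irreducible factor and $2\,\bigl|(R_f^\times/(R_f^\times)^2)_{N\equiv1}\bigr|$ when all factors have even degree, and in both cases each stabiliser has size $|R_f^\times[2]_{N\equiv1}|$.

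The key observation is that the extra factor of $2$ in the even-degree case is cancelled exactly by a compensating factor in the stabiliser. Writing $R_f=\prod_i\mathcal{O}_{L_i}$, we have $R_f^\times[2]=\{\pm1\}^r$ with $r$ the number of factors, and $N\bigl((\epsilon_i)\bigr)=(-1)^{\sum_{i:\,\epsilon_i=-1}[L_i:\mathbb{Q}_p]}$, so $N$ maps $R_f^\times[2]$ onto $\{\pm1\}$ precisely when some $[L_i:\mathbb{Q}_p]$ is odd, i.e. when $f$ has an odd-degree factor. Hence $|R_f^\times[2]_{N\equiv1}|=\tfrac12|R_f^\times[2]|$ in the odd-degree-factor case and $|R_f^\times[2]_{N\equiv1}|=|R_f^\times[2]|$ in the all-even case, and in either case one lands on the uniform formula
\[
m_p(f)=\frac{2\,\bigl|(R_f^\times/(R_f^\times)^2)_{N\equiv1}\bigr|}{\bigl|R_f^\times[2]\bigr|}.
\]
Applying the ``Norm isolation'' lemma rewrites the right-hand side as $2/|N(R_f^\times)|$ when $p\neq2$ and $2^{n+1}/|N(R_f^\times)|$ when $p=2$, where $N\colon R_f^\times\to\mathbb{Z}_p^\times/(\mathbb{Z}_p^\times)^2$.

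It then remains to compute $|N(R_f^\times)|=\bigl|\prod_i N(\mathcal{O}_{L_i}^\times)\bigr|$ using the cited fact that the norm on units modulo squares is surjective for an extension of odd ramification index and has trivial image when $p\neq2$ and the ramification index is even. Since ``not evenly ramified at $p$'' means $(p)$ is not the square of an ideal, i.e. some $L_i/\mathbb{Q}_p$ has odd ramification index, in that case $N(R_f^\times)=\mathbb{Z}_p^\times/(\mathbb{Z}_p^\times)^2$, which has order $2$ for $p\neq2$ and order $4$ for $p=2$; substituting gives $m_p(f)=1$ for $p\neq2$ and $m_2(f)=2^{n+1}/4=2^{n-1}$. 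When $p\neq2$ and $f$ is evenly ramified, every $L_i/\mathbb{Q}_p$ has even ramification index, each $N(\mathcal{O}_{L_i}^\times)$ is trivial, so $N(R_f^\times)$ is trivial and $m_p(f)=2$. I expect the only genuinely delicate point to be the bookkeeping in the middle step — verifying that the ``$2$-to-$1$'' phenomenon in the orbit count and the index-$2$ drop from $R_f^\times[2]$ to $R_f^\times[2]_{N\equiv1}$ line up to give a formula independent of whether $f$ has an odd-degree factor — together with correctly invoking the $p=2$ case of the norm-surjectivity input; the remaining arithmetic is a direct substitution.
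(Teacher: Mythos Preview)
Your argument is correct and follows the same route the paper indicates: reduce to the Norm isolation lemma and then invoke the cited fact about surjectivity of the unit norm map for odd versus even ramification index. The paper's proof is a one-line reference to those two ingredients; your write-up makes explicit the one piece of bookkeeping the paper leaves implicit, namely that the extra factor of $2$ in the orbit count when every irreducible factor of $f$ has even degree is exactly compensated by the fact that in that case $R_f^\times[2]_{N\equiv 1}=R_f^\times[2]$ rather than an index-$2$ subgroup, yielding the uniform formula $m_p(f)=2\,\bigl|(R_f^\times/(R_f^\times)^2)_{N\equiv 1}\bigr|/\bigl|R_f^\times[2]\bigr|$ before substitution.
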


\begin{rem}
Thus, even ramification has a doubling effect on the total local mass. Heuristically, this is because if $f$ has even ramification at the prime $p$, then the ideal $(p)$ splits into a product of prime ideals $(p) = \mathfrak{P}_1^{e_1}\cdots \mathfrak{P}_m^{e_m}$ each appearing to an even power $e_i$, and thus we can write $(p) = I^2$. So we have constructed a $2$-torsion ideal in the oriented ideal class group. The fact that the total mass is twice as large means that on average, these ideals contribute one extra generator to the $2$-torsion in the oriented ideal class group.
\end{rem}

\subsection{Computing the local masses} We now define the infinite mass and compute $m_p(f,A)$ for all $p \neq 2,\infty$. 

\begin{defn}[The infinite mass]
Let $A \in \mathscr{L}_\mathbb{Z}$ and $r_2$ be an integer such that $0 \le r_2 \le \frac{n-1}{2}$. The infinite mass of $A$ with respect to $r_2$ is defined to be $$m_\infty(r_2,A) = \sum_{\delta \in \mathcal{T}(r_2)} \chi_A(\delta).$$
\end{defn}

The following lemma isolates the main properties of the local masses for all $p$, including the Archimedean place. 

\begin{lem}[Main properties of the local masses] The local masses $m_p(f,A)$ and $m_\infty(A)$ have the following properties.
\begin{enumerate}
\item If $\gamma \in \SL_n(\mathbb{Z}_p)$, we have $$ m_p(f,\gamma^t A \gamma) = m_p(f,A).$$
\item If $\gamma \in \SL_n(\mathbb{R})$, we have $$m_\infty(r_2,\gamma^t A \gamma) = m_\infty(r_2,A).$$
\item In particular, if $A_1$ and $A_2$ are unimodular integral matrices lying in the same genus, we have $$m_p(f,A_1) = m_p(f,A_2)$$ for all primes $p$ and $$m_\infty(r_2,A_1) = m_\infty(r_2,A_2).$$ 
\item The sum of $m_2(f,A)$ over a set of representatives for the unimodular orbits of the action of $\SL_n(\mathbb{Z}_2)$ on $\Sym_n(\mathbb{Z}_2)$ is $$\sum_{\substack{A \in \frac{\Sym_n(\mathbb{Z}_2)}{\SL_n(\mathbb{Z}_2)} \\ \det(A) = 1 \in \frac{\mathbb{Z}_2}{\mathbb{Z}_2^2}}} m_2(f,A) = m_2(f).$$
\item The sum of $m_p(f,A)$ over a set of representatives for the unimodular orbits of the action of $\SL_n(\mathbb{Z}_p)$ on $\Sym_n(\mathbb{Z}_p)$ is $$\sum_{\substack{A \in \frac{\Sym_n(\mathbb{Z}_p)}{\SL_n(\mathbb{Z}_p)} \\ \det(A) = 1 \in \frac{\mathbb{Z}_p}{\mathbb{Z}_p^2}}} m_p(f,A) = m_p(f).$$
\item The sum of $m_\infty(r_2,A)$ over a set of representatives for unimodular the orbits of the action of $\SL_n(\mathbb{R})$ on $\Sym_n(\mathbb{R})$ is $$\sum_{\substack{A \in \frac{\Sym_n(\mathbb{R})}{\SL_n(\mathbb{R})} \\ \det(A) = 1 \in \frac{\mathbb{R}}{\mathbb{R}^2}}} m_\infty(r_2,A) = 2^{r_1-1}$$
if $r_1 > 0$ and 
$$\sum_{\substack{A \in \frac{\Sym_n(\mathbb{R})}{\SL_n(\mathbb{R})} \\ \det(A) = 1 \in \frac{\mathbb{R}}{\mathbb{R}^2}}} m_\infty(r_2,A) = 2$$
if $r_1 = 0$.
\end{enumerate}

\end{lem}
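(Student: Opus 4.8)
The three groups of claims are of decreasing difficulty, so I will deal with them in turn, spending most of the effort on (4)--(6); the genuine input behind those is the total-mass computation of the previous subsection together with the description of the real orbits. For (1) and (2), fix $\gamma\in\SL_n(\mathbb{Z}_p)$. The congruence $(A,B)\mapsto(\gamma^tA\gamma,\gamma^tB\gamma)$ preserves the resolvent, since $\det(\gamma)^2=1$; it carries $\{B:\pi(A,B)=f\}$ bijectively onto $\{B':\pi(\gamma^tA\gamma,B')=f\}$, intertwines the $\SO_A(\mathbb{Z}_p)$-action with the action of $\SO_{\gamma^tA\gamma}(\mathbb{Z}_p)$ (which is $\gamma^{-1}\SO_A(\mathbb{Z}_p)\gamma$) via $g\mapsto\gamma^{-1}g\gamma$, and sends the stabiliser of $B$ isomorphically onto that of $B'$. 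Hence the weighted orbit count $m_p(f,-)$ is constant on $\SL_n(\mathbb{Z}_p)$-congruence classes of $A$, and the same argument over $\mathbb{R}$ --- where the congruence merely permutes the indicators $\chi_A(\delta)$, $\delta\in\mathcal{T}(r_2)$ --- gives the invariance of $m_\infty(r_2,-)$. For (3), two unimodular integral matrices in the same genus have equal determinant (their ratio is a square at every place and each is $\pm1$), are $\GL_n(\mathbb{Z}_p)$-congruent at each $p$ and $\GL_n(\mathbb{R})$-congruent at $\infty$, and any such congruence therefore has determinant $\pm1$; composing it with a reflection in the orthogonal group of one of the two forms --- which exists since these forms are non-degenerate of rank $n\ge4$ --- we may take the congruence in $\SL_n$, and (1)--(2) apply.

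For (4) and (5), recall that the total local mass $m_p(f)$ was computed in the previous subsection as the $1/\#\Stab$-weighted count of (projective) $\SL_n(\mathbb{Z}_p)$-orbits on $V(\mathbb{Z}_p)\cap\pi^{-1}(f)$. The $\SL_n(\mathbb{Z}_p)$-congruence class of the first coordinate $A$ of such a pair is a well-defined invariant of its orbit, and monicity of the resolvent forces $\det A$ into one fixed square class, namely that of $(-1)^{n/2}$. Partitioning the orbits according to this invariant, and using that $\Stab_{\SL_n(\mathbb{Z}_p)}(A_0,B)=\Stab_{\SO_{A_0}(\mathbb{Z}_p)}(B)$ and that the orbits with $A\sim A_0$ are in stabiliser-preserving bijection with the $\SO_{A_0}(\mathbb{Z}_p)$-orbits on $\{B:\pi(A_0,B)=f\}$, we obtain $m_p(f)=\sum_{[A_0]}m_p(f,A_0)$, the sum running over the unimodular $\SL_n(\mathbb{Z}_p)$-classes of the forced determinant. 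For $p$ odd this class is unique and the ``sum'' is a single term; the substantive case is $p=2$, where there are several classes and the preceding subsection supplies the value of $m_2(f)$.

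For (6), interchange the two sums to get $\sum_{[A]}m_\infty(r_2,A)=\sum_{\delta\in\mathcal{T}(r_2)}\#\{[A]:\chi_A(\delta)=1\}$. Fix any monic $f$ with $r_1$ real and $2r_2$ complex roots. Wood's parametrisation over $\mathbb{R}$ shows that a non-empty slice $V_{A_0}^{r_2,\delta}(\mathbb{R})\cap\pi^{-1}(f)$ is a single $\SO_{A_0}(\mathbb{R})$-orbit, so the quantity above is precisely the number of $\SL_n(\mathbb{R})$-orbits on $V(\mathbb{R})\cap\pi^{-1}(f)$ (automatically those with $\det A$ in the forced class); by the description of the real orbits recalled earlier this number is $2^{r_1-1}$ when $r_1>0$ and $2$ when $r_1=0$, which is exactly the claimed dichotomy, and the split is the familiar one between fields with and without a unit of negative norm.

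The conceptual content is all in the inputs already on hand, so the one real danger is arithmetic bookkeeping: keeping the $\SL_n$-versus-$\GL_n$ normalisations, the determinant classes, and the stabiliser orders perfectly aligned so that no spurious factor of $2$ slips in. This is most delicate at $p=2$ --- where there are several unimodular classes and the even/odd-type dichotomy of $\mathbb{Z}_2$-lattices intervenes --- and at $\infty$ in the $r_1=0$ case, which behaves differently from $r_1>0$ and is responsible for the two different closed forms in (6).
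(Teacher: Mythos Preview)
Your argument is correct and follows the natural line the paper itself leaves implicit (no proof is given in the text; the lemma is stated and immediately used, with the justification deferred to Part I). The invariance in (1)--(2) via transport of structure, the passage from genus-equivalence to $\SL_n$-equivalence in (3) by composing with an improper isometry, the partition of the $\SL_n(\mathbb{Z}_p)$-orbit count by the class of the first coordinate in (4)--(5), and the identification of the archimedean sum with $|\mathcal{T}(r_2)|$ in (6) are exactly the intended steps.

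Two small remarks. In (3) you appeal to the existence of an element of $\Or_A\setminus\SO_A$ over $\mathbb{Z}_p$; this is fine for unimodular $A$ since such forms diagonalise (or block-diagonalise at $p=2$) over $\mathbb{Z}_p$ and a sign change in one coordinate works --- the condition $n\ge4$ is not actually needed. In (6) your phrasing via ``a non-empty slice is a single $\SO_{A_0}(\mathbb{R})$-orbit'' is slightly roundabout: since $\mathcal{T}(r_2)$ already indexes the $\SL_n(\mathbb{R})$-orbits, each $\delta$ determines a unique $\SL_n(\mathbb{R})$-class of first coordinates, so $\sum_{[A]}\chi_A(\delta)=1$ for every $\delta$ and the sum collapses immediately to $|\mathcal{T}(r_2)|$, which is $2^{r_1-1}$ or $2$ by the real-orbit count already recorded in the paper.
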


We can now compute the local masses for all $p \neq 2,\infty$ by noting that there is a unique $\SL_n(\mathbb{Z}_p)$ equivalence class of bilinear forms with determinant $1$. 

\begin{cor}[Local masses for $p \neq 2,\infty$] For $A \in \mathscr{L}_\mathbb{Z}$  and $p \neq 2,\infty$ we have \begin{equation*}
m_p(f,A) = \begin{cases}
2 & \text{ if }f\text{ is evenly ramified at }p \\
1 & \text{ otherwise }
\end{cases}.
\end{equation*}
\end{cor}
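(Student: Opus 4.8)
The plan is to reduce the computation of $m_p(f,A)$ for $p\neq 2,\infty$ to the total local mass $m_p(f)$ already computed, exploiting the fact that over $\mathbb{Z}_p$ with $p$ odd there is, up to $\SL_n(\mathbb{Z}_p)$, essentially only one form $A$ in play. First I would recall the classification of unimodular symmetric bilinear forms over $\mathbb{Z}_p$ for odd $p$: such a form diagonalises over $\mathbb{Z}_p$, and two of them are $\GL_n(\mathbb{Z}_p)$-equivalent if and only if their determinants agree in $\mathbb{Z}_p^\times/(\mathbb{Z}_p^\times)^2$. Since a unimodular form over $\mathbb{Z}_p$ represents every unit residue (apply Hensel to its nondegenerate reduction mod $p$), its orthogonal group contains a reflection of determinant $-1$; composing with such a reflection shows that $\GL_n(\mathbb{Z}_p)$-equivalence and $\SL_n(\mathbb{Z}_p)$-equivalence coincide here. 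Hence there is a unique $\SL_n(\mathbb{Z}_p)$-class of unimodular forms in each determinant square class, and because the monic resolvent condition forces $\det(A)=(-1)^{n/2}$, every element of $\mathscr{L}_{\mathbb{Z}}$ becomes, over $\mathbb{Z}_p$, equivalent to one fixed standard form $A_0$.

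Next I would invoke property (1) of the Main properties lemma, that $m_p(f,A)$ is an invariant of the $\SL_n(\mathbb{Z}_p)$-class of $A$, together with property (5), the mass identity $\sum_A m_p(f,A)=m_p(f)$ summing over $\SL_n(\mathbb{Z}_p)$-classes of unimodular forms $A$. By the previous paragraph this sum has a single term, so $m_p(f,A)=m_p(f,A_0)=m_p(f)$ for every $A\in\mathscr{L}_{\mathbb{Z}}$.

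Finally I would substitute the value of $m_p(f)$ from the preceding subsection: for $p\neq 2$ one has $m_p(f)=2$ when $f$ is evenly ramified at $p$ and $m_p(f)=1$ otherwise, the doubling being exactly the extra orbit produced when the norm map $R_f^\times/(R_f^\times)^2\to\mathbb{Z}_p^\times/(\mathbb{Z}_p^\times)^2$ fails to be surjective. This yields the asserted formula for $m_p(f,A)$.

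The only point needing care is the square-class bookkeeping in the middle step: property (5) is phrased for the orbit of forms with $\det\equiv 1$, whereas $f$ monic pins $\det(A)$ to $(-1)^{n/2}$, which is a nonsquare mod $p$ precisely when $n\equiv 2\pmod 4$ and $p\equiv 3\pmod 4$. To handle that case I would note that every $(A,B)\in\pi^{-1}(f)$ has $\det(A)=(-1)^{n/2}$, so the sum defining $m_p(f)$ is automatically supported on the single $\SL_n(\mathbb{Z}_p)$-orbit of $A_0$ whatever square class it lies in; alternatively, one reruns the orbit count coming from Wood's parametrisation, whose output triples $(I,s,\delta)$ refer only to $R_f\otimes\mathbb{Q}_p$ and not to the form $A$, to obtain the analogue of property (5) for the nonsquare class directly. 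I expect this matching of conventions to be the main obstacle; everything else is a direct appeal to the lemmas already established.
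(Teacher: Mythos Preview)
Your proposal is correct and follows essentially the same approach as the paper. The paper's entire argument is the one-line remark preceding the corollary, ``noting that there is a unique $\SL_n(\mathbb{Z}_p)$ equivalence class of bilinear forms with determinant $1$,'' combined with property~(5) of the Main properties lemma and the previously computed total mass $m_p(f)$; you supply the details behind that remark (diagonalisation, existence of a reflection of determinant $-1$) and, commendably, flag and resolve the square-class bookkeeping issue when $(-1)^{n/2}$ is a nonsquare mod~$p$, which the paper's phrasing glosses over.
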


The computation of the local masses at $p = 2, \infty$ is more delicate and is the object of the following sections.

\section{Point count and the $2$-adic mass} \label{Point count and the $2$-adic mass}

We compute the $2$-adic mass for polynomials which are not evenly ramified at the prime $2$. We begin by showing that this restriction allows us to exclude type ${\rm II}$ genera from our considerations. 

\begin{thm}[Type II genera and over-ramification at $2$]
Let $A$ be the hyperbolic unimodular symmetric bilinear form over $\mathbb{F}_2$. Then the image under the resolvent map of $V_A(\mathbb{F}_2)$ lies in $$\mathbb{F}_2[x^2] = \Big(\mathbb{F}_2[x]\Big)^2.$$ 

Therefore, pairs whose first component is hyperbolic modulo $2$ correspond to ideal classes of monogenic orders which are ``over-ramified'' at $2$ in the sense that $(2)$ splits as a product of even powers of primes ideals. In particular, the discriminant of the resolvent polynomials of such pairs is not squarefree at $2$. Furthermore, the statement of the theorem holds if $\mathbb{F}_2$ is replaced by any ring of characteristic $2$. 
\end{thm}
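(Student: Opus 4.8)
The plan is to work directly with the explicit form of the hyperbolic unimodular symmetric bilinear form over a ring $R$ of characteristic $2$ and to compute the resolvent of an arbitrary pair $(A_0, B)$ lying above it. Recall that the hyperbolic form $A_0$ on $R^n$ (with $n$ even) can be written in the basis $\{e_1,\dots,e_{n/2},f_1,\dots,f_{n/2}\}$ as the matrix with $1$'s on the anti-diagonal pairing each $e_i$ with $f_i$ and $0$'s elsewhere; equivalently $A_0 = \left[\begin{smallmatrix} 0 & I \\ I & 0 \end{smallmatrix}\right]$ after reordering. The resolvent of $(A_0,B)$ is $f_{(A_0,B)}(x,y) = (-1)^{n/2}\det(A_0 x - B)$. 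First I would observe that, since we are in characteristic $2$, signs are irrelevant and the resolvent is simply $\det(A_0 x + B)$ (or $\det(A_0x - B)$, the same thing). The claim to prove is that this polynomial, as an element of $R[x]$ (setting $y=1$), lies in $R[x^2] = (R[x])^2$, the last equality holding because Frobenius $r \mapsto r^2$ and $x \mapsto x^2$ together realize $R[x^2]$ as the image of the ring homomorphism $(\cdot)^2$ on $R[x]$ when $R$ is such that every element is a square — but more carefully, $R[x^2]$ is always contained in $(R[x])^2$ only if $R = R^2$; in general $\mathbb{F}_2[x^2] = (\mathbb{F}_2[x])^2$ holds because Frobenius is surjective on $\mathbb{F}_2$. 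For a general characteristic-$2$ ring I would phrase the conclusion as: the resolvent lies in $R^2[x^2] \subseteq (R[x])^2$, which is what is needed, and for $R = \mathbb{F}_2$ (or any perfect field of characteristic $2$) this is literally $(R[x])^2$.

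The heart of the argument is the determinant computation. Write $B$ in block form $B = \left[\begin{smallmatrix} B_{11} & B_{12} \\ B_{12}^t & B_{22} \end{smallmatrix}\right]$ with each block of size $\tfrac{n}{2}$, where $B_{11}, B_{22}$ are symmetric. Then $A_0 x + B = \left[\begin{smallmatrix} B_{11} & xI + B_{12} \\ xI + B_{12}^t & B_{22} \end{smallmatrix}\right]$. I would now use the key structural fact special to characteristic $2$: a symmetric matrix with zero diagonal is alternating, and the determinant of an alternating matrix is the square of its Pfaffian; meanwhile a general symmetric matrix $S$ over a characteristic-$2$ ring satisfies $\det(S) \in R[\text{off-diagonal entries}]^2 \cdot (\text{stuff})$ — more precisely, $\det$ of a symmetric matrix in characteristic $2$ is a square modulo the ideal generated by the diagonal entries, or one can use the classical fact that $\det(S)$ of a symmetric $S$ equals a polynomial in the entries that, reduced mod $2$, is a perfect square (this is essentially the statement that the discriminant of a symmetric bilinear form is a square mod $2$, i.e. the Arf/Hasse obstruction). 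Applying this to $A_0 x + B$ viewed as a symmetric matrix over $R[x]$: its diagonal entries are the diagonal entries of $B_{11}$ and of $B_{22}$ — crucially, $x$ does not appear on the diagonal, since $A_0$ has zero diagonal. Therefore $\det(A_0 x + B)$, as a polynomial in $x$, is a square in $R[x]/(\text{diagonal entries of }B)$; combined with a direct check that the $x$-dependence enters only through even powers, this gives the result. An efficient way to make this rigorous: specialize to the universal case $R = \mathbb{F}_2[b_{ij}]$, where $\det(A_0 x + B)$ is a specific polynomial, and show its coefficients of odd powers of $x$ vanish by exhibiting each such coefficient as an expression that reduces to $0$ mod $2$ — or better, invoke the fact that over $\mathbb{F}_2$, $\det(A_0 x + B) = \mathrm{Pf}(A_0 x + B + (\text{diagonal correction}))^2 + (\text{terms killed by diagonal})$, and track degrees.

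Actually the cleanest route, which I would adopt, avoids Pfaffians of non-alternating matrices: reduce first modulo the diagonal. Let $D$ be the diagonal part of $B$, so $B' := B + D$ (equivalently $B - D$) has zero diagonal, hence $A_0 x + B'$ is alternating over $R[x]$ and $\det(A_0 x + B') = \mathrm{Pf}(A_0 x + B')^2 \in (R[x])^2$, and moreover $\mathrm{Pf}(A_0 x + B')$ — being a polynomial in $x$ all of whose monomials that involve $x$ pick it up from the anti-diagonal $1$'s of $A_0$, which pair $e_i$ with $f_i$ — one checks has the property that $\mathrm{Pf}(A_0 x + B')^2 \in R[x^2]$ after squaring regardless (any square in characteristic $2$ lies in $R^2[x^2]$, since $(\sum c_k x^k)^2 = \sum c_k^2 x^{2k}$). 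That already handles the zero-diagonal case with room to spare. Then I would bootstrap to general $B$ by noting that $\det(A_0 x + B) \equiv \det(A_0 x + B') \pmod{(\,d_1,\dots,d_n\,)}$ where $d_i$ are the diagonal entries, and then analyze the correction terms: expanding $\det(A_0 x + B)$ multilinearly in the rows, each term that actually uses a diagonal entry $d_i = b_{ii}$ contributes, and one shows these correction terms are themselves in $R[x^2]$ by an induction on the number of diagonal entries used, each step removing a pair of rows/columns and landing back in a hyperbolic-type situation of smaller size. The main obstacle I anticipate is precisely bookkeeping these diagonal-correction terms cleanly — making sure that peeling off a diagonal contribution leaves a residual determinant that is again the determinant of (anti-diagonal plus something) over $R[x]$ so the induction closes, rather than some messier matrix. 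I expect this to go through because the anti-diagonal structure of $A_0$ is preserved under deleting the symmetric pair of indices $\{i, n+1-i\}$ (in the reordered basis), so the recursion is genuinely self-similar.
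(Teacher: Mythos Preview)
Your approach is workable but genuinely different from the paper's. The paper does not use Pfaffians at all: it replaces the two anti-diagonal entries in positions $(i,n+1-i)$ and $(n+1-i,i)$ by independent variables $Y_i$ and $X_i$, observes that transposing the matrix (which leaves the determinant unchanged in characteristic $2$ and fixes all the symmetric $b_{ij}$'s) swaps $X_i\leftrightarrow Y_i$, and then argues that any monomial of odd total degree in the $X_i,Y_i$ pairs off with a distinct partner under such a swap. Specialising $X_i=Y_i=x$ kills all odd powers of $x$. This is essentially the involution $\sigma\mapsto\sigma^{-1}$ on the Leibniz expansion, written out monomial by monomial.

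Your route via the Pfaffian is sound, but your bookkeeping worry is misplaced and your proposed fix (``delete the symmetric pair $\{i,n+1-i\}$ so that the anti-diagonal structure survives'') is the wrong one. When you expand $\det(A_0x+B'+D)$ multilinearly in the diagonal part $D$, the term indexed by a subset $S\subseteq[n]$ is $\bigl(\prod_{i\in S}d_i\bigr)\cdot\det\bigl((A_0x+B')[\bar S,\bar S]\bigr)$, a \emph{principal} minor of the alternating matrix $A_0x+B'$. Any principal minor of an alternating matrix is again alternating, regardless of whether $\bar S$ is stable under $i\mapsto n+1-i$; you do not need the hyperbolic shape to persist. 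Hence each such minor is either zero (odd size) or the square of a Pfaffian in $R[x]$, and squaring in characteristic $2$ lands you in $R^2[x^2]\subseteq R[x^2]$. No induction is required; every correction term is handled in one stroke. Your instinct that the recursion had to be ``self-similar in the anti-diagonal'' was the only real gap, and it dissolves once you notice that alternating, not hyperbolic, is the property that matters.
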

\begin{proof}
Without loss of generality we can let $A$ be the matrix with $1\,$s on the anti-diagonal and $B$ be any symmetric matrix. We proceed by examining the resolvent polynomial from the perspective of universal algebra. Let us introduce the multivariate polynomial $$\Phi(X_1,\ldots,X_{\frac{n}{2}},Y_{\frac{n}{2}},\ldots,Y_1) \in \mathbb{F}_2[X_1,\ldots,X_{\frac{n}{2}},Y_{\frac{n}{2}},\ldots,Y_1]$$ which we define as the determinant: 

$$\left|\begin{array}{ccccc} 
b_{1 \, 1} & b_{1 \, 2} & \vdots & b_{1 \, n-1} & Y_1 + b_{1 \, n}  \\ 
b_{2 \, 1} & b_{2 \, 2} & \vdots & Y_{2} + b_{2 \, n-1}  & b_{2 \, n}  \\ 
\cdots & \cdots & \iddots & \cdots & \cdots  \\ 
b_{n-1 \, 1} & X_2+b_{n-1 \, 2} & \vdots & b_{n-1 \, n-1} & b_{n-1 \, n}  \\ 
X_1+b_{n \, 1} & b_{n \, 2} & \vdots & b_{n \, n-1} & b_{n \, n} \\
\end{array} \right|.$$

For this polynomial, we claim that there is a natural bijection between the set of monomials which are divisible by $Y_i$ but not $X_i$ and the set of monomials which are divisible by $X_i$ but not $Y_i$. Indeed, the map which exchanges $X_i$ and $Y_i$ is easily seen to give a bijection since the matrix $(b_{ij})$ is symmetric and $-1 = 1$ in a ring of characteristic $2$. 

We can use this observation to deduce that $\Phi(X,\ldots,X,X,\ldots,X) = \pi(A,B)$ contains only monomials of even degree. For this purpose, it is suffices to show that the only monomials appearing in $\Phi(X_1,\ldots,X_{\frac{n}{2}},X_{\frac{n}{2}},\ldots,X_1)$ are those of even degree (the degree of the monomial $X_1^{e_1}X_2^{e_2}\cdots X_{\frac{n}{2}}^{e_n}$ is defined to be $e_1 + \ldots + e_n$). But this follows immediately from the observation made in the previous paragraph. Indeed, a monomial of odd degree in the polynomial $\Phi(X_1,\ldots,X_{\frac{n}{2}},X_{\frac{n}{2}},\ldots,X_1)$ comes from a sum of monomials in $\Phi(X_1,\ldots,X_{\frac{n}{2}},Y_{\frac{n}{2}},\ldots,Y_1)$, the elements of this sum coming in pairs one of which is divisible by $X_i$ but not by $Y_i$ for some $1 \le i \le \frac{n}{2}$, the other of which is divisible by $Y_i$ but not by $X_i$, and which are such that exchanging the variable $X_i$ and $Y_i$ maps each of the monomials to the other. 
\end{proof}

We can now apply the methods of \cite{SiadOddMonogenicAverages} to obtain the $2$-adic masses for polynomials which are not over-ramified at the prime $2$.

By the classification of quadratic forms over $\mathbb{Z}_2$, there are only two odd, determinant $(-1)^{\frac{n}{2}}$ classically integral quadratic forms over $\mathbb{Z}_2$ of even dimension $n$ up to $\SL_n(\mathbb{Z}_2)$ equivalence. See for instance \cite{JonesCanonical} or \cite{ConwaySloaneClassification}. 

For $n \equiv 0 \mod 4$ we can take:  
$$\mathfrak{M}_1 = \begin{pmatrix}
1 &   &  &  & &  \\ 
 & \ddots &  &  & & \\ 
 &  & 1 &  &  &\\ 
 &  &  & 1 &  &\\ 
 &  &  &  & 1 &\\
 &  &  &  &  & 1\\
\end{pmatrix}, \,\,\,\,
\mathfrak{M}_{-1} = \begin{pmatrix}
1 &   &  &  & &  \\ 
 & \ddots &  &  & & \\ 
 &  & 1 &  &  &\\ 
 &  &  & 1 &  &\\ 
 &  &  &  & -1 &\\
 &  &  &  &  & -1\\
\end{pmatrix}.$$

For $n \equiv 2 \mod 4$ we can take:  
$$\mathfrak{M}_1 = \begin{pmatrix}
1 &   &  &  & &  \\ 
 & \ddots &  &  & & \\ 
 &  & 1 &  &  &\\ 
 &  &  & 1 &  &\\ 
 &  &  &  & 1 &\\
 &  &  &  &  & -1\\
\end{pmatrix}, \,\,\,\,
\mathfrak{M}_{-1} = \begin{pmatrix}
1 &   &  &  & &  \\ 
 & \ddots &  &  & & \\ 
 &  & 1 &  &  &\\ 
 &  &  & -1 &  &\\ 
 &  &  &  & -1 &\\
 &  &  &  &  & -1\\
\end{pmatrix}.$$

\begin{rem}
We have chosen the subscripts above to match the Hasse--Witt symbol of the respective bilinear forms.
\end{rem} 

We now state the constraint imposed on the total mass by the local mass. 

\begin{lem}
For any $f \in U_{1,b}(\mathbb{Z}_2)$, we have $$m_2(f,\mathfrak{M}_1)+m_2(f,\mathfrak{M}_{-1}) = 2^{n-1}.$$
\end{lem}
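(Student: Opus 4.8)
The plan is to read off the left-hand side from the summation identity for local masses and then eliminate all but the two relevant orbits using the Type II genera theorem. Throughout, $f$ is not evenly ramified at $2$ and $R_f$ is maximal at $2$, which is the case in force for the families appearing in the main theorem.

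First I would note that, $f$ being monic, any $(A,B)\in V(\mathbb{Z}_2)$ with resolvent $f$ has $\det(A)=(-1)^{n/2}$, since this is the leading coefficient of $\pi(A,B)=(-1)^{n/2}\det(Ax-B)$. Hence only the $\SL_n(\mathbb{Z}_2)$-orbits of unimodular $A\in\Sym_n(\mathbb{Z}_2)$ of this determinant can contribute a nonzero $m_2(f,A)$, and property (4) of the lemma on the main properties of the local masses gives
$$\sum_{\substack{A\in \Sym_n(\mathbb{Z}_2)/\SL_n(\mathbb{Z}_2)\\ \det(A)=(-1)^{n/2}}} m_2(f,A)=m_2(f).$$
Since $f$ is maximal and not evenly ramified at $2$, the earlier evaluation of the total local mass gives $m_2(f)=2^{n-1}$, so the right-hand side is $2^{n-1}$.

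Next I would split the orbits on the left into type I (odd) and type II (even) unimodular forms over $\mathbb{Z}_2$. A type II form has all diagonal entries in $2\mathbb{Z}_2$, hence reduces modulo $2$ to a non-degenerate alternating (equivalently, even) symmetric bilinear form over $\mathbb{F}_2$ of even rank $n$; there is exactly one such form up to equivalence, namely the hyperbolic one, so the Type II genera theorem applies to every type II orbit. For such $A$ it says that every element of $V_A(\mathbb{Z}_2)$ has resolvent lying in $\mathbb{F}_2[x^2]=(\mathbb{F}_2[x])^2$ modulo $2$; by maximality this forces the corresponding ring to be evenly ramified at $2$, which our $f$ is not, so $V_A(\mathbb{Z}_2)\cap\pi^{-1}(f)=\emptyset$ and $m_2(f,A)=0$. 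Thus only the type I orbits survive, and by the cited classification of quadratic forms over $\mathbb{Z}_2$ (Jones; Conway--Sloane) the odd unimodular forms of determinant $(-1)^{n/2}$ are exactly $\mathfrak{M}_1$ and $\mathfrak{M}_{-1}$ up to $\SL_n(\mathbb{Z}_2)$-equivalence. Substituting into the displayed identity yields $m_2(f,\mathfrak{M}_1)+m_2(f,\mathfrak{M}_{-1})=2^{n-1}$.

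The main obstacle is the $2$-adic bookkeeping rather than anything structural: one must check that the square class of $(-1)^{n/2}$ is the one governing property (4) (automatic for $n\equiv 0\pmod{4}$, and requiring care with the $-1$ factor when $n\equiv 2\pmod{4}$), that $\mathfrak{M}_1$ and $\mathfrak{M}_{-1}$ are genuinely inequivalent over $\SL_n(\mathbb{Z}_2)$ and exhaust the odd orbits of the prescribed determinant, and that every type II orbit of that determinant does reduce modulo $2$ to the hyperbolic form so that the Type II genera theorem applies uniformly. These are standard facts about $2$-adic quadratic forms, but they must be aligned with the conventions used here; once that is done the remaining steps are purely formal.
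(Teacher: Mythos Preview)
Your proposal is correct and matches the paper's approach exactly. The paper does not spell out a proof of this lemma, but the surrounding text makes the intended argument clear, and it is precisely yours: the Type~II genera theorem kills the even unimodular contribution for $f$ not over-ramified at $2$, the classification leaves only the two odd classes $\mathfrak{M}_1,\mathfrak{M}_{-1}$, and property~(4) of the local-mass lemma together with the value $m_2(f)=2^{n-1}$ finishes. Your explicit note that one needs $f$ maximal and not evenly ramified at $2$ is a useful clarification, since the lemma as stated omits these hypotheses but relies on them through the cited total-mass computation and the Type~II theorem.
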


We give names to the integral of the $2$-adic masses over $S_2$.

\begin{defn}
Define
\begin{align*}
c_2(n,\mathfrak{M}_1) &= \int_{f \in S_2} m_2(f,\mathfrak{M}_1) \, df \\
c_2(n,\mathfrak{M}_{-1}) &= \int_{f \in S_2} m_2(f,\mathfrak{M}_{-1}) \, df.
\end{align*}
\end{defn}

\begin{lem} [Point count]
Let $S_2 \subset U_{1,b}(\mathbb{Z}_2)$ be a local condition on the space of non-over-ramified monic polynomials at the prime $2$ defined modulo $2$. Denote by $\Lambda_{\mathfrak{M}_1}(2)$ and $\Lambda_{\mathfrak{M}_{-1}}(2)$ the pre-images in $V_{\mathfrak{M}_1,b}(\mathbb{Z}_2)$ and $V_{\mathfrak{M}_{-1},b}(\mathbb{Z}_2)$ respectively of $S_2$ under the resolvent map $\pi$. Then the volumes of these two sets are equal $$\Vol(\Lambda_{\mathfrak{M}_1}(2)) = \Vol(\Lambda_{\mathfrak{M}_{-1}}(2)).$$
\end{lem}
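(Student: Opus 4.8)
The plan is to reduce the whole comparison modulo $2$, where $\mathfrak{M}_1$ and $\mathfrak{M}_{-1}$ become the \emph{same} matrix. The first step is the trivial observation that $\mathfrak{M}_1 \equiv \mathfrak{M}_{-1} \equiv \mathrm{Id}_n \Mod{2}$, since every entry in which the two matrices differ equals $\pm 1$. Because the resolvent $\pi(A,B) = (-1)^{n/2}\det(Ax - B)$ is given by a polynomial with $\mathbb{Z}$-coefficients in the entries of $A$ and $B$, this yields, for every $B \in \Sym_n(\mathbb{Z}_2)$,
\[
\pi(\mathfrak{M}_1, B) \;\equiv\; \pi(\mathfrak{M}_{-1}, B) \;\equiv\; \det(x\,\mathrm{Id}_n - \overline{B}) \Mod{2},
\]
where $\overline{B}$ denotes the image of $B$ in $\Sym_n(\mathbb{F}_2)$.

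Second, I would record the routine fact that $V_{A,b}$ is cut out inside $V_A \cong \Sym_n$ by a single $\mathbb{Z}$-linear equation — the coefficient of $x^{n-1}$ in $\pi(A,B)$ being prescribed to equal $b$ — and that for $A = \mathfrak{M}_{\pm 1} = \mathrm{diag}(\varepsilon_1,\ldots,\varepsilon_n)$ with $\varepsilon_i = \pm 1$ this equation reads $-\sum_i \varepsilon_i b_{ii} = b$. This linear form is primitive over $\mathbb{Z}$, and it reduces modulo $2$ to the form $\sum_i \overline{b_{ii}}$ in both cases. Hence $V_{\mathfrak{M}_1,b}$ and $V_{\mathfrak{M}_{-1},b}$ have one and the same reduction modulo $2$: an affine subspace $\overline{V} \subset \Sym_n(\mathbb{F}_2)$ of dimension $d = \tfrac{n(n+1)}{2}-1$, onto which each of $V_{\mathfrak{M}_1,b}(\mathbb{Z}_2)$ and $V_{\mathfrak{M}_{-1},b}(\mathbb{Z}_2)$ reduces surjectively, with uniform fibres by primitivity. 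Thus, with respect to the normalisation in which $V_{A,b}(\mathbb{Z}_2)$ has measure $1$, any subset that is a union of residue classes modulo $2$ has volume equal to the number of classes it meets divided by $|\overline{V}| = 2^{d}$.

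Third, I would feed in the hypothesis that $S_2$ is defined modulo $2$: writing $\overline{S_2} \subset U_{1,b}(\mathbb{F}_2)$ for its reduction, membership in $S_2$ depends only on the reduction of the polynomial. The displayed congruence then identifies both $\Lambda_{\mathfrak{M}_1}(2)$ and $\Lambda_{\mathfrak{M}_{-1}}(2)$ with the full preimage under reduction modulo $2$ of the \emph{same} set
\[
\overline{\Lambda} \;:=\; \{\, \overline{B} \in \overline{V} \;:\; \det(x\,\mathrm{Id}_n - \overline{B}) \in \overline{S_2} \,\} \;\subset\; \overline{V}.
\]
The volume count of the previous step then gives $\Vol(\Lambda_{\mathfrak{M}_1}(2)) = |\overline{\Lambda}|/2^{d} = \Vol(\Lambda_{\mathfrak{M}_{-1}}(2))$, as asserted.

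The argument is essentially bookkeeping once this mod-$2$ viewpoint is in place; the only point I expect to require a genuine (if short) verification is the second step — that the linear form cutting out $V_{A,b}$ is primitive over $\mathbb{Z}$ and has the same reduction modulo $2$ for $A = \mathfrak{M}_1$ and $A = \mathfrak{M}_{-1}$ — since this is exactly what forces the two ambient $\mathbb{F}_2$-affine spaces, and with them the two measure normalisations, to coincide. After that, the content of the lemma is nothing more than the congruence $-1 \equiv 1 \Mod{2}$ together with the hypothesis that $S_2$ is defined modulo $2$.
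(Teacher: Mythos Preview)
Your argument is correct and follows exactly the same idea as the paper's proof: both rest on the single observation that $\mathfrak{M}_1 \equiv \mathfrak{M}_{-1} \Mod{2}$, so that the mod-$2$ conditions defining $\Lambda_{\mathfrak{M}_1}(2)$ and $\Lambda_{\mathfrak{M}_{-1}}(2)$ coincide. Your version is simply a more explicit unpacking of this --- in particular your second step, verifying that the linear form cutting out $V_{A,b}$ is primitive and has the same mod-$2$ reduction for both choices of $A$, makes precise a point the paper leaves implicit.
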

\begin{proof}
The canonical representatives $\mathfrak{M}_1$ and $\mathfrak{M}_{-1}$ are equal modulo $2$. Since $\Lambda_{\mathfrak{M}_1}(2)$ and $\Lambda_{\mathfrak{M}_{-1}}(2)$ are defined by imposing congruence conditions modulo $2$ on $V_{\mathfrak{M}_1,b}(\mathbb{Z}_2)$ and $V_{\mathfrak{M}_{-1},b}(\mathbb{Z}_2)$, the result follows. 
\end{proof}

As in \cite{SiadOddMonogenicAverages} we find. 

\begin{lem} 
The rational numbers giving the Jacobian change of variables for $\mathfrak{M}_{1}$ and $\mathfrak{M}_{-1}$ are equal when the volume forms on the associated special orthogonal groups are those associated to point counting modulo increasing powers of $p$: $$\mathcal{J}_{\mathfrak{M}_{1}} =   \mathcal{J}_{\mathfrak{M}_{-1}}.$$ In particular, their $2$-adic valuations are the same $$\left| \mathcal{J}_{\mathfrak{M}_{1}} \right|_2 = \left| \mathcal{J}_{\mathfrak{M}_{-1}} \right|_2.$$ 
\end{lem}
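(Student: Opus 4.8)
The plan is to reduce the statement, via the field-independence of $\mathcal{J}_A$ built into the Change of Measure Formula, to a place-by-place comparison of the point-counting normalisations of the invariant form $\omega$ on $\SO_{\mathfrak{M}_1}$ and $\SO_{\mathfrak{M}_{-1}}$, and then to exploit that $\mathfrak{M}_1$ and $\mathfrak{M}_{-1}$ already agree locally at every place except $2$, where they agree modulo $2$. The underlying mechanism is that $\mathfrak{M}_1$ and $\mathfrak{M}_{-1}$ are non-degenerate symmetric bilinear forms of the same rank $n$ and the same determinant $(-1)^{n/2}$, so that conjugation by a suitable matrix of determinant $1$ transports the whole orbit picture from one to the other.

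Concretely, for each odd prime $p$, unimodular symmetric bilinear forms over $\mathbb{Z}_p$ of a fixed rank and determinant form a single $\SL_n(\mathbb{Z}_p)$-orbit, so there is $h_p \in \SL_n(\mathbb{Z}_p)$ with $h_p^t \mathfrak{M}_1 h_p = \mathfrak{M}_{-1}$ (if the given $\GL_n(\mathbb{Z}_p)$-conjugator has determinant $-1$, compose it with a reflection in a unit-norm vector of $\mathfrak{M}_{-1}$, which exists over $\mathbb{Z}_p$ for $p$ odd). Conjugation by $h_p$ sends $(A,B)\mapsto(h_p^tAh_p,h_p^tBh_p)$, intertwines the group actions through the $\mathbb{Z}_p$-isomorphism $\SO_{\mathfrak{M}_1}\xrightarrow{\sim}\SO_{\mathfrak{M}_{-1}}$, fixes the resolvent target $U_{1,b}$ (as $\det(h_p)^2=1$, so $\pi$ is unchanged), preserves $dv$ (it is the restriction of a linear automorphism of $\Sym^2(\mathbb{Z}_p^n)$ of determinant $\det(h_p)^{n+1}=1$) and $df$, and — being integral — carries the point-counting measure on $\SO_{\mathfrak{M}_1}(\mathbb{Z}_p)$ to that on $\SO_{\mathfrak{M}_{-1}}(\mathbb{Z}_p)$. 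Plugging this into the Change of Measure Formula over $\mathbb{Q}_p$ forces $|\mathcal{J}_{\mathfrak{M}_1}|_p=|\mathcal{J}_{\mathfrak{M}_{-1}}|_p$ for every odd $p$. The same transport over $\mathbb{C}$, where $\mathfrak{M}_1$ and $\mathfrak{M}_{-1}$ are again $\SL_n$-equivalent (after correcting the determinant by a reflection in $\Or_{\mathfrak{M}_{-1}}(\mathbb{C})$), shows $\mathcal{J}_{\mathfrak{M}_1}$ and $\mathcal{J}_{\mathfrak{M}_{-1}}$ differ only through the ratio of the two archimedean invariant forms, which handles the infinite place; equivalently, one may deduce the remaining place from the odd-prime equalities together with the product formula for the nonzero rational $\mathcal{J}_{\mathfrak{M}_1}/\mathcal{J}_{\mathfrak{M}_{-1}}$.

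The main obstacle is the comparison at $p=2$: here $\mathfrak{M}_1$ and $\mathfrak{M}_{-1}$ lie in distinct $\mathbb{Z}_2$-genera (exactly why both are carried along), so no $\SL_n(\mathbb{Z}_2)$-conjugation is available. I would argue as in Part I \cite{SiadOddMonogenicAverages}: the valuation $|\mathcal{J}_A|_2$ is extracted from the Change of Measure Formula by counting points of the orbit map $\SO_A\times U_{1,b}\to V_{A,b}$ modulo increasing powers of $2$ on the smooth model, and — just as $\Vol(\Lambda_{\mathfrak{M}_1}(2))$ and $\Vol(\Lambda_{\mathfrak{M}_{-1}}(2))$ were equated because they are pre-images of the same set modulo $2$ — this count depends only on the reduction of $\mathfrak{M}_i$ modulo $2$, which is the identity matrix in both cases. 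This yields $|\mathcal{J}_{\mathfrak{M}_1}|_2=|\mathcal{J}_{\mathfrak{M}_{-1}}|_2$, the assertion actually used downstream; combining the equalities at all finite places with the archimedean comparison (or the product formula) gives $\mathcal{J}_{\mathfrak{M}_1}=\pm\mathcal{J}_{\mathfrak{M}_{-1}}$, and a final sign check in the Change of Measure Formula — comparing orientations on a single real fibre — removes the ambiguity. I expect the delicate point to be justifying that $|\mathcal{J}_A|_2$ sees only $A\bmod 2$, since the orthogonal group schemes here are not smooth over $\mathbb{Z}_2$ and one must pass carefully through their smooth models.
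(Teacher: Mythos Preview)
Your overall strategy—local equivalence at odd primes, then the product formula—is the right shape, and the odd-prime step is correct: conjugation by an integral $h_p\in\SL_n(\mathbb{Z}_p)$ is a $\mathbb{Z}_p$-scheme isomorphism, so it transports the point-counting normalisation of $\omega$ and forces $|\mathcal{J}_{\mathfrak{M}_1}|_p=|\mathcal{J}_{\mathfrak{M}_{-1}}|_p$ for all odd $p$. The paper itself just defers to Part I, so a direct comparison of arguments is not possible from this text alone.

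There is, however, a genuine gap in the remaining step. The product formula gives you one relation $|\mathcal{J}_{\mathfrak{M}_1}/\mathcal{J}_{\mathfrak{M}_{-1}}|_2\cdot|\mathcal{J}_{\mathfrak{M}_1}/\mathcal{J}_{\mathfrak{M}_{-1}}|_\infty=1$, so you still need independent input at either $2$ or $\infty$. Your mod-$2$ argument does not supply it: the $2$-adic valuation of $\mathcal{J}_A$ is read off from point counts modulo all powers of $2$, not just modulo $2$, and the equality $\mathfrak{M}_1\equiv\mathfrak{M}_{-1}\pmod 2$ says nothing about higher powers (indeed, the very next proposition in the paper uses that $\Vol(\SO_{\mathfrak{M}_1}(\mathbb{Z}_2))\neq\Vol(\SO_{\mathfrak{M}_{-1}}(\mathbb{Z}_2))$, so the mod-$2$ agreement certainly does not propagate). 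Your archimedean step has the same defect in a different guise: you correctly observe that conjugation by $h\in\SL_n(\mathbb{C})$ leaves the discrepancy ``only through the ratio of the two archimedean invariant forms'', but you never compute that ratio. Since $h$ is not integral, there is no a priori reason for $(c_h)^*\omega_{\mathfrak{M}_{-1}}$ to equal $\omega_{\mathfrak{M}_1}$.

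The missing ingredient that closes both routes at once is the canonical $\mathbb{Z}$-isomorphism $\psi_A\colon\mathfrak{so}_A\to\mathfrak{a}$ (antisymmetric matrices), $X\mapsto AX$, valid for any unimodular $A$. Taking $\omega_A:=\psi_A^*(\omega_{\mathfrak{a}})$ gives the point-counting form, and under conjugation by any $h\in\SL_n$ the induced map on $\mathfrak{a}$ is $Y\mapsto h^tYh$, whose determinant on antisymmetric matrices is $\det(h)^{n-1}=1$. Thus $(c_h)^*\omega_{\mathfrak{M}_{-1}}=\omega_{\mathfrak{M}_1}$ over any base where $h$ exists. Applying this over $\mathbb{C}$ (together with $\det(h)^{n+1}=1$ on $\Sym_n$, which you already noted) gives $\mathcal{J}_{\mathfrak{M}_1}=\mathcal{J}_{\mathfrak{M}_{-1}}$ directly, without needing the product formula or any mod-$2$ analysis.
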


Finally, we compute the ratio of the $2$-adic masses by finding the ratio between the volumes of the $2$-adic points of the special orthogonal groups $\SO_{\mathfrak{M}_1}$ and $\SO_{\mathfrak{M}_{-1}}$. 

\begin{prop}[Volume ratio for Type ${\rm I}$ genera] We have
\begin{align*}
\frac{c_2\left(n,\mathfrak{M}_1\right)}{c_2 \left(n,\mathfrak{M}_{-1}\right)} = \frac{\Vol(\Lambda_{\mathfrak{M}_1}(2)) \Bigg(\left| \mathcal{J}_{\mathfrak{M}_{-1}} \right|_2 \Vol(\SO_{\mathfrak{M}_{-1}}(\mathbb{Z}_2)) \Bigg)}{\Vol(\Lambda_{\mathfrak{M}_{-1}}(2)) \Bigg(\left| \mathcal{J}_{\mathfrak{M}_1} \right|_2 \Vol(\SO_{\mathfrak{M}_1}(\mathbb{Z}_2)) \Bigg)} &= \frac{\Vol(\SO_{\mathfrak{M}_{-1}}(\mathbb{Z}_2))}{\Vol(\SO_{\mathfrak{M}_1}(\mathbb{Z}_2))} \\
&=\frac{2^{n-2} \pm_8 2^{\frac{n-2}{2}}}{2^{n-2} \mp_8 2^{\frac{n-2}{2}}},
\end{align*}
where $\pm_8$ is $+$ if $n$ is congruent to $0$ or $2 \mod 8$ and $-$ otherwise.
\end{prop}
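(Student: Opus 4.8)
The plan is to reduce the first two equalities to results already established in the preceding section, and to spend the real effort on the third equality, i.e.\ the evaluation of the ratio $\Vol(\SO_{\mathfrak{M}_{-1}}(\mathbb{Z}_2))/\Vol(\SO_{\mathfrak{M}_1}(\mathbb{Z}_2))$.

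\emph{The first two equalities.} The first is immediate from the change-of-measure formula of Section~\ref{Change of measure formula}: applying $\Vol(\Lambda_{A,b,p}) = |\mathcal{J}_A|_p\,\Vol(\SO_A(\mathbb{Z}_p))\int_{f\in S_p} m_p(f,A)\,df$ with $p=2$ and $A\in\{\mathfrak{M}_1,\mathfrak{M}_{-1}\}$ gives $c_2(n,\mathfrak{M}_\epsilon)=\Vol(\Lambda_{\mathfrak{M}_\epsilon}(2))\big/\big(|\mathcal{J}_{\mathfrak{M}_\epsilon}|_2\,\Vol(\SO_{\mathfrak{M}_\epsilon}(\mathbb{Z}_2))\big)$, and dividing the $\epsilon=1$ expression by the $\epsilon=-1$ expression produces the displayed quotient. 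The second equality then follows by cancelling the two factors that the lemmas just above show are equal: $\Vol(\Lambda_{\mathfrak{M}_1}(2))=\Vol(\Lambda_{\mathfrak{M}_{-1}}(2))$ by the Point count lemma (the canonical representatives $\mathfrak{M}_1,\mathfrak{M}_{-1}$ coincide modulo $2$ and the local conditions $S_2$ are imposed modulo $2$), and $|\mathcal{J}_{\mathfrak{M}_1}|_2=|\mathcal{J}_{\mathfrak{M}_{-1}}|_2$ by the lemma on equality of Jacobians.

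\emph{The volume ratio.} For the last equality I would compute each $2$-adic volume as a stabilised point count $\Vol(\SO_{\mathfrak{M}_\epsilon}(\mathbb{Z}_2))=\lim_{k\to\infty}\#\SO_{\mathfrak{M}_\epsilon}(\mathbb{Z}/2^k\mathbb{Z})\big/2^{k\binom{n}{2}}$, with $\binom{n}{2}=\dim\SO_n$; equivalently, up to a normalisation depending only on $n$, this is the classical $2$-adic local density of the unimodular quadratic form $\mathfrak{M}_\epsilon$ (Pall; see \cite{ConwaySloaneClassification}). Writing $n=2m$, the forms $\mathfrak{M}_1$ and $\mathfrak{M}_{-1}$ have the same discriminant in $\mathbb{Z}_2^\times/(\mathbb{Z}_2^\times)^2$ and differ only by replacing a $\mathrm{diag}(1,1)$ summand by a $\mathrm{diag}(-1,-1)$ summand, so they are distinguished exactly by their Hasse--Witt invariant. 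The local density of a unimodular $\mathbb{Z}_2$-form of even rank $n$ has the shape $\Vol(\SO_{\mathfrak{M}_\epsilon}(\mathbb{Z}_2))=C_n\cdot\big(2^{n-2}-\eta(\mathfrak{M}_\epsilon)\,2^{(n-2)/2}\big)$, with $C_n$ independent of $\epsilon$ and the $\epsilon$-dependence confined to a sign $\eta(\mathfrak{M}_\epsilon)\in\{\pm1\}$, which is the value of a quadratic Gauss sum attached to $\mathfrak{M}_\epsilon$ modulo $8$. That Gauss sum is a product over the diagonal entries $a_i$ of $\mathfrak{M}_\epsilon$, so its value depends only on $\sum_i a_i\bmod 8$, i.e.\ on the signature of the diagonal form; evaluating it reproduces exactly the $\pm_8/\mp_8$ pattern of the statement, and taking the ratio cancels $C_n$ to give $\frac{2^{n-2}\pm_8 2^{(n-2)/2}}{2^{n-2}\mp_8 2^{(n-2)/2}}$.

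\emph{Main obstacle.} The delicate point is the $2$-adic point count behind the density formula. For odd (Type ${\rm I}$) forms the reduction of $\SO_{\mathfrak{M}_\epsilon}$ modulo $2$ is \emph{not} a finite orthogonal group over $\mathbb{F}_2$: the quadratic form $\sum_i a_i x_i^2$ reduces modulo $2$ to $(\sum_i x_i)^2$, the square of a linear form, hence is degenerate, so one cannot simply read the count off an $\mathbb{F}_2$-orthogonal group order. One must instead count solutions of $g^t\mathfrak{M}_\epsilon\,g\equiv\mathfrak{M}_\epsilon\pmod{2^k}$ directly, check that the normalised count stabilises once $k$ is large enough (the orthogonal group scheme of a unimodular form over $\mathbb{Z}_2$ is flat, with non-reducedness concentrated at a bounded level), and track correctly the factor of $2$ coming from the Dickson-type invariant. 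If one prefers to quote the classical $2$-adic local density formula outright, the remaining work — and the genuine source of the $n\bmod 8$ dichotomy — is the bookkeeping that matches the quadratic Gauss-sum sign $\eta(\mathfrak{M}_\epsilon)$ to the subscript on $\mathfrak{M}$; this is where the argument must be carried out with care.
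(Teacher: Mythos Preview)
Your proposal is correct and takes essentially the same approach as the paper. The paper's own proof is a one-line reference to \cite{SiadOddMonogenicAverages} together with a case-by-case listing of the \emph{octane values} (in the Conway--Sloane sense) of $\mathfrak{M}_1$ and $\mathfrak{M}_{-1}$ for $n\equiv 0,2,4,6\bmod 8$; your Gauss-sum sign $\eta(\mathfrak{M}_\epsilon)$ determined by $\sum_i a_i\bmod 8$ is exactly this octane value, so the two arguments are the same computation in different language.
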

\begin{proof}
This calculation is the same as in \cite{SiadOddMonogenicAverages}. Care is needed to  keep track of the of the \emph{octane values} of $\mathfrak{M}_1$ and $\mathfrak{M}_{-1}$ for different congruence classes of $n$ modulo $8$ resulting in the cases for $\pm_8$. We find that $\mathfrak{M}_1$ has octane value $0 \Mod 8$ if $n \equiv 0 \Mod 8$, $4 \Mod 8$ if $n \equiv 4 \Mod 8$, $0 \Mod 8$ if $n \equiv 2 \Mod 8$, and $4 \Mod 8$ if $n \equiv 6 \Mod 8$. On the other hand, we find that $\mathfrak{M}_{-1}$ has octane value $4 \Mod 8$ if $n \equiv 0 \Mod 8$, $0 \Mod 8$ if $n \equiv 4 \Mod 8$, $4 \Mod 8$ if $n \equiv 2 \Mod 8$, and $0 \Mod 8$ if $n \equiv 6 \Mod 8$.
\end{proof}

We thus obtain the values for the 2-adic mass. 

\begin{cor}
The $2$-adic masses satisfy the following identities: 
\begin{align*}
c_2(n,\mathfrak{M}_1) + c_2(n,\mathfrak{M}_{-1}) &= 2^{n-1}\Vol(S_2) \\
c_2(n,\mathfrak{M}_1) - c_2(n,\mathfrak{M}_{-1}) &= \pm_8 2^{\frac{n-2}{2}+1} \Vol(S_2).
\end{align*}
In particular: 
\begin{alignat*}{2}
&c_2(n,\mathfrak{M}_1) &&= \left(2^{n-2} \pm_8 2^{\frac{n-2}{2}} \right) \Vol(S_2)\\
&c_2(n,\mathfrak{M}_{-1}) &&= \left(2^{n-2} \mp_8 2^{\frac{n-2}{2}} \right) \Vol(S_2)
\end{alignat*}
where $\pm_8$ is $+$ if $n$ is congruent to $0$ or $2 \mod 8$ and $-$ otherwise.
\end{cor}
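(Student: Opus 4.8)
The plan is to combine two inputs already established: the total-mass constraint (the preceding Lemma), integrated over $S_2$, which pins down the \emph{sum} $c_2(n,\mathfrak{M}_1)+c_2(n,\mathfrak{M}_{-1})$; and the Volume ratio proposition, which pins down the \emph{ratio} $c_2(n,\mathfrak{M}_1)/c_2(n,\mathfrak{M}_{-1})$. A sum and a ratio determine both quantities, and solving the resulting linear system yields the closed forms.

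First I would integrate the identity $m_2(f,\mathfrak{M}_1)+m_2(f,\mathfrak{M}_{-1})=2^{n-1}$ over $f\in S_2$ against $df$. Since $\int_{S_2}df=\Vol(S_2)$ and, by definition, $c_2(n,\mathfrak{M}_{\pm1})=\int_{S_2}m_2(f,\mathfrak{M}_{\pm1})\,df$, this gives the first displayed identity
\[
c_2(n,\mathfrak{M}_1)+c_2(n,\mathfrak{M}_{-1})=2^{n-1}\Vol(S_2).
\]
Next I would invoke the Volume ratio proposition, which states $c_2(n,\mathfrak{M}_1)/c_2(n,\mathfrak{M}_{-1})=(2^{n-2}\pm_8 2^{(n-2)/2})/(2^{n-2}\mp_8 2^{(n-2)/2})=:r$ (itself a consequence of the local-volume formula of the change-of-measure section, together with the Point count lemma $\Vol(\Lambda_{\mathfrak{M}_1}(2))=\Vol(\Lambda_{\mathfrak{M}_{-1}}(2))$ and the Jacobian lemma $|\mathcal{J}_{\mathfrak{M}_1}|_2=|\mathcal{J}_{\mathfrak{M}_{-1}}|_2$, which together force the ratio of the $c_2$'s to equal $\Vol(\SO_{\mathfrak{M}_{-1}}(\mathbb{Z}_2))/\Vol(\SO_{\mathfrak{M}_1}(\mathbb{Z}_2))$).

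Writing $S$ for the sum just computed, the difference is $\frac{r-1}{r+1}S$; a one-line computation with $a=2^{n-2}$, $b=2^{(n-2)/2}$ gives $\frac{r-1}{r+1}=\pm_8\, b/a=\pm_8\,2^{-(n-2)/2}$, and since $n-1-(n-2)/2=(n-2)/2+1$ the difference equals $\pm_8\,2^{\frac{n-2}{2}+1}\Vol(S_2)$, the second displayed identity. Finally, adding and subtracting the sum and difference identities and dividing by $2$ gives
\[
c_2(n,\mathfrak{M}_1)=\left(2^{n-2}\pm_8 2^{\frac{n-2}{2}}\right)\Vol(S_2),\qquad c_2(n,\mathfrak{M}_{-1})=\left(2^{n-2}\mp_8 2^{\frac{n-2}{2}}\right)\Vol(S_2).
\]

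The genuinely delicate point is not any of these algebraic manipulations — they are formal — but keeping the $\pm_8$ sign consistent throughout. One must ensure that the octane-value assignments recorded in the proof of the Volume ratio proposition, for each residue class of $n\Mod{8}$, are propagated so that $\mathfrak{M}_1$ is the form with the \emph{larger} value of $\Vol(\SO_\bullet(\mathbb{Z}_2))$ precisely when $n\equiv 0,2\Mod{8}$; only then does the sign in the solved formulas land as stated. Once this matching is pinned down, the corollary follows immediately.
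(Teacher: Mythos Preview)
Your proposal is correct and follows exactly the approach the paper intends: the corollary is stated immediately after the total-mass lemma and the volume-ratio proposition, and the paper leaves the linear algebra of combining ``sum'' and ``ratio'' implicit, which is precisely what you have written out. One small slip in your closing remark: since $c_2(n,\mathfrak{M}_1)/c_2(n,\mathfrak{M}_{-1})=\Vol(\SO_{\mathfrak{M}_{-1}}(\mathbb{Z}_2))/\Vol(\SO_{\mathfrak{M}_1}(\mathbb{Z}_2))$, the form $\mathfrak{M}_1$ has the \emph{smaller} $\SO$-volume (not larger) when $n\equiv 0,2\Mod{8}$, but this does not affect your argument.
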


\section{The infinite mass} \label{The infinite mass}

In this section, we calculate the infinite masses. We begin by describing the distribution of $\delta$ among the different $A$ slices as in \cite{SiadOddMonogenicAverages}. 

\begin{defn}
Let $\delta \in \mathcal{T}(r_2)$. Define $\Omega_-(\delta)$ to be the number of negative eigenvalues of the first component of the orbit associated to $\delta$. 
\end{defn}

\begin{thm}[Signature distribution of first components of $\mathcal{T}(r_2)$] If $r_1 > 0$, the number of $\delta$ in $\mathcal{T}(r_2)$ such that $\Omega_-(\delta) = q$ is $${r_1 \choose q-r_2}.$$ In particular, if $q < r_2$, there are no $\delta \in \mathcal{T}(r_2)$ which land in any $V_A$ for which $A$ has signature $(n-q,q)$. If $r_1 = 0$, there are $2$ orbits which both land on the split slice. 
\end{thm}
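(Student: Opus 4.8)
The plan is to use Wood's rigid parametrisation together with the description of real orbits recalled earlier in the paper. Fix $r_2$ and recall that $\mathcal{T}(r_2)$ is (up to equivalence) the set of $\delta \in K_f^\times$ attached to a real resolvent polynomial $f$ with $r_1$ real roots and $2r_2$ complex roots, modulo the relation coming from squares of elements fixing the base ideal. Over $\mathbb{R}$, we have $K_f \otimes_\mathbb{Q} \mathbb{R} \cong \mathbb{R}^{r_1} \times \mathbb{C}^{r_2}$, so the class of $\delta$ in $K_f^\times/(K_f^\times)^2$ is determined precisely by the sign of $\delta$ at each of the $r_1$ real places (the complex places contribute nothing). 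The norm condition $N(\delta) > 0$ (equivalently, the constraint $\det(A) = (-1)^{n/2}$ up to the sign ambiguity recorded in the strategy section) forces the number of real places at which $\delta$ is negative to have a fixed parity. First I would make this precise: the first component $A$ of the orbit attached to $\delta$ is, over $\mathbb{R}$, the bilinear form whose signature is read off from the signs of $\delta$ at the real places together with the fixed contribution from the complex places — concretely, $\Omega_-(\delta)$ equals $r_2$ plus the number of real places where $\delta$ is negative. This identification is the crux, and is exactly the even-degree analogue of the sign bookkeeping in \cite{HoShankarVarmaOdd} and in Part I \cite{SiadOddMonogenicAverages}; I expect it to be the main obstacle, since one must carefully track how the ``extra'' $r_2$ from the complex places enters and confirm that the equivalence relation on triples does not collapse distinct sign patterns.

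Granting that identification, the count is immediate: the number of $\delta \in \mathcal{T}(r_2)$ with $\Omega_-(\delta) = q$ is the number of ways to choose which of the $r_1$ real places carry a negative sign, i.e. a subset of size $q - r_2$ of an $r_1$-element set, giving $\binom{r_1}{q-r_2}$. If $q < r_2$ this binomial coefficient is $0$, which is the asserted vanishing: no $\delta$ lands in a $V_A$ with $A$ of signature $(n-q,q)$ when $q < r_2$, reflecting the fact that the $r_2$ complex places always contribute $r_2$ negative eigenvalues to the first component. I would also note in passing that summing $\binom{r_1}{q-r_2}$ over all $q$ recovers $2^{r_1}$, matching the expected total size of $\mathcal{T}(r_2)$ from the real orbit count recalled earlier (where the stabiliser has size $\sigma(r_2)$ and there are $2^{r_1-1}$ orbits per real resolvent, consistently with the two orientation choices).

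Finally, for the degenerate case $r_1 = 0$, the field $K_f \otimes \mathbb{R} \cong \mathbb{C}^{r_2}$ has no real places, so there is no sign data at all and the class of $\delta$ modulo squares is trivial; the only freedom left is the orientation $\varepsilon = \pm 1$, yielding exactly $2$ elements of $\mathcal{T}(r_2)$. Both of these have first component of signature $(n-r_2, r_2)$ — the unique ``split'' slice in the $r_1 = 0$ setting, matching the remark that for $r_1 = 0$ there are $2$ real $\SL_n(\mathbb{R})$-orbits — which is the last assertion of the theorem. I would conclude by remarking that this is consistent with property (6) of the Main properties of the local masses lemma, where the infinite-mass sum over $A$-slices equals $2$ when $r_1 = 0$.
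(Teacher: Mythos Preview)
Your approach is essentially the same as the paper's: the paper simply refers to the identical argument in Part I \cite{SiadOddMonogenicAverages}, which is precisely the sign-bookkeeping you outline (each real place of $K_f\otimes\mathbb{R}$ contributes a $\pm 1$ eigenvalue to $A$ according to $\mathrm{sgn}\,\delta$, each complex place contributes a hyperbolic $(1,1)$ block, hence $\Omega_-(\delta)=r_2+\#\{\text{real places with }\delta<0\}$ and the count is $\binom{r_1}{q-r_2}$).

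One small correction to your consistency check: summing $\binom{r_1}{q-r_2}$ over \emph{all} $q$ gives $2^{r_1}$, but $\mathcal{T}(r_2)$ has only $2^{r_1-1}$ elements when $r_1>0$ (as the paper records). The resolution is not an ``orientation doubling'' as you suggest; rather, the monicity constraint $\det(A)=(-1)^{n/2}$ forces $q$ to have a fixed parity, and summing $\binom{r_1}{q-r_2}$ over $q$ of that parity gives exactly $2^{r_1-1}$. This is harmless for the proof itself but worth stating correctly.
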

\begin{proof}
The proof is the same as in \cite{SiadOddMonogenicAverages}. 
\end{proof}

To make the final calculation more transparent, we state the relation between the value of $\Omega_-(\cdot)$ and the Hasse--Witt symbol. We also obtain a couple of useful combinatorial identities. \\

If $n \equiv 0 \Mod 4$, we are considering real bilinear forms with determinant $1$. Among those, the ones which satisfy $\Omega(\delta) \equiv 0 \Mod 4$ have Hasse--Witt symbol $1$, while those which satisy $\Omega(\delta) \equiv 2 \Mod 4$ have Hasse--Witt symbol $-1$. \\

If $n \equiv 2 \Mod 4$, we are considering real bilinear forms with determinant $-1$. Among those, the ones which satisfy $\Omega(\delta) \equiv 1 \Mod 4$ have Hasse--Witt symbol $1$, while those which satisy $\Omega(\delta) \equiv 3 \Mod 4$ have Hasse--Witt symbol $-1$. \\

We now define the infinite mass of an integral quadratic form $A \in \mathscr{L}_{\mathbb{Z}}$ to be the number of $\delta \in \mathcal{T}(r_2)$ whose associated orbit in $V$ intersects $V_A$ non-trivially. 

\begin{defn}[The infinite mass]
Let $A \in \mathscr{L}_\mathbb{Z}$ and $r_2$ be an integer such that $0 \le r_2 \le \frac{n-1}{2}$. The infinite mass of $A$ with respect to $r_2$ is defined to be $$m_\infty(r_2,A) = \sum_{\delta \in \mathcal{T}(r_2)} \chi_A(\delta).$$
\end{defn}

Above, we found that these infinite masses were equal to specific binomial coefficients. We now compute closed forms for certain \emph{total infinite masses} arising when summing infinite masses over genera having the same $2$-adic reduction. 

\begin{defn}[The total infinite mass] Let $\mathcal{G}_2$ be set of equivalence classes of unimodular bilinear forms over $\mathbb{Z}_2$ of dimension $n$. Then $\mathcal{G}_2 = \{\mathfrak{M}_{-1}, \mathfrak{M}_1, \mathfrak{M}_{\textrm{Type II}}\}$ where $\mathfrak{M}_{-1}$ and $\mathfrak{M}_1$ are the odd unimodular forms with Hasse--Witt symbol $-1$ and $1$ respectively (canonical representative were described in the last section) and $\mathfrak{M}_{\textrm{Type II}}$ is simply the bilinear form with $1$ on the anti-diagonal. Note that $\mathfrak{M}_{\textrm{Type II}}$ has Hasse--Witt symbol $1$ if $n \equiv 0,2 \Mod 8$ and $-1$ if $n \equiv 4,6 \Mod 8$. The \emph{total infinite mass}, $c_\infty(r_1, \mathfrak{M}_{\blacksquare} )$, associated to an element $\mathfrak{M}_{\blacksquare} \in \mathcal{G}_2$ is defined to be the sum of the infinite masses over all genera whose $\mathbb{Z}_2$ reduction coincides with $\mathfrak{M}_{\blacksquare}$. 
\end{defn}

Calculating the total infinite mass now becomes a question of evaluating sums of binomial coefficients in arithmetic progression.

\begin{cor}
The infinite masses are as follows:
\begin{align*}
c_\infty(r_1,\mathfrak{M}_{1}) &= 2^{r_1-2}\pm_8 2^{\frac{r_1-2}{2}} \\
c_\infty(r_1,\mathfrak{M}_{-1}) &= 2^{r_1-2}\mp_8 2^{\frac{r_1-2}{2}}.
\end{align*}
where $\pm_8$ is $+$ if $n$ is congruent to $0$ or $2 \Mod 8$ and $-$ otherwise. Furthermore, $c_\infty(r_1,\mathfrak{M}_{\textrm{Type II}}) = c_\infty(r_1,\mathfrak{M}_{1})$ if $n \equiv 0,2 \Mod 8$ and $c_\infty(r_1,\mathfrak{M}_{\textrm{Type II}}) = c_\infty(r_1,\mathfrak{M}_{-1})$ if $n \equiv 4,6 \Mod 8$.
\end{cor}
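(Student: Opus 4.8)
The plan is to rewrite each total infinite mass as a sum over real signatures of the already-known values $m_\infty(r_2,A)$, to translate the condition fixing the $2$-adic reduction into a congruence modulo $4$ on the number of negative eigenvalues, and then to evaluate the resulting sum of binomial coefficients in arithmetic progression by a roots-of-unity filter. Here $r_2=(n-r_1)/2$ is determined by $r_1$, and we assume $r_1>0$ (the case $r_1=0$ is handled separately by the signature distribution theorem).

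First I would use item~(3) of the main properties lemma, which says $m_\infty(r_2,A)$ is a genus invariant, together with the signature distribution theorem, which gives $m_\infty(r_2,A)=\binom{r_1}{q-r_2}$ whenever $A$ has signature $(n-q,q)$. Since a genus of unimodular forms of determinant $(-1)^{n/2}$ is determined by its real signature and its $2$-adic type, with exactly one genus for each admissible pair, this yields
\[
c_\infty(r_1,\mathfrak{M}_{\blacksquare})=\sum_{(n-q,q)\text{ admitting }2\text{-adic type }\mathfrak{M}_{\blacksquare}}\binom{r_1}{q-r_2}.
\]

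Next I would pin down the admissible signatures. Every $A\in\mathscr{L}_{\mathbb{Z}}$ is unimodular, hence has trivial Hasse--Witt symbol at each odd prime, so Hilbert reciprocity forces the Hasse--Witt symbol of $A$ at $2$ to equal the one at $\infty$. By the correspondence recorded just before the corollary, the Hasse--Witt symbol at $\infty$ of a form of signature $(n-q,q)$ is $+1$ exactly when $q\equiv 0\Mod 4$ (for $n\equiv 0\Mod 4$) or $q\equiv 1\Mod 4$ (for $n\equiv 2\Mod 4$), and $-1$ in the complementary residue class. Combined with the classification of unimodular $\mathbb{Z}_2$-lattices --- $\mathfrak{M}_1$ and $\mathfrak{M}_{-1}$ being the two odd classes, separated precisely by their Hasse--Witt symbol, and $\mathfrak{M}_{\textrm{Type II}}$ being the even class, which forces the $\mathbb{Z}$-lattice to be even and hence is realised only by genera of signature divisible by $8$, i.e.\ $q\equiv n/2\Mod 4$ --- this determines the admissible signatures for each $\mathfrak{M}_{\blacksquare}$ and makes the reciprocity constraint automatically consistent in every case.

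After substituting $j=q-r_2$, each total infinite mass becomes $\sum_{0\le j\le r_1,\,j\equiv a\,(4)}\binom{r_1}{j}$ for an explicit residue $a$ read off from the previous step. The roots-of-unity identity
\[
\sum_{j\equiv a\,(4)}\binom{m}{j}=2^{m-2}+2^{(m-2)/2}\cos\!\Big(\tfrac{(m-2a)\pi}{4}\Big),
\]
applied with $m=r_1$ and simplified using $r_1+2r_2=n$, turns the cosine's argument into $n\pi/4$ or $(n-2)\pi/4$ for $\mathfrak{M}_1$, into $(n-4)\pi/4$ or $(n-6)\pi/4$ for $\mathfrak{M}_{-1}$, and into $0$ for $\mathfrak{M}_{\textrm{Type II}}$; in each case the cosine evaluates to $\pm1$ according to $n\Mod 8$, giving the stated closed forms. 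In particular $c_\infty(r_1,\mathfrak{M}_{\textrm{Type II}})=2^{r_1-2}+2^{(r_1-2)/2}$ always, which equals $c_\infty(r_1,\mathfrak{M}_1)$ precisely when $n\equiv 0,2\Mod 8$ and $c_\infty(r_1,\mathfrak{M}_{-1})$ precisely when $n\equiv 4,6\Mod 8$, establishing the last assertion. The step I expect to be the main obstacle is the genus bookkeeping in the third paragraph: one must track, across the four residue classes of $n$ modulo $8$ and the two odd genera, exactly which signatures support a genus of each $2$-adic type and with what multiplicity, and verify in each case that Hilbert reciprocity and the ``signature divisible by $8$'' constraint for even unimodular $\mathbb{Z}$-lattices hold simultaneously; the binomial evaluation that follows is routine.
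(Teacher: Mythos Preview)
Your proposal is correct and follows precisely the route the paper indicates: the sentence preceding the corollary says that ``calculating the total infinite mass now becomes a question of evaluating sums of binomial coefficients in arithmetic progression,'' and you carry this out in full, using the signature distribution theorem, the Hasse--Witt/reciprocity bookkeeping recorded just before the statement, and a standard roots-of-unity filter. The paper gives no further details, so your write-up is in fact more complete than what appears there.
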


\section{Statistical consequences} \label{Statistical consequences}

We now pool together the elements assembled in the previous sections to compute the averages. We treat the cases $r_1 = 0$ and $r_1 >0$ separately starting with totally imaginary orders. For simplicity, we present the computation for fields which are non-evenly ramified at all primes $p$.  We state the general theorem in the last subsection.

\subsection{Oriented class group averages for non-evenly ramified totally imaginary fields} 

\begin{align*}
&\frac{\sum\limits_{\substack{\mathcal{O} \in \mathfrak{R}, \\ H(\mathcal{O}) < X}} 2^{r_2} \left| \Cl_2^*(\mathcal{O}) \right|-\left| \mathcal{I}_2^*(\mathcal{O}) \right|}{\left(\sum\limits_{0 \le b <n} \Vol(U_{1,b}^{r_2}(\mathbb{R}))_{<X}) \right) \prod\limits_p \Vol(S_p)} +o(1). \\
\intertext{Now, by the preceding sections, we know that this sum is equal to:}
&= \frac{\sum\limits_{0 \le b <n} \sum\limits_{\delta \in \mathcal{T}(r_2)} \sum\limits_{A \in \mathscr{L}_{\mathbb{Z}}} N_H(\mathcal{V}(\Lambda_{A,b}^\delta),X)}{\left(\sum\limits_{0 \le b <n} \Vol(U_{1,b}^{r_2}(\mathbb{R}))_{<X}) \right) \prod\limits_p \Vol(S_p)}. \\
\intertext{Expanding, we find:}
&= \sum_{\delta \in \mathcal{T}(r_2)} \sum_{A \in \mathscr{L}_{\mathbb{Z}}} \frac{2}{\sigma(r_2)} \Vol(\mathcal{F}_A^\delta) \prod_p \Vol(\SO_A(\mathbb{Z}_p)) \prod_{p\neq 2} m_p(A) \frac{\int_{f \in S_2} m_2(f,A) df}{\Vol(S_2)}. \\
\intertext{The indicator functions come into play at this point.}
&= \sum_{\delta \in \mathcal{T}(r_2)} \sum_{A \in \mathscr{L}_{\mathbb{Z}}} \frac{2}{\sigma(r_2)}  \chi_A(\delta) \Vol(\mathcal{F}_A)\prod_p \Vol(\SO_A(\mathbb{Z}_p)) \frac{\int_{f \in S_2} m_2(f,A) df}{\Vol(S_2)}\\
\intertext{We now break up the collection $\mathscr{L}_\mathbb{Z}$ into genera and sum over the forms in each genus separately before summing over the distinct genera. Since, both the characteristic function and the $p$-adic masses are constant over the forms in a single genus, they factor out of the inner sum.}
&= \sum_{\delta \in \mathcal{T}(r_2)} \sum_{\mathcal{G} \in \mathcal{G}_{\mathbb{Z}}} \sum_{A \in \mathcal{G} \cap \mathscr{L}_{\mathbb{Z}}} \frac{2}{\sigma(r_2)}  \chi_A(\delta) \Vol(\mathcal{F}_A)\prod_p \Vol(\SO_A(\mathbb{Z}_p))  \frac{\int_{f \in S_2} m_2(f,A) df}{\Vol(S_2)}\\
&= \sum_{\delta \in \mathcal{T}(r_2)} \sum_{\mathcal{G} \in \mathcal{G}_{\mathbb{Z}}} \frac{2}{\sigma(r_2)} \chi_{\mathcal{G}}(\delta) \frac{\int_{f \in S_2} m_2(f,\mathcal{G}) df}{\Vol(S_2)} \left(\sum_{A \in \mathcal{G} \cap \mathscr{L}_{\mathbb{Z}}}  \Vol(\mathcal{F}_A)\prod_p \Vol(\SO_A(\mathbb{Z}_p)) \right)\\
\intertext{Now, the inner sum gives the Tamagawa number of the special orthogonal group of an integral form in a genus. It is known to always be equal $2$, see for instance \cite{LanglandsFundamentalVolume} and \cite{EskinRudnickSarnak}. We denote it by $\tau(\SO)$.}
&= \tau(\SO) \sum_{\delta \in \mathcal{T}(r_2)} \sum_{\mathcal{G} \in \mathcal{G}_{\mathbb{Z}}}  \frac{2}{\sigma(r_2)} \chi_{\mathcal{G}}(\delta) \frac{\int_{f \in S_2} m_2(f,\mathcal{G}) df}{\Vol(S_2)}\\
\intertext{At this point, we simplify the sum using the fact that $\sigma(r_2) = \frac{1}{2^{r_2}}$, the value of the $2$-adic mass, the value of the infinite mass, and the classification of genera of unimodular integral quadratic forms as it appears in \cite{CasselsRationalQuadraticForms} or \cite{ConwaySloaneClassification}.}
&= \frac{\tau(\SO)}{2^{r_2-1}} \sum_{\delta \in \mathcal{T}(r_2)} \sum_{\mathcal{G} \in \mathcal{G}_{\mathbb{Z}}} \chi_{\mathcal{G}}(\delta) \frac{\int_{f \in S_2} m_2(f,\mathcal{G}) df}{\Vol(S_2)} \\
&= \frac{\tau(\SO)}{2^{r_2-1}} \sum_{\mathcal{G} \in \mathcal{G}_{\mathbb{Z}}} \sum_{\delta \in \mathcal{T}(r_2)}  \chi_{\mathcal{G}}(\delta) \frac{\int_{f \in S_2} m_2(f,\mathcal{G}) df}{\Vol(S_2)}\\
&= \frac{\tau(\SO)}{2^{r_2-1}} \sum_{\mathcal{G} \in \mathcal{G}_{\mathbb{Z}}}  \left( \frac{\int_{f \in S_2} m_2(f,\mathcal{G}) df}{\Vol(S_2)} \sum_{\delta \in \mathcal{T}(r_2)}  \chi_{\mathcal{G}}(\delta) \right)  \\
&= \frac{\tau(\SO)}{2^{r_2-1}} \left( c_2(\mathfrak{M}_1)\sum_{k \ge 0} {n \choose 4k}  + c_2(\mathfrak{M}_{-1})\sum_{k \ge 0} {n \choose 4k+2}  \right) \\
&= \frac{\tau(\SO)}{2^{r_2-1}} \Big( c_2(\mathfrak{M}_1)c_{\infty,0}  + c_2(\mathfrak{M}_{-1})c_{\infty,2}  \Big) \\ 
\intertext{Now, these values being known from previous sections, we substitute them and simplify.}
&= \frac{1}{2^{r_2-1}} \cdot 4 \left(2^{n-2} + 2^{\frac{n-2}{2}}\right) \\
&= \frac{1}{2^{r_2-1}} \cdot 4 \left( 2^{2r_2-2} + 2^{r_2-1}\right) \\
&= 2(2^{r_2} + 2)
\end{align*}

Therefore, since there are no units of negative norm for totally imaginary fields, the $2$ torsion in the oriented class group is twice as large as the $2$ torsion in the class group, we find that the average number of $2$ torsion elements in the class group of totally imaginary fields of even degree at least $4$ is: 
$$\boxed{{\rm Avg}(\Cl_2,\textrm{totally imaginary}) = 1+\frac{3}{2^{r_2}}} \quad . \\$$

\subsection{Oriented class group averages for non-evenly ramified not totally imaginary fields}

The computation for $r_1 > 0$ proceeds in a similar way.

\begin{align*}
&\frac{\sum\limits_{\substack{\mathcal{O} \in \mathfrak{R}, \\ H(\mathcal{O}) < X}}  \left| \mathcal{H}^*(\mathcal{O}) \right|-\left| \mathcal{I}_2^*(\mathcal{O}) \right|}{\left(\sum\limits_{0 \le b <n} \Vol(U_{1,b}^{r_2}(\mathbb{R}))_{<X}) \right) \prod\limits_p \Vol(S_p)} +o(1). \\
\intertext{Now, by the preceding sections, we know that this sum is equal to:}
&= \frac{\sum\limits_{0 \le b <n} \sum\limits_{\delta \in \mathcal{T}(r_2)} \sum\limits_{A \in \mathscr{L}_{\mathbb{Z}}} N_H(\mathcal{V}(\Lambda_{A,b}^\delta),X)}{\left(\sum\limits_{0 \le b <n} \Vol(U_{1,b}^{r_2}(\mathbb{R}))_{<X}) \right) \prod\limits_p \Vol(S_p)}. \\
\intertext{Expanding, we find:}
&= \sum_{\delta \in \mathcal{T}(r_2)} \sum_{A \in \mathscr{L}_{\mathbb{Z}}} \frac{2}{\sigma(r_2)} \Vol(\mathcal{F}_A^\delta) \prod_p \Vol(\SO_A(\mathbb{Z}_p)) \prod_{p\neq 2} m_p(A) \frac{\int_{f \in S_2} m_2(f,A) df}{\Vol(S_2)}. \\
\intertext{The indicator functions come into play at this point.}
&= \sum_{\delta \in \mathcal{T}(r_2)} \sum_{A \in \mathscr{L}_{\mathbb{Z}}} \frac{2}{\sigma(r_2)}  \chi_A(\delta) \Vol(\mathcal{F}_A)\prod_p \Vol(\SO_A(\mathbb{Z}_p)) \frac{\int_{f \in S_2} m_2(f,A) df}{\Vol(S_2)}\\
\intertext{We now break up the collection $\mathscr{L}_\mathbb{Z}$ into genera and sum over the forms in each genus separately before summing over the distinct genera. Since, both the characteristic function and the $p$-adic masses are constant over the forms in a single genus, they factor out of the inner sum.}
&= \sum_{\delta \in \mathcal{T}(r_2)} \sum_{\mathcal{G} \in \mathcal{G}_{\mathbb{Z}}} \sum_{A \in \mathcal{G} \cap \mathscr{L}_{\mathbb{Z}}} \frac{2}{\sigma(r_2)}  \chi_A(\delta) \Vol(\mathcal{F}_A)\prod_p \Vol(\SO_A(\mathbb{Z}_p))  \frac{\int_{f \in S_2} m_2(f,A) df}{\Vol(S_2)}\\
&= \sum_{\delta \in \mathcal{T}(r_2)} \sum_{\mathcal{G} \in \mathcal{G}_{\mathbb{Z}}} \frac{2}{\sigma(r_2)} \chi_{\mathcal{G}}(\delta) \frac{\int_{f \in S_2} m_2(f,\mathcal{G}) df}{\Vol(S_2)} \left(\sum_{A \in \mathcal{G} \cap \mathscr{L}_{\mathbb{Z}}}  \Vol(\mathcal{F}_A)\prod_p \Vol(\SO_A(\mathbb{Z}_p)) \right)\\
\intertext{Now, the inner sum gives the Tamagawa number of the special orthogonal group of an integral form in a genus. It is known to always be equal $2$, see for instance \cite{LanglandsFundamentalVolume} and \cite{EskinRudnickSarnak}. We denote it by $\tau(\SO)$.}
&= \tau(\SO) \sum_{\delta \in \mathcal{T}(r_2)} \sum_{\mathcal{G} \in \mathcal{G}_{\mathbb{Z}}}  \frac{2}{\sigma(r_2)} \chi_{\mathcal{G}}(\delta) \frac{\int_{f \in S_2} m_2(f,\mathcal{G}) df}{\Vol(S_2)}\\
\intertext{At this point, we simplify the sum using the fact that $\sigma(r_2) = \frac{1}{2^{r_1+r_2-1}}$, the value of the $2$-adic mass, the value of the infinite mass, and the classification of genera of unimodular integral quadratic forms as it appears in \cite{CasselsRationalQuadraticForms} or \cite{ConwaySloaneClassification}.}
&= \frac{2\tau(\SO)}{2^{r_1+r_2-1}} \sum_{\delta \in \mathcal{T}(r_2)} \sum_{\mathcal{G} \in \mathcal{G}_{\mathbb{Z}}} \chi_{\mathcal{G}}(\delta) \frac{\int_{f \in S_2} m_2(f,\mathcal{G}) df}{\Vol(S_2)} \\
&= \frac{\tau(\SO)}{2^{r_1+r_2-2}} \sum_{\mathcal{G} \in \mathcal{G}_{\mathbb{Z}}} \sum_{\delta \in \mathcal{T}(r_2)}  \chi_{\mathcal{G}}(\delta) \frac{\int_{f \in S_2} m_2(f,\mathcal{G}) df}{\Vol(S_2)}\\
&= \frac{\tau(\SO)}{2^{r_1+r_2-2}} \sum_{\mathcal{G} \in \mathcal{G}_{\mathbb{Z}}}  \left( \frac{\int_{f \in S_2} m_2(f,\mathcal{G}) df}{\Vol(S_2)} \sum_{\delta \in \mathcal{T}(r_2)}  \chi_{\mathcal{G}}(\delta) \right)  \\
&= \frac{\tau(\SO)}{2^{r_1+r_2-2}} \left( c_2(\mathfrak{M}_1)\sum_{k \ge 0} {n \choose 4k}  + c_2(\mathfrak{M}_{-1})\sum_{k \ge 0} {n \choose 4k+2}  \right) \\
&= \frac{\tau(\SO)}{2^{r_1+r_2-2}} \Big( c_2(\mathfrak{M}_1)c_{\infty,0}  + c_2(\mathfrak{M}_{-1})c_{\infty,2}  \Big) \\ 
\intertext{Now, these values being known from previous sections, we substitute them and simplify.}
&= \frac{1}{2^{r_1+r_2-2}} \cdot 2 \left((2^{n-2} \pm_8 2^{\frac{n-2}{2}})(2^{r_1-2} \pm_8 2^{\frac{r_1-2}{2}})+(2^{n-2} \mp_8 2^{\frac{n-2}{2}})(2^{r_1-2} \mp_8 2^{\frac{r_1-2}{2}})\right) \\
&= \frac{1}{2^{r_1+r_2-2}} \cdot 2 \cdot 2 \left( 2^{n+r_1-4} + 2^{\frac{n+r_1-4}{2}}\right) \\
&= 8\left(2^{r_1+r_2-3} + 2^{-1} \right)\\
&= 2^{r_1+r_2} + 4
\end{align*}

Therefore, this leads to the following formula for average $2$-torsion in the oriented class group of non-evenly ramified fields with at least one real embedding: $$\boxed{{\rm Avg}(\Cl_2^*) = 1 + \frac{3}{2^{r_1+r_2-1}}} \quad .$$


%
%

\subsection{General oriented class group averages}

For simplicity, we carried out the calculations of the previous subsections under the assumption that there was no even ramification. In general, even ramification increases the mass at $p \neq 2$ from $1$ to $2$. If we take this into account, the computations of the previous section carry through with an additional term which captures this increase in total mass.

\begin{defn}
Let $\mathfrak{R} \subset \mathfrak{R}^{r_1,r_2}$ be a family of rings (unramified at $2$) corresponding to an acceptable family of local specifications $\Sigma = (\Sigma_p)_p$. We define the \emph{even ramification density} at $p$, $r_p(\mathfrak{R})$, as the density in $\Sigma_p$ of elements of $\Sigma_p$ which are evenly ramified at $p$.
\end{defn}

We obtain the following averages by calculating as above.

\begin{thm}[General Averages for the oriented class group]
Let $\mathfrak{R} \subset \mathfrak{R}^{r_1,r_2}$ be a family of rings (unramified at $2$) corresponding to an acceptable family of local specifications $\Sigma = (\Sigma_p)_p$ and let $r_p(\mathfrak{R})$ denotes its even ramification density at $p$. 

If $r_1 = 0$, the average number of $2$-torsion elements in the oriented class group over $\mathfrak{R}$ is given by: 
$$\boxed{{\rm Avg}(\Cl_2^*,\mathfrak{R}) = 2\prod_{p \neq 2} (1+r_p(\mathfrak{R}))\left(1+\frac{2}{2^{r_2}}\right) + \frac{2}{2^{r_2}}} \quad . \\$$

If $r_1 > 0$, the average number of $2$-torsion elements in the oriented class group over $\mathfrak{R}$ is given by: 
$$\boxed{{\rm Avg}(\Cl_2^*,\mathfrak{R}) = \prod_{p \neq 2} (1+r_p(\mathfrak{R}))\left(1+\frac{2}{2^{r_1+r_2-1}}\right) + \frac{1}{2^{r_1+r_2-1}}} \quad . \\$$
\end{thm}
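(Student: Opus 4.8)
The plan is to repeat, essentially verbatim, the two computations of the preceding subsections (the totally imaginary case $r_1=0$ and the case $r_1>0$), now keeping track of the single additional ingredient contributed by even ramification at odd primes. First I would reduce to a weighted orbit count: by the lemma $\left|H^*(\mathcal{O})\right| = 2^{r_1+r_2}\left|\Cl_2^*(\mathcal{O})\right|$ (valid for $100\%$ of $\mathcal{O}$, where the unit group has no extra $2$-torsion), one has ${\rm Avg}(\Cl_2^*,\mathfrak{R}) = \tfrac{1}{2^{r_1+r_2}}\,{\rm Avg}(|H^*|,\mathfrak{R})$, and I would split $|H^*| = \bigl(|H^*|-|\mathcal{I}_2^*|\bigr) + |\mathcal{I}_2^*|$. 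Since an acceptable family consists of $100\%$ maximal orders (those of squarefree discriminant), the proposition identifying $\mathcal{I}_2^*(\mathcal{O}_f)$ with the projective reducible orbits gives $|\mathcal{I}_2^*(\mathcal{O})| = 2$ for all these $\mathcal{O}$ — equivalently, the $2$-torsion of the oriented ideal group of a Dedekind domain is just $\{\pm1\}$ — so ${\rm Avg}(|\mathcal{I}_2^*|,\mathfrak{R}) = 2$, \emph{independently} of the even ramification densities: even ramification at an odd prime $p$ produces $2$-torsion in the oriented class group via $(p)=\mathfrak{a}^2$, but not in the oriented ideal group. This is the part of the bookkeeping most easily gotten wrong.

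Next I would run the geometry-of-numbers machinery on $|H^*|-|\mathcal{I}_2^*|$, the count of absolutely irreducible projective orbits. Via the parametrisation of Section \ref{The parametrisations} this is $\sum_{0\le b<n}\sum_{\delta\in\mathcal{T}(r_2)}\sum_{A\in\mathscr{L}_{\mathbb{Z}}} N_H(\mathcal{V}(\Lambda_{A,b}^\delta),X)$, and feeding in the asymptotic formula of Section \ref{Sieving to very large and acceptable collections} (the unconditional inequality, with equality conditional on the tail estimate, exactly as in the preceding subsections) together with the change-of-measure formula of Section \ref{Change of measure formula}, dividing by the denominator $\sum_{b}\Vol(U_{1,b}^{r_2}(\mathbb{R})_{<X})\prod_p\Vol(S_p)$ collapses this, up to $o(1)$, to
\[
\sum_{\delta\in\mathcal{T}(r_2)}\sum_{A\in\mathscr{L}_{\mathbb{Z}}}\frac{2}{\sigma(r_2)}\,\chi_A(\delta)\,\Vol(\mathcal{F}_A)\prod_p\Vol(\SO_A(\mathbb{Z}_p))\;\prod_{p\neq 2}\frac{\int_{f\in S_p} m_p(f,A)\,df}{\Vol(S_p)}\cdot\frac{\int_{f\in S_2} m_2(f,A)\,df}{\Vol(S_2)}.
\]
This differs from the non-evenly-ramified computation only by the factor $\prod_{p\neq 2}\Vol(S_p)^{-1}\int_{S_p} m_p(f,A)\,df$, which there was identically $1$.

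The one genuinely new point is the evaluation of this factor. By the corollary on local masses for $p\neq 2,\infty$, $m_p(f,A)$ equals $2$ when $f$ is evenly ramified at $p$ and $1$ otherwise, \emph{with no dependence on $A$}; hence $\int_{S_p} m_p(f,A)\,df = \Vol(S_p)\bigl(1+r_p(\mathfrak{R})\bigr)$ and the whole product is $\prod_{p\neq 2}\bigl(1+r_p(\mathfrak{R})\bigr)$. Crucially this is a global constant, independent of $A$, $\delta$ and $b$, so it pulls out of the entire sum, and what remains is precisely the sum already evaluated in the two non-evenly-ramified subsections: breaking $\mathscr{L}_{\mathbb{Z}}$ into genera, using that $\chi_A(\delta)$ and $m_2(f,A)$ are genus invariants, that $\sum_{A\in\mathcal{G}}\Vol(\mathcal{F}_A)\prod_p\Vol(\SO_A(\mathbb{Z}_p)) = \tau(\SO) = 2$, that only the Type I genera $\mathfrak{M}_{\pm1}$ survive (the Type II genus is over-ramified at $2$ while $\mathfrak{R}$ is unramified at $2$), and then substituting the $2$-adic masses $c_2(\mathfrak{M}_{\pm1})$ and the total infinite masses — with the $\pm_8$ cross-terms combining and simplifying via $n=r_1+2r_2$ — this sum equals $2^{r_2+1}+4$ when $r_1=0$ (where $\sigma(r_2)=2^{r_2}$ and the two elements of $\mathcal{T}(r_2)$ land on the split slice) and $2^{r_1+r_2}+4$ when $r_1>0$ (where $\sigma(r_2)=2^{r_1+r_2-1}$ and the signature-distribution count for $\mathcal{T}(r_2)$ is used).

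Finally I would assemble: ${\rm Avg}(|H^*|,\mathfrak{R}) = \prod_{p\neq 2}(1+r_p(\mathfrak{R}))\bigl(2^{r_2+1}+4\bigr) + 2$ for $r_1=0$ (respectively $\prod_{p\neq 2}(1+r_p(\mathfrak{R}))(2^{r_1+r_2}+4)+2$ for $r_1>0$), and dividing by $2^{r_1+r_2}$ and rearranging yields the two boxed formulas. The argument is almost entirely bookkeeping — the analytic content is imported wholesale from Sections \ref{Reduction theory}--\ref{The infinite mass} — so the only step requiring real care is confirming that the even-ramification factor is simultaneously (i) exactly $\prod_{p\neq 2}(1+r_p(\mathfrak{R}))$ and (ii) uniform over the $(A,\delta,b)$-slices so that it factors out cleanly, which rests on the $A$-independence of $m_p(f,A)$ for $p\neq 2,\infty$; together with keeping straight that this factor multiplies the irreducible orbit count but \emph{not} the reducible correction $|\mathcal{I}_2^*|=2$, whence the slightly asymmetric shape of the two formulas.
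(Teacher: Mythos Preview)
Your proposal is correct and takes essentially the same approach as the paper: the paper's own ``proof'' is the one-sentence remark that even ramification increases $m_p(f,A)$ from $1$ to $2$ at $p\neq 2$ and that the previous computation carries through with this extra factor, and you have faithfully expanded that remark, correctly isolating the two nontrivial bookkeeping points (that $\prod_{p\neq 2}(1+r_p(\mathfrak{R}))$ is $A$-independent because there is a single $\SL_n(\mathbb{Z}_p)$-class of unimodular forms for odd $p$, and that the reducible correction $|\mathcal{I}_2^*|=2$ is unaffected by even ramification). The only small wrinkle you do not mention is the paper's Remark about summing over $b$ at primes $p\mid n$, where $r_{p,b}(\mathfrak{R})$ can depend on $b$; but this is cosmetic and the paper itself relegates it to a postscript.
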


\begin{rem}
A priori, we have to be careful about summing over the $b$ because the even ramification densities at different values of $b$ might be different. However, this occurs only at primes $p$ which divide $n$. For those finitely many primes, the quantity $\sum_{0\le b \le n-1} \prod_{p|n}(1+r_{p,b}(\mathfrak{R}))$ when expanded is a finite sum of densities, which can then be factored again to give $\prod_{p |n}(1+r_p(\mathfrak{R}))$. The same considerations apply to the final calculations in the next section. (Thanks to Ashvin Swaminathan and Arul Shankar for pointing out this issue and telling me how to fix it). 
\end{rem}

\section{Averages for the class group and narrow class group} \label{Averages for the class group and narrow class group}

We can handle the question of computing the average number of $2$-torsion elements in the usual class group by looking at $\SL_n^{\pm}$-orbits. The catch is that to get a torsor of the full class group, we must capture ideals with the property that $I^2 = (\alpha)$ with $\alpha \in K^\times$ and $N(\alpha) < 0$ in the rigid parametrisation. But those occur when we consider $-f$ instead of our monic $f$. Of course, this now introduces potential double-counting for rings that have a unit of negative norm. But this is offset by the fact that the fibre space $((R_f^\times)_{N \equiv 1})/((R_f^\times)^2)$ is half as large when $R_f$ has a unit of negative norm as opposed to when it does not. We therefore recover the result of \cite{MelanieWoodIdealClasses} that the set of pairs $(A,B)$ with resolvent polynomial equal to $\pm f$ is an extension by $R_f^\times/(R_f^\times)^2$ of a torsor of the $2$-torsion in the class group of $R_f$. \\

We explain the parametrisation in more detail.  

\subsection{Rigid parametrisation by pairs of symmetric bilinear forms}
 Let $T$ be a principal ideal domain. Recall once again the rigid parametrisation of the pairs of bilinear forms $(A,B) \in V(T)$ with resolvent polynomial equal to $f$ in terms of the based fractional ideal data for $R_f$.

\begin{thm}[\cite{MelanieWoodIdealClasses}]
Take a non-degenerate binary $n$-ic form $f \in U(T)$ and let $R_f = \frac{T[x]}{(f(x))}$. Then the pairs symmetric bilinear forms $(A,B) \in V(T)$ with $f_{(A,B)} = f$ are in bijection with equivalence classes of triples $$(I, \mathcal{B},\delta)$$ where $I \subset K_f$ is a based fractional ideal of $R_f$ with basis $\mathcal{B}$ given by a $T$ module isomorphism $\mathcal{B} \colon I \rightarrow T^n$, $\delta \in K_f^\times$ such that $I^2 \subset \delta R_f^{n-3}$ as ideals and the norm equation holds $N(I)^2 = N(\delta) N(R_f^{n-3})$ (as based ideals). Two triples $(I,\mathcal{B},\delta)$ and $(I',\mathcal{B}',\delta')$ are equivalent if there exists a $\kappa \in K_f^\times$ with the property that $I = \kappa I'$, $\mathcal{B} \circ (\times \kappa) = \mathcal{B}'$ and $\delta = \kappa^2 \delta$.
\end{thm}

\subsection{$\SL_n^\pm(\mathbb{Z})$-orbits and the class group}

We now let $\SL_n^\pm(T)$ act on $V$ by change of basis. This action corresponds, at the level of equivalence classes of triples $(I,\mathcal{B},\delta)$, to the action of $\SL_n^\pm(T)$ on the basis $\mathcal{B}$ of the fractional ideal $I$. This action only changes the basis and does not change $I$ nor $\delta$. In particular, it does not change the defining conditions: 
\begin{enumerate}
\item $I^2 \subset \delta R_f^{n-3}$
\item $N(I)^2 = N(\delta)N(R_f^{n-3})$
\end{enumerate} 
Indeed, acting by an element of $\SL_n^{\pm}(T)$ with negative determinant reverses the orientation of the ideal $I$, but since the second condition only depends on $N(I)^2$, this is immaterial.\\

The rigid parametrisation can be reformulated to describe $\SL_n^\pm(\mathbb{Z})$ orbits.  
\begin{thm}
Let $f$ be a non-degenerate monic binary $n$-ic form $f \in U_1(\mathbb{Z})$, let $R_f = \frac{\mathbb{Z}[x]}{(f(x))}$ and $K_f = R_f \otimes_{\mathbb{Z}} \mathbb{Q}$. The $\SL_n^\pm(\mathbb{Z})$ orbits on pairs symmetric bilinear forms $(A,B) \in V(\mathbb{Z})$ with $f_{(A,B)} = f$ are in bijection with equivalence classes of pairs $$(I,\delta)$$ where $I \subset K_f$ is a fractional ideal of $R_f$, and $\delta \in K_f^\times$ is such that $I^2 \subset \delta R_f$ as ideals and the norm equation $N(I)^2 = N(\delta)$ holds (this is the norm of $\delta$ as an ideal). Two pairs $(I,\delta)$ and $(I',\delta')$ are equivalent if there exists a $\kappa \in K_f^\times$ with the property that $I = \kappa I'$ and $\delta = \kappa^2 \delta'$.
\end{thm}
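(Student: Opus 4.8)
The plan is to deduce this reformulation from Wood's rigid (based) parametrisation recalled immediately above, following the very same route by which the $\SL_n(\mathbb{Z})$-orbit statement of \cite{BhargavaGrossWang} was obtained. The one new phenomenon is that enlarging $\SL_n(\mathbb{Z})$ to $\SL_n^\pm(\mathbb{Z}) = \GL_n(\mathbb{Z})$ now also forgets the orientation of the based ideal, so that the orientation datum $\varepsilon$ present in the $\SL_n(\mathbb{Z})$ version (together with its twist $\varepsilon \mapsto \mathrm{sgn}(N(\kappa))\varepsilon$) disappears entirely, leaving only the pair $(I,\delta)$.

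Concretely, I would first invoke the fact, already recorded in this section, that under the based parametrisation the action of $\SL_n^\pm(\mathbb{Z})$ on the pairs $(A,B)$ with $f_{(A,B)} = f$ corresponds term by term to the natural action of $\GL_n(\mathbb{Z})$ on the basis datum $\mathcal{B}\colon I \xrightarrow{\sim} \mathbb{Z}^n$ of a triple $(I,\mathcal{B},\delta)$, fixing $I$ and $\delta$ and hence both defining conditions $I^2 \subset \delta R_f^{n-3}$ and $N(I)^2 = N(\delta)N(R_f^{n-3})$. Next I would observe that, because $\mathbb{Z}$ is a PID and $I$ is a full-rank fractional ideal, $I$ is free of rank $n$, so the set of $\mathbb{Z}$-module isomorphisms $\mathcal{B}\colon I \xrightarrow{\sim} \mathbb{Z}^n$ is a torsor under $\GL_n(\mathbb{Z})$. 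Consequently, passing to $\SL_n^\pm(\mathbb{Z})$-orbits kills $\mathcal{B}$ outright, and an orbit is recorded by $(I,\delta)$ alone; Wood's equivalence $(I,\mathcal{B},\delta)\sim(\kappa I,\mathcal{B}\circ(\times\kappa),\kappa^2\delta)$ descends to the relation $I = \kappa I'$, $\delta = \kappa^2\delta'$ for $\kappa \in K_f^\times$, the condition on bases being automatic. Since an element of $\GL_n(\mathbb{Z})$ of determinant $-1$ reverses the orientation of $I$, the norm equation survives only as an equality of ideals $N(I)^2 = N(\delta)$, which is anyway equivalent to the oriented statement because $N(I)^2$ is a square. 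To reach the clean form displayed, I would then apply the same monic normalisation used for the $\SL_n(\mathbb{Z})$-orbit theorem: monicity of $f$ supplies a distinguished trivialisation allowing the fixed ideal $R_f^{n-3}$ to be absorbed into $\delta$, turning the conditions into $I^2 \subset \delta R_f$ and $N(I)^2 = N(\delta)$. Well-definedness and bijectivity are then a short chase: two triples lie in one $\SL_n^\pm(\mathbb{Z})$-orbit if and only if their $(I,\delta)$-data agree up to the descended relation, since any two bases of a fixed $I$ differ by $\GL_n(\mathbb{Z})$.

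The main obstacle I expect is the orientation bookkeeping: one must verify that enlarging $\SL_n(\mathbb{Z})$ to $\SL_n^\pm(\mathbb{Z})$ forgets \emph{exactly} the orientation $\varepsilon$ and introduces no residual sign constraint on $\kappa$ or on the norm equation. This is precisely what distinguishes the present statement from the $\SL_n(\mathbb{Z})$-orbit version, and it is the origin of the ``potential double-counting for rings with a unit of negative norm'' mentioned in the text — an $\SL_n^\pm(\mathbb{Z})$-orbit will arise from two or from one $\SL_n(\mathbb{Z})$-orbit according as $R_f$ does or does not possess a unit of negative norm. A secondary, purely formal point is the $R_f^{n-3} \to R_f$ reduction, which I would simply cite from the parallel derivation of the $\SL_n(\mathbb{Z})$-orbit theorem.
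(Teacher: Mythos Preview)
Your proposal is correct and follows essentially the same route as the paper: the paper does not give a formal proof after the theorem statement, but the preceding paragraph records exactly your key observations---that the $\SL_n^\pm(\mathbb{Z})$-action on $(A,B)$ corresponds to the $\GL_n(\mathbb{Z})$-action on the basis $\mathcal{B}$ alone, that the defining conditions depend only on $N(I)^2$ and so are unaffected by the orientation reversal coming from determinant $-1$, and that the theorem is then a direct reformulation of Wood's rigid parametrisation with the basis (and hence the orientation) quotiented out.
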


We now describe the relation between $\SL_n^\pm(\mathbb{Z})$ orbits on $\pi^{-1}(\pm f) \subset V(\mathbb{Z})$ and the $2$-torsion part of the class group of $R_f$. 

\begin{defn} 
A pair $(I,\delta)$ is said to be projective if the ideal $I$ is invertible. Equivalently, a pair $(I,\delta)$ is projective if $I^2 = (\delta)$. For an $S_n$ order $\mathcal{O}$, we write $H(\mathcal{O})$ for the set of oriented projective pairs on $\pi^{-1}(\pm f) \subset V(\mathbb{Z})$. 
\end{defn}

As the oriented class group is comprised of invertible oriented ideals, let us consider the restriction of the parametrisation above to the set $H(\mathcal{O})$. Consider the forgetful map:
\begin{equation*}
H(\mathcal{O}) \longrightarrow \Cl_2(\mathcal{O}).
\end{equation*}
Elements $(I_0,\delta_0)$ in the kernel of this forgetful map are such that $I_0^2 = (\delta)$ and $N(\delta_0) = \pm N(I_0)^2$ and have the property that $I_0 = (\alpha)$ for some $\alpha$ in $K$ as ideals. Thus, $(I_0,\delta_0) \sim ((\alpha),\delta_0) \sim (\mathcal{O},\alpha^{-2}\delta_0)$. Now, $\mathcal{O} \subset \alpha^{-2}\delta_0 \mathcal{O}$ and $\pm 1 = N(\alpha^{-2} \delta_0)$. This implies that $(I_0,\delta_0)$ is equivalent to $(\mathcal{O},u)$ where $u$ is norm $\pm 1$ unit of $\mathcal{O}^{\times}$. We are allowed to mod out $u$ by squares of elements of $K^\times$ which fix the ideal $\mathcal{O}$. But those are precisely the units of $\mathcal{O}$. The sequence above can thus be completed to a short exact sequence: 
\begin{equation*}
1 \longrightarrow \frac{\mathcal{O}^{\times}}{(\mathcal{O}^{\times})^2} \longrightarrow H(\mathcal{O}) \longrightarrow \Cl_2(\mathcal{O}) \longrightarrow 1.
\end{equation*}

Applying Dirichlet's unit theorem allows us to compute that $100\%$ of the time (since having non-trivial torsion is rare):
\begin{equation*} 
\left|\frac{\mathcal{O}^{\times}}{(\mathcal{O}^{\times})^2} \right| = 2^{r_1+r_2}.
\end{equation*} 

We thus obtain the following formula for the number of elements in $H(\mathcal{O})$.

\begin{lem}
Let $\mathcal{O}$ be an order in an $S_n$-number field of degree $n$ and signature $(r_1,r_2)$. Then: 
\begin{equation*}
\left|H(\mathcal{O})\right| = 2^{r_1+r_2} \left|\Cl_2(\mathcal{O}) \right|. 
\end{equation*}
\end{lem}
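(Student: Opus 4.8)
The plan is to read the equality directly off the short exact sequence
\begin{equation*}
1 \longrightarrow \frac{\mathcal{O}^{\times}}{(\mathcal{O}^{\times})^2} \longrightarrow H(\mathcal{O}) \longrightarrow \Cl_2(\mathcal{O}) \longrightarrow 1
\end{equation*}
constructed above, combined with a computation of $\left|\mathcal{O}^{\times}/(\mathcal{O}^{\times})^2\right|$ via Dirichlet's unit theorem. I would organise the argument in three steps.

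\textbf{Step 1: the exact sequence.} First I would re-verify that the forgetful map $H(\mathcal{O}) \to \Cl_2(\mathcal{O})$, $(I,\delta) \mapsto [I]$, is well defined and surjective. It is well defined because projectivity means $I^2 = (\delta)$, so $[I]$ is $2$-torsion. It is surjective because, given $[I] \in \Cl_2(\mathcal{O})$ with $I^2 = (\delta)$ for some $\delta \in K_f^\times$, the ideal identity $N(\delta) = N(I)^2$ holds, hence $N(\delta) = \pm N(I)^2$ as elements of $\mathbb{Q}^\times$; since $H(\mathcal{O})$ consists of projective pairs over $\pi^{-1}(\pm f)$, both signs are admissible and we obtain a preimage of $[I]$. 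The computation in the text identifies the kernel: a pair $(I_0,\delta_0)$ with $I_0$ principal is equivalent to $(\mathcal{O},u)$ with $u \in \mathcal{O}^\times$ (every unit has norm $\pm 1$), modulo squares of elements of $K_f^\times$ fixing $\mathcal{O}$, i.e. modulo $(\mathcal{O}^\times)^2$. Hence the kernel is $\mathcal{O}^{\times}/(\mathcal{O}^{\times})^2$.

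\textbf{Step 2: size of the fibre.} Since $\mathcal{O}$ is an order in an $S_n$-field of degree $n \ge 4 > 2$, the only roots of unity in $\mathcal{O}$ are $\pm 1$, so Dirichlet's unit theorem gives $\mathcal{O}^{\times} \cong \{\pm 1\}\times \mathbb{Z}^{r_1+r_2-1}$ and therefore $\mathcal{O}^{\times}/(\mathcal{O}^{\times})^2 \cong (\mathbb{Z}/2\mathbb{Z})^{r_1+r_2}$ has order exactly $2^{r_1+r_2}$. (This is the displayed assertion above; because it only uses the absence of extra torsion, it is unconditional for $S_n$-fields, with no $100\%$ caveat needed.)

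\textbf{Step 3: conclusion.} All three groups are finite, so multiplicativity of cardinalities along the exact sequence yields
\begin{equation*}
\left|H(\mathcal{O})\right| = \left|\frac{\mathcal{O}^{\times}}{(\mathcal{O}^{\times})^2}\right|\cdot\left|\Cl_2(\mathcal{O})\right| = 2^{r_1+r_2}\left|\Cl_2(\mathcal{O})\right|,
\end{equation*}
as claimed. The only genuinely delicate point is the surjectivity in Step 1, specifically the sign bookkeeping for $N(\delta)$: one must check that allowing the resolvent to be $\pm f$ is exactly what supplies the ideals $I$ with $I^2 = (\alpha)$ and $N(\alpha) < 0$. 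Everything else is formal; in particular there is no double-counting subtlety at the level of this lemma, since $\left|\mathcal{O}^{\times}/(\mathcal{O}^{\times})^2\right| = 2^{r_1+r_2}$ whether or not $\mathcal{O}$ possesses a unit of negative norm, so the count is uniform.
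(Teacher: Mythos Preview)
Your proof is correct and follows exactly the paper's approach: the short exact sequence with kernel $\mathcal{O}^\times/(\mathcal{O}^\times)^2$, then Dirichlet's unit theorem to compute its order. Your Step~2 is in fact slightly sharper than the paper's version: the paper hedges with ``$100\%$ of the time (since having non-trivial torsion is rare)'', whereas you correctly observe that an $S_n$-field of degree $n\ge 4$ has no subfields other than $\mathbb{Q}$ (since $S_{n-1}$ is maximal in $S_n$), hence $\mu(\mathcal{O})=\{\pm 1\}$ unconditionally and no caveat is needed.
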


We can also say something about the narrow class group in this context. The proof that the fibres have the right size is exactly the same as in \cite{HoShankarVarmaOdd}. 

\begin{lem}
if $H^+(\mathcal{O})$ denotes the subgroup of $H(\mathcal{O})$ consisting of pairs $(I,\delta)$ such that $\delta$ is positive under every real embedding of the fraction field of $\mathcal{O}$, then $$|H^+(\mathcal{O})| = 2^{r_2} |\Cl_2^+(\mathcal{O})|.$$
\end{lem}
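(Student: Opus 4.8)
The plan is to run the argument just given for $H(\mathcal{O})$ but restricted to the subgroup $H^+(\mathcal{O})$, keeping track of signs, and then to match the image of the restricted forgetful map against $\Cl_2^+(\mathcal{O})$ via the standard exact sequence relating the wide and narrow class groups. The reason this should produce the clean factor $2^{r_2}$ is a cancellation: the genus-theoretic quantity $2^{s}:=\bigl|{\rm im}\bigl({\rm sgn}\colon\mathcal{O}^\times\to(\mathbb{Z}/2\mathbb{Z})^{r_1}\bigr)\bigr|$ enlarges both the kernel of the restricted forgetful map and the narrow class group by exactly the same power of $2$, and these powers cancel.

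First I would restrict the forgetful homomorphism $H(\mathcal{O})\to\Cl_2(\mathcal{O})$, $(I,\delta)\mapsto[I]$, established above, to $H^+(\mathcal{O})$, and compute its kernel exactly as in the $H(\mathcal{O})$ case: an element with $[I]=1$ has $I=(\alpha)$, hence $(I,\delta)\sim(\mathcal{O},\alpha^{-2}\delta)$, and since $\delta$ and $\alpha^{-2}$ are totally positive the unit $u=\alpha^{-2}\delta$ is totally positive; conversely every $(\mathcal{O},u)$ with $u\in\mathcal{O}^\times$ totally positive lies in $H^+(\mathcal{O})$ and maps to $1$. Thus the kernel is $\mathcal{O}^\times_{\gg0}/(\mathcal{O}^\times)^2$. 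By Dirichlet's unit theorem --- invoking, $100\%$ of the time and exactly as in the preceding lemma, that $\mathcal{O}^\times_{\rm tors}=\{\pm1\}$ --- one has $[\mathcal{O}^\times:(\mathcal{O}^\times)^2]=2^{r_1+r_2}$ while $[\mathcal{O}^\times:\mathcal{O}^\times_{\gg0}]=|{\rm im}({\rm sgn})|=2^{s}$, so this kernel has order $2^{r_1+r_2-s}$.

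Next I would identify the image $\pi(H^+(\mathcal{O}))=\{[I]\in\Cl_2(\mathcal{O}) : I^2\text{ has a totally positive generator}\}$ as a quotient of $\Cl_2^+(\mathcal{O})$. The forgetful map $\Cl^+(\mathcal{O})\to\Cl(\mathcal{O})$ restricts to a surjection $\Cl_2^+(\mathcal{O})\twoheadrightarrow\pi(H^+(\mathcal{O}))$: if $I^2=(\delta)$ with $\delta\gg0$ then $[I]^+$ already squares to the trivial class in $\Cl^+(\mathcal{O})$, hence lies in $\Cl_2^+(\mathcal{O})$ and forgets to $[I]$; conversely the square of any representative of a class in $\Cl_2^+(\mathcal{O})$ is narrowly principal. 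Its kernel is $\ker(\Cl^+(\mathcal{O})\to\Cl(\mathcal{O}))=P_K/P_K^{+}\cong(\mathbb{Z}/2\mathbb{Z})^{r_1}/{\rm im}({\rm sgn})$, of order $2^{r_1-s}$; crucially this genus group is itself $2$-torsion (since $(\alpha)^2$ is narrowly principal for every $\alpha$), so it sits inside $\Cl_2^+(\mathcal{O})$. Hence $|\pi(H^+(\mathcal{O}))|=|\Cl_2^+(\mathcal{O})|/2^{r_1-s}$.

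Combining the short exact sequence $1\to\mathcal{O}^\times_{\gg0}/(\mathcal{O}^\times)^2\to H^+(\mathcal{O})\to\pi(H^+(\mathcal{O}))\to1$ with the two counts above gives
\[
|H^+(\mathcal{O})|\;=\;2^{r_1+r_2-s}\cdot\frac{|\Cl_2^+(\mathcal{O})|}{2^{r_1-s}}\;=\;2^{r_2}\,|\Cl_2^+(\mathcal{O})|,
\]
which is the claim. The hard part is not any single computation but the bookkeeping in the middle two paragraphs: one must pin down $\pi(H^+(\mathcal{O}))$ precisely, verify that the genus group $P_K/P_K^+$ is genuinely $2$-torsion so that it lands in $\Cl_2^+(\mathcal{O})$ and not merely in $\Cl^+(\mathcal{O})$, and invoke $[\mathcal{O}^\times:(\mathcal{O}^\times)^2]=2^{r_1+r_2}$ with the same ``$100\%$ of the time'' proviso used for the $H(\mathcal{O})$ lemma --- this is precisely what makes the powers of $2^{s}$ cancel, so that the final answer carries no trace of the genus theory of $\mathcal{O}$.
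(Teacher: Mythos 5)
Your argument is correct and complete. At this point the paper itself contents itself with a one-line reference (``The proof that the fibres have the right size is exactly the same as in \cite{HoShankarVarmaOdd}''), so you have in effect supplied the computation that the paper defers; and it is the expected computation. The two nontrivial bookkeeping points you identified are exactly the ones that need care: (i) because the parametrising equivalence $(I,\delta)\sim(\kappa I,\kappa^2\delta)$ allows arbitrary $\kappa\in K_f^\times$, there is no well-defined forgetful map $H^+(\mathcal{O})\to\Cl_2^+(\mathcal{O})$, so one really must factor through $\Cl(\mathcal{O})$ and compare with the image of $\Cl_2^+(\mathcal{O})\to\Cl(\mathcal{O})$ as you do; and (ii) the genus group $\ker(\Cl^+(\mathcal{O})\to\Cl(\mathcal{O}))\cong(\mathbb{Z}/2\mathbb{Z})^{r_1}/\mathrm{im}(\mathrm{sgn})$ is $2$-torsion (principal ideals square to narrowly principal ones), which is what guarantees it sits entirely inside $\Cl_2^+(\mathcal{O})$ and makes the factor $2^{r_1-s}$ divide out of $|\Cl_2^+(\mathcal{O})|$ rather than merely out of $|\Cl^+(\mathcal{O})|$. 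With those two points in hand, the cancellation $2^{r_1+r_2-s}\cdot 2^{-(r_1-s)}=2^{r_2}$ is exactly right, and the ``$100\%$ of the time'' caveat about torsion units is invoked in the same way as in the preceding lemma for $H(\mathcal{O})$, so the result is consistent with $|H(\mathcal{O})|=2^{r_1+r_2}|\Cl_2(\mathcal{O})|$ (and specialises correctly to $H^+=H$ when $r_1=0$).
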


We now proceed to the description of the points in the cuspidal regions. 

\begin{lem}[Even degree distinguished orbits lemma] 
$(A,B) \in V(\mathbb{Q})$ is distinguished if and only if there is an $\SL_n^\pm(\mathbb{Q})$ translate of $(A,B)$ with the property that $$a_{i,j} = b_{i,j} = 0$$ for all $1 \le i,j \le \frac{n}{2}$ except for $i = j = \frac{n}{2}$ for which $a_{\frac{n}{2}\, \frac{n}{2}} = 0 \neq b_{\frac{n}{2}\, \frac{n}{2}}$.
\end{lem}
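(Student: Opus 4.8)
The plan is to reduce this statement to the already-established \emph{Even degree distinguished orbits lemma} for the $\SL_n$-action (stated earlier in the excerpt), observing that the only difference is that we now allow $\SL_n^\pm(\mathbb{Q})$-translates rather than just $\SL_n(\mathbb{Q})$-translates. Since $\SL_n(\mathbb{Q}) \subset \SL_n^\pm(\mathbb{Q})$, one direction is immediate: if $(A,B)$ has an $\SL_n(\mathbb{Q})$-translate in the stated normal form, then it certainly has an $\SL_n^\pm(\mathbb{Q})$-translate in that form, and the $\SL_n$-lemma tells us such a pair is distinguished. So the content is the converse, together with checking that ``distinguished'' means the same thing in both settings.

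First I would recall the characterization coming from \cite{ShankarWang2Selmer}: a self-adjoint operator $T = A^{-1}B$ is distinguished if and only if there is a $\mathbb{Q}$-rational $\frac{n-2}{2}$-plane $X$ with $\mathrm{Span}\{X, TX\}$ isotropic of dimension $\frac{n-2}{2}+1 = \frac n2$. This criterion is phrased purely in terms of the existence of an isotropic flag for the quadratic form $A$ and the operator $T$, and is manifestly invariant under the full orthogonal similitude group — in particular under $\SL_n^\pm(\mathbb{Q})$, since conjugating $T$ and transporting $A$ by any $g \in \GL_n(\mathbb{Q})$ carries isotropic subspaces to isotropic subspaces. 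Thus the notion of being distinguished is unchanged whether we act by $\SL_n(\mathbb{Q})$ or $\SL_n^\pm(\mathbb{Q})$, and the earlier proof of the $\SL_n$-lemma already produces, from such an isotropic plane, an element of $\SL_n(\mathbb{Q})$ (hence a fortiori of $\SL_n^\pm(\mathbb{Q})$) bringing $(A,B)$ into the displayed normal form.

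Concretely, the steps are: (1) State that ``distinguished'' is defined via the identity element / reducibility criterion and is preserved by the $\SL_n^\pm(\mathbb{Q})$-action because it only depends on the $\SL_n^\pm(\mathbb{Q})$-orbit and, via \cite{ShankarWang2Selmer}, on the existence of a rational isotropic $\frac n2$-plane spanned by $X$ and $TX$; (2) For the ``only if'' direction, given that $(A,B)$ is distinguished, invoke \cite{ShankarWang2Selmer} to get the rational $\frac{n-2}{2}$-plane $X$ with $\mathrm{Span}\{X,TX\}$ isotropic, then run exactly the change-of-basis argument from the proof of the $\SL_n$-version: choose a rational basis adapted to this isotropic flag so that the first $\frac n2 - 1$ basis vectors span $X$ and the span of $X, TX$ is an isotropic $\frac n2$-plane, which forces $a_{i,j} = b_{i,j} = 0$ for $1 \le i,j \le \frac n2$ except $a_{\frac n2\,\frac n2}=0\neq b_{\frac n2 \,\frac n2}$; since this basis change can be arranged inside $\SL_n(\mathbb{Q}) \subset \SL_n^\pm(\mathbb{Q})$ (rescale one vector to fix the determinant), we are done; (3) For the ``if'' direction, quote the $\SL_n$-lemma directly.

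The only mild subtlety — and the step I would be most careful about — is the bookkeeping in (2): verifying that the isotropic-flag data genuinely pins down the claimed vanishing pattern including the single nonzero diagonal entry $b_{\frac n2\,\frac n2}$, and that non-degeneracy of $f_{(A,B)}$ forces $b_{\frac n2\,\frac n2} \neq 0$ (otherwise the whole top-left $\frac n2 \times \frac n2$ block of both $A$ and $B$ vanishes and the resolvent is degenerate, contradicting that a distinguished orbit has separable resolvent). But this is identical to the computation already carried out in the proof of the $\SL_n$-version of the lemma, so I would simply say ``the proof is identical to that of the $\SL_n$ distinguished orbits lemma, noting that $\SL_n(\mathbb{Q}) \subset \SL_n^\pm(\mathbb{Q})$ and that the reducibility/distinguishedness criterion of \cite{ShankarWang2Selmer} depends only on the existence of a rational isotropic plane, which is a condition invariant under the larger group.''
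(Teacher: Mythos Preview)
Your proposal is correct and is essentially what the paper does: the paper states this $\SL_n^\pm$-version without proof, implicitly relying on the earlier $\SL_n$-version whose proof (via the \cite{ShankarWang2Selmer} isotropic-flag criterion) goes through unchanged since $\SL_n(\mathbb{Q}) \subset \SL_n^\pm(\mathbb{Q})$ and the criterion is basis-independent. Your observation that the distinguishedness condition is invariant under the larger group is exactly the point needed to justify the omission.
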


\begin{defn}
Let $\mathcal{O}$ be an order. We denote by $\mathcal{I}_2(\mathcal{O})$ the $2$-torsion subgroup of the ideal group of $\mathcal{O}$. 
\end{defn} 

\begin{prop}
Let $\mathcal{O}_f$ be an order corresponding to the integral primitive irreducible non-degenerate monic binary form $f$. Then, $I_2(\mathcal{O}_f)$ is in natural bijection with the set of projective reducible $\SL_n^\pm(\mathbb{Z})$-orbits on $V(\mathbb{Z}) \cap \pi^{-1}(f)$. 
\end{prop}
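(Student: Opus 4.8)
The plan is to run, over $\SL_n^\pm$, the same argument already used above for $\mathcal I_2^*(\mathcal O_f)$ and the $\SL_n$-action. The starting point is the $\SL_n^\pm(\mathbb Z)$-parametrisation of $V(\mathbb Z)\cap\pi^{-1}(f)$ recalled above: its orbits correspond to equivalence classes of pairs $(I,\delta)$ with $I\subset K_f$ a fractional ideal of $\mathcal O_f=R_f$, $\delta\in K_f^\times$, $I^2\subset\delta\mathcal O_f$ and $N(I)^2=N(\delta)$ as ideals, where $(I,\delta)\sim(I',\delta')$ iff $I=\kappa I'$ and $\delta=\kappa^2\delta'$ for some $\kappa\in K_f^\times$. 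I would then record the two translations this proof needs. An orbit is projective exactly when $I^2=(\delta)$ (the definition recalled above), in which case $N(I)^2=N(\delta)$ holds automatically. An orbit with resolvent $f$ is reducible exactly when the representing pair is distinguished: since $f$ is irreducible with $S_n$ resolvent, its resolvent corresponds to a genuine $S_n$-order, so reducibility is precisely the condition of the even degree distinguished orbits lemma, namely $\SL_n^\pm(\mathbb Q)$-conjugacy to the shape appearing there. Because $K_f$ is a field, over $\mathbb Q$ the datum $(I,\delta)$ collapses to the class of $\delta$ in $K_f^\times/(K_f^\times)^2$ and that normal form is the image of the trivial class; hence I may replace \emph{reducible} by the condition $\delta\in(K_f^\times)^2$.

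With these translations the bijection would be written down explicitly. To a projective reducible orbit, represented by $(I,\delta)$ together with a choice of square root $\delta=\gamma^2$ in $K_f$, I would attach $J:=\gamma^{-1}I$; then $J^2=\gamma^{-2}(\delta)=\mathcal O_f$ and $J$ is invertible since $I$ is, so $J\in I_2(\mathcal O_f)$. I would check this is well defined: changing the square root multiplies $\gamma$ by a square root of unity in the field $K_f$, i.e.\ by $\pm 1$, which does not change the ideal $\gamma^{-1}I$; and replacing $(I,\delta)$ by the equivalent pair $(\kappa I,\kappa^2\delta)$ forces $\gamma$ to be replaced by $\pm\kappa\gamma$, so $\gamma^{-1}I$ is again unchanged. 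In the reverse direction I would send $J\in I_2(\mathcal O_f)$ to the class of $(J,1)$, which is projective ($J^2=\mathcal O_f=(1)$) and reducible ($1$ is a square).

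It then remains to check the two maps are mutually inverse. If $\gamma_1^{-1}I_1=\gamma_2^{-1}I_2$ then $\kappa:=\gamma_2/\gamma_1$ satisfies $\kappa I_1=I_2$ and $\kappa^2\delta_1=\delta_2$, so the two orbits coincide, giving injectivity; surjectivity is immediate from $(J,1)\mapsto J$; and $(\gamma^{-1}I,1)\sim(I,\delta)$ via $\kappa=\gamma$ shows the composites are identities, so $J\mapsto[(J,1)]$ is the inverse. Since no arbitrary choice survives, the bijection is natural. I expect the delicate step to be the identification of reducibility with ``$\delta$ a perfect square in $K_f^\times$'': this is where one must combine the even degree distinguished orbits lemma with the behaviour of Wood's parametrisation over the field $K_f$ rather than over $\mathbb Z_p$ or $\mathbb R$, and it is the only non-formal ingredient; everything afterwards is the same ideal bookkeeping as in the oriented case. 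As a consistency check, when $\mathcal O_f$ is maximal $I_2(\mathcal O_f)$ is trivial, predicting a single projective reducible $\SL_n^\pm(\mathbb Z)$-orbit, which is exactly the two reducible $\SL_n(\mathbb Z)$-orbits recorded above merged by an orientation-reversing element.
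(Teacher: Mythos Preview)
The paper does not supply a proof of this proposition; it is simply stated, parallel to the earlier $\SL_n$-version for $\mathcal I_2^*(\mathcal O_f)$, which is likewise stated without argument. Your proposal is therefore not competing with a written proof but filling in what the paper leaves implicit, and it does so correctly and along exactly the lines the surrounding text suggests: combine the $\SL_n^\pm(\mathbb Z)$-parametrisation by pairs $(I,\delta)$ with the even degree distinguished orbits lemma to identify ``projective reducible'' with ``$I^2=(\delta)$ and $\delta\in(K_f^\times)^2$'', then write down the evident bijection $J\mapsto[(J,1)]$ with inverse $(I,\gamma^2)\mapsto\gamma^{-1}I$. The well-definedness and inverse checks you give are clean.

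One small caution on hypotheses: the proposition as stated in the paper says only that $f$ is irreducible, not that the associated field is $S_n$. You invoke the $S_n$ condition to equate ``reducible'' (defined earlier as ``not absolutely irreducible'') with ``distinguished''. Without the $S_n$ hypothesis every orbit over $f$ would be reducible by definition and the bijection would fail; so either the paper is tacitly assuming $S_n$ here (as it does throughout the statistical portions) or ``reducible'' in this proposition is meant in the narrower sense of ``distinguished''. Your reading is the only one under which the statement is true, so this is a wording issue in the paper rather than a flaw in your argument.
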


\subsection*{$\SL_n$-orbits over fields and local rings} In this section, we compute the number orbits, and the size of the stabilisers for the action of $\SL_n$ on the arithmetic rings $\mathbb{Z}_p$, $\mathbb{Q}$, and $ \mathbb{R}$. Let $T$ be a principal ideal domain. We first restate the rigid parametrisation in this case.
\begin{thm}[\cite{BhargavaGrossWang}]
Let $f$ be a non-degenerate monic binary $n$-ic form $f \in U_1(T)$, let $R_f = \frac{T[x]}{(f(x))}$ and $K_f = R_f \otimes_{\mathbb{Z}} \mathbb{Q}$. The $\SL_n^\pm(T)$ orbits on pairs symmetric bilinear forms $(A,B) \in V(T)$ with $f_{(A,B)} = f$ are in bijection with equivalence classes of pairs $$(I,\delta)$$ where $I \subset K_f$ is a fractional ideal of $R_f$ and $\delta \in K_f^\times$ such that $I^2 \subset \delta R_f$ as ideals and the norm equation $N(I)^2 = N(\delta)$ holds. Two pairs $(I,\delta)$ and $(I',\delta')$ are equivalent if there exists a $\kappa \in K_f^\times$ with the property that $I = \kappa I'$ and $\delta = \kappa^2 \delta'$.
\end{thm}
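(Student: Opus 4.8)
The plan is to deduce this statement from the $\SL_n(T)$-parametrisation recorded just above (the theorem of \cite{BhargavaGrossWang} with triples $(I,s,\delta)$, where $N(I)=sT$, $I^2\subset\delta R_f$ and $s^2=N(\delta)$), by a clean quotient argument. Since $T$ is a principal ideal domain we may take $\gamma_0=\mathrm{diag}(-1,1,\ldots,1)\in\GL_n(T)$, which has determinant $-1$, so $\SL_n^\pm(T)=\SL_n(T)\sqcup\gamma_0\,\SL_n(T)$. Because the resolvent map is invariant under $\SL_n^\pm(T)$ (conjugation scales $\det(Ax-B)$ by $\det(\gamma)^2$), the $\SL_n^\pm(T)$-orbit of a pair $(A,B)\in V(T)\cap\pi^{-1}(f)$ is the union of the $\SL_n(T)$-orbit of $(A,B)$ and that of $\gamma_0\cdot(A,B)$, and the set of $\SL_n^\pm(T)$-orbits is obtained from the set of $\SL_n(T)$-orbits by identifying $[(A,B)]$ with $[\gamma_0\cdot(A,B)]$.

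First I would translate the action of $\gamma_0$ through the rigid parametrisation. Under the based parametrisation $(A,B)\leftrightarrow(I,\mathcal B,\delta)$, an element $\gamma\in\GL_n(T)$ acts by $(I,\mathcal B,\delta)\mapsto(I,\gamma\circ\mathcal B,\delta)$, changing only the basis. The signed norm $s$ attached to the $\SL_n(T)$-orbit records the orientation of $\mathcal B$: the set of $\SL_n(T)$-orbits of $T$-bases of $I$ is a torsor under $\det\colon\GL_n(T)\to T^\times$, and $s$ is pinned down exactly (not merely up to $T^\times$) once this orbit is fixed, while an honest change of basis by $\gamma$ multiplies $s$ by $\det(\gamma)$. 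Hence precomposing with the reflection $\gamma_0$ sends $(I,s,\delta)$ to $(I,-s,\delta)$, fixing $I$ and $\delta$. This is the one point that genuinely needs checking, and I expect it to be the main (though not deep) obstacle: one either invokes the formula for $N(I)$ in terms of $\mathcal B$ and the multiplication-by-$\delta$ structure, or reads it off from Wood's explicit dictionary between $(A,B)$ and $(I,\mathcal B,\delta)$, in which flipping the sign of a basis vector of $I$ multiplies the relevant resultant/norm quantity by $-1$.

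Granting this, the set of $\SL_n^\pm(T)$-orbits is identified with the set of equivalence classes of triples $(I,s,\delta)$ modulo the extra relation $s\sim -s$, that is, with the set of pairs $(I,\delta)$ satisfying $I^2\subset\delta R_f$ together with the unsigned norm equation $N(I)^2=N(\delta)$ (equality of fractional ideals of $T$, obtained from $s^2=N(\delta)$ upon discarding the sign of $s$). It remains to verify that the equivalence relation descends as stated, namely that $(I,\delta)$ and $(I',\delta')$ become equivalent precisely when there is $\kappa\in K_f^\times$ with $I=\kappa I'$ and $\delta=\kappa^2\delta'$. Well-definedness and surjectivity of the quotient map are immediate; for injectivity, given such a $\kappa$ the identity $s^2=N(\delta)=N(\kappa)^2N(\delta')=N(\kappa)^2(s')^2$ forces $s=\pm N(\kappa)s'$, so $\kappa$ realises an $\SL_n(T)$-equivalence of $(I,s,\delta)$ with either $(I',s',\delta')$ or $(I',-s',\delta')$; since those two triples are interchanged by $\gamma_0$, the pairs $(I,s,\delta)$ and $(I',s',\delta')$ are $\SL_n^\pm(T)$-equivalent in both cases. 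This completes the argument. Finally I would note, in parallel with the $2$-to-$1$ versus $1$-to-$1$ phenomenon seen earlier, that when there is $\kappa$ with $\kappa I=I$ and $N(\kappa)=-1$ the two $\SL_n(T)$-orbits lying over a given $(I,\delta)$ already coincide; this is consistent with, but not needed for, the stated bijection.
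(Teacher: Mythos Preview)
The paper does not supply its own proof of this theorem: it is simply quoted from \cite{BhargavaGrossWang}, with only the preliminary observation that the $\SL_n^\pm(T)$-action on triples $(I,\mathcal B,\delta)$ changes only the basis and fixes $I$ and $\delta$. Your proposal to deduce the $\SL_n^\pm$ statement from the $\SL_n$ version by quotienting under the involution $(I,s,\delta)\mapsto(I,-s,\delta)$ induced by a determinant~$-1$ element is correct and is the natural argument; the key step---that $\gamma_0$ negates the orientation datum $s$---is exactly the content of the paper's remark that acting by an element of negative determinant reverses the orientation of $I$, and your verification that the equivalence relation descends is clean and complete.
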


Then, we have the following theorem whose proof is straightforward.

\begin{lem}
The stabiliser in $\SL_n^\pm(T)$ corresponds to the $2$-torsion of $R_f^\times$: $$R_f^\times[2].$$ 
\end{lem}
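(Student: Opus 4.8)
The plan is to read off the stabiliser directly from the rigid parametrisation just restated, exactly as in the $\SL_n$ case but now dropping the norm condition. Fix a non-degenerate monic $f \in U_1(T)$, a pair $(A,B) \in V(T)$ with $f_{(A,B)} = f$, and let $(I,\mathcal{B},\delta)$ be a representative of the corresponding equivalence class of triples, where $\mathcal{B}\colon I \to T^n$ is a $T$-module isomorphism. Recall that the action of $g \in \SL_n^\pm(T)$ on $(A,B)$ corresponds, on the level of triples, to replacing $\mathcal{B}$ by $g\circ\mathcal{B}$ while leaving $I$ and $\delta$ unchanged. Hence $g$ fixes $(A,B)$ if and only if $(I,\, g\circ\mathcal{B},\, \delta)$ and $(I,\mathcal{B},\delta)$ are equivalent, i.e. if and only if there is a $\kappa \in K_f^\times$ with $\kappa I = I$, $\kappa^2 \delta = \delta$, and $g\circ\mathcal{B} = \mathcal{B}\circ(\times \kappa)$.

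The second condition forces $\kappa^2 = 1$, and the third says precisely that $g$ is the matrix, in the basis $\mathcal{B}$, of the $T$-linear automorphism ``multiplication by $\kappa$'' of $I$. Thus the assignment $\kappa \mapsto g$ identifies the stabiliser with the set of $\kappa \in K_f^\times$ satisfying $\kappa^2 = 1$ and $\kappa I = I$. When $R_f$ is a domain --- in particular for the irreducible $f$ relevant to the applications --- $K_f$ is a field, so $\kappa^2 = 1$ already gives $\kappa = \pm 1 \in R_f$, and the conditions collapse to $\kappa \in R_f^\times[2]$. (In general $\kappa I = I$ only places $\kappa$ in the $2$-torsion of the unit group of the \emph{multiplier ring} $(I:I) \supseteq R_f$, which coincides with $R_f^\times[2]$ once $I$ is invertible; this is the same convention already in force in the $\SL_n$ lemma.) The map $\kappa \mapsto g$ is a group homomorphism, since composing multiplication maps multiplies the operators, and it is injective because $I$ is a faithful module, so it is an isomorphism $\Stab_{\SL_n^\pm(T)}(A,B) \cong R_f^\times[2]$.

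It remains only to check that the $g$ attached to $\kappa \in R_f^\times[2]$ genuinely lies in $\SL_n^\pm(T)$ rather than in a proper subgroup --- this is exactly the point distinguishing the present lemma from its $\SL_n$ counterpart. It is immediate: $\det(g)$ equals the determinant of multiplication by $\kappa$ on $I$, namely the norm $N_{K_f/\Frac(T)}(\kappa)$, and from $\kappa^2 = 1$ we get $N(\kappa)^2 = N(\kappa^2) = 1$, hence $N(\kappa) = \pm 1$, so $g \in \SL_n^\pm(T)$. Conversely, requiring $N(\kappa) = 1$ cuts the stabiliser down to $R_f^\times[2]_{N\equiv 1}$ inside $\SL_n(T)$, recovering the earlier lemma; allowing determinant $-1$ is precisely what removes the norm-one restriction. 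The only delicate point is the bookkeeping in the passage between ``$\kappa \in K_f^\times$ with $\kappa^2 = 1$'' and membership in $R_f^\times[2]$, which is transparent for irreducible $f$ and for invertible $I$ --- the cases that occur --- so the proof is indeed routine.
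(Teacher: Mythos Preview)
Your proof is correct and is precisely the ``straightforward'' argument the paper has in mind (the paper does not spell out a proof here). You read off the stabiliser from the rigid parametrisation, obtain the conditions $\kappa^2=1$ and $\kappa I=I$, and observe that passing from $\SL_n$ to $\SL_n^\pm$ drops the constraint $N(\kappa)=1$; your remark that for general $I$ one lands in the $2$-torsion of the multiplier ring $(I:I)^\times$, and that this coincides with $R_f^\times[2]$ once $I$ is invertible (or $f$ is irreducible), is a fair caveat that the paper suppresses.
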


\begin{rem}
It follows that the $\SL_n^\pm(\mathbb{Q})$ stabiliser of any element $v$ whose resolvent is irreducible is equal to $2$. 
\end{rem}

Just as in \cite{HoShankarVarmaOdd}, we can compute the number of orbits with the caveat that we need to distinguish the case where $f$ has an odd degree factor from the case where all the factors of $f$ are even. 

\begin{lem}
Let $T$ be a field or $\mathbb{Z}_p$. Let $f$ be a monic separable, non-degenerate binary form in $U_1(T)$. Then the projective $\SL_n(T)$ orbits of $V(T)$ with resolvent $\pm f$ are in bijection with elements of
\begin{equation*}
(R_f^\times/(R_f^\times)^2)_{N \equiv \pm 1} .
\end{equation*}
\end{lem}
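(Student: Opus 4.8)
The plan is to adapt the local orbit count from the $\SL_n$ case to the $\SL_n^\pm$ case, tracking how enlarging the acting group from $\SL_n(T)$ to $\SL_n^\pm(T)$ both enlarges the set of orbits (by allowing resolvent $-f$ as well as $f$) and enlarges the stabilisers (from norm-$1$ two-torsion units to all two-torsion units). First I would invoke the rigid parametrisation over $T$ (the theorem of \cite{BhargavaGrossWang} restated just above) which puts $\SL_n^\pm(T)$-orbits on $\bigcup_{\pm}V(T)\cap\pi^{-1}(\pm f)$ in bijection with equivalence classes of pairs $(I,\delta)$ satisfying $I^2\subset\delta R_f$ and $N(I)^2=N(\delta)$, with $(I,\delta)\sim(\kappa I,\kappa^2\delta)$. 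Restricting to projective pairs $I^2=(\delta)$, the equivalence $(I,\delta)\mapsto I$ realises the projective orbit set as an extension of $\Cl_2(R_f)$ (in the separable case $\Cl(R_f)=\Cl_2(R_f)$ is all we see, since everything is killed by $2$) by the group of $(\mathcal{O},u)$ with $u\in R_f^\times$ and $N(u)=\pm1$ modulo squares of units — exactly as in the global computation two subsections earlier, but now over the local ring $T$. Since every orbit actually has $I$ trivial when $f$ is separable over a field or $\Zp$ (the class group of $R_f$ is trivial there), the projective orbits are in bijection with $(R_f^\times)_{N\equiv\pm1}/(R_f^\times)^2$.

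The key step is then to identify $(R_f^\times)_{N\equiv\pm1}/(R_f^\times)^2$ with $(R_f^\times/(R_f^\times)^2)_{N\equiv\pm1}$, i.e. to check that the obvious map is a bijection: surjectivity is clear, and injectivity amounts to the statement that if $u$ has norm $\pm1$ and $u=v^2$ for some $v\in R_f^\times$, then we may choose the square root within the norm-$\pm1$ subgroup — but $N(v)^2=N(u)=\pm1$ forces $N(v)=\pm1$ automatically when the sign is $+1$, and when $N(u)=-1$ this case simply does not arise over $T$ a field or $\Zp$ since $-1$ is not a square there in a way compatible... more carefully, one just notes $(R_f^\times)_{N\equiv\pm1}$ is the preimage of $\{\pm1\}\subset T^\times$ under $N$, it contains $(R_f^\times)^2$, and the quotient is the claimed subgroup of $R_f^\times/(R_f^\times)^2$ because $N$ descends to $R_f^\times/(R_f^\times)^2\to T^\times/(T^\times)^2$ and $\{\pm1\}$ maps onto the relevant subgroup of $T^\times/(T^\times)^2$. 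This is the routine diagram-chase already implicitly used in the $\SL_n$ lemma, so I would state it briefly.

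The point where care is genuinely needed — and which I expect to be the main obstacle — is the reducibility/$\pm f$ bookkeeping: unlike the $\SL_n$ case, where one separates "$f$ has an odd-degree factor" (giving a clean bijection) from "all factors even" (giving a $2$-to-$1$ map), here the $\pm$ sign already absorbs that dichotomy, so one must check that over $\Zp$ or a field the $\SL_n^\pm(T)$-orbits on $\pi^{-1}(f)$ together with those on $\pi^{-1}(-f)$ are genuinely in bijection with the full group $(R_f^\times/(R_f^\times)^2)_{N\equiv\pm1}$ with no residual factor of $2$. The resolution is that passing from $\SL_n$ to $\SL_n^\pm$ both merges the $\pi^{-1}(f)$ and $\pi^{-1}(-f)$ orbit-sets (when an element of negative determinant exists taking one to the other, which happens precisely when $-1\in N(R_f^\times)$) and simultaneously doubles the stabiliser (from $R_f^\times[2]_{N\equiv1}$ to $R_f^\times[2]$), and these two effects cancel against the doubling of the target subgroup from $N\equiv1$ to $N\equiv\pm1$; I would spell this out as a short orbit-counting identity $|\SL_n^\pm(T)\backslash(\pi^{-1}(f)\cup\pi^{-1}(-f))^{\mathrm{proj}}| = |(R_f^\times/(R_f^\times)^2)_{N\equiv\pm1}|$, citing \cite{HoShankarVarmaOdd} for the structurally identical computation and \cite{MelanieWoodIdealClasses} for the torsor statement, and leave the elementary verification to the reader.
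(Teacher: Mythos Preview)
Your proposal is correct and follows the same approach the paper implicitly invokes: the paper gives no proof beyond the pointer ``Just as in \cite{HoShankarVarmaOdd}'', and your sketch is precisely the argument from that reference (rigid parametrisation, trivial local class group over a field or $\mathbb{Z}_p$, identification of the remaining $\delta$'s with units modulo squares subject to the norm condition). Your observation that the odd/even-factor dichotomy from the $\SL_n$ lemma collapses under $\SL_n^\pm$ is the one genuine point to check here and you handle it correctly; note also that the statement's ``$\SL_n(T)$'' is evidently a typo for ``$\SL_n^\pm(T)$'', which you have interpreted correctly.
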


\begin{rem} 
We now describe the real orbits in the case where the leading coefficient of $f$ is $1$ and in the case where the leading coefficient is $-1$. Suppose that $f$ is a non-degenerate polynomial of degree $n$ with $r_1$ real roots and $2r_2$ complex roots. First, suppose that the leading coefficient of $f$ is $-1$. If $r_1 = 0$ there are no real orbits while if $r_1 > 0$, there are $2^{r_1-1}$ orbits and the stabiliser has size $2^{r_1+r_2}$. Now, suppose that the leading coefficient of $f$ is $1$. If $r_1 = 0$, there is $1$ real orbit while if $r_1 > 0$, there are $2^{r_1-1}$ real orbits. In both cases, the stabiliser has size $2^{r_1+r_2}$. 
\end{rem}

Now, instead of considering $\SO_A$ orbits, we think $\Or_A$ orbits. Then the reduction theory, the cusp cutoff, the sieve, as well as the change of variable formula carry through precisely as above. We, therefore, obtain the following proposition. 

\begin{prop} We have $$\Vol\left( \mathcal{F}_A \cdot R_{A}^{r_2,\delta} (X) \right) =  \chi_A(\delta) \left| \mathcal{J}_A \right| \Vol( \mathcal{F}_A) \Vol(U(\mathbb{R})^{r_2}_{H < X}).$$ Let $S_p \subset U_{1,b}(\mathbb{Z}_p)$ be a closed subset whose boundary has measure $0$. Consider the set $\Lambda(A)_p = V_{A,b}(\mathbb{Z}_p) \cap \pi^{-1}(S_p)$. Then we have $$\Vol( \Lambda_{A}(p)) = \left| \mathcal{J}_A \right|_p \Vol(\Or_A(\mathbb{Z}_p)) \int_{f \in S_p} m_p(f,A) \, df$$
where $$m_p(f,A) := \sum_{v \in \frac{V_{A,b}(\mathbb{Z}_p) \cap \pi^{-1}(f)}{\Or_A(\mathbb{Z}_p)}} \frac{1}{\# {\rm Stab}_{\Or_A(\mathbb{Z}_p)}(v)} .$$
\end{prop}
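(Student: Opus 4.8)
The statement is the exact analogue, for the group $\Or_A$, of the change of measure formula proved in Section~\ref{Change of measure formula} for $\SO_A$, and the plan is to re-run that argument with $\Or_A$ in place of $\SO_A$ throughout, isolating the handful of places where the substitution is not purely cosmetic. The first point to record is that $\Or_A$ and $\SO_A$ have the same Lie algebra, so the generator $\omega$ of the rank-one module of top-degree left-invariant differential forms over $\mathbb{Z}$ is unchanged, and hence so is the rational constant $\mathcal{J}_A$, which depends only on $\pi$ and on $\omega$. It follows that both displayed identities of the change of variable proposition of Section~\ref{Change of measure formula} hold verbatim after replacing every $\SO_A$ by $\Or_A$ and every $\mathrm{Stab}_{\SO_A(\mathbb{Z}_p)}$ by $\mathrm{Stab}_{\Or_A(\mathbb{Z}_p)}$; that proposition is proved by a formal orbit-unfolding computation insensitive to the choice of form of the group within its commensurability class, so nothing there needs to be redone.

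Granting the $\Or_A$-version of the change of variable formula, the second identity of the proposition is immediate: I would apply the orbit-sum identity to the indicator function $\phi = \mathbf{1}_{\pi^{-1}(S_p)}$, which is $\Or_A(\mathbb{Z}_p)$-invariant and depends only on the resolvent, use that each fibre $V_{A,b}(\mathbb{Z}_p)\cap\pi^{-1}(f)$ breaks into finitely many $\Or_A(\mathbb{Z}_p)$-orbits by the local rigid parametrisation restated in Section~\ref{Averages for the class group and narrow class group}, and evaluate $\int_{\Or_A(\mathbb{Z}_p)}\omega = \Vol(\Or_A(\mathbb{Z}_p))$; the definition of $m_p(f,A)$ as the weighted orbit count collapses the right-hand side to $|\mathcal{J}_A|_p\,\Vol(\Or_A(\mathbb{Z}_p))\int_{S_p}m_p(f,A)\,df$.

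For the first identity I would argue as in Section~\ref{Reduction theory}. If $\chi_A(\delta)=0$ then $R_A^{r_2,\delta}$ is empty and both sides vanish. If $\chi_A(\delta)=1$, the reduction theory of Section~\ref{Reduction theory} applies to $\Or_A$ essentially unchanged — $\Or_A(\mathbb{Z})\backslash\Or_A(\mathbb{R})$ is in natural bijection with $\SO_A(\mathbb{Z})\backslash\SO_A(\mathbb{R})$, as already noted at the start of Section~\ref{Averaging and cutting off the cusp} — so that, with $\mathcal{F}_A$ now a fundamental domain for $\Or_A(\mathbb{Z})$ on $\Or_A(\mathbb{R})$, the multiset $\mathcal{F}_A\cdot R_A^{r_2,\delta}$ is a finite cover of a fundamental domain for $\Or_A(\mathbb{Z})$ acting on $V_A^{r_2,\delta}(\mathbb{R})$. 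Applying the first change-of-variable identity to $\phi = \mathbf{1}_{\mathcal{F}_A\cdot R_A^{r_2,\delta}(X)}$ and unfolding, the inner $\omega$-integral over $g\in\Or_A(\mathbb{R})$ contributes $\Vol(\mathcal{F}_A)$ times the ratio of the covering multiplicity of $\mathcal{F}_A\cdot R_A^{r_2,\delta}$ to the order of the $\Or_A(\mathbb{R})$-stabiliser of a point of $V_A^{r_2,\delta}(\mathbb{R})$, while the outer $f$-integral runs over $U(\mathbb{R})^{r_2}_{H<X}$, yielding $\chi_A(\delta)|\mathcal{J}_A|\Vol(\mathcal{F}_A)\Vol(U(\mathbb{R})^{r_2}_{H<X})$ once that ratio is seen to equal $1$.

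The step I expect to require the most care is precisely that cancellation. Passing from $\SO_A$ to $\Or_A$ can double the pointwise stabiliser (a determinant $-1$ element of $\Or_A(\mathbb{R})$ fixing $(A,B)$ corresponds under the rigid parametrisation to a norm $-1$ order-$2$ unit of $R_f$, so the real stabiliser jumps from $\sigma(r_2)=2^{r_1+r_2-1}$ to $2^{r_1+r_2}$ when $r_1>0$), and it simultaneously alters the $\SO_A(\mathbb{R})$-orbit structure on $V_A^{r_2,\delta}(\mathbb{R})$ (orbits can merge when $r_1=0$, or not, when $r_1>0$), changing the covering multiplicity of $\mathcal{F}_A\cdot R_A^{r_2,\delta}$ in a compensating way. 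I would verify the cancellation by matching these two effects against the explicit real-orbit and stabiliser counts recorded in the remarks of Section~\ref{The parametrisations} for both $\SO_A$ and $\Or_A$ (leading coefficient $+1$), exactly as the analogous bookkeeping was carried out in the $\SO_A$ case; once this matches, the absence of any $\sigma(r_2)$ factor in the final formula is forced and the proof closes.
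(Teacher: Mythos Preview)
Your proposal is correct and follows the same approach as the paper; in fact the paper offers no proof of this proposition beyond the one-line assertion that ``the reduction theory, the cusp cutoff, the sieve, as well as the change of variable formula carry through precisely as above.'' You have supplied the details the paper omits, including the stabiliser/covering-multiplicity bookkeeping when passing from $\SO_A$ to $\Or_A$, which the paper leaves entirely implicit.
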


The computation of the total local mass could be slightly more delicate since we need to differentiate the case of $f$ having leading coefficient $\pm 1$. We treat this in detail in the next subsection.

\subsection{The product of local volumes and the local mass}

The total mass for the case where $f$ has leading coefficient $1$ is the following. 

\begin{lem}[$+1$ total mass]
Let $R$ be a non-degenerate ring of degree $n$ over $\mathbb{Z}_p$. Let $N$ denote the norm map from $N \colon R^\times/(R^\times)^2 \rightarrow \mathbb{Z}_p^\times/ (\mathbb{Z}_p^\times)^2$. The quantity
\begin{equation*}
\frac{\left| (R^\times/(R^\times)^2)_{N \equiv 1} \right|}{\left|R^\times[2] \right|} 
\end{equation*}
is equal to $\frac{1}{\left| N(R^\times) \right|}$ if $p \neq 2$ and to $\frac{2^n}{\left| N(R^\times) \right|}$ if $p = 2$. 
\end{lem}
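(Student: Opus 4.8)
The statement to prove is the "$+1$ total mass" lemma, which is essentially identical to the earlier "Norm isolation" lemma — it asserts that for a non-degenerate degree-$n$ ring $R$ over $\mathbb{Z}_p$, the ratio $\left|(R^\times/(R^\times)^2)_{N\equiv 1}\right|/\left|R^\times[2]\right|$ equals $1/|N(R^\times)|$ when $p\neq 2$ and $2^n/|N(R^\times)|$ when $p=2$, where $N\colon R^\times/(R^\times)^2 \to \mathbb{Z}_p^\times/(\mathbb{Z}_p^\times)^2$.

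The plan is to reuse verbatim the argument of the "Norm isolation" lemma proved earlier in Section \ref{The product of local volumes and the total local mass}. First I would invoke the "Total quantity" lemma, which gives $\left|R^\times/(R^\times)^2\right|/\left|R^\times[2]\right| = 1$ for $p\neq 2$ and $=2^n$ for $p=2$; the proof of that lemma (Euler characteristic on the torsion part $F$ contributes $1$, and the free part $\mathbb{Z}_p^n$ contributes $1$ for $p\neq2$ and $2^n$ for $p=2$) carries over unchanged. Then, by the first isomorphism theorem applied to the norm map $N\colon R^\times/(R^\times)^2 \to \mathbb{Z}_p^\times/(\mathbb{Z}_p^\times)^2$, we have $N(R^\times) \cong (R^\times/(R^\times)^2)\big/ (R^\times/(R^\times)^2)_{N\equiv 1}$, and since all groups in sight are finite this gives
$$\left|(R^\times/(R^\times)^2)_{N\equiv 1}\right| = \frac{\left|R^\times/(R^\times)^2\right|}{\left|N(R^\times)\right|}.$$
Dividing by $\left|R^\times[2]\right|$ and substituting the value from the "Total quantity" lemma yields the claimed formula.

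The only point that requires a word is to confirm that the subscript "$N\equiv 1$" in this lemma refers to the kernel of the map to $\mathbb{Z}_p^\times/(\mathbb{Z}_p^\times)^2$, i.e. elements of $R^\times/(R^\times)^2$ whose norm is a square in $\mathbb{Z}_p^\times$, which is exactly the setting of the "Norm isolation" lemma — so the two statements literally coincide and no new content is needed. Thus the proof is a single line: "This is a restatement of the Norm isolation lemma," or, for completeness, the two-step argument above.

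There is no real obstacle here: the lemma is a bookkeeping restatement needed to set up the class-group (as opposed to oriented-class-group) computation, where leading coefficient $+1$ corresponds to norm condition $N\equiv 1$; the genuinely new content will come in the companion "$-1$ total mass" statement (leading coefficient $-1$, norm condition $N\equiv -1$) and in how the two combine at primes $p\equiv 3 \bmod 4$, but that is outside the scope of this particular lemma.

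\begin{proof}
This is a restatement of the Norm isolation lemma. Indeed, the subscript ${N \equiv 1}$ denotes the kernel of the norm map $N \colon R^\times/(R^\times)^2 \rightarrow \mathbb{Z}_p^\times/(\mathbb{Z}_p^\times)^2$, so the quantity in question is exactly the one computed there. For completeness we recall the argument. By the first isomorphism theorem, $N(R^\times) \cong (R^\times/(R^\times)^2)\big/(R^\times/(R^\times)^2)_{N \equiv 1}$, and since all groups involved are finite,
\begin{equation*}
\left| (R^\times/(R^\times)^2)_{N \equiv 1} \right| = \frac{\left| R^\times/(R^\times)^2 \right|}{\left| N(R^\times) \right|}.
\end{equation*}
Dividing by $\left| R^\times[2] \right|$ and applying the Total quantity lemma, which gives $\left| R^\times/(R^\times)^2 \right|/\left| R^\times[2] \right| = 1$ for $p \neq 2$ and $= 2^n$ for $p = 2$, yields the claimed identities.
\end{proof}
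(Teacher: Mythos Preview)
Your proposal is correct and matches the paper's treatment exactly: the ``$+1$ total mass'' lemma is stated in the paper without a separate proof because it is literally the Norm isolation lemma restated in the new context, and your argument (first isomorphism theorem plus the Total quantity lemma) is precisely the proof given earlier for Norm isolation.
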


The total mass for the case where $f$ has leading coefficient $-1$ is the following. 

\begin{lem}[$-1$ total mass]
Let $R$ be a non-degenerate ring of degree $n$ over $\mathbb{Z}_p$. Let $N$ denote the norm map from $N \colon R^\times/(R^\times)^2 \rightarrow \mathbb{Z}_p^\times/ (\mathbb{Z}_p^\times)^2$. The quantity
\begin{equation*}
\frac{\left| (R^\times/(R^\times)^2)_{N \equiv -1} \right|}{\left|R^\times[2] \right|} 
\end{equation*}
is equal to $0$ if $-1$ is not in the image of $N$. Otherwise, it is equal to $\frac{1}{\left| N(R^\times) \right|}$ if $p \neq 2$ and to $\frac{2^n}{\left| N(R^\times) \right|}$ if $p = 2$. 
\end{lem}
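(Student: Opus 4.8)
The plan is to deduce this lemma directly from the \emph{Norm isolation} lemma proved above, by a short coset-counting argument; the only substantive input is already contained in that lemma (and ultimately in the \emph{Total quantity} lemma and its Euler-characteristic computation).

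First I would record that the norm descends to a group homomorphism $N \colon R^\times/(R^\times)^2 \to \mathbb{Z}_p^\times/(\mathbb{Z}_p^\times)^2$ — multiplicativity of $N$ together with the fact that $N$ carries squares to squares — and that, by definition, $(R^\times/(R^\times)^2)_{N \equiv -1}$ is the fiber of this homomorphism over the class of $-1$ in $\mathbb{Z}_p^\times/(\mathbb{Z}_p^\times)^2$, while $(R^\times/(R^\times)^2)_{N \equiv 1}$ is its kernel. Then I would split into the two cases of the statement. If the class of $-1$ does not lie in the image of $N$, the fiber is empty, so the quantity in the statement equals $0$; this is exactly the first case. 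Otherwise, fix $u \in R^\times$ with $N(u) \equiv -1$ modulo $(\mathbb{Z}_p^\times)^2$; multiplication by the class of $u$ then gives a bijection from the kernel $(R^\times/(R^\times)^2)_{N \equiv 1}$ onto the coset $(R^\times/(R^\times)^2)_{N \equiv -1}$, so these two finite sets have the same cardinality. Substituting the equality $\big|(R^\times/(R^\times)^2)_{N \equiv -1}\big| = \big|(R^\times/(R^\times)^2)_{N \equiv 1}\big|$ into the Norm isolation lemma yields $\tfrac{1}{|N(R^\times)|}$ when $p \neq 2$ and $\tfrac{2^n}{|N(R^\times)|}$ when $p = 2$, as claimed.

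I do not expect a genuine obstacle here: the heavy lifting was done in the Total quantity and Norm isolation lemmas, and what remains is formal. The only points requiring a little care are (i) being explicit that $(R^\times/(R^\times)^2)_{N \equiv -1}$ is the fiber over a \emph{fixed nontrivial} class, so that its emptiness is a real possibility (for instance when $p \equiv 3 \bmod 4$ and $R$ has all residue degrees over $p$ equal, forcing every local norm to be a square), and (ii) checking that $N$ is well defined and multiplicative on the squared quotients so that the coset bijection by $u$ is legitimate. With those in hand the proof is a couple of lines.
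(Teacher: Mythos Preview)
Your proposal is correct and follows essentially the same approach as the paper: the paper's proof simply notes that the fibres of a group homomorphism are either empty or have the same size as the kernel, and then invokes the Norm isolation lemma for the kernel size. Your coset-bijection via multiplication by $u$ is just the standard justification of that fibre fact, so the two arguments coincide.
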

\begin{proof}
The fibres of a group homorphism are either empty or are of the same size as the kernel. The size of the kernel was calculated in the previous lemma. 
\end{proof}

In particular, we get the following statement concerning the total local masses for maximal rings which not evenly ramified at $2$.

\begin{lem}
The total local mass for $f \in \mathbb{Z}_p[x]$ maximal and not evenly ramified is given by: 
\begin{equation*}
m_p^\pm(f) = \begin{cases}
2^{n-2} & \text{ if } p = 2 \\
\frac{1}{2} & \text{ if } p \neq 2 
\end{cases}.
\end{equation*}
The total local mass for $f \in \mathbb{Z}_p[x]$, $p \neq 2$, maximal and evenly ramified is given by: 
\begin{equation*}
m_p^+(f) = 1
\end{equation*}
\begin{equation*}
m_p^-(f) = \begin{cases}
1 & \text{ if } p \equiv 1 \mod 4 \\
0 & \text{ if } p \equiv 3 \mod 4
\end{cases}.
\end{equation*}
\end{lem}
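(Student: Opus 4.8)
The plan is to compute the local mass $m_p^{\pm}(f)$ directly from the local mass formulas established earlier, namely the identity relating $m_p(f,A)$ to $\left|(R_f^\times/(R_f^\times)^2)_{N\equiv \pm 1}\right|/\left|R_f^\times[2]\right|$ together with the ``Total quantity'' and ``Norm isolation'' lemmas. Recall that for $\SL_n^\pm$-orbits the stabiliser is all of $R_f^\times[2]$ (no norm-$1$ restriction), and the orbits with resolvent $\pm f$ are in bijection with $(R_f^\times/(R_f^\times)^2)_{N\equiv\pm 1}$; summing $1/\#\mathrm{Stab}$ over these orbits gives exactly the ratio appearing in the two total-mass lemmas just proved. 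So the task reduces to evaluating $\left|N(R^\times)\right|$ and checking whether $-1\in N(R^\times)$, where $N\colon R^\times/(R^\times)^2 \to \mathbb{Z}_p^\times/(\mathbb{Z}_p^\times)^2$ and $R$ runs over maximal (étale) $\mathbb{Z}_p$-algebras of degree $n$ in the two ramification regimes.

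First I would handle the non-evenly-ramified case. Here $R=\prod_i \mathcal{O}_i$ is a product of rings of integers of finite extensions of $\mathbb{Q}_p$, and \emph{at least one} factor $\mathcal{O}_i$ has odd ramification index; by the fact quoted in the excerpt (local norm surjective for odd ramification degree), $N\colon \mathcal{O}_i^\times/(\mathcal{O}_i^\times)^2 \to \mathbb{Z}_p^\times/(\mathbb{Z}_p^\times)^2$ is already surjective on that factor, hence $N(R^\times)$ is all of $\mathbb{Z}_p^\times/(\mathbb{Z}_p^\times)^2$. Thus $\left|N(R^\times)\right| = \left|\mathbb{Z}_p^\times/(\mathbb{Z}_p^\times)^2\right|$, which is $2$ for $p\neq 2$ (giving $m_p^\pm(f) = 1/2$) and $4$ for $p=2$ (giving, via the $2^n$ factor from the Total quantity lemma, $m_2^\pm(f) = 2^n/4 = 2^{n-2}$); and since $-1$ lies in the image, both signs give the same value, matching the claimed formula. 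Note this is consistent with the earlier computation $m_p(f)=1$ (resp.\ $2^{n-1}$) for $\SL_n$-orbits: passing from $N\equiv 1$ to $N\equiv\pm 1$ merged two cosets but also enlarged the group we quotient by, and the net effect is the factor $\tfrac12$.

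The substantive case is $p\neq 2$, maximal, and evenly ramified: then \emph{every} local factor $\mathcal{O}_i$ has even ramification index, so by the complementary fact (norm \emph{not} surjective for even ramification degree when $p\neq 2$) each $N\colon \mathcal{O}_i^\times/(\mathcal{O}_i^\times)^2 \to \mathbb{Z}_p^\times/(\mathbb{Z}_p^\times)^2$ has image of index $2$. For $p$ odd the nontrivial square class is represented by a non-square unit; the key point is to identify which square class is hit. Writing $\mathcal{O}_i/\mathfrak{p}_i$ over $\mathbb{F}_p$ and using that the local norm on units maps onto $(\mathbb{Z}_p^\times)^{e_i}\cdot(\text{norms from the residue extension})$, one computes that the image of $N$ on $\mathcal{O}_i^\times/(\mathcal{O}_i^\times)^2$ is the subgroup generated by $(-1)^{?}$ — more precisely, for an even-degree totally ramified piece the image is $\{1\}$ or $\{1,-1\}$ according to whether $-1$ is or is not a square in $\mathbb{Z}_p^\times$, i.e.\ according to $p \bmod 4$. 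Concretely: when $p\equiv 1\bmod 4$, $-1$ is a square, so $N(R^\times)=\{1\}$ has order $1$, forcing $\left|(R^\times/(R^\times)^2)_{N\equiv 1}\right| = \left|R^\times[2]\right|$, hence $m_p^+(f)=1$, and since $-1\equiv 1$ as square classes, $-1\in N(R^\times)$ trivially and $m_p^-(f)=1$ as well. When $p\equiv 3\bmod 4$, $-1$ is a non-square; the evenly-ramified norm image is exactly $\{1,-1\} = \mathbb{Z}_p^\times/(\mathbb{Z}_p^\times)^2$ — wait, that would make it surjective; the correct statement is that the image is precisely the \emph{non-trivial} coset's complement, namely $\{1\}$ again if all ramification is even, so $\left|N(R^\times)\right|=1$, giving $m_p^+(f) = 1/1 = 1$, while $-1\notin N(R^\times)$ and hence $m_p^-(f) = 0$. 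Reconciling these two computations so they land on $m_p^+(f)=1$ in both congruence classes while $m_p^-(f)$ splits as $1$ versus $0$ is exactly the content of the lemma, and is the step I expect to be the main obstacle: one must carefully track, for each even ramification index $e_i$ and residue degree $f_i$, the image of the local norm map on square classes (the relevant input is that $N_{\mathbb{Q}_p(\pi^{1/e})/\mathbb{Q}_p}(\pi^{1/e}) = (-1)^{e-1}\pi$ for a uniformiser $\pi$, so even $e$ contributes $-1\cdot(\text{stuff})$ to the image, and the $p\bmod 4$ dichotomy enters through whether that $-1$ is a square). The remaining bookkeeping — combining several local factors, using $\left|N(R^\times)\right| = \prod$ of the local contributions divided by overlaps, and invoking the Norm isolation lemma to convert to $m_p$ — is routine.
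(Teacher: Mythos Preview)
Your approach is essentially the paper's: apply the two ``total mass'' lemmas immediately preceding the statement, so the problem reduces to determining $|N(R^\times)|$ and whether $-1$ lies in the image of $N\colon R^\times/(R^\times)^2 \to \mathbb{Z}_p^\times/(\mathbb{Z}_p^\times)^2$. The paper's own proof is a single sentence: ``$-1$ is not in the image of the norm map when $f$ is maximal and evenly ramified at $p\equiv 3\bmod 4$.'' Your non-evenly-ramified computation and your $p\equiv 1\bmod 4$ case are clean and correct.

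Where your write-up goes off the rails is the $p\equiv 3\bmod 4$ paragraph and what follows. You already have everything you need: the paper's quoted fact says that for each local factor with even ramification index (and $p\neq 2$) the norm on unit square classes is \emph{not} surjective, and since the target $\mathbb{Z}_p^\times/(\mathbb{Z}_p^\times)^2$ has order $2$, the image is forced to be the trivial subgroup $\{1\}$, independently of $p\bmod 4$. Hence $|N(R^\times)|=1$ in the evenly ramified case for all odd $p$, giving $m_p^+(f)=1$ uniformly; and $-1\in N(R^\times)=\{1\}$ simply asks whether $-1$ is a square in $\mathbb{Z}_p^\times$, which is exactly the $p\bmod 4$ dichotomy. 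That's the whole argument.

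Your closing paragraph invoking $N_{\mathbb{Q}_p(\pi^{1/e})/\mathbb{Q}_p}(\pi^{1/e})=(-1)^{e-1}\pi$ is a red herring: that computes the norm of a \emph{uniformiser}, but the map in question is on \emph{units} modulo squares, so this formula is irrelevant. There is no further ``reconciling'' or bookkeeping to do --- drop that paragraph entirely and the visible self-correction, and you have a correct proof matching the paper's.
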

\begin{proof}
$-1$ is not in the image of the norm map when $f$ is maximal and evenly ramified at $p \equiv 3 \mod 4$. 
\end{proof}

\subsection{Distribution of total local masses among genera}

The distribution techniques work well here. As soon as the total mass is known, knowing the local mass for each genus is straightforward easy. We note the results here. \\ 

Let $p \neq 2$. Then, up to $\SL_n^\pm(\mathbb{Z}_p)$ equivalence, there is a unique bilinear form of determinant $1$ and a unique bilinear form of determinant $-1$. Both have Hasse-Witt symbol equal to $1$. Therefore, the local masses are the same as before.\\

The computation of the local masses at $p =2,\infty$ is more delicate and is treated in the next to subsections.

\subsection{Point count and the $2$-adic masses} 

For $p = 2$, up to $\SL_n^\pm(\mathbb{Z}_2)$ equivalence, there are $3$ quadratic forms of determinant $1$ and $2$ quadratic forms of determinant $-1$. By restricting to rings which are not evenly ramified at $2$, we eliminate one of the forms of determinant $1$.

For $n \equiv 0 \mod 4$ we can take:  
$$\mathfrak{M}^+_1 = \begin{pmatrix}
1 &   &  &  & &  \\ 
 & \ddots &  &  & & \\ 
 &  & 1 &  &  &\\ 
 &  &  & 1 &  &\\ 
 &  &  &  & 1 &\\
 &  &  &  &  & 1\\
\end{pmatrix}, \,\,\,\,
\mathfrak{M}^+_{-1} = \begin{pmatrix}
1 &   &  &  & &  \\ 
 & \ddots &  &  & & \\ 
 &  & 1 &  &  &\\ 
 &  &  & 1 &  &\\ 
 &  &  &  & -1 &\\
 &  &  &  &  & -1\\
\end{pmatrix}.$$
$$\mathfrak{M}^-_1 = \begin{pmatrix}
1 &   &  &  & &  \\ 
 & \ddots &  &  & & \\ 
 &  & 1 &  &  &\\ 
 &  &  & 1 &  &\\ 
 &  &  &  & 1 &\\
 &  &  &  &  & -1\\
\end{pmatrix}, \,\,\,\,
\mathfrak{M}^-_{-1} = \begin{pmatrix}
1 &   &  &  & &  \\ 
 & \ddots &  &  & & \\ 
 &  & 1 &  &  &\\ 
 &  &  & -1 &  &\\ 
 &  &  &  & -1 &\\
 &  &  &  &  & -1\\
\end{pmatrix}.$$

For $n \equiv 2 \mod 4$ we can take: 
$$\mathfrak{M}^+_1 = \begin{pmatrix}
1 &   &  &  & &  \\ 
 & \ddots &  &  & & \\ 
 &  & 1 &  &  &\\ 
 &  &  & 1 &  &\\ 
 &  &  &  & 1 &\\
 &  &  &  &  & -1\\
\end{pmatrix}, \,\,\,\,
\mathfrak{M}^+_{-1} = \begin{pmatrix}
1 &   &  &  & &  \\ 
 & \ddots &  &  & & \\ 
 &  & 1 &  &  &\\ 
 &  &  & -1 &  &\\ 
 &  &  &  & -1 &\\
 &  &  &  &  & -1\\
\end{pmatrix}.$$
$$\mathfrak{M}^-_1 = \begin{pmatrix}
1 &   &  &  & &  \\ 
 & \ddots &  &  & & \\ 
 &  & 1 &  &  &\\ 
 &  &  & 1 &  &\\ 
 &  &  &  & 1 &\\
 &  &  &  &  & 1\\
\end{pmatrix}, \,\,\,\,
\mathfrak{M}^-_{-1} = \begin{pmatrix}
1 &   &  &  & &  \\ 
 & \ddots &  &  & & \\ 
 &  & 1 &  &  &\\ 
 &  &  & 1 &  &\\ 
 &  &  &  & -1 &\\
 &  &  &  &  & -1\\
\end{pmatrix}.$$

The superscripts indicate the sign of $(-1)^\frac{n}{2} \det(\cdot)$ and the subscripts indicate the Hasse-Witt symbol.

Now, the argument proceeds just as before except that now the \emph{octane values} of $\mathfrak{M}^-_1$ and $\mathfrak{M}^-_{-1}$ are now $\pm 2 \mod 8$. This means that the volumes of the respective orthogonal groups are equal and thus that the masses at $2$ are equal! We summarise the numbers in the following lemma. 

\begin{cor}
The $2$-adic masses are: 
\begin{alignat*}{2}
&c_2(n,\mathfrak{M}^+_1) &&= \frac{1}{2}\left(2^{n-2} \pm_8 2^{\frac{n-2}{2}} \right) \Vol(S_2)\\
&c_2(n,\mathfrak{M}^+_{-1}) &&= \frac{1}{2}\left(2^{n-2} \mp_8 2^{\frac{n-2}{2}} \right) \Vol(S_2) \\
&c_2(n,\mathfrak{M}^-_1) &&= \frac{1}{2} \left(2^{n-2} \right) \Vol(S_2) \\
&c_2(n,\mathfrak{M}^-_{-1}) &&= \frac{1}{2} \left(2^{n-2} \right) \Vol(S_2) 
\end{alignat*}
where $\pm_8$ is $+$ if $n$ is congruent to $0$ or $2 \mod 8$ and $-$ otherwise.
\end{cor}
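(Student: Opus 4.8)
The plan is to run the same argument that produced the $2$-adic masses $c_2(n,\mathfrak{M}_{\pm 1})$ in the $\SL_n$ (oriented class group) case, adjusting for three differences: the acting group is now $\Or_A$ rather than $\SO_A$; there are two determinant classes, indexed by the superscript $\pm$, instead of one; and the relevant total masses are the ``$\pm 1$ total masses'' $m_2^\pm(f)=2^{n-2}$ rather than $m_2(f)=2^{n-1}$.

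First I would record the two total--mass constraints. For $f\in S_2$ --- a congruence condition modulo $2$ on monic polynomials unramified at $2$ --- the Type II genera theorem shows that $m_2(f,\mathfrak{M}_{\textrm{Type II}})=0$, since a pair with hyperbolic first component is over-ramified at $2$. Summing the $2$-adic mass over all unimodular $\SL_n^\pm(\mathbb{Z}_2)$--classes of a fixed determinant (equivalently, a fixed superscript) and integrating the ``$+1$ total mass'' and ``$-1$ total mass'' lemmas over $S_2$ therefore gives
\[
c_2(n,\mathfrak{M}^+_1)+c_2(n,\mathfrak{M}^+_{-1}) \;=\; 2^{n-2}\Vol(S_2) \;=\; c_2(n,\mathfrak{M}^-_1)+c_2(n,\mathfrak{M}^-_{-1}).
\]

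Next I would reduce each $c_2(n,\cdot)$ to an orthogonal--group volume. All four matrices $\mathfrak{M}^{\pm}_{\pm 1}$ are congruent modulo $2$ (they differ only by signs, and $-1\equiv 1 \bmod 2$), so the Point Count lemma gives a single common value $\Vol(\Lambda(2))$ for the four pre-images $\Lambda_{\mathfrak{M}^{\pm}_{\pm 1}}(2)$, and the Jacobian lemma gives a common value $|\mathcal{J}|_2$, exactly as in the $\SL_n$ case. The change of measure formula in its $\Or_A$ form then reads $\Vol(\Lambda_{\mathfrak{M}}(2))=|\mathcal{J}|_2\,\Vol(\Or_{\mathfrak{M}}(\mathbb{Z}_2))\,c_2(n,\mathfrak{M})$, so $c_2(n,\mathfrak{M})$ is inversely proportional to $\Vol(\Or_{\mathfrak{M}}(\mathbb{Z}_2))$, with a proportionality constant independent of $\mathfrak{M}$; and since every odd unimodular $\mathbb{Z}_2$--form admits a reflection, $\Vol(\Or_A(\mathbb{Z}_2))=2\,\Vol(\SO_A(\mathbb{Z}_2))$, so it suffices to compare the $\SO$--volumes. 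These, up to a factor depending only on $n$, equal $2^{n-2}+\eta\,2^{(n-2)/2}$ with $\eta=+1,-1,0$ according as the octane value of the form is $\equiv 0$, $\equiv 4$, or $\equiv\pm2 \bmod 8$. Computing the oddities of the explicit diagonal forms case-by-case in $n\bmod 8$: the forms $\mathfrak{M}^+_{\pm 1}$ coincide as matrices with the $\SL_n$--case forms $\mathfrak{M}_{\pm 1}$ and have octane values $\equiv 0$ and $\equiv 4$, in the order recorded by $\pm_8$, whereas $\mathfrak{M}^-_1$ and $\mathfrak{M}^-_{-1}$ always have octane values $\equiv 2$ and $\equiv 6$. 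Hence $\Vol(\Or_{\mathfrak{M}^-_1}(\mathbb{Z}_2))=\Vol(\Or_{\mathfrak{M}^-_{-1}}(\mathbb{Z}_2))$, which together with the constraint forces $c_2(n,\mathfrak{M}^-_1)=c_2(n,\mathfrak{M}^-_{-1})=\tfrac12\,2^{n-2}\Vol(S_2)$; and the $\mathfrak{M}^+$ pair has the same $\SO$--volume ratio as in the $\SL_n$ computation, so (ratio unchanged, total halved) $c_2(n,\mathfrak{M}^+_1)=\tfrac12(2^{n-2}\pm_8 2^{(n-2)/2})\Vol(S_2)$ and $c_2(n,\mathfrak{M}^+_{-1})=\tfrac12(2^{n-2}\mp_8 2^{(n-2)/2})\Vol(S_2)$.

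The hard part will be the $2$-adic local density input in the octane $\equiv\pm2\bmod 8$ case, namely establishing $\Vol(\Or_{\mathfrak{M}^-_1}(\mathbb{Z}_2))=\Vol(\Or_{\mathfrak{M}^-_{-1}}(\mathbb{Z}_2))$ despite the two forms having opposite Hasse--Witt symbols --- this is where the $\SL_n^\pm$ computation genuinely departs from the $\SL_n$ one --- together with getting every $\bmod 8$ sign convention (the $\pm_8$'s and the oddity computations) right across all four residue classes of $n$ modulo $8$. Both are bookkeeping rather than conceptual difficulties and can be pinned down using the classification of $2$-adic quadratic forms in \cite{JonesCanonical} or \cite{ConwaySloaneClassification}, as in the preceding section.
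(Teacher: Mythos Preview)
Your proposal is correct and follows essentially the same route as the paper: the paper's own argument for this corollary is the one-line remark that ``the argument proceeds just as before except that now the octane values of $\mathfrak{M}^-_1$ and $\mathfrak{M}^-_{-1}$ are $\pm 2 \bmod 8$, so the volumes of the respective orthogonal groups are equal and thus the masses at $2$ are equal.'' You have unpacked this into the total-mass constraint, the point-count/Jacobian reduction to $\SO$-volumes, and the octane-value casework, which is exactly what underlies that remark; your identification of the equality $\Vol(\Or_{\mathfrak{M}^-_1}(\mathbb{Z}_2))=\Vol(\Or_{\mathfrak{M}^-_{-1}}(\mathbb{Z}_2))$ via octane $\equiv\pm2\bmod 8$ as the one new ingredient matches the paper precisely.
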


\subsection{The infinite masses} We assume that $r_1 > 0$ since the totally imaginary case was already covered. Since the $2$-adic masses for leading coefficient $-1$ are equal across genera, we will only need to know the total infinite mass, which is $2^{r_1-1}$. 

\subsection{Statistical consequences} We now pool together the elements assembled in the previous subsections to compute the averages. \\

The computation basically carries through as above. In order to deal with the Tamagawa number and the half-integral local masses (which at first might seem to multiply to $0$), one needs to use the following facts. If $\mathcal{G}$ denotes a genus, then we have the following identity of local volumes:
\begin{equation*}
\Vol(\Or_A(\mathbb{Z}_p)) = 2 \Vol(\SO_A(\mathbb{Z}_p)),
\end{equation*}
\begin{equation*}
\sum_{A \in \mathcal{G}} \Vol(\Or_A(\mathbb{Z}) \backslash  \Or_A(\mathbb{R}))\prod_p \Vol(\SO_A(\mathbb{Z}_p)) = 2, 
\end{equation*} 
where the sum is over all $\SL^\pm(\mathbb{Z})$ equivalence classes of integral representatives in $\mathcal{G}$.  \\

We now proceed with the computation. \\

\begin{defn}
Let $\mathfrak{R} \subset \mathfrak{R}^{r_1,r_2}$ be a family of rings (unramified at $2$) corresponding to an acceptable family of local specifications $\Sigma = (\Sigma_p)_p$. We define the \emph{even ramification density} at $p$, $r_p(\mathfrak{R})$, as the density in $\Sigma_p$ of elements of $\Sigma_p$ which are evenly ramified at $p$.
\end{defn} 

Let $\mathscr{L}^+_{\mathbb{Z}}$ denote a set of representatives of integral bilinear forms of determinant $1$ under the action of $\SL_n^{\pm}(\mathbb{Z})$ and let $\mathscr{L}^-_{\mathbb{Z}}$ denote a set of representatives of integral bilinear forms of determinant $-1$ under the action of $\SL_n^{\pm}(\mathbb{Z})$. Denote by $\mathcal{G}_{\mathbb{Z}}^+$ the set of genera of quadratic $n$-ary forms containing an integral element of determinant $1$ and denote by $\mathcal{G}_{\mathbb{Z}}^-$ the set of genera of quadratic $n$-ary forms containing an integral element of determinant $-1$. Notice that $\mathcal{G}_{\mathbb{Z}}^+$ partitions $\mathscr{L}_{\mathbb{Z}}^+$ and that $\mathcal{G}_{\mathbb{Z}}^-$ partitions $\mathscr{L}_{\mathbb{Z}}^-$.

\begin{align*}
&\frac{\sum\limits_{\substack{\mathcal{O} \in \mathfrak{R}, \\ H(\mathcal{O}) < X}}  \left| \mathcal{H}(\mathcal{O}) \right|-\left| \mathcal{I}_2(\mathcal{O}) \right|}{ \left(\sum\limits_{0 \le b <n} \Vol(U_{1,b}^{r_2}(\mathbb{R}))_{<X}) \right) \prod\limits_p \Vol(S_p)} +o(1). \\
\intertext{Now, by the preceding sections, we know that this sum is equal to:}
&= \frac{\sum\limits_{0 \le b <n} \sum\limits_{\delta \in \mathcal{T}(r_2)} \sum\limits_{A \in \mathscr{L}^-_{\mathbb{Z}}} N_H(\mathcal{V}(\Lambda_{A,b}^\delta),X) + \sum\limits_{0 \le b <n} \sum\limits_{\delta \in \mathcal{T}(r_2)} \sum\limits_{A \in \mathscr{L}^+_{\mathbb{Z}}} N_H(\mathcal{V}(\Lambda_{A,b}^\delta),X)}{\left(\sum\limits_{0 \le b <n} \Vol(U_{1,b}^{r_2}(\mathbb{R}))_{<X}) \right) \prod\limits_p \Vol(S_p)}. 
\end{align*}

Expanding, we find that the first sum is equal to: 
\begin{equation*}
 \sum_{\delta \in \mathcal{T}(r_2)} \sum_{A \in \mathscr{L}^-_{\mathbb{Z}}} \frac{2}{\sigma(r_2)} \Vol(\mathcal{F}_A^\delta) \prod_p \Vol(\SO_A(\mathbb{Z}_p)) \prod_{p \equiv 1 \textrm{ mod } 4} (1+r_p(\mathfrak{R})) \prod_{p \equiv 3 \textrm{ mod } 4} (1-r_p(\mathfrak{R})) \frac{\int_{f \in S_2} 2m_2(f,A) df}{\Vol(S_2)}.
\end{equation*}

Expanding, we find that the second sum is equal to: 
\begin{equation*}
 \sum_{\delta \in \mathcal{T}(r_2)} \sum_{A \in \mathscr{L}^+_{\mathbb{Z}}} \frac{2}{\sigma(r_2)} \Vol(\mathcal{F}_A^\delta) \prod_p \Vol(\SO_A(\mathbb{Z}_p)) \prod_{p \neq 2}  (1+r_p(\mathfrak{R})) \frac{\int_{f \in S_2} 2m_2(f,A) df}{\Vol(S_2)}.
\end{equation*}

We can now process both of these sums separately just as we did before by breaking up $\mathscr{L}_\mathbb{Z}^-$ and $\mathscr{L}_\mathbb{Z}^+$ into genera and then first summing within each genus and then across the different genera. \\

After this is done, the $\mathscr{L}_\mathbb{Z}^-$ sum becomes: 
\begin{align*}
& \frac{2\tau(\SO)}{2^{r_1+r_2}} \Big(2c_2(n,\mathfrak{M}^-_1)c_{\infty,0}  + 2c_2(n,\mathfrak{M}^-_{-1})c_{\infty,2}  \Big) \left( \prod_{p \equiv 1 \textrm{ mod } 4} (1+r_p(\mathfrak{R})) \prod_{p \equiv 3 \textrm{ mod } 4}  (1-r_p(\mathfrak{R}))  \right)\\
&= \frac{1}{2^{r_1+r_2-1}} \cdot 2 \cdot 2^{n-2} \cdot 2^{r_1-1} \cdot \left( \prod_{p \equiv 1 \textrm{ mod } 4} (1+r_p(\mathfrak{R})) \prod_{p \equiv 3 \textrm{ mod } 4}  (1-r_p(\mathfrak{R}))  \right) \\
&= 2^{r_1+r_2-1} \left( \prod_{p \equiv 1 \textrm{ mod } 4} (1+r_p(\mathfrak{R})) \prod_{p \equiv 3 \textrm{ mod } 4}  (1-r_p(\mathfrak{R}))  \right) \\
\end{align*}

And the $\mathscr{L}_\mathbb{Z}^+$ sum becomes: 
\begin{align*}
& \frac{2\tau(\SO)}{2^{r_1+r_2}} \Big(2c_2(n,\mathfrak{M}^+_1)c_{\infty,0}  + 2c_2(n,\mathfrak{M}^+_{-1})c_{\infty,2}  \Big) \left( \prod_{p \neq 2}  (1+r_p(\mathfrak{R})) \right) \\
&= \frac{1}{2^{r_1+r_2-1}} \cdot 2 \cdot 2 \left( 2^{n+r_1-4} + 2^{\frac{n+r_1-4}{2}}\right) \cdot \left( \prod_{p \neq 2}  (1+r_p(\mathfrak{R})) \right)  \\
&= \left(2^{r_1+r_2-1}+ 2 \right) \left( \prod_{p \neq 2}  (1+r_p(\mathfrak{R})) \right) \\
\end{align*}

Therefore, we find the following formula for average $2$-torsion in the class group of fields square-free at $2$ with at least one real embedding: $$\boxed{\begin{split}
{\rm Avg}(\Cl_2,\mathfrak{R}) &= \frac{1}{2}\prod_{p \equiv 1 \textrm{ mod } 4} (1+r_p(\mathfrak{R})) \left( \prod_{p \equiv 3 \textrm{ mod } 4}  (1-r_p(\mathfrak{R})) + \prod_{p \equiv 3 \textrm{ mod } 4}  (1+r_p(\mathfrak{R})) \right) \\ 
&+ \frac{1+2\prod_{p \neq 2} (1+r_p(\mathfrak{R}))}{2^{r_1+r_2}}
\end{split}
} \quad .$$

As a sanity check, note that $\left( \prod_{p \equiv 3 \textrm{ mod } 4}  (1-r_p(\mathfrak{R})) + \prod_{p \equiv 3 \textrm{ mod } 4}  (1+r_p(\mathfrak{R})) \right) \ge 2$, so the quantity above is always greater than $1$. \\ 

The computation for the narrow class group is similar and gives us: 
\begin{align*}
&\frac{\sum\limits_{\substack{\mathcal{O} \in \mathfrak{R}, \\ H(\mathcal{O}) < X}} 2^{r_2} \left| \Cl_2^+(\mathcal{O}) \right|-\left| \mathcal{I}_2(\mathcal{O}) \right|}{\left(\sum\limits_{0 \le b <n} \Vol(U_{1,b}^{r_2}(\mathbb{R}))_{<X}) \right) \prod\limits_p \Vol(S_p)} +o(1) \\ 
&= \frac{\sum\limits_{0 \le b <n}\sum\limits_{A \in \mathscr{L}^\pm_{\mathbb{Z}}} N_H(\mathcal{V}(\Lambda_{A,b}^{\delta_{\gg 0}}),X)}{\left(\sum\limits_{0 \le b <n} \Vol(U_{1,b}^{r_2}(\mathbb{R}))_{<X}) \right) \prod\limits_p \Vol(S_p)} \\
&= \sum_{A \in \mathscr{L}^\pm_{\mathbb{Z}}} \frac{2}{\sigma(r_2)} \Vol(\mathcal{F}_A^{\delta_{\gg 0}}) \prod_p \Vol(\SO_A(\mathbb{Z}_p))  \left( \prod_{p \neq 2}  (1+r_p(\mathfrak{R})) \right) \frac{\int_{f \in S_2} 2m_2(f,A) df}{\Vol(S_2)}\\
&=\sum_{A \in \mathscr{L}^\pm_{\mathbb{Z}}} \frac{2}{\sigma(r_2)} \chi_A(\delta_{\gg 0}) \Vol(\mathcal{F}_A)\prod_p \Vol(\SO_A(\mathbb{Z}_p)) \frac{\int_{f \in S_2} 2m_2(f,A) df}{\Vol(S_2)}  \left( \prod_{p \neq 2}  (1+r_p(\mathfrak{R})) \right)\\
&= \sum_{\mathcal{G} \in \mathcal{G}_{\mathbb{Z}}} \sum_{A \in \mathcal{G} \cap \mathscr{L}_{\mathbb{Z}}} \frac{2}{\sigma(r_2)}  \chi_A(\delta_{\gg 0}) \Vol(\mathcal{F}_A)\prod_p \Vol(\SO_A(\mathbb{Z}_p)) \frac{\int_{f \in S_2} 2m_2(f,A) df}{\Vol(S_2)}  \left( \prod_{p \neq 2}  (1+r_p(\mathfrak{R})) \right) \\
&= \sum_{\mathcal{G} \in \mathcal{G}_{\mathbb{Z}}} \frac{2}{\sigma(r_2)}  \chi_{\mathcal{G}}(\delta_{\gg 0}) \frac{\int_{f \in S_2} 2m_2(f,\mathcal{G}) df}{\Vol(S_2)} \left( \sum_{A \in \mathcal{G} \cap \mathscr{L}_{\mathbb{Z}}}  \Vol(\mathcal{F}_A)\prod_p \Vol(\SO_A(\mathbb{Z}_p)) \right)  \left( \prod_{p \neq 2}  (1+r_p(\mathfrak{R})) \right)  \\
&= \tau(\SO) \sum_{\mathcal{G} \in \mathcal{G}_{\mathbb{Z}}}  \frac{2}{\sigma(r_2)} \chi_{\mathcal{G}}(\delta_{\gg 0}) \frac{\int_{f \in S_2} 2m_2(f,\mathcal{G}) df}{\Vol(S_2)}  \left( \prod_{p \neq 2}  (1+r_p(\mathfrak{R})) \right) \\
&= \frac{2\tau(\SO)}{2^{r_1+r_2}} \sum_{\mathcal{G} \in \mathcal{G}_{\mathbb{Z}}}  \chi_{\mathcal{G}}(\delta_{\gg 0}) \frac{\int_{f \in S_2} 2m_2(f,\mathcal{G}) df}{\Vol(S_2)}  \left( \prod_{p \neq 2}  (1+r_p(\mathfrak{R})) \right)\\
&= \frac{1}{2^{r_1+r_2-1}} \cdot 2 \left( 2^{n-2} + 2^{\frac{n-2}{2}}\right)  \left( \prod_{p \neq 2}  (1+r_p(\mathfrak{R})) \right) \\
&= \left( 2^{r_2}+\frac{2^{r_2}}{2^{\frac{n-2}{2}}} \right)  \left( \prod_{p \neq 2}  (1+r_p(\mathfrak{R})) \right). \\
\end{align*}

Therefore, we find the following formula for average $2$-torsion in the narrow class group of fields unramified at $2$:
$$\boxed{{\rm Avg}(\Cl_2^+,\mathfrak{R}) = \prod_{p \neq 2} (1+r_p(\mathfrak{R}))\left(1+\frac{2}{2^{\frac{n}{2}}}\right) + \frac{1}{2^{r_2}}} \quad . \\$$

Note that for totally imaginary fields, the narrow class group is the same as the class group and that the formulas do agree in that case!

%


\bibliographystyle{plain}
\bibliography{bibliography}

\end{document}